\tikzset{
	labl/.style={anchor=south, rotate=90, inner sep=.5mm}
}
\newcommand{\spa}{\medskip}
\newcommand{\ag}{{\mbox{\larger[-4]$\dag$}}}
\newcommand{\PP}{\mathbb{P}}
\newcommand{\Q}{\mathbb{Q}}
\newcommand{\F}{\mathbb{F}}
\newcommand{\Z}{\mathbb{Z}}
\newcommand{\Crys}{\mathbf{Crys}}
\newcommand{\bt}{Barsotti--Tate\ }
\newcommand{\LS}{\mathbf{LS}}
\newcommand{\Foi}{\Fisoc^\mathrm{\ag}}
\newcommand{\oi}{\Isoc^\mathrm{\ag}}
\newcommand{\alg}{\mathrm{alg}}
\newcommand{\ka}{k^\mathrm{alg}}
\newcommand{\rev}{\mathrm{rev}}
\newcommand{\iso}{\xrightarrow{\sim}}
\newcommand{\Zp}{\mathbb{Z}_p}
\newcommand{\Qp}{\mathbb{Q}_p}
\newcommand{\Qpu}{{\mathbb{Q}_p^\mathrm{ur}}}
\newcommand{\Qpbar}{\overline{\mathbb{Q}}_{p}}
\newcommand{\Gm}{\mathbb{G}_m}
\newcommand{\tors}{\mathrm{tors}}
\newcommand{\crys}{\mathrm{crys}}
\newcommand{\ur}{\mathrm{ur}}
\newcommand{\sep}{\mathrm{sep}}
\newcommand{\Fp}{\F_p}
\newcommand{\Fpn}{\F_{p^n}}
\newcommand{\calA}{\mathcal{A}}
\newcommand{\calE}{\mathcal{E}}
\newcommand{\calEd}{{\calE}^\ag}
\newcommand{\calO}{\mathcal{O}}
\newcommand{\caloL}{\overline{\calL}}
\newcommand{\calL}{\mathcal{L}}
\newcommand{\calLd}{{\mathcal{L}^{\ag}}}
\newcommand{\calM}{\mathcal{M}}
\newcommand{\calMd}{{\calM^{\ag}}}
\newcommand{\calN}{\mathcal{N}}
\newcommand{\calNd}{{{\calN}^\ag}}
\newcommand{\caloN}{{\overline{\mathcal{N}}}}
\newcommand{\caloNd}{\caloN^\ag}
\newcommand{\calT}{\mathcal{T}}
\newcommand{\calV}{\mathcal{V}}
\newcommand{\VVec}{\mathbf{Vec}}
\newcommand{\Gr}{\mathrm{Gr}}
\newcommand{\Spec}{\mathrm{Spec}}
\newcommand{\Hom}{\mathrm{Hom}}
\newcommand{\End}{\mathrm{End}}
\newcommand{\GL}{\mathrm{GL}}
\newcommand{\id}{\mathrm{id}}
\newcommand{\PBQ}{\mathrm{PBQ}}
\newcommand{\PBS}{$\mathrm{PBS}$}
\newcommand{\MS}{\mathrm{MS}}
\newcommand{\cst}{\mathrm{cst}}
\newcommand{\rk}{\mathrm{rk}}
\newcommand{\et}{{\mathrm{\acute{e}t}}}
\newcommand{\Fisoc}{\mathbf{F\textrm{-}Isoc}}
\newcommand{\Fiisoc}{\mf F^{\infty}\textrm{-} \mf{Isoc}}
\newcommand{\Fiisoce}{\mf F^{\infty}\textrm{-} \mf{Isoc}^\et}
\newcommand{\Fnisoc}{\mf F^{n}\textrm{-} \mf{Isoc}}
\newcommand{\Fnoi}{\mf F^{n}\textrm{-} \mf{Isoc}^\ag}
\newcommand{\Fioi}{\mf F^{\infty}\textrm{-} \mf{Isoc}^\ag}
\newcommand{\oil}{\mf{Isoc}^\ag_\Qpi}
\newcommand{\Isoc}{\mathbf{Isoc}}
\newcommand{\Isocl}{\mathbf{Isoc}_{\Qpi}}
\newcommand{\Isocle}{\mathbf{Isoc}_{\Qpi}^\et}
\newcommand{\DM}{Dieudonné--Manin}
\newcommand{\Qpi}{{\Q_{p}^\ur}}
\newcommand{\Qpn}{{\Q_{p^n}}}
\newcommand{\Qpis}{$\Qpi$-structure}
\newcommand{\DMs}{\DM\ \Qpis}
\newcommand{\dhl}{$\dag$-hull}
\newcommand{\dex}{$\dag$-ex\-ten\-da\-ble}
\newcommand{\dc}{$\dag$-com\-pac\-ti\-fi\-cation}
\newcommand{\dcs}{$\dag$-com\-pac\-ti\-fi\-cations}
\newcommand{\tcalE}{\calE_{\alg}}
\newcommand{\tcalEd}{\calE_{\alg}^\ag}
\newcommand{\tMd}{M_{\alg}^\ag}
\newcommand{\pie}{\pi_1^{\et}}
\newcommand{\mf}{\mathbf}
\begin{document}

	\newtheorem{theo}[subsubsection]{Theorem}
	\newtheorem*{theo*}{Theorem}
	\newtheorem{ques}[subsubsection]{Question}
	\newtheorem*{ques*}{Question}
	\newtheorem{conj}[subsubsection]{Conjecture}
	\newtheorem{prop}[subsubsection]{Proposition}
	\newtheorem{lemm}[subsubsection]{Lemma}
	\newtheorem*{lemm*}{Lemma}
	\newtheorem{coro}[subsubsection]{Corollary}
	\newtheorem*{coro*}{Corollary}

	\theoremstyle{definition}
	\newtheorem{defi}[subsubsection]{Definition}
	\newtheorem*{defi*}{Definition}
	\newtheorem{hypo}[subsubsection]{Hypothesis}
	\newtheorem{rema}[subsubsection]{Remark}
	\newtheorem{exam}[subsubsection]{Example}
	\newtheorem{nota}[subsubsection]{Notation}
	\newtheorem{cons}[subsubsection]{Construction}
	
	\numberwithin{equation}{subsection}
	
	\title{Parabolicity conjecture of $F$-isocrystals}
	\date{\today}
	\makeatletter
			\@namedef{subjclassname@2020}{%
		\textup{2020} Mathematics Subject Classification}
	\makeatother
	
	\subjclass[2020]{14F30, 18M25}
	
	\keywords{$F$-isocrystal, slope, monodromy group}
	
	\author{Marco D'Addezio}
\address{Institut de Mathématiques de Jussieu-Paris Rive Gauche, SU - 4 place Jussieu, Case 247, 75005 Paris}
\email{daddezio@imj-prg.fr}

	\begin{abstract}
		In this article we prove Crew's parabolicity conjecture of $F$-isocrystals. For this purpose, we introduce and study the notion of $\dagger$-hull of a sub-$F$-isocrystal. On the way, we prove a new Lefschetz theorem for overconvergent $F$-isocrystals.
	\end{abstract}
	
	\maketitle

	\tableofcontents
	\section{Introduction}
	\subsection{Main results}
Let $X$ be a smooth geometrically connected variety over a perfect field $k$ of positive characteristic $p$ and let $n$ be a positive integer. For an overconvergent $F^n$-isocrystal $(\calMd,\Phi_\calM^\ag)$ over $X$ we write $(\calM,\Phi_\calM)$ for the associated $F^n$-isocrystal and we suppose that $(\calM,\Phi_\calM)$ admits the slope filtration $$0=S_0(\calM)\subsetneq S_1(\calM) \subsetneq...\subsetneq S_m(\calM)=\calM.$$ If $\eta$ is a point of $X$ with perfect residue field, Crew defined in \cite{Cre92a} two algebraic \textit{monodromy groups} for $\calMd$ with respect to $\eta$, that we denote by $G(\calM,\eta)$ and $G(\calMd,\eta)$. The former algebraic group is a subgroup of the latter and they both are subgroups of $\GL(\calM_\eta)$, where $\calM_\eta$ is the fibre of $\calM$ at $\eta$. In this article we prove the following fundamental result about these groups.

\begin{theo}[Theorem \ref{para-c:t}]\label{i-para-conj:t}
The subgroup $G(\calM,\eta)\subseteq G(\calMd,\eta)$ is the subgroup of $G(\calMd,\eta)$ stabilising the slope filtration of $\calM_\eta$. Moreover, when $\calMd$ is semi-simple, $G(\calM,\eta)$ is a parabolic subgroup of $G(\calMd,\eta)$.
\end{theo}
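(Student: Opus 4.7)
Set $P := \St_{G(\calMd,\eta)}(S_\bullet(\calM_\eta))$. The inclusion $G(\calM,\eta) \subseteq P$ is the easy direction: the slope filtration is a functorial construction on the Tannakian subcategory $\langle \calM \rangle$ of convergent $F^n$-isocrystals admitting it, so it defines a filtration of the fibre functor $\omega_\eta$ restricted to $\langle \calM \rangle$, and this filtration is therefore preserved by the Tannakian group $G(\calM,\eta)$.

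For the reverse inclusion $P \subseteq G(\calM,\eta)$, I would apply Chevalley's theorem to realise $G(\calM,\eta)$ as the stabiliser of a line inside an ambient representation: $G(\calM,\eta) = \St_{G(\calMd,\eta)}(L)$, with $L$ a line in a representation $V$ of $G(\calMd,\eta)$. By Tannakian duality for $\langle \calMd \rangle$, the object $V$ arises from an overconvergent $\calPd \in \langle \calMd \rangle$, and $L \subseteq V = \calP_\eta$ corresponds to a rank-one convergent sub-$F^n$-isocrystal $\calL \subseteq \calP$ which is \emph{not} a priori overconvergent. The main tool is the paper's $\dag$-hull theory: the $\dag$-hull $\calLd \subseteq \calPd$ is by construction overconvergent (hence $G(\calMd,\eta)$- and $P$-stable), and the paper's key structural result on $\dag$-hulls should identify $\calL$ with a canonical piece of the slope filtration of $\calLd$ (e.g.\ its first nonzero step, or the unique convergent sub matching one slope gradation). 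Granting this identification, I then use that $P$ preserves the slope filtration of every object of $\langle \calMd \rangle$: any such object is a subquotient of a tensor construction from $\calMd$, and the slope filtration commutes with tensor products, duals, subobjects and quotients, so its fibre at $\eta$ is induced from $S_\bullet(\calM_\eta)$ and hence $P$-stable. Applying this to $\calLd$ yields $P \subseteq \St_{G(\calMd,\eta)}(\calL_\eta) = \St_{G(\calMd,\eta)}(L) = G(\calM,\eta)$.

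For the parabolicity statement, once the equality $G(\calM,\eta) = P$ is in hand the argument is formal: when $\calMd$ is semi-simple the monodromy group $G(\calMd,\eta)$ is reductive, by the standard Tannakian fact that the monodromy group of a semi-simple neutral Tannakian category is reductive. The stabiliser of a flag of subspaces in a representation of a reductive group is always a parabolic subgroup, so $G(\calM,\eta) = P$ is parabolic in $G(\calMd,\eta)$.

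The main obstacle is exactly the structural lemma on $\dag$-hulls invoked above -- namely, recovering the convergent sub $\calL$ canonically from the slope filtration of $\calLd$. I expect this to be the principal technical theorem of the paper, proved by first reducing, via the new Lefschetz theorem for overconvergent $F$-isocrystals announced in the abstract, to the case when $X$ is a curve; one can then analyse the $\dag$-hull using Kedlaya's slope filtration theorem for the bounded Robba ring together with the \DM decomposition of the fibre at $\eta$ to pin down the position of $\calL$ within the slope filtration of $\calLd$.
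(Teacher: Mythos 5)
Your outline correctly identifies the engine of the proof (Chevalley's theorem plus the $\dag$-hull slope theorem $S_1(\calN)=S_1(\caloN)$, itself reduced to curves via a Lefschetz theorem), and your reverse-inclusion argument is essentially the paper's Proposition \ref{para:p}. But there is a genuine gap in where you run that argument. You apply Chevalley directly to $G(\calM,\eta)\subseteq G(\calMd,\eta)$, i.e.\ to the Tannakian categories $\langle\calM\rangle\subseteq\Isoc(X)$ and $\langle\calMd\rangle\subseteq\oi(X)$, which are $K(\Omega)$-linear and whose objects carry \emph{no Frobenius structure}. The line $L$ that Chevalley produces then corresponds to a rank-one sub-\emph{isocrystal} $\calL\subseteq\calP$ which need not be stable under any power of Frobenius; for such an object neither its slope nor its $\dag$-hull in the sense of Theorem \ref{main:t} is defined (that theorem concerns inclusions of $F^n$-isocrystals), so the key identification "$\calL=S_1(\caloL)$", and hence "$L=\overline{L}\cap V^{\leq s}$", does not make sense as you set it up. This is precisely the "field of scalars" obstruction the paper flags in the introduction: the argument is easy for monodromy groups of objects \emph{with} Frobenius structure and genuinely harder without it.

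The paper's resolution is to first prove your statement for the monodromy groups of $F^\infty$-isocrystals endowed with Dieudonné--Manin $\Qpi$-structures, $G(\calM,\Phi^\infty_\calM,\eta)\subseteq G(\calMd,\Phi^{\ag,\infty}_\calM,\eta)$ (Proposition \ref{para:p}): there Chevalley yields a rank-one sub-$F^\infty$-isocrystal, to which Theorem \ref{main:t} applies verbatim. One then descends to Crew's groups via the whole apparatus of \S\ref{isoc-with-qpu:s}: the two exact rows of Proposition \ref{fund-exac-seq:p}, the fact that the left square is cartesian (Corollary \ref{cart:c}, which itself requires Proposition \ref{rk1-unr:p}, another nontrivial consequence of Theorem \ref{main:t}), giving $G(\calM,\eta)=G(\calMd,\eta)\cap G(\calM,\Phi^\infty_\calM,\eta)$ after the scalar-extension comparison of Proposition \ref{comp:p}. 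None of this descent step appears in your proposal, and it is not formal. Two smaller points: your claim that "the stabiliser of a flag of subspaces in a representation of a reductive group is always parabolic" is false in general — what saves you is that the slope filtration is an exact $\otimes$-filtration of the fibre functor, hence split by a quasi-cocharacter $\lambda$, and $P_G(\lambda)$ is parabolic by Saavedra; and even then one must check that intersecting this parabolic with the normal subgroup $G(\calMd,\eta)\subseteq G(\calMd,\Phi^{\ag,\infty}_\calM,\eta)$ again yields a parabolic, which is the final step of the paper's proof.
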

This solves the \textit{parabolicity conjecture}, initially proposed as a question in \cite[page 460]{Cre92a}. Partial results on this conjecture have been obtained in \cite{Cre92b}, \cite{Crew94}, \cite{Tsu02}, \cite{AD18}, and \cite{Tsu19}. Theorem \ref{i-para-conj:t} can be seen as a natural enhancement of Kedlaya's full faithfulness theorem, proven in \cite{KedFull}.

\spa


When $k$ is a finite field and $\calMd$ is semi-simple, the group $G(\calMd,\eta)$ is “the same” as the geometric monodromy group of the semi-simple $\ell$-adic \textit{companions} of $(\calMd,\Phi_\calM^\ag)$, namely those semi-simple $\ell$-adic lisse sheaves with the same $L$-function as $(\calMd,\Phi_\calM^\ag)$ (see \cite{Pal15}, \cite{Dri18}, and \cite{Dad}). In this particular case, we deduce from Theorem \ref{i-para-conj:t} the following semi-simplicity result.
\begin{theo}[Theorem \ref{semi-simp:t}]\label{i-semi-simp:t}
Let $X$ be a smooth variety over a finite field and $f:A\to X$ an abelian scheme with constant slopes. If $(\calM,\Phi_\calM)$ is the $F$-isocrystal $R^1f_{\crys*}\calO_{A,\crys}$, the induced $F$-isocrystal $\Gr_{S_\bullet}(\calM,\Phi_\calM)$ is semi-simple. In particular, $R^1f_{\et*}\underline{\Qp}$ is a semi-simple lisse $\Qp$-sheaf over $X$.
\end{theo}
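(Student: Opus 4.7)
The strategy is to apply Theorem~\ref{i-para-conj:t} to the overconvergent extension of $\calM$ and deduce semi-simplicity of the associated graded from reductivity of a Levi quotient.

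First, since $f$ is smooth and proper, the convergent $F$-isocrystal $\calM$ extends to an overconvergent $F$-isocrystal $\calMd$ on $X$ (by Ogus--Shiho--Étesse). For every closed point $x \in |X|$ the fibre $\calMd_x$ is the rational Dieudonné module of the abelian variety $A_x$, so $\calMd$ is pure of weight $1$. By the $p$-adic analogue of Deligne's purity theorem (Abe--Esnault), a pure overconvergent $F$-isocrystal on a smooth variety over a finite field is semi-simple; hence $\calMd$ is semi-simple and $G(\calMd,\eta)$ is a reductive group.

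Applying Theorem~\ref{i-para-conj:t}, $G(\calM,\eta)$ is the stabiliser of the slope filtration $S_\bullet$ of $\calM_\eta$ and is a parabolic subgroup of the reductive group $G(\calMd,\eta)$. Since each $S_i$ is a sub-$F$-isocrystal of $\calM$, the associated graded $\Gr_{S_\bullet}\calM$ lies in the full Tannakian subcategory generated by $\calM$, and Tannakian duality yields a surjection
\[
G(\calM,\eta) \twoheadrightarrow G(\Gr_{S_\bullet}\calM,\eta) \subseteq \prod_i \GL(\Gr_{S_i}\calM_\eta).
\]
Because $G(\calM,\eta)$ is the stabiliser of the flag $S_\bullet$ in the reductive group $G(\calMd,\eta)$, standard structure theory of parabolic subgroups implies that its unipotent radical $R_u G(\calM,\eta)$ acts trivially on each graded piece; hence the above surjection factors through the Levi quotient $L := G(\calM,\eta)/R_u G(\calM,\eta)$. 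As a quotient of the reductive group $L$, the monodromy group $G(\Gr_{S_\bullet}\calM,\eta)$ is reductive, which gives semi-simplicity of $\Gr_{S_\bullet}(\calM,\Phi_\calM)$.

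For the second assertion, the slope-$0$ summand of $\Gr_{S_\bullet}\calM$ is a unit-root convergent $F$-isocrystal, and under Katz's equivalence between unit-root convergent $F$-isocrystals and continuous $\Qp$-representations of $\pi_1^{\et}(X,\eta)$ it corresponds, up to duality, to the lisse $\Qp$-sheaf $R^1 f_{\et*}\underline{\Qp}$. Being a direct factor of the semi-simple object $\Gr_{S_\bullet}\calM$, this summand is itself semi-simple, so $R^1 f_{\et*}\underline{\Qp}$ is semi-simple as well. The most delicate point is the factorisation of the surjection onto $G(\Gr_{S_\bullet}\calM,\eta)$ through the Levi, which crucially uses the stabiliser description in Theorem~\ref{i-para-conj:t}; the finite-field hypothesis enters only via the semi-simplicity of $\calMd$ in the first step.
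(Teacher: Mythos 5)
Your argument establishes the wrong kind of semi-simplicity. Everything you do takes place with Crew's groups $G(\calM,\eta)$, $G(\calMd,\eta)$, $G(\Gr_{S_\bullet}\calM,\eta)$, i.e.\ the monodromy groups of the underlying (overconvergent) isocrystals \emph{without} Frobenius structure. Reductivity of $G(\Gr_{S_\bullet}\calM,\eta)$ only yields semi-simplicity of the isocrystal $\Gr_{S_\bullet}\calM$, whereas the theorem asserts semi-simplicity of the $F$-isocrystal $\Gr_{S_\bullet}(\calM,\Phi_\calM)$, equivalently reductivity of the arithmetic group $G(\calN,\Phi_\calN,x)$ of the paper's §\ref{fini-fiel:ss}. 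The implication ``underlying isocrystal semi-simple $\Rightarrow$ $F$-isocrystal semi-simple'' is false in general: a constant $F$-isocrystal whose Frobenius matrix is a nontrivial Jordan block already gives a counterexample (it is isoclinic, its underlying isocrystal is trivial, yet it is not semi-simple). The same confusion infects your first step: purity gives only \emph{geometric} semi-simplicity, i.e.\ reductivity of $G(\calMd,x)$; a pure overconvergent $F$-isocrystal need not be semi-simple as an $F$-isocrystal. Since the second assertion concerns $R^1f_{\et*}\underline{\Qp}$ as a representation of the full arithmetic fundamental group, the Frobenius structure cannot be discarded, and your deduction of it from the first part inherits the gap.

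The missing arithmetic input, which is the one step in the paper's proof specific to abelian schemes beyond purity, is that the $p^n$-power Frobenius acts semi-simply on $H^1_{\crys}(A_x)$ because it lies in the centre of the semi-simple algebra $\End(A_x)\otimes_{\Z}\Q$; this makes the constant quotient $G(\calMd,\Phi_\calM^\ag,x)^{\cst}$ reductive, and together with the reductivity of $G(\calMd,x)$ (your purity argument, correctly interpreted) and the exact sequence (\ref{fund-diag:e}) it gives that the full arithmetic group $G:=G(\calMd,\Phi_\calM^\ag,x)$ is reductive. The paper then concludes via Proposition \ref{cent:p}, which identifies $G(\calN,\Phi_\calN,x)$ with the centraliser $Z_G(\lambda)$ of the slope quasi-cocharacter, hence reductive by \cite[Cor. 11.12]{Bor91}. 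Your Levi-quotient mechanism is the geometric shadow of this centraliser argument; if you rerun it with the arithmetic groups and supply the semi-simplicity of the fibre Frobenius, it can be repaired, but as written the key claim ``$\calMd$ is semi-simple as an $F$-isocrystal'' is unjustified and the conclusion you reach is strictly weaker than the theorem.
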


As an additional outcome of the theory developed, we are also able to prove a new diophantine result for abelian varieties.

\begin{theo}[Theorem \ref{abelian:t}]\label{intro-abelian:t}
	Let $E$ be a finitely generated field extension of $\Fp$ and let $A$ be an abelian variety over $E$. The group $A(E^{\mathrm{sep}})[p^\infty]$ is finite in the following two case.
	\begin{itemize}
		\item[(i)] If $\End(A)\otimes_\Z \Qp$ is a division algebra.
		\item[(ii)] If $\End(A)\otimes_\Z \Q$ has no factor of Albert-type IV.
	\end{itemize}
\end{theo}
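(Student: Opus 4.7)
The plan is to reduce the assertion to a statement about $F$-isocrystals over a smooth variety over a finite field, where Theorems~\ref{i-para-conj:t} and~\ref{i-semi-simp:t} apply. After a finite separable extension of $E$, which alters $A(E^\sep)[p^\infty]$ only by a finite group, we may arrange for $A$ to be the generic fibre of an abelian scheme $f:\calA\to X$ with $X$ smooth geometrically connected over a finite field $\F_q$ and function field $E$; after shrinking $X$ we may assume the Newton polygon of $f$ is constant. Set $(\calM,\Phi_\calM):=R^1f_{\crys*}\calO_{\calA,\crys}$ with overconvergent incarnation $(\calMd,\Phi_\calM^\ag)$ and slope filtration $S_\bullet(\calM)$. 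Via Katz's equivalence between unit-root convergent $F$-isocrystals and continuous $\Qp$-lisse sheaves on $X$, combined with Poincar\'e duality for $f$, finiteness of $A(E^\sep)[p^\infty]$ is equivalent to the non-existence of a non-zero morphism $\underline{\Qp}\hookrightarrow S_1(\calM)$ in the convergent category of $F$-isocrystals.

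\textbf{Reduction to the overconvergent category.} Suppose for contradiction that such a morphism exists. By Kedlaya's full faithfulness theorem~\cite{KedFull}, it is induced by a unique non-zero overconvergent morphism $\underline{\Qp}\hookrightarrow\calMd$. Hence it suffices to prove that $\calMd$ admits no non-zero trivial sub-overconvergent-$F$-isocrystal.

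\textbf{Structure of $\calMd$.} In case (i), the algebra $\End(\calMd)=\End(A)\otimes\Qp$, by the Tate--Faltings theorem for abelian varieties over finitely generated fields combined with Kedlaya's full faithfulness, is a division algebra, so $\calMd$ is irreducible as overconvergent $F$-isocrystal. A non-zero sub of rank one would then force $\calMd\cong\underline{\Qp}$, contradicting $\dim\calMd=2g>1$. In case (ii), reduce first to a simple isogeny factor $B$ of non-type-IV Albert type and write $\calN^\ag$ for the overconvergent $F$-isocrystal of $B$. The totally real centre $K$ of $\End(B)\otimes\Q$ decomposes $\calN^\ag=\bigoplus_{\mathfrak p\mid p}\calN^\ag_{\mathfrak p}$ according to primes above $p$. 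One then shows each summand $\calN^\ag_{\mathfrak p}$ is irreducible as overconvergent $F$-isocrystal, after which the argument of case (i) applies summand by summand. The irreducibility of each $\calN^\ag_{\mathfrak p}$ is where Theorems~\ref{i-para-conj:t} and~\ref{i-semi-simp:t} enter essentially: parabolicity identifies $G(\calN_{\mathfrak p},\eta)$ as the stabiliser of the slope filtration in $G(\calN^\ag_{\mathfrak p},\eta)$, and the semi-simplicity of the slope-graded, combined with the non-type-IV hypothesis, excludes further decomposition of the commutant and forces irreducibility.

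\textbf{Main obstacle.} The hardest step is the last, specifically showing irreducibility of each local summand $\calN^\ag_{\mathfrak p}$ under the non-type-IV hypothesis. Classically this is a Mumford--Tate-type rigidity statement; in the $p$-adic $F$-isocrystal setting it is made accessible by the new parabolicity and semi-simplicity results of this paper, which together control how the convergent monodromy sits inside the overconvergent one and how the slope-graded splits. A secondary, classical input is the Tate--Faltings identification $\End(A)\otimes\Qp=\End(\calMd)$ for finitely generated fields of characteristic $p$, which provides the starting point for the overconvergent-side analysis.
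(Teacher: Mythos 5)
The central reduction in your proposal is incorrect, and it is precisely the step where the paper's new machinery has to enter. Infinitude of $A(E^{\mathrm{sep}})[p^\infty]$ is \emph{not} equivalent to the existence of a map $\underline{\Qp}\hookrightarrow S_1(\calM)$. Since $E^{\mathrm{sep}}$ has no nontrivial separable extensions, every étale \bt group over $E^{\mathrm{sep}}$ is constant, so infinitude of $A(E^{\mathrm{sep}})[p^\infty]$ amounts to the existence of a nonzero \emph{étale sub-\bt-group} $H\subseteq A[p^\infty]$ over $E$; the Galois action on $T_pH$ is in general highly nontrivial. On the Dieudonné side this gives a nonzero \emph{unit-root quotient} $(\calM,\Phi_\calM)\twoheadrightarrow(\calN,\Phi_\calN)$ — i.e.\ a partial splitting of the slope filtration — not a trivial subobject of $S_1(\calM)$. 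A morphism $\underline{\Qp}\to S_1(\calM)$ is a Frobenius-invariant horizontal section and detects (roughly) torsion rational over $E$ itself; it is a far stronger condition, and Helm's example cited in the paper shows the two are genuinely different. Once the criterion is corrected, the step you are missing is exactly Corollary \ref{max-slope-sub:c} (a consequence of Theorem \ref{main:t} on $\dag$-hulls): the minimal-slope quotient $\calN$, equivalently the maximal-slope subobject $\calN^\vee\subseteq\calM^\vee$, is automatically \dex. Only then can one combine semi-simplicity of $\calMd$ (reductivity of monodromy via purity) with Kedlaya's full faithfulness and de Jong's theorem to produce a nontrivial idempotent in $\End(A)\otimes_\Z\Qp$, settling case (i). Your proposal never passes the splitting from the convergent to the overconvergent category, which is the whole difficulty.

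Case (ii) in your sketch has a further problem. The claim that each local factor $\calN^\ag_{\mathfrak p}$ is irreducible is false in general: for Albert types II and III the quaternion algebra $\End(B)\otimes\Q$ may split at a prime $\mathfrak p$ above $p$, so $\End(\calN^\ag_{\mathfrak p})\simeq M_2(K_{\mathfrak p})$ and the factor decomposes. More importantly, "applying case (i) summand by summand" does not address the actual obstruction: when $\End(A)\otimes\Qp$ has idempotents, a unit-root direct summand of $\calMd$ is not excluded by irreducibility considerations at all. The paper's argument for (ii) is of a different nature: it shows that a unit-root irreducible summand contributes a twist class $[a]\in\mathbb{X}(\calMd)$ that is not a $\Q$-multiple of $[p^g]$ (a weight-$1$ Weil number which is a $p$-adic unit cannot be a root of unity, by the Riemann Hypothesis), hence the centre of $G(\calMd,\Phi^\ag_\calM)^\circ$ has dimension at least $2$, which by \cite[Lem. 10.5]{CT20} forces a factor of Albert type IV. Neither the weight argument nor the link between the centre of the monodromy group and the Albert classification appears in your proposal, so case (ii) is unproven as written.
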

This theorem enriches the list of known results on the finiteness of separable $p$-torsion points of abelian varieties (see \cite{Vol95} and \cite{Ros17}). It is worth mentioning that abelian varieties with finite separable $p$-torsion play an important role in the theory of Brauer--Manin obstructions in positive characteristic (see \cite{PV10}).

\spa

As a further consequence of Theorem \ref{i-para-conj:t}, we solve a conjecture proposed by Kedlaya in \cite[Rmk. 5.14]{Ked16}.

\begin{coro}[Corollary \ref{k-c:c}]\label{i-k-c:c}
Let $X$ be a smooth connected variety over a perfect field $k$ and let $(\calM_1^\ag,\Phi_{\calM_1}^\ag)$ and $(\calM_2^\ag,\Phi_{\calM_2}^\ag)$ be two irreducible overconvergent $F^n$-isocrystals over $X$ with constant slopes. If $(S_1(\calM_1),\Phi_{\calM_1}|_{S_1(\calM_1)})$ and $(S_1(\calM_2),\Phi_{\calM_2}|_{S_1(\calM_2)})$ are isomorphic $F^n$-isocrystals, then $(\calM_1^\ag,\Phi_{\calM_1}^\ag)$ and $(\calM_2^\ag,\Phi_{\calM_2}^\ag)$ are isomorphic overconvergent $F^n$-isocrystals.
\end{coro}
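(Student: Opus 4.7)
The plan is to apply Theorem \ref{i-para-conj:t} to the semi-simple overconvergent $F^n$-isocrystal $\calM^\ag := \calM_1^\ag \oplus \calM_2^\ag$, and then conclude via the highest-weight theory of reductive groups. Let $\calM := \calM_1 \oplus \calM_2$ denote the associated convergent $F^n$-isocrystal, and set $G^\ag := G(\calM^\ag, \eta)$ and $G := G(\calM, \eta)$. Since each $\calM_i^\ag$ is irreducible, $\calM^\ag$ is semi-simple, so by Theorem \ref{i-para-conj:t} the group $G^\ag$ is reductive and $G$ is the parabolic subgroup of $G^\ag$ stabilising the slope filtration of $\calM_\eta = \calM_{1,\eta} \oplus \calM_{2,\eta}$. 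Moreover, each $\calM_{i,\eta}$ is an irreducible $G^\ag$-representation via the projection $G^\ag \twoheadrightarrow G(\calM_i^\ag,\eta)$.

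Fix a Levi decomposition $G = L U$ inside $G^\ag$ with $U$ the unipotent radical, and note that the slope filtration is the weight filtration of a canonical cocharacter $\lambda : \Gm \to L$ (whose existence is ensured by the constant-slopes assumption). Standard representation theory in characteristic zero gives, for each irreducible $G^\ag$-representation $V$, that $V^U$ is an irreducible $L$-submodule concentrated in the extremal $\lambda$-weight; in our setting this yields
\begin{equation*}
\calM_{i,\eta}^U = S_1(\calM_i)_\eta \qquad \text{as $L$-submodules of $\calM_{i,\eta}$.}
\end{equation*}
The hypothesis provides an isomorphism $\phi : S_1(\calM_1) \iso S_1(\calM_2)$ of convergent $F^n$-isocrystals, which via the Tannakian formalism furnishes a $G$-equivariant, hence (since $U$ acts trivially on both sides) $L$-equivariant isomorphism $\phi_\eta : \calM_{1,\eta}^U \iso \calM_{2,\eta}^U$ of $L$-representations.

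To conclude, we invoke the standard fact that the functor $V \mapsto V^U$ from irreducible $G^\ag$-representations to $L$-representations is injective on isomorphism classes, because $V^U$ detects the highest-weight class of $V$ with respect to the cocharacter $\lambda$. Applied to $V_i = \calM_{i,\eta}$ together with $\phi_\eta$, this produces a $G^\ag$-equivariant isomorphism $\calM_{1,\eta} \iso \calM_{2,\eta}$, which by the Tannakian dictionary is exactly the desired isomorphism $\calM_1^\ag \iso \calM_2^\ag$ of overconvergent $F^n$-isocrystals. The essential input, and the only nontrivial step, is Theorem \ref{i-para-conj:t}: without the parabolicity of $G$ in $G^\ag$, one could not leverage the highest-weight dictionary needed to bootstrap an isomorphism of minimum-slope parts to an isomorphism of the full overconvergent $F^n$-isocrystals.
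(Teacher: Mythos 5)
Your route --- parabolicity of $G:=G(\calM,\eta)$ in $G^\ag:=G(\calMd,\eta)$ plus the highest-weight dictionary $V\mapsto V^U$ --- is genuinely different from the paper's, but as written it does not prove the statement. The decisive gap is at the very last step: the groups in Theorem \ref{i-para-conj:t} are Tannaka groups of categories of (overconvergent) isocrystals \emph{without} Frobenius structure, so a $G^\ag$-equivariant isomorphism $\calM_{1,\eta}\iso\calM_{2,\eta}$ only yields an isomorphism $\calM_1^\ag\iso\calM_2^\ag$ in $\oi(X)$, not an isomorphism of overconvergent $F^n$-isocrystals. These are not the same: two irreducible overconvergent $F^n$-isocrystals with isomorphic underlying overconvergent isocrystals may differ by a constant twist of the Frobenius and fail to be $F^n$-isomorphic. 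Indeed, your argument never uses the Frobenius-equivariance of $\epsilon$ --- only the induced $L$-module isomorphism on fibres, which survives any such twist --- so it cannot detect this obstruction. To repair this one must run the whole argument with the Frobenius-aware groups $G(\calM,\Phi_\calM^\infty,\eta)\subseteq G(\calMd,\Phi_\calM^{\ag,\infty},\eta)$ and Proposition \ref{para:p} (or, over a finite field, Theorem \ref{main-fini:t}), not with Theorem \ref{i-para-conj:t} itself.

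There are two further problems in the representation-theoretic step. First, $\calM_{i,\eta}$ need not be an irreducible $G^\ag$-representation: irreducibility of $(\calM_i^\ag,\Phi_{\calM_i}^\ag)$ in $\Fnoi(X)$ only forces $\calM_i^\ag$ to be semi-simple in $\oi(X)$ (its socle is Frobenius-stable), and it may well decompose into several Frobenius-conjugate irreducible factors. Then neither the identity $\calM_{i,\eta}^U=S_1(\calM_i)_\eta$ nor the injectivity of $V\mapsto V^U$ on isomorphism classes applies directly, and after decomposing into irreducible constituents one only matches them up to permutation, with no control on Frobenius. Second, $G^\ag$ need not be connected, while the highest-weight statement you invoke is for connected reductive groups. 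For comparison, the paper's proof of Corollary \ref{k-c:c} bypasses monodromy groups entirely: writing $(\calN,\Phi_\calN)=(S_1(\calM_1),\Phi_{\calM_1}|_{S_1(\calM_1)})$, it embeds it via $(\iota,\epsilon)$ into $(\calM_1\oplus\calM_2,\Phi_{\calM_1}\oplus\Phi_{\calM_2})$ and takes the $\dag$-hull $(\caloN,\Phi_\caloN)$; Theorem \ref{main:t} gives $S_1(\caloN)=\calN\neq S_1(\calM_1)\oplus S_1(\calM_2)$, so $\caloNd$ is a proper, hence irreducible, subobject of $\calM_1^\ag\oplus\calM_2^\ag$ admitting non-zero maps to both summands, and Schur's lemma concludes. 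Since that construction never leaves the category of $F^n$-isocrystals, Frobenius-equivariance of the resulting isomorphism is automatic.
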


When $X$ has dimension $1$ or $k$ is a finite field the conjecture is proved in \cite{Tsu19}. Combining Corollary \ref{i-k-c:c} over finite fields, the Langlands reciprocity conjecture for overconvergent isocrystals, proven in \cite{Abe}, and Chebotarev density theorem, we also get as a consequence the following stronger form of the multiplicity one theorem for cuspidal automorphic representations.

\begin{theo}[Theorem \ref{mult-one:t}]\label{i-mult-one:t}
	Let $E$ be a global field of characteristic $p$ and let $\mathbb{A}$ be its adele ring. For every positive integer $r$, the isomorphism class of a $\Qpbar$-linear cuspidal automorphic representation of $\GL_r(\mathbb{A})$ is determined by the datum of the Hecke eigenvalues of minimal slope at all but finitely many places of $E$.
\end{theo}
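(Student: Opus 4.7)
The plan is to translate the statement from automorphic representations to overconvergent $F$-isocrystals via Abe's Langlands reciprocity, recognise the minimal-slope Hecke eigenvalues as the Frobenius eigenvalues of the first piece of the slope filtration, and then conclude using a Chebotarev density argument combined with Corollary \ref{i-k-c:c}.

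Fix a smooth projective geometrically connected curve $X$ over a finite field with function field $E$. By Abe's Langlands reciprocity, a cuspidal $\Qpbar$-linear automorphic representation $\pi$ of $\GL_r(\mathbb{A})$ corresponds to an irreducible overconvergent $F$-isocrystal $\calM^\ag(\pi)$ defined on a dense open $U_\pi\subseteq X$, compatibly with Hecke eigenvalues and Frobenius eigenvalues at every unramified closed point. Given $\pi_1,\pi_2$ satisfying the hypothesis, set $\calM_i^\ag := \calM^\ag(\pi_i)$ and choose a dense open $U\subseteq U_{\pi_1}\cap U_{\pi_2}$ on which both $\calM_i^\ag$ have constant Newton polygon, so that the underlying $F$-isocrystals $\calM_i$ admit slope filtrations $S_\bullet(\calM_i)$. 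The minimal-slope Hecke eigenvalues of $\pi_i$ at a closed point $x\in U$ are exactly the Frobenius eigenvalues on the fibre of $S_1(\calM_i)$ at $x$, so the hypothesis translates into the assertion that $S_1(\calM_1)$ and $S_1(\calM_2)$ have matching Frobenius characteristic polynomials at almost every closed point of $U$.

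Next I invoke Theorem \ref{i-para-conj:t} to produce the semi-simplicity needed to upgrade this agreement of traces into an actual isomorphism. Since $\calM_i^\ag$ is irreducible, $G(\calM_i^\ag,\eta)$ is reductive, and Theorem \ref{i-para-conj:t} identifies $G(\calM_i,\eta)$ with a parabolic subgroup of $G(\calM_i^\ag,\eta)$ stabilising the slope filtration. The unipotent radical of this parabolic acts trivially on the slope-graded pieces, so $\Gr_{S_\bullet}(\calM_i)$ is a representation of the reductive Levi quotient and is therefore semi-simple; in particular $S_1(\calM_i)$ is a semi-simple $F$-isocrystal. The Chebotarev density theorem for semi-simple $F$-isocrystals on a curve then yields $S_1(\calM_1)\cong S_1(\calM_2)$ as $F$-isocrystals. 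Corollary \ref{i-k-c:c} upgrades this to an isomorphism $\calM_1^\ag\cong \calM_2^\ag$ of overconvergent $F$-isocrystals on $U$, and applying Langlands reciprocity in the reverse direction yields $\pi_1\cong\pi_2$.

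The main hurdle is ensuring that $S_1(\calM_i)$ is genuinely semi-simple as an $F$-isocrystal and not merely semi-simple up to extension: Chebotarev alone only identifies two semi-simple $F$-isocrystals with the same Frobenius characteristic polynomials, whereas Corollary \ref{i-k-c:c} demands an isomorphism between the minimal-slope sub-$F$-isocrystals themselves. The parabolicity conjecture is precisely what bridges this gap, forcing $G(\calM_i,\eta)$ to be parabolic in a reductive group and therefore making the slope-graded pieces semi-simple as representations of the corresponding Levi quotient.
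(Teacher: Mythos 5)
Your overall architecture --- Abe's reciprocity, identifying the minimal-slope Hecke eigenvalues with the Frobenius eigenvalues on $S_1(\calM_i)$, Chebotarev, then Corollary \ref{i-k-c:c} --- is exactly the paper's. The one step that does not hold up as written is your justification that $S_1(\calM_i)$ is semi-simple \emph{as an $F$-isocrystal}. Theorem \ref{i-para-conj:t} concerns the groups $G(\calM_i,\eta)\subseteq G(\calM_i^\ag,\eta)$, i.e.\ the monodromy of the (over)convergent isocrystals \emph{without} Frobenius structure, so your Levi-quotient argument only shows that $\Gr_{S_\bullet}(\calM_i)$ is semi-simple as an \emph{isocrystal}. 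That is strictly weaker than what Chebotarev requires: to recover an isomorphism class from characteristic polynomials of Frobenii you need the associated representation of the full arithmetic fundamental group --- equivalently the $F$-isocrystal $(S_1(\calM_i),\Phi)$ --- to be semi-simple, and semi-simplicity of the underlying isocrystal does not imply this (already over a point, a non-scalar unipotent Frobenius matrix gives a non-semi-simple unit-root $F$-isocrystal whose underlying isocrystal is trivial). So "in particular $S_1(\calM_i)$ is a semi-simple $F$-isocrystal" is a non sequitur as stated.

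The gap is fillable, in two ways. The paper's route is Corollary \ref{irr:c}: any irreducible sub-$F$-isocrystal of $S_1(\calM_i)$ has $\dag$-hull equal to all of $\calM_i$ (by irreducibility of $\calM_i^\ag$), and Theorem \ref{main:t} then forces it to equal $S_1(\calM_i)$; thus $S_1(\calM_i)$ is actually \emph{irreducible} as an $F$-isocrystal, which is what gets fed into Crew's equivalence and Serre's Chebotarev theorem. Alternatively, your argument goes through if you replace the geometric groups by the arithmetic ones of \S\ref{fini-fiel:ss}: since $(\calM_i^\ag,\Phi^\ag)$ is irreducible, the group $G(\calM_i^\ag,\Phi^\ag,x)$ acts faithfully and irreducibly, hence is reductive, and Theorem \ref{main-fini:t} together with Proposition \ref{cent:p} identify the arithmetic monodromy group of $\Gr_{S_\bullet}(\calM_i,\Phi)$ with the centraliser $Z_G(\lambda)$, a Levi subgroup, hence reductive; this does give semi-simplicity as an $F$-isocrystal. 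You should also record the twist needed to make $S_1(\calM_i)$ unit-root before invoking Crew's theorem, but that is routine.
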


\subsection{\texorpdfstring{$\dag$}{}-hull of \texorpdfstring{$F$}{}-isocrystals}

One of the main tools introduced and studied in this article is the notion of \textit{$\dag$-hull} of a sub-$F^n$-isocrystal.
 \begin{defi}
	Let $(\calN,\Phi_\calN)\subseteq (\calM,\Phi_\calM)$ be an inclusion of $F^n$-isocrystals. The \textit{\dhl\ }of $(\calN,\Phi_\calN)$ in $(\calM,\Phi_\calM)$ is the smallest subobject of $(\calM,\Phi_\calM)$ containing $(\calN,\Phi_\calN)$ and coming from an overconvergent $F^n$-isocrystal. We denote it by $(\caloN,\Phi_\caloN)$. 
\end{defi}

This notion was introduced by the author in September 2018, during a discussion in Berlin with Abe and Esnault. In that case, it was used as an attempt to better understand the results in \cite{AD18}. A couple of months later, reading \cite{Vel91}, the author understood that a certain property on the slopes of the $\dag$-hull of an $F^n$-isocrystal would have implied the parabolicity conjecture. This property was satisfied, for example, by certain $F^n$-isocrystals coming from $p$-divisible groups, as a consequence of a local result proven by Tate \cite[Prop. 12]{Tat67}. In this article, we prove that the expected property for the $\dag$-hull is satisfied in general.

\begin{theo}[Theorem \ref{main:t}]\label{i-main:t}
	Let $X$ be a smooth variety over a perfect field $k$ and let $(\calN,\Phi_\calN)\subseteq (\calM,\Phi_\calM)$ be an inclusion of $F^n$-isocrystals over $X$. If $\calM$ comes from an overconvergent isocrystal and has constant slopes, then $S_1(\calN,\Phi_\calN)=S_1(\caloN,\Phi_{\caloN})$.
\end{theo}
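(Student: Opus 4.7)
Since $\calM$ has constant slopes, the first step $S_1(\calM)$ of its slope filtration is a globally defined sub-$F^n$-isocrystal, and the slope filtration of any sub-$F^n$-isocrystal $\calN'\subseteq \calM$ is obtained by intersection, $S_1(\calN')=\calN'\cap S_1(\calM)$. The inclusion $S_1(\calN)\subseteq S_1(\caloN)$ is therefore automatic from $\calN\subseteq\caloN$, and the content of the theorem is the converse inclusion
\[\caloN\cap S_1(\calM)\;\subseteq\;\calN,\]
namely that the passage to the $\dag$-hull does not enlarge the minimum-slope piece. My plan is to argue by contradiction: if some overconvergent sub-$F^n$-isocrystal $\calN^\ast\subseteq \calM$ contains $\calN$ and satisfies $\calN^\ast\cap S_1(\calM)\subsetneq \caloN\cap S_1(\calM)$, then $\calN^\ast$ would be a proper overconvergent sub strictly between $\calN$ and $\caloN$, contradicting the minimality of $\caloN$ as the $\dag$-hull.

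The first step is a reduction to the case where $X$ is a smooth curve, via the new Lefschetz-type theorem for overconvergent $F$-isocrystals mentioned in the abstract. After checking that, for a sufficiently general smooth curve $C\hookrightarrow X$, the formation of the $\dag$-hull is compatible with restriction to $C$, the problem reduces to the $1$-dimensional case. On a smooth curve $C$ with smooth compactification $\bar C$, the overconvergence of $\calM^\ag$ and $\caloN^\ag$ is detected on a formal punctured neighborhood of each boundary point $x\in \bar C\setminus C$, and the construction of $\calN^\ast$ becomes a local-to-global problem at these boundary points.

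The heart of the argument is the local construction at $x$. Using the $p$-adic local monodromy theorem to describe $\calM^\ag$ on the formal punctured disk around $x$, and exploiting the constant-slope hypothesis to control $S_1(\calM)$ locally, I would try to produce an overconvergent sub-$F^n$-isocrystal of $\calM^\ag$ near $x$ that contains the image of $\calN$ but that, along the minimum-slope piece, is strictly smaller than $\caloN^\ag$. This step is meant to be a $p$-adic analogue of the descent result used by Tate in \cite[Prop.~12]{Tat67} for $p$-divisible groups, transposed from the Dieudonné-theoretic setting to arbitrary overconvergent $F^n$-isocrystals with constant slopes. The various local constructions would then be assembled, using Kedlaya's full-faithfulness theorem \cite{KedFull} to certify that the glued object is genuinely overconvergent as a global sub of $\calM$.

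I expect the principal obstacle to be this local descent step. Its difficulty stems from the fact that the slope filtration of an overconvergent $F$-isocrystal is in general not itself overconvergent: what is required is the production, in an overconvergent fashion, of part of $\caloN^\ag\cap S_1(\calM)$, precisely an object the slope filtration cannot directly access. Overcoming this will require essential use of the overconvergence of the ambient $\calM^\ag$, via the $p$-adic local monodromy at the boundary, playing the role of Tate's Dieudonné-theoretic input in the $p$-divisible group case; and one must also ensure that the Lefschetz reduction in Step 1 is compatible with the $\dag$-hull, which is itself a nontrivial consistency check.
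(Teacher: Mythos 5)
Your high-level architecture (reduce to curves by a Lefschetz argument, then prove a local statement on the curve) matches the paper's, but the proposal has a genuine gap at its core and two structural problems.

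First, the opening reformulation is imprecise: for a subobject $\calN'\subseteq\calM$ one does \emph{not} have $S_1(\calN')=\calN'\cap S_1(\calM)$ in general (take $\calN'$ supported in slopes strictly larger than $s_1$), and the inclusion $S_1(\calN)\subseteq S_1(\caloN)$ is \emph{not} automatic: part of the theorem is precisely that the $\dag$-hull does not acquire a strictly smaller minimal slope. Relatedly, the contradiction framing is vacuous. Any $\dag$-extendable $\calN^\ast$ containing $\calN$ automatically contains $\caloN$ by minimality, so producing such an $\calN^\ast$ with $S_1(\calN^\ast)=S_1(\calN)$ is exactly equivalent to the theorem; nothing is gained by the reductio.

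Second, the heart of the matter --- the curve case --- is left as an acknowledged open step, and the route you sketch (analysis on formal punctured disks at the boundary points of $\bar C$ via the $p$-adic local monodromy theorem, followed by gluing certified by full faithfulness) is not the mechanism that works. There is no gluing procedure that manufactures a new global overconvergent subobject from local data at the cusps, and the slope filtration is a generic, not boundary, phenomenon. What the paper does instead is: (i) reduce to $\mathbb{A}^1_k$ and then to the completion $\calE$ at the \emph{generic} point of the curve; (ii) describe the $\dag$-hull there by duality, as $\overline{N}=Q^\vee$ where $Q^\ag$ is the image of $(M^\ag)^\vee\to N^\vee$ (Lemma \ref{dhl:l}); (iii) invoke de Jong's reverse slope filtration over a lift of $k(t)^{\alg}$ (Theorem \ref{op-slop:t}) to show that a $\varphi$-module over $\calEd$ injecting into an isoclinic module of slope $s/r$ has maximal slope $s/r$ with controlled multiplicity (Lemma \ref{local-K:l}); and (iv) prove by a separate integral-model/Tor argument (Proposition \ref{l-to-g:p}) that forming the $\dag$-hull commutes with passage to the generic point. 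None of steps (ii)--(iv) is visible in your sketch, and they constitute the actual proof.

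Third, the Lefschetz reduction cannot be treated as an independent black box: in the paper, Theorem \ref{tame-lef:t} is itself proved \emph{using} the curve case of the main theorem (via Proposition \ref{rk1-unr:p} and Corollary \ref{rk1-unr:c}, which control unramified rank-one objects), and it only applies after Kedlaya's semistable reduction theorem has been used to reach the docile case. So the logical order must be: curve case first, then Lefschetz, then the general case; your Step 1 as written risks circularity.
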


When $X$ is a curve, we first reduce to the case $X=\mathbb{A}^1_k$, then we prove an analogous result on the generic point of $\mathbb{A}^1_k$. Finally, we deduce the global result from the generic one. In dimension 1, Theorem \ref{i-main:t} is essentially proven in \cite[Prop. 6.1]{Tsu19} as well. In his proof Tsuzuki makes use of a certain filtration for overconvergent $F$-isocrystals, namely the \textit{$\PBQ$ filtration} that he constructs in \cite[Thm. 3.27]{Tsu19}. In the proof we propose in this article, instead, we avoid [\textit{ibid.}, Thm. 3.27]. On the other hand, we use Theorem \ref{i-main:t} to recover [\textit{ibid.}, Thm. 3.27] and extend it to arbitrary smooth varieties (Corollary \ref{PBS:c}).

\spa

For higher dimensional varieties we deduce Theorem \ref{i-main:t} from the case of curves, thanks to a new Lefschetz theorem for overconvergent $F^n$-isocrystals (Theorem \ref{i-tame-lef:t}). We postpone for a moment the discussion about this part. Before that, let us briefly explain how to deduce Theorem \ref{i-para-conj:t} from Theorem \ref{i-main:t}. We prove first, thanks to Chevalley's theorem, that Theorem \ref{i-main:t} implies a variant of Theorem \ref{i-para-conj:t} for some slightly different groups, namely the monodromy groups of \textit{$F^\infty$-isocrystals} (Proposition \ref{para:p}). Subsequently, to pass from this variant to the original statement, we introduce a third type of monodromy groups, namely the monodromy groups of \textit{isocrystals with punctual $\Qpu$-structure}, defined in §\ref{Fi-mono:d}. In the cases we consider, these latter monodromy groups are $\Qpu$-forms of Crew's monodromy groups (the ones defined in \cite{Cre92a}), thus we are then able to prove Theorem \ref{i-para-conj:t}. Let us see now more in details the technical points we mentioned.

\subsection{Lefschetz theorem}
The main issue to reduce Theorem \ref{i-main:t} to the case of curves is due to the existence of wild ramification in positive characteristic. One would like to find a smooth connected curve $C\subseteq X$ such that for every overconvergent isocrystal $\calMd$ over $X$, the Tannakian category $\langle \calM^\ag\rangle$ spanned by $\calMd$ (see §\ref{tann:ss}) is equivalent to the Tannakian category $\langle \calM^\ag|_C\rangle$ spanned by the restriction of $\calMd$ to $C$. This is possible, for example, for local systems in characteristic $0$, or for tamely ramified $\ell$-adic lisse sheaves in positive characteristic (see \cite{Esn}). The failure of the existence of such a nice curve for general $\ell$-adic lisse sheaves is already clear for $\mathbb{A}^2_k$ (see [\textit{ibid.}, Lem. 5.4]). On the other hand, if rather than considering all the objects at the same time one focuses on one object at a time, then such a nice curve exists over finite fields both for $\ell$-adic lisse sheaves and overconvergent $F^n$-isocrystals (see \cite[Lem. 6 and Thm. 8]{Kat99} and \cite[Thm. 3.10]{AE19}). We extend this result to \textit{docile} overconvergent $F^n$-isocrystals over general perfect fields, namely those overconvergent $F^n$-isocrystals which admit a log-extension with nilpotent residues.

 \begin{theo}[Theorem \ref{tame-lef:t}]\label{i-tame-lef:t}
	Let $Y\subseteq \PP^d_{\ka}$ be a smooth connected projective variety of dimension at least $2$ and let $D$ be a simple normal crossing divisor. If $(\calMd, \Phi_\calM^\ag)$ is an overconvergent $F^n$-isocrystal over $X:=Y\setminus D$ docile along $D$, then there exists a smooth connected curve $C\subseteq X$ such that the restriction functor $\langle \calMd\rangle \to \langle \calMd|_{C} \rangle$
	is an equivalence of categories.
\end{theo}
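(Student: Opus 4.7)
The strategy is Tannakian: I would reduce the problem of showing that $R:\langle\calMd\rangle\to\langle\calMd|_C\rangle$ is an equivalence to showing that $R$ is fully faithful. Fix a base point $c\in C$; then both monodromy groups $G_C$ and $G$ sit inside $\GL(\calM_c)$, with $G_C$ naturally a closed subgroup of $G$ (the map $\pi_1(C,c)\to\pi_1(X,c)$ induces an inclusion of Zariski closures). Faithfulness of $R$ is automatic on the connected variety $X$: a horizontal section of any object of $\langle\calMd\rangle$ vanishing on $C$ vanishes on a tubular neighborhood of $C$ and hence on all of $X$ by analytic continuation. Fullness of $R$, by Chevalley's theorem together with the fact that closed subgroups of $\GL(\calM_c)$ are determined by their invariants in tensor constructions, is equivalent to $G_C=G$, and this equality then forces every sub-$F^n$-isocrystal of $\calMd|_C$ to extend to $\calMd$ over $X$, giving essential surjectivity for free. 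In cohomological terms, fullness amounts to the $H^0$-comparison
\[
H^0(X,\calMd^{\otimes a}\otimes(\calMd^\vee)^{\otimes b})\iso H^0\bigl(C,(\calMd^{\otimes a}\otimes(\calMd^\vee)^{\otimes b})|_C\bigr)
\]
for all $a,b\ge 0$; each tensor construction remains docile along $D$.

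To produce such a $C$, I would reduce to the finite-field case by spreading out and specialisation. The triple $(Y,D,\calMd)$ descends, after enlarging $\ka$ by a finite extension if necessary, to a triple $(Y_0,D_0,\calMd_0)$ over a finitely generated subfield $k_0\subset\ka$, and then spreads out to a proper smooth family $\calY\to S$ over a finite-type $\F_p$-scheme $S$, together with a relative snc divisor $\calD\subset\calY$ and a relative docile overconvergent $F^n$-isocrystal $\calMd_S$ on $\calY\setminus\calD$. For a closed point $s\in S$ with finite residue field $\F_q$, the specialisation $(Y_s,D_s,\calMd_s)$ is a docile overconvergent $F^n$-isocrystal over $\F_q$, so by \cite[Thm.~3.10]{AE19} there is a smooth connected curve $C_s\subseteq X_s:=Y_s\setminus D_s$ for which the Tannakian restriction is an equivalence $\langle\calMd_s\rangle\iso\langle\calMd_s|_{C_s}\rangle$. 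This $C_s$ spreads out to a relative curve $C_T\subseteq\calY_T$ over an étale neighborhood $T\to S$ of $s$, transversal to $\calD_T$, and its geometric generic fibre base-changed to $\ka$ is our candidate $C$.

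The main obstacle is to propagate the fully faithful property from the fibre at $s$ to the geometric generic fibre. For each fixed $(a,b)$, upper semi-continuity of $h^0$ ensures that the $H^0$-comparison map remains an isomorphism on an open neighborhood of $s$ in $T$, but we need this simultaneously for the \emph{infinite} collection of tensor constructions, which is a priori not an open condition. The finiteness input is again Chevalley: the generic monodromy group $G$ is cut out inside $\GL(\calM_c)$ as the stabiliser of finitely many tensors in some single construction $\calM_c^{\otimes a_0}\otimes(\calM_c^\vee)^{\otimes b_0}$, so in the end only finitely many tensor constructions need to be controlled in order to enforce $G_C=G$. Making this precise across the family, together with verifying that docility is preserved under spreading out and reduction modulo $p$ (which requires careful control of the log-extension on $(Y,D)$ in the family), is the principal technical burden. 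A purely geometric Bertini argument without the finite-field input fails over countable perfect fields such as $\overline{\F_p}$, since one would have to intersect countably many open loci in the Hilbert scheme of curves in $Y$; the detour through $\F_q$ is therefore essential.
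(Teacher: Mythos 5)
Your proposal breaks down at its very first step, and the breakdown is precisely the phenomenon this paper is about. You claim that fullness of the restriction functor $R$ is equivalent to $G_C=G$ because ``closed subgroups of $\GL(\calM_c)$ are determined by their invariants in tensor constructions.'' That is false: Chevalley's theorem realises a closed subgroup as the \emph{stabiliser of a line} in a tensor construction, not as the fixer of a vector, and closed subgroups are not determined by their invariants. Concretely, if $P\subsetneq G$ is a proper parabolic subgroup of a reductive group, then $W^P=W^G$ for every representation $W$ of $G$ (a $P$-invariant vector gives a morphism from the complete connected variety $G/P$ to $W$, hence is constant), so the $H^0$-comparison you write down can hold for all $(a,b)$ while $G_C\subsetneq G$. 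This is exactly the situation of the parabolicity conjecture: the convergent monodromy group is a proper parabolic subgroup of the overconvergent one, yet the two have the same invariants. Consequently essential surjectivity does not come ``for free'' from fullness; it is the entire difficulty. Full faithfulness over an arbitrary perfect field is already supplied by Abe--Esnault (Theorem \ref{AE-lef:t}), so your finite-field detour is aimed at the part of the statement that is not in question. The actual proof establishes essential surjectivity by an observability criterion: it suffices to extend the \emph{rank one} objects of $\langle\calMd|_C\rangle$ to $X$. These are controlled by characters of the \'etale fundamental group (after passing to the graded pieces of the slope filtration and to isocrystals with $\Qpi$-structure), extended from $C$ to $X$ via the Pink--Serre lemma (Lemma \ref{isoc-Lef:l}), forced to be unramified by the ramification control of Lemma \ref{ramif:l}, and finally shown to lie in $\langle\calMd,\Phi_\calM^{\ag,\infty}\rangle$ by Proposition \ref{rk1-unr:p} --- which itself rests on the $\dag$-hull theorem for curves (Theorem \ref{main-curv:t}). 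None of this is replaceable by an $H^0$-comparison.

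Two further points. First, your spreading-out argument would require specialising an overconvergent $F^n$-isocrystal and its Tannakian data from the geometric generic fibre of a family to a closed fibre and transporting monodromy groups back; no such specialisation formalism is invoked or available in the form you need, and even for lisse sheaves monodromy groups jump under specialisation. Second, your closing assertion that ``a purely geometric Bertini argument without the finite-field input fails over countable perfect fields'' is contradicted by the paper itself: the proof imposes only \emph{three} open conditions on the curve (Theorem \ref{Bertini:t}), because the Pink--Serre lemma compresses the infinitely many conditions coming from tensor constructions into a single finite \'etale cover, and Lemma \ref{ramif:l} compresses the ramification conditions into a single conic closed subset $Z$. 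The finiteness input is not the residue field but these two reduction lemmas.
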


This theorem is obtained by a combination of various Lefschetz-type results. One of the main ingredients is Theorem \ref{AE-lef:t}, proven by Abe--Esnault, which gives a class of curves $C$ such that the restriction functor $\langle \calMd\rangle \to \langle \calMd|_{C} \rangle$ is fully faithful. To prove Theorem \ref{i-tame-lef:t}, we show that for at least one of these curves the restriction functor is also essentially surjective. This condition can be tested on rank $1$ objects, which have the advantage to come from $p$-adic characters of the étale fundamental group and they are easier to extend from $C$ to $X$. The difficult part is to impose that the extended characters come from overconvergent $F^n$-isocrystals. In our proof, this is done by combining Lemma \ref{ramif:l} and Proposition \ref{rk1-unr:p}. Interestingly, in order to use Proposition \ref{rk1-unr:p} we need Theorem \ref{i-main:t} for curves.  Therefore, the proofs of Theorem \ref{i-main:t} and Theorem \ref{i-tame-lef:t} are intrinsically intertwined.

\subsection{Punctual \texorpdfstring{$\Qpi$}{}-structures} The other technical issue we have to solve in our article is given by the fact that the field of scalars of the Tannakian category of isocrystals is in general bigger than the field of scalars of the category of $F^n$-isocrystals. We encounter this issue in the proof of Theorem \ref{i-para-conj:t}, which is easier for monodromy groups of objects endowed with an $F^n$-structure (knowing Theorem \ref{main:t}), but it is harder for the monodromy groups of the objects without $F^n$-structure. To jump from one setting to the other we slightly modify both categories. We first replace the category of $F^n$-isocrystals with the category of $F^\infty$-isocrystals, namely the 2-colimit of the categories of $F^n$-isocrystals for various $n$. If $k$ is big enough, this new category is a $\Qpu$-linear category. On the other side, we construct the category of isocrystals with punctual $\Qpi$-structure (see §\ref{isoc-p:d}), which is simply the category of isocrystals endowed with the choice of a $\Qpi$-linear lattice at some fibre. This other Tannakian category is also $\Qpi$-linear. 

\spa

There is a natural functor between these two categories obtained thanks to the observation that a finite-rank $F^\infty$-isocrystal over the spectrum of an algebraically closed field has a natural $\Qpi$-linear lattice (Lemma \ref{DM:l}). To prove Theorem \ref{i-para-conj:t} we relate the monodromy groups of these objects and their overconvergent variants thanks to Proposition \ref{fund-exac-seq:p}, which is an analogue of the
homotopy exact sequence for the étale fundamental group.

\subsection{The structure of the article}

In §\ref{nota:s} we fix most of the notation we use in our article. In §\ref{isoc-with-qpu:s} we introduce and study isocrystals with punctual $\Qpu$-structure and we prove Proposition \ref{fund-exac-seq:p}. In §\ref{main-theo:s} we prove all the main theorems of our article. In particular, in §\ref{curve-case:ss} we prove Theorem \ref{i-main:t} over curves, in §\ref{Chevalley:ss} we prove that Theorem \ref{i-main:t} implies a variant of Theorem \ref{i-para-conj:t}, and in §\ref{Lefschet:ss} we prove the Lefschetz theorem (Theorem \ref{i-tame-lef:t}) and we use it to complete the proof of Theorem \ref{i-main:t} in §\ref{pf-main-t:sss}, which is then used to prove Theorem \ref{i-para-conj:t}. In §\ref{apps:s} we deduce some consequences of the results in §\ref{main-theo:s}. More precisely, we first prove Theorem \ref{i-semi-simp:t} in §\ref{fini-fiel:ss}. Subsequently in §\ref{sep-pts:ss} we prove Theorem \ref{intro-abelian:t} and in §\ref{k-c:ss} we prove Kedlaya's conjecture (Corollary \ref{i-k-c:c}) and its consequence, namely Theorem \ref{i-mult-one:t}. Finally, in §\ref{PBS:ss}, we prove the existence of Tsuzuki's PBQ filtration for general smooth varieties (Corollary \ref{PBS:c}) and we prove a generalisation of Kedlaya's conjecture (Corollary \ref{gen-Ked-con:c}).

\subsection{Acknowledgements}I am grateful to João Pedro dos Santos for pointing out the characterisation of parabolic subgroups in \cite{Vel91}. This strongly motivated my study of the $\dag$-hulls of sub-$F^n$-isocrystals. It is also a pleasure to thank Tomoyuki Abe, Emiliano Ambrosi, Anna Cadoret, Hélène Esnault, Quentin Guignard, Kiran Kedlaya, Damian R\"ossler, Peter Scholze, Atsushi Shiho, and Nobuo Tsuzuki for several helpful discussions. In particular, I thank Kedlaya for explaining to me the proof of Proposition \ref{local:p}. Finally, I thank the referees for many enlightening comments which have greatly helped to correct and improve the article.
	
	\spa
	
	 The author was funded by the Deutsche Forschungsgemeinschaft (DFG, German Research Foundation) under Germany's Excellence Strategy – The Berlin Mathematics
	Research Center MATH+ (EXC-2046/1, project ID: 390685689) and by the Max-Planck Institute for Mathematics.
 \section{Notation}\label{nota:s}
 \subsection{Tannakian categories}\label{tann:ss} If $\mathbf{C}$ is a Tannakian category and $A$ is an object of $\mathbf{C}$, we write $H^0(A)$ for the vector space $\Hom(\mathbbm{1},A)$ and $\langle A \rangle$ for the Tannakian subcategory\footnote{We will always require that a Tannakian subcategory is closed under the operation of taking subobjects.} of $\mathbf{C}$ spanned by $A$. The \textit{socle} of $A$ is defined to be the maximal semi-simple subobject of $A$ and the \textit{socle filtration} of $A$ is the unique ascending filtration $0=A_0\subsetneq A_1\subsetneq...\subsetneq A_n=A$ such that $A_i/A_{i-1}$ is the socle of $A/A_{i-1}$ for every $i$. 
 \spa 
 
 Throughout the article we will freely make use of the notion of \textit{observable functor}, as presented in \cite[§A]{DE20}, and all the criteria listed there to determine various properties of morphisms of Tannaka groups (e.g. when a morphism is a closed immersion, faithfully flat, etc.). We recall that by [\textit{ibid.}, Prop. A.3], an exact functor between Tannakian categories $\Psi:\mathbf{C}\to \mathbf{D}$ is observable if and only if every subquotient of an object of the essential image of $\Psi$ can be embedded into an object in the essential image of $\Psi$. We will also use the notion of \textit{scalar extensions} of $K$-linear Tannakian categories with respect to field extension $L/K$ (possibly infinite), as presented in \cite[§4.4]{Del89} and \cite[Thm. 3.1.3]{Sta08}.
 
\subsection{Isocrystals}
Let $p$ be a prime number. We fix an algebraic closure of $\Fp$, denoted by $\F$, and for $n\in \Z_{>0}$ we write $\Fpn$ for the subfield of $\F$ with $p^n$ elements. For a perfect field $k$ of positive characteristic $p$ we write $W(k)$ for its ring of Witt vectors, $K(k)$ for its fraction field, and $\sigma$ for the lift of the Frobenius of $k$. We also write $\Qpn$ for $K(\Fpn)$ and $\Qpi$ for the union $\cup_{n>0}\Qpn\subseteq K(\F)$. If $X$ is a smooth variety over $k$, we denote by $\Crys(X)$ the category of crystals of coherent $\calO_{X,\crys}$-modules over the absolute crystalline site of $X$ and by $\Isoc(X)$ its localisation $\Crys(X)[\tfrac 1p]$\footnote{To prove the results of our article we could have simply considered the smaller category of convergent isocrystals since we are interested in $F^n$-isocrystals, which are convergent by \cite[Thm. 2.2]{Ked16}}. We also denote by $\oi(X)$ the category of (coherent) overconvergent isocrystals over $X/K(k)$ (cf. \cite{Ber96}).

\spa

We say that $\eta$ is a \textit{perfect point} (resp. \textit{geometric point}) of $X$, if it is an $\Omega$-point for some field extension $\Omega/k$ with $\Omega$ perfect (resp. algebraically closed). We write $\omega_\eta$ for the exact $\otimes$-functor $\Isoc(X)\to \VVec_{K(\Omega)}$ which sends an isocrystal over $X$ to the pullback over $\Spec(\Omega)$. We denote with the same symbol the analogous exact $\otimes$-functor $\oi(X)\to \VVec_{K(\Omega)}$. If $X$ is connected, for every $\calM\in \Isoc(X)$ we denote by $G(\calM,\eta)$ the Tannaka group of $\langle \calM \rangle$ with respect to $\omega_\eta$. If $\calM$ is convergent, this group coincides with the group $\mathrm{DGal}(\calM,\eta)$ defined in \cite[§2]{Cre92a}. Similarly, for $\calM^\ag\in \Isoc^\ag(X)$ we denote by $G(\calM^\ag,\eta)$ the Tannaka group of $\langle \calM^\ag \rangle$ with respect to $\omega_\eta$. This latter group coincides with Crew's monodromy group $\mathrm{DGal}(\calM^\ag,\eta)$. 
 
 \spa
 
 We write $F:X\to X$ for the absolute Frobenius of $X$ and for a positive integer $n$, we write $\Fnisoc(X)$ (resp. $\Fnoi(X)$) for the category of pairs $(\calM,\Phi_\calM)$ (resp. $(\calMd,\Phi_\calM^\ag)$) where $\calM\in \Isoc(X)$ (resp. $\calMd\in \oi(X)$) and $\Phi_\calM$ is a $K$-linear isomorphism $(F^n)^*\calM\iso \calM$ (resp. $K$-linear isomorphism $(F^n)^*\calMd\iso \calMd$). The category $\Fnisoc(X)$ is canonically equivalent to the category of convergent $F^n$-isocrystals (see \cite[Thm. 2.2]{Ked16}). If $(\calM,\Phi_\calM)$ is an $F^n$-isocrystal we define inductively $\Phi_\calM^m:=\Phi_{\calM}\circ F^*(\Phi_\calM^{m-1})$ for $m>0$, where $\Phi_\calM^1:=\Phi_\calM$ and we say that these $F^{nm}$-structures are the powers of $\Phi_{\calM}$. We write $\Fiisoc(X)$ for $2\textrm{-}\varinjlim_n \Fnisoc(X)$ and $\Fioi(X)$ for $2\textrm{-}\varinjlim_n \Fnoi(X)$. If $(\calM,\Phi_\calM)$ is an $F^n$-isocrystal, we write $(\calM,\Phi_\calM^{\infty})$ for its image in $\Fiisoc(X)$ and we use the analogous convention in the overconvergent setting.
 
 \spa

  We write $\Isoc(X)_F$ (resp. $\oi(X)_F$) for the subcategory of $\Isoc(X)$ (resp. $\oi(X)_F$) spanned by those isocrystals (resp. overconvergent isocrystals) which admit some $F$-structure. The natural “restriction” functor $\alpha:\oi(X)_F\to \Isoc(X)_F$ is fully faithful (see \cite{Ked07} and \cite[Cor. 5.7]{DE20}). We say that an $\calM\in \Isoc(X)_F$ is \textit{$\dag$-extendable} if it is in the essential image of $\alpha$ and we write $\calMd$ for the associated overconvergent isocrystal (which is unique up to isomorphism).
 
 \spa
 
By \cite[Cor. 2.3.1]{Kat79}, \cite[Thm. 4.1]{dO99}, and \cite[Thm. 3.12]{Ked16}, after possibly removing a divisor of $X$, an $F^n$-isocrystal $(\calM,\Phi_\calM)$ has constant slopes. In this case, thanks to \cite[Cor. 2.6.2]{Kat79} and \cite[Cor. 4.2]{Ked16}, it acquires the slope filtration $$0=S_0(\calM)\subsetneq S_1(\calM) \subsetneq...\subsetneq S_m(\calM)=\calM$$ where each graded piece $S_i(\calM)/S_{i-1}(\calM)$ is of pure slope $s_i\in \Q$ and $s_1<s_2<...<s_m$.

 \section{Isocrystals with punctual \texorpdfstring{$\Qpi$}{}-structure} \label{isoc-with-qpu:s}
 
 \subsection{First definitions}  Let $k$ be a perfect field of characteristic $p$ and $X$ a smooth variety over $k$. If one wants to compare the Tannakian category spanned by an $F^n$-isocrystal $(\calM,\Phi_{\calM})$ over $X$ with the one spanned by the isocrystal $\calM$ in $\Isoc(X)$ one encounters issues related to the different fields of scalars of the categories. Indeed, while the former object lives in $\Fnisoc(X)$, which is a $\Qpn$-linear Tannakian category, the latter is in $\Isoc(X)$, a $K$-linear Tannakian category where $K:=K(k)$. We propose here a possible approach to compare the two categories. This will lead to Proposition \ref{fund-exac-seq:p}.
 
 \begin{hypo}\label{hypo}
Throughout §\ref{isoc-with-qpu:s}, we assume that $k$ is endowed with the choice of an inclusion $\F\subseteq k$.\footnote{One can easily generalise what we do in this section to the case when $k$ does not contain $\F$. We preferred to avoid this discussion here since the parabolicity conjecture, as we will see, is a “geometric statement”, meaning that we are allowed to replace the field $k$ with its algebraic closure.} In particular, for every perfect field extension $\Omega/k$ we have a preferred embedding $\Qpi\hookrightarrow K(\Omega)$.
 	\end{hypo}
 
 %
 
  \begin{defi}\label{isoc-p:d}
  If $\eta$ is a geometric point of $X$, an \textit{isocrystal with (punctual) $\Qpi$-structure} over $(X,\eta)$ is a pair $(\calM,V_\calM)$ where $\calM$ is an isocrystal and $V_\calM$ is a $\Qpi$-linear lattice of $\omega_\eta(\calM)$, namely a $\Qpi$-linear vector subspace $V_\calM\subseteq \omega_\eta(\calM)$ such that $V_\calM\otimes_{\Qpi}K(\Omega)= \omega_\eta(\calM)$. A morphism of isocrystals with $\Qpi$-structure $(\calM,V_\calM)\to(\calN,V_\calN)$ is a morphism of isocrystals $f:\calM\to \calN$ such that $\omega_\eta(f)(V_\calM)\subseteq V_\calN$. We write $\Isoc_{\Qpi}(X,\eta)$ for the category of isocrystals with punctual $\Qpi$-structure over $(X,\eta)$. 
 	
 \end{defi}

\begin{lemm}If $X$ is geometrically connected, the category $\Isocl(X,\eta)$ has a natural structure of a $\Qpi$-linear neutral Tannakian category.
\end{lemm}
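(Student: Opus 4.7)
The plan is to equip $\Isocl(X,\eta)$ with a $\Qpi$-linear rigid abelian $\otimes$-structure satisfying $\End(\mathbbm{1})=\Qpi$, and to exhibit a $\Qpi$-linear exact faithful $\otimes$-functor $\omega:\Isocl(X,\eta)\to\VVec_{\Qpi}$ given by $(\calM,V_\calM)\mapsto V_\calM$. These are the axioms of a neutral Tannakian category in the sense of Deligne--Milne.

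First I would construct the additional structure on objects. The tensor product is $(\calM,V_\calM)\otimes(\calN,V_\calN):=(\calM\otimes\calN,\,V_\calM\otimes_{\Qpi}V_\calN)$, where the $\Qpi$-tensor embeds into $\omega_\eta(\calM\otimes\calN)=\omega_\eta(\calM)\otimes_{K(\Omega)}\omega_\eta(\calN)$; the dual is $(\calM,V_\calM)^\vee:=(\calM^\vee,\Hom_{\Qpi}(V_\calM,\Qpi))$, embedded in $\omega_\eta(\calM^\vee)$ via the canonical identification $\Hom_{\Qpi}(V_\calM,\Qpi)\otimes_{\Qpi}K(\Omega)\iso\omega_\eta(\calM)^\vee$; and the unit is $\mathbbm{1}:=(\calO_X,\Qpi\cdot 1)$, which makes sense by Hypothesis~\ref{hypo}. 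In each case, faithful flatness of the extension $K(\Omega)/\Qpi$ guarantees that the proposed $\Qpi$-subspace spans the ambient $K(\Omega)$-vector space after extending scalars, so it is genuinely a $\Qpi$-lattice. For a morphism $f:(\calM,V_\calM)\to(\calN,V_\calN)$, the restriction $\omega_\eta(f)|_{V_\calM}:V_\calM\to V_\calN$ is $\Qpi$-linear, and I would equip $\ker f$ and the cokernel of $f$ in $\Isoc(X)$ with the $\Qpi$-kernel and $\Qpi$-cokernel of $\omega_\eta(f)|_{V_\calM}$. Tensoring these $\Qpi$-linear exact sequences with $K(\Omega)$ and using the exactness of $\omega_\eta$ identifies the resulting $\Qpi$-spaces with lattices in $\omega_\eta(\ker f)$ and $\omega_\eta(\mathrm{coker}\, f)$. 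The universal properties transfer from $\Isoc(X)$, and rigidity (evaluation and coevaluation) is obtained by restricting the standard $\Isoc(X)$-maps to the $\Qpi$-structures.

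It remains to verify the conditions for neutrality. Since $\Qpi\subseteq K$ by Hypothesis~\ref{hypo}, all morphism sets are naturally $\Qpi$-modules. Geometric connectedness of $X$ gives $\End_{\Isoc(X)}(\calO_X)=K$; an element $\lambda\in K$ preserves the lattice $\Qpi\subseteq K(\Omega)=\omega_\eta(\calO_X)$ if and only if $\lambda\in\Qpi$, so $\End(\mathbbm{1})=\Qpi$. The fiber functor $\omega$ is $\otimes$-compatible and unital by construction, exact by the explicit description of (co)kernels, and lands in finite-dimensional $\Qpi$-vector spaces because $\dim_{\Qpi}V_\calM=\dim_{K(\Omega)}\omega_\eta(\calM)$. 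Faithfulness follows from the faithfulness of $\omega_\eta$ on $\langle\calM\rangle$ together with the identification $\omega_\eta(f)=\omega(f)\otimes_{\Qpi}K(\Omega)$.

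The main obstacle throughout is the single flatness verification: that the candidate $\Qpi$-subspaces assigned to tensor products, duals, kernels and cokernels truly span the ambient fibers after base change to $K(\Omega)$. Once this is handled via the flatness of $K(\Omega)/\Qpi$, all the remaining verifications are essentially bookkeeping inherited from the Tannakian structure on $\Isoc(X)$ and the $\Qpi$-linear structure on $\VVec_{\Qpi}$.
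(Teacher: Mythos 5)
Your proposal is correct and follows essentially the same route as the paper: the induced tensor structure with unit $(\calO_{X/K},\Qpi)$, the dual lattice of functionals carrying $V_\calM$ into $\Qpi$ (your $\Hom_{\Qpi}(V_\calM,\Qpi)$ is canonically that subspace), $\End(\mathbbm{1})=\Qpi$ from geometric connectedness plus Hypothesis \ref{hypo}, and the fibre functor $(\calM,V_\calM)\mapsto V_\calM$. The extra detail you supply on kernels, cokernels and the flatness of $K(\Omega)/\Qpi$ is exactly the bookkeeping the paper leaves implicit.
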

\begin{proof} 
	The $\otimes$-structure is the one induced by the $\otimes$-structures of $\Isoc(X)$ and $\VVec_{\Qpi}$, with unit object $\mathbbm 1=(\calO_{X/K},\Qpi)$. Thanks to the assumption that $X$ is geometrically connected and $k$ contains $\F$, we deduce that $\End(\mathbbm 1)=\Qpi$. We claim that $\Isocl(X,\eta)$ is rigid. Indeed, for an isocrystal with $\Qpi$-structure $(\calM,V_\calM)$, we may take the $\Qpi$-linear subspace $V_{\calM^\vee}\subseteq \omega_x(\calM^\vee)$ corresponding to those morphisms $f\in \Hom(\omega_x(\calM),K)$ such that $f(V)\subseteq \Qpi$. The isocrystal with $\Qpi$-structure $(\calM,V_\calM)^\vee:= (\calM^\vee,V_{\calM^\vee})$, endowed with the natural morphisms $\mathrm{ev}: (\calM,V_\calM)\otimes (\calM,V_\calM)^\vee\to \mathbbm{1}$ and $\delta: \mathbbm{1} \to (\calM,V_\calM)^\vee\otimes (\calM,V_\calM)$, is then a dual of $(\calM,V_\calM)$. Finally, we note that $\omega_{\eta,\Qpi}:\Isocl(X,\eta)\to \VVec_{\Qpi}$ defined by $(\calM,V_\calM)\mapsto V_\calM$ is a fibre functor for $\Isocl(X,\eta)$. This ends the proof.\end{proof}
 
 Now that we have constructed a neutral Tannakian category of isocrystals with smaller field of scalars, we want to study its interaction with the category of $F^n$-isocrystals.

\begin{cons}\label{lattice:cons} Let $\Omega/k$ be an algebraically closed field extension. For an $F^n$-isocrystal $(M,\Phi_M)$ over $\Omega$ (of finite-rank), we write $\omega_\Qpi(M,\Phi_M)\subseteq M$ for the $\Qpi$-linear vector subspace of vectors $v\in M$ such that $\Phi_M^iv=p^jv$ for some $(i,j)\in \Z_{>0}\times \Z$.
\end{cons}

\begin{lemm}[after Dieudonné--Manin]\label{DM:l}
The vector space $\omega_\Qpi(M,\Phi_M)$ is a $\Qpi$-linear lattice of the $K(\Omega)$-vector space $M$.
	\end{lemm}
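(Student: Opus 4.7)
The approach is to invoke the Dieudonné--Manin classification in the form suited to $F^n$-isocrystals over an algebraically closed residue field. Since $\Omega\supseteq \F$, we may view $(M,\Phi_M)$ as a finite-dimensional $K(\Omega)$-vector space equipped with a $\sigma^n$-semilinear automorphism, and the classification applies verbatim in this setting.

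The first step is to reduce to the pure-slope case. I would decompose $M=\bigoplus_{\lambda} M_\lambda$ into isotypic components of pure slope $\lambda\in\Q$. Any $v\in\omega_{\Qpi}(M,\Phi_M)$ satisfies $\Phi_M^i v=p^j v$ for some $i>0$; writing $v=\sum v_\lambda$ and applying $\Phi_M^i$ componentwise forces $\Phi_M^i v_\lambda=p^j v_\lambda$ for every $\lambda$. Since $M_\lambda$ has pure slope $\lambda$, the eigenvalue $p^j$ of $\Phi_M^i$ on $M_\lambda$ must have $p$-adic valuation equal to $i\lambda$, so $v_\lambda=0$ whenever $\lambda\ne j/i$. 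Hence $\omega_{\Qpi}(M,\Phi_M)=\bigoplus_{\lambda}\omega_{\Qpi}(M_\lambda,\Phi_M|_{M_\lambda})$, and one may work one slope at a time. Now fix $\lambda=s/r$ in lowest terms; Dieudonné--Manin furnishes a $K(\Omega)$-basis $v_1,\ldots,v_d$ of $M_\lambda$ on which $\Phi_M^r$ acts by $p^s$, so each $v_i$ lies in $\omega_{\Qpi}(M_\lambda)$, and since $\Qpi\subseteq K(\Omega)$ the $v_i$ are automatically $\Qpi$-linearly independent. Their $\Qpi$-span $L:=\Qpi\langle v_1,\ldots,v_d\rangle$ is therefore a candidate $\Qpi$-lattice of $M_\lambda$ sitting inside $\omega_{\Qpi}(M_\lambda)$.

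The remaining task, and the only subtle point, is the reverse inclusion $\omega_{\Qpi}(M_\lambda)\subseteq L$. Given $v\in\omega_{\Qpi}(M_\lambda)$ with $\Phi_M^i v=p^j v$ (forcing $j/i=\lambda$ by the argument above), I would choose a common multiple $A$ of $i$ and $r$, set $B=\lambda A\in\Z$, and consider the subspace $N:=\ker(\Phi_M^A-p^B)\subseteq M_\lambda$, which is a $\Q_{p^{nA}}$-vector space containing both $v$ and $v_1,\ldots,v_d$. Twisting by $p^{-B}$ turns $(M_\lambda,\Phi_M^A)$ into a unit-root $F^{nA}$-isocrystal over $\Omega$, whose classical triviality over algebraically closed residue fields yields $\dim_{\Q_{p^{nA}}} N=d$ and $N\otimes_{\Q_{p^{nA}}}K(\Omega)\iso M_\lambda$. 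Consequently $v_1,\ldots,v_d$ are a $\Q_{p^{nA}}$-basis of $N$ and $v\in N\subseteq L$, which completes the proof. Summing over $\lambda$ then gives the $\Qpi$-lattice property for $\omega_{\Qpi}(M,\Phi_M)$. The only real obstacle is the bookkeeping of semilinearity and Frobenius twists needed to reduce to the unit-root case; once Dieudonné--Manin is granted, the rest of the argument is essentially formal.
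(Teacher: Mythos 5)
Your proof is correct and follows essentially the same route as the paper: invoke Dieudonné--Manin to reduce to pure-slope (the paper goes all the way to rank one, $(K(\Omega),p^s\sigma)$) and then conclude from the fact that Frobenius-fixed points of $K(\Omega)$ exhaust $\Qpi$, i.e.\ the triviality of unit-root objects over an algebraically closed residue field. The extra bookkeeping you supply (compatibility of $\omega_{\Qpi}$ with the slope decomposition, and the reverse inclusion via $\ker(\Phi_M^A-p^B)$) just makes explicit what the paper's two-line argument leaves implicit.
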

	\begin{proof}
Thanks to Dieudonné--Manin classification, after possibly taking powers of $\Phi_M$, we may assume that $(M,\Phi_M)=(K(\Omega),p^s\sigma)$ for some $s\in \Z$. The result then follows from the observation that $\Qpi=\cup_nK(\Omega)^{\sigma^n=\id}$.
	\end{proof}
\begin{defi}\label{Fi-lattices:d}
	 We say that $\omega_\Qpi(M,\Phi_M)$ is the \textit{Dieudonné--Manin $\Qpi$-structure} of $(M,\Phi_M)$. The assignment $(M,\Phi_M)\mapsto \omega_\Qpi(M,\Phi_M)$ produces a $\Qpi$-linear fibre functor $\omega_\Qpi:\Fiisoc(\Omega)\to \VVec_{\Qpi}$. If $\eta$ is a $\Omega$-point of $X$, we write $$\omega_{\eta,\Qpi}:\Fiisoc(X)\to \VVec_{\Qpi}$$ for the composition $\omega_\Qpi\circ \eta^*$ and we write $$\Lambda_\eta: \Fiisoc(X)\to \Isocl(X,\eta)$$ for the functor obtained by sending $(\calM,\Phi_\calM^\infty)\mapsto(\calM,\omega_{\eta,\Qpi}(\calM,\Phi_\calM^\infty))$. We say that an object in the essential image of $\Lambda_\eta$ is an \textit{isocrystal with Dieudonné--Manin $\Qpi$-structure} over $(X,\eta)$.
\end{defi}

\begin{rema}\label{DM:r}
The existence of the \DMs\ of an $F$-isocrystal over an algebraically closed field has its own interest. For example, thanks to \cite{Ked06}, if $k$ is any field of characteristic $p$, one can associate to a variety $X/k$ the finite-dimensional $\Qpi$-linear vector spaces $\omega_{\Qpi}(H^i_{\rm{rig}}(X_{k^\alg}/K(k^{\alg})))$ and their variant with compact support. This assignment produces a $\Qpi$-linear cohomology theory with all the desired properties (e.g. Poincaré duality, the K\"unneth formula, etc.). This solves in a minimal way Serre's obstruction to the existence of a $\Qp$-linear cohomology theory.
\end{rema}


\subsection{Basic properties}

In general, the category $\langle\calM,V_\calM\rangle$ spanned by an isocrystal with $\Qpi$-structure might by quite different from the category spanned by the associated isocrystal $\calM$ in $\Isoc(X)$. For example, $(\calM,V_\calM)$ might be irreducible even if $\calM$ is not, or it might happen that $H^0(\calM,V_\calM)$ has lower dimension than $H^0(\calM)$. In what follows we show that these phenomena do not occur for \DMs s.

\begin{lemm}\label{H0:l}If $(\calM,\Phi_{\calM})$ is an $F^n$-isocrystal, the maximal trivial\footnote{ We say that an object in a Tannakian category is trivial if it is isomorphic to a direct sum $\mathbbm{1}^{\oplus m}$ for some $m\geq0$. } subobject $\calT\subseteq\calM$ is kept stable under $\Phi_\calM$. In particular, if $(\calM,V_\calM)$ is the induced isocrystal with Dieudonné--Manin $\Qpi$-structure then $$H^0(\calM,V_\calM)\otimes_{\Qpi}K\simeq H^0(\calM).$$ 
\end{lemm}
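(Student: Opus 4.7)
My plan has three stages: handle the $\Phi_\calM$-stability of $\calT$ by formal properties of Frobenius pullback, reduce the $H^0$-statement to the case where $\calM=\calT$ is trivial, and then attack the trivial case via Dieudonné--Manin.

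For the $\Phi_\calM$-stability, I would use only that $(F^n)^\ast\colon \Isoc(X)\to \Isoc(X)$ is an exact tensor functor fixing the unit object $\mathbbm 1$, and hence sends trivial isocrystals to trivial isocrystals. Thus $(F^n)^\ast\calT\subseteq (F^n)^\ast\calM$ is trivial, and the isomorphism $\Phi_\calM\colon (F^n)^\ast\calM\iso \calM$ carries it onto a trivial sub-isocrystal of $\calM$; maximality of $\calT$ forces this image to sit inside $\calT$, and the fact that $\Phi_\calM$ is an isomorphism promotes the inclusion to an equality. Hence $\Phi_\calM$ restricts to an isomorphism $(F^n)^\ast\calT\iso \calT$, giving $\calT$ the structure of an $F^n$-subisocrystal of $(\calM,\Phi_\calM)$.

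For the $H^0$ part, my plan is first to reduce to the case $\calM=\calT$. Any morphism $\mathbbm 1\to \calM$ in $\Isoc(X)$ factors through the maximal trivial subobject, so $H^0(\calM)=H^0(\calT)$; by the first part, the DM $\Qpi$-lattice $V_\calM$ of $\calM_\eta$ meets $\calT_\eta$ in the DM $\Qpi$-lattice $V_\calT$ of $(\calT_\eta,\Phi_{\calT,\eta})$, and the analogous factorization in $\Isocl(X,\eta)$ yields $H^0(\calM,V_\calM)=H^0(\calT,V_\calT)$. Now trivialising $\calT\cong H\otimes_K\mathcal O_{X/K}$ with $H:=H^0(\calT)$, the $F^n$-structure descends to a $\sigma^n$-semi-linear automorphism $\phi$ of $H$, the fibre becomes $\calT_\eta=H\otimes_K K(\Omega)$, and a morphism $\mathbbm 1\to (\calT,V_\calT)$ in $\Isocl(X,\eta)$ unwinds to an $h\in H$ with $\phi^i h=p^j h$ for some $(i,j)\in \Z_{>0}\times\Z$. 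Thus $H^0(\calT,V_\calT)$ coincides with the $\Qpi$-subspace $\omega_\Qpi(H,\phi)\subseteq H$ of such eigen-like vectors.

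The remaining content of the lemma is therefore the identity $\omega_\Qpi(H,\phi)\otimes_\Qpi K\iso H$, i.e.\ that $\omega_\Qpi(H,\phi)$ is a $\Qpi$-lattice of $H$. This is the main obstacle and, I expect, the technical heart of the argument. My plan is to derive it from Lemma~\ref{DM:l}, which supplies a $\Qpi$-lattice $V_\calT\subseteq \calT_\eta$ over the algebraically closed base $\Omega$, and then to descend from $\Omega$ down to $k$ using the $\Gal(\Omega/k)$-equivariance of $V_\calT$ together with the hypothesis $\F\subseteq k$, which forces the Galois action on $\Qpi$ to be trivial and should allow one to extract enough $k$-rational eigen-like vectors to span $H$ over $K$.
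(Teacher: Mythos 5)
Your first part --- stability of $\calT$ under $\Phi_\calM$ via the fact that $(F^n)^*$ is a rank-preserving tensor functor sending trivial objects to trivial objects, plus maximality and a rank count --- is correct, and is exactly what the paper compresses into the words ``by maximality''. Your reduction of the $H^0$-statement to the single claim that $\omega_{\Qpi}(H,\phi)$ is a $\Qpi$-lattice of $H:=H^0(\calT)$, where $\phi$ is the $\sigma^n$-semilinear automorphism of $H$ induced by $\Phi_\calM$, is likewise a faithful unwinding of the paper's one-line assertion that $(\calT,V_\calM|_\calT)$ is a trivial object of $\Isocl(X,\eta)$. So you have correctly isolated the crux.

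The gap is in your last step, and the route you propose would not close it. What you need is the Dieudonn\'e--Manin decomposition of the \emph{constant} $F^n$-isocrystal $(H,\phi)$ over $\Spec(k)$, i.e.\ over $K=K(k)$ itself rather than over $K(\Omega)$. When $k$ is algebraically closed this is immediate: apply Lemma \ref{DM:l} with $\Omega=k$; no descent is needed, and one then identifies $V_\calT$ with $\omega_{\Qpi}(H,\phi)$ because both are $\Qpi$-lattices of $\calT_\eta$ with one contained in the other. For a general perfect $k\supseteq\F$ no Galois-descent argument can succeed, because the needed statement is false. Take $k=\F(t)^{\perf}$, $H=K^2$ and $\phi=\left(\begin{smallmatrix}1&[t]\\0&1\end{smallmatrix}\right)\sigma$: the eigen-like vectors of $H\otimes_K K(\Omega)$ are spanned by $(1,0)$ and $(x_0,1)$ with $\sigma(x_0)-x_0=-[t]$, and $x_0\notin K$ because the Artin--Schreier equation $x^p-x=-t$ has no root in the purely inseparable extension $\F(t)^{\perf}$ of $\F(t)$; hence $\omega_{\Qpi}(H,\phi)$ has $\Qpi$-dimension $1<2=\dim_K H$. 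Conceptually, for the unit-root part of $(H,\phi)$ one has $\omega_{\Qpi}(H,\phi)=T^{\Gal(k^{\sep}/k)}\otimes_{\Zp}\Qpi$, where $T$ is the Galois lattice attached to it by Crew's equivalence, so full rank forces the Galois action to be trivial; the hypothesis $\F\subseteq k$ only removes unramified twists and does not kill Artin--Schreier-type obstructions. You should therefore carry out the last step by invoking Lemma \ref{DM:l} directly over $K(k)$ with $k$ algebraically closed --- which is the situation in which the lemma is actually deployed, cf.\ the reduction to $k=\ka$ at the start of the proof of Theorem \ref{para-c:t} --- rather than by descending from $\Omega$ to a general perfect $k$.
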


\begin{proof}
By maximality $\calT\subseteq \calM$ is preserved by $\Phi_\calM$, so that it defines an inclusion $(\calT,V_\calM|_\calT)\subseteq (\calM,V_\calM)$ of isocrystals with $\Qpi$-structure. Since $\calT$ is trivial, $(\calT,V_\calM|_\calT)$ is a trivial isocrystal with $\Qpi$-structure. This yields the desired result.
\end{proof}

\begin{lemm}\label{ss-uniq-DM:l}
	A semi-simple isocrystal $\calM$ admits at most one \DM\ $\Qpi$-structure up to isomorphism. In addition, if $V_\calM$ is such a $\Qpi$-structure, $(\calM,V_\calM)$ decomposes as the direct sum $\bigoplus_i (\calM_i,V_{\calM_i})^{\oplus a_i}$ with $a_i>0$ and the isocrystals $\calM_i$ irreducible and pairwise non-isomorphic.
\end{lemm}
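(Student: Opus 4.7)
The plan is to identify $\End_{\Isocl(X,\eta)}((\calM, V_\calM))$ as a $\Qpi$-form of $\End_{\Isoc(X)}(\calM)$ using Lemma \ref{H0:l}, to descend the isotypic decomposition of $\calM$ to $\Isocl(X, \eta)$ via the $F^n$-equivariance of the projections onto isotypic components, and finally to handle uniqueness within each irreducible factor.

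Choose an $F^n$-isocrystal structure $(\calM, \Phi_\calM)$ realising $V_\calM$, and apply Lemma \ref{H0:l} to the endomorphism object $\EEnd(\calM, \Phi_\calM) = (\calM, \Phi_\calM) \otimes (\calM, \Phi_\calM)^\vee$. Since $\omega_{\eta, \Qpi}$ is a tensor functor, the induced Dieudonn\'e--Manin $\Qpi$-lattice on $\EEnd(\calM)$ is precisely $\End_\Qpi(V_\calM) \subseteq \End_K(\omega_\eta(\calM))$, whose intersection with $\End_{\Isoc(X)}(\calM)$ equals $\End_{\Isocl}((\calM, V_\calM))$. Lemma \ref{H0:l} then yields
\[
\End_{\Isocl(X, \eta)}((\calM, V_\calM)) \otimes_\Qpi K \;\cong\; \End_{\Isoc(X)}(\calM) \;\cong\; \prod_i M_{a_i}(D_i),
\]
where $\calM = \bigoplus_i \calM_i^{\oplus a_i}$ is the isotypic decomposition of the semi-simple isocrystal $\calM$ and $D_i = \End_{\Isoc(X)}(\calM_i)$.

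Next, $(F^n)^*$ permutes the finitely many isomorphism classes $[\calM_i]$, so after replacing $\Phi_\calM$ by a sufficiently high power (which preserves $V_\calM$, by direct inspection of Construction \ref{lattice:cons}), every $\calM_i^{\oplus a_i}$ is preserved by $\Phi_\calM$. The idempotent projections $e_i : \calM \twoheadrightarrow \calM_i^{\oplus a_i} \hookrightarrow \calM$ then become $F^n$-equivariant; using that any endomorphism commuting with $\Phi_\calM$ stabilises the DM lattice $V_\calM$, they lie in $\End_{\Isocl}((\calM, V_\calM))$. This gives an orthogonal direct sum decomposition $(\calM, V_\calM) = \bigoplus_i (N_i, V_i)$ in $\Isocl(X, \eta)$ with $N_i = \calM_i^{\oplus a_i}$. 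Within each factor, fix an isomorphism $\phi_i : (F^n)^*\calM_i \iso \calM_i$ (which exists by irreducibility of $\calM_i$ and Schur), making $\calM_i$ an $F^n$-isocrystal with its own DM $\Qpi$-structure $V_{\calM_i}$; the Frobenius on $N_i$ then differs from $\phi_i^{\oplus a_i}$ by an element $g \in \GL_{a_i}(D_i) = \Aut_{\Isoc(X)}(N_i)$, and a Lang-type argument (using surjectivity of the Frobenius-twisted map $h \mapsto h^{-1} \cdot \Phi_i(h)$ on $\GL_{a_i}(D_i)$, afforded by $\Qpi$ being the maximal unramified extension of $\Qp$) produces $h \in \GL_{a_i}(D_i)$ with $\omega_\eta(h)(V_i) = V_{\calM_i}^{\oplus a_i}$, yielding $(N_i, V_i) \cong (\calM_i, V_{\calM_i})^{\oplus a_i}$ in $\Isocl(X, \eta)$.

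For uniqueness, any two DM $\Qpi$-structures $V, V'$ on $\calM$ both decompose by the preceding as $\bigoplus_i (\calM_i, V_{\calM_i})^{\oplus a_i}$ and $\bigoplus_i (\calM_i, V'_{\calM_i})^{\oplus a_i}$; it suffices to show $(\calM_i, V_{\calM_i}) \cong (\calM_i, V'_{\calM_i})$ in $\Isocl(X, \eta)$ for each $i$. Two DM $\Qpi$-structures on an irreducible $\calM_i$ arise from two $F^n$-structures differing by an element $c \in D_i^\times$ (again by Schur), and the same Lang-type argument, now applied inside $D_i^\times$, produces the required automorphism of $\calM_i$ intertwining the two lattices. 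The main obstacle is precisely the Lang-type surjectivity invoked above: verifying that the Frobenius-twisted conjugation action on $\GL_{a_i}(D_i)$ has a single orbit on the relevant torsor of $F^n$-structures amounts to the vanishing of a non-abelian Frobenius cohomology group, which is where the choice of $\Qpi$ (as opposed to $\Qp$) as the field of scalars is essential.
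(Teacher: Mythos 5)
Your first two steps are sound and essentially reproduce the skeleton of the paper's argument: Lemma \ref{H0:l} applied to the internal endomorphism object identifies $\End((\calM,V_\calM))$ as a $\Qpi$-form of $\End(\calM)$, and after passing to a power of $\Phi_\calM$ the isotypic idempotents become Frobenius-equivariant and hence descend to $\Isocl(X,\eta)$. The genuine gap is the ``Lang-type surjectivity'' on which both your Step 3 and your uniqueness argument rest, and which you yourself flag as unverified: it is false. Here $D_i$ is a division algebra over $K(k)$ and the twisting map $h\mapsto \Phi_i(h):=\Phi_i\circ (F^n)^*h\circ\Phi_i^{-1}$ is $\sigma$-\emph{semilinear} over $K(k)$; it is not an arithmetic Frobenius acting on a $\Qpi$-algebra, so no Lang--Steinberg vanishing applies. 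Indeed $h\mapsto h^{-1}\Phi_i(h)$ preserves (reduced norm) valuations, so already on $\GL_1(K(\ka))=K(\ka)^\times$ its image consists of valuation-zero elements and never contains $p$; in general the orbits of Frobenius-twisted conjugation on $\GL_{a_i}(D_i)$ are the $\sigma$-conjugacy classes, which by Dieudonn\'e--Manin/Kottwitz are classified by Newton data and form an infinite set. Concretely, for $\calM_i=\mathbbm{1}$, $a_i=1$, $\phi_i=\sigma$, $\Phi_i=p\sigma$, your equation for $h$ has no solution even though the two \DM\ lattices coincide (both equal $\Qpi$): solving the Lang equation is sufficient but not necessary for what you want, and it is not always solvable.

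The conclusion you need is nevertheless true, but it requires the mechanism the paper uses rather than $\sigma$-conjugacy. Since $\omega_{\Qpi}$ is a $\otimes$-functor on $\Fiisoc$, the object $(\calM_i,V_{\calM_i})^\vee\otimes(\calM,V_\calM)$ (and, in the uniqueness step, $(\calM,V_\calM)^\vee\otimes(\calM,V_\calM')$) again carries a \DMs, so Lemma \ref{H0:l} produces ``enough'' morphisms: the maximal trivial subobject $(\calT_i,V_{\calT_i})$ has full $K$-dimension equal to $\dim_K\Hom(\calM_i,\calM)$, and the evaluation map $(\calM_i,V_{\calM_i})\otimes(\calT_i,V_{\calT_i})\to(\calM,V_\calM)$ surjects onto the $i$-th isotypic piece, exhibiting it as a quotient of finitely many copies of the irreducible object $(\calM_i,V_{\calM_i})$ and hence as $(\calM_i,V_{\calM_i})^{\oplus a_i}$ by counting ranks. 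For uniqueness one observes that $(\calM,V_\calM)$ is irreducible whenever $\calM$ is (a nonzero subobject in $\Isocl(X,\eta)$ has underlying isocrystal equal to $\calM$, and its lattice is then forced to be $V_\calM$), so the nonzero morphism supplied by Lemma \ref{H0:l} between $(\calM,V_\calM)$ and $(\calM,V_\calM')$ is automatically an isomorphism. In short: replace the insoluble twisted-conjugacy problem by Schur's lemma in $\Isocl(X,\eta)$, fed by Lemma \ref{H0:l}.
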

\begin{proof}Let $\calM=\bigoplus_{i} \calM_i^{\oplus a_i}$ be an isotypic decomposition of $\calM$. Suppose that $\calM$ admits an $F^n$-structure $\Phi_\calM$ and write $V_\calM$ for the associated \DMs. Since $\Phi_\calM$ permutes the $\calM_i$'s, each of them admits an $F^{nm}$-structure for some $m>0$. Therefore, each $\calM_i$ admits a \DMs\ $V_{\calM_i}$. Let $(\calT_i,V_{\calT_i})$ be the maximal trivial subobject of $(\calM_i,V_{\calM_i})^\vee \otimes (\calM,V_\calM)$. By Lemma \ref{H0:l}, each $\calT_i$ is the maximal trivial subobject of $\calM_i^\vee \otimes \calM$. By construction, we have a tautological inclusion of $(\calM_i,V_{\calM_i})\otimes (\calT_i,V_{\calT_i})$ in $(\calM,V_\calM)$ for each $i$. This gives the desired decomposition. To prove the unicity it is enough to assume $\calM$ irreducible. In this case, if $V_{\calM}'$ is another \DMs, we have that $(\calM,V_{\calM})$ and $(\calM,V_\calM')$ are both irreducible and they admit a non-trivial morphism by Lemma \ref{H0:l}. \end{proof}
\begin{lemm}\label{soc-DM:l}
	If $(\calM,V_\calM)$ is an isocrystal with \DM\ $\Qpi$-structure, the socle $\calN\subseteq\calM$ admits a (unique) \DM\ \Qpis\ $V_\calN$ and $(\calN,V_\calN)$ is a subobject of $(\calM,V_\calM)$. Moreover, if $(\calM,V_\calM)$ is semi-simple, $\calM$ is semi-simple.
	
\end{lemm}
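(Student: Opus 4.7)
The plan is to endow the socle $\calN$ with an $F^n$-structure by restricting $\Phi_\calM$, invoke Lemma~\ref{DM:l} to produce the desired \DM\ \Qpis\ $V_\calN$, and then derive the semi-simplicity of $\calM$ from the existence of complements in $\Isocl(X,\eta)$.

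First I would choose an $F^n$-isocrystal $(\calM, \Phi_\calM)$ inducing $(\calM, V_\calM)$. Since Frobenius pullback on $\Isoc(X)$ is an exact $\otimes$-functor that preserves simple objects and hence socles, and since $\Phi_\calM\colon (F^n)^*\calM \iso \calM$ is an isomorphism of isocrystals, $\Phi_\calM$ restricts to an isomorphism $(F^n)^*\calN \iso \calN$, endowing $\calN$ with a canonical $F^n$-structure $\Phi_\calN$. I would then set $V_\calN := \omega_{\eta,\Qpi}(\calN, \Phi_\calN^\infty)$; unwinding Construction~\ref{lattice:cons} shows that $V_\calN = V_\calM \cap \omega_\eta(\calN)$, because $\Phi_\calN$ is the restriction of $\Phi_\calM$ and the Frobenius eigenvector condition defining the \DMs\ is internal to $\omega_\eta(\calN)$. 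In particular $V_\calN\subseteq V_\calM$, so the inclusion $\calN \hookrightarrow \calM$ refines to a subobject $(\calN, V_\calN) \hookrightarrow (\calM, V_\calM)$ in $\Isocl(X,\eta)$. Uniqueness of $V_\calN$ as a \DM\ \Qpis\ on $\calN$ is then immediate from Lemma~\ref{ss-uniq-DM:l}, since $\calN$ is semi-simple.

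For the last assertion, assume $(\calM, V_\calM)$ is semi-simple in $\Isocl(X, \eta)$. Then the subobject $(\calN, V_\calN)$ admits a direct complement, so that
\[
(\calM, V_\calM) \simeq (\calN, V_\calN) \oplus (\calM', V_{\calM'})
\]
for some $(\calM', V_{\calM'})$, giving in particular a decomposition $\calM = \calN \oplus \calM'$ of isocrystals. If $\calM'\neq 0$, then since finite-rank isocrystals form an abelian category of finite length, $\calM'$ would contain a simple subobject $\calS$. Such an $\calS$ is a simple subobject of $\calM$ and must therefore lie in the socle $\calN$, contradicting $\calN \cap \calM' = 0$. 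Hence $\calM' = 0$ and $\calM = \calN$ is semi-simple, as desired.

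The main potential pitfall I foresee is the $\Phi_\calM$-stability of the socle $\calN$, which ultimately rests on Frobenius pullback not disturbing simplicity of isocrystals (a fact parallel to, but stronger than, the preservation of trivial subobjects already used in Lemma~\ref{H0:l}). Once this stability is in hand, the rest of the argument is a direct combination of Lemma~\ref{DM:l}, Lemma~\ref{ss-uniq-DM:l}, and the standard splitting property of semi-simple objects.
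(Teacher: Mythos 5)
Your proof is correct and follows essentially the same route as the paper: the key point in both is that $\Phi_\calM$ preserves the socle (because Frobenius pullback is an exact autoequivalence of $\Isoc(X)$), so the restricted $F^n$-structure induces the required \DM\ \Qpis\ on $\calN$ making $(\calN,V_\calN)$ a subobject, with uniqueness coming from Lemma~\ref{ss-uniq-DM:l}. The only (harmless) divergence is in the final assertion, where the paper reduces to the case of an irreducible $(\calM,V_\calM)$ and applies Lemma~\ref{ss-uniq-DM:l} again, while you split off a direct complement of $(\calN,V_\calN)$ and use maximality of the socle; both arguments are equally valid.
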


\begin{proof}Let $\Phi_\calM$ be an $F^n$-structure of $\calM$. Since $\Phi_\calM$ preserves $\calN$, it induces a \DMs\ $\calV_\calN$ of $\calN$ which makes $(\calN,V_\calN)$ a subobject of $(\calM,V_\calM)$. If $(\calM,\Phi_\calM)$ is irreducible, we deduce that $(\calN,V_\calN)=(\calM,V_\calM)$. Therefore, by Lemma \ref{ss-uniq-DM:l}, the isocrystal $\calN$ is irreducible. This concludes the proof.
\end{proof}

\begin{prop}\label{sq-DM:p}
If $(\calM,V_\calM)$ is an isocrystal with \DMs, each irreducible subquotient $\calM_i$ of $\calM$ admits a (unique) \DMs\ $V_{\calM_i}$ which makes $(\calM_i,V_{\calM_i})$ a subquotient of $(\calM,V_\calM)$.
\end{prop}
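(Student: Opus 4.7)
My plan is to reduce to the semisimple case handled by Lemma \ref{ss-uniq-DM:l} by propagating the DM structure along the socle filtration of $\calM$ in $\Isoc(X)$. Fix an $F^n$-structure $\Phi_\calM$ realising $V_\calM = \omega_{\eta,\Qpi}(\calM, \Phi_\calM)$, and let $0 = \calN_0 \subsetneq \calN_1 \subsetneq \cdots \subsetneq \calN_r = \calM$ be the socle filtration. By canonicity, every $\calN_i$ is $\Phi_\calM$-stable. Setting $V_{\calN_i} := V_\calM \cap \omega_\eta(\calN_i)$, one immediately checks that $V_{\calN_i} = \omega_{\eta,\Qpi}(\calN_i, \Phi_\calM|_{\calN_i})$, so each $(\calN_i, V_{\calN_i}) \hookrightarrow (\calM, V_\calM)$ in $\Isocl(X, \eta)$.

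Next I would promote the quotients $\calN_i/\calN_{i-1}$ to isocrystals with DM structure, with underlying lattice $V_{\calN_i}/V_{\calN_{i-1}}$. The essential input is that over the algebraically closed residue field of $\eta$, Dieudonné--Manin classification makes $\Fiisoc$ semisimple, so the additive functor $\omega_\Qpi$ is exact. Applying it to the short exact sequence $0 \to \eta^*\calN_{i-1} \to \eta^*\calN_i \to \eta^*(\calN_i/\calN_{i-1}) \to 0$ of $F^n$-isocrystals identifies $V_{\calN_i}/V_{\calN_{i-1}}$ with $\omega_{\eta,\Qpi}(\calN_i/\calN_{i-1}, \Phi_\calM)$ and exhibits $(\calN_i/\calN_{i-1}, V_{\calN_i}/V_{\calN_{i-1}})$ as a subquotient of $(\calM, V_\calM)$ in $\Isocl(X, \eta)$.

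Finally, each $\calN_i/\calN_{i-1}$ is semisimple, being the socle of $\calM/\calN_{i-1}$. By Jordan--H\"older, any irreducible subquotient $\calM_i$ of $\calM$ in $\Isoc(X)$ is isomorphic to an irreducible summand of some $\calN_j/\calN_{j-1}$. Applying Lemma \ref{ss-uniq-DM:l} to $(\calN_j/\calN_{j-1}, V_{\calN_j}/V_{\calN_{j-1}})$ yields a canonical decomposition into pairwise non-isomorphic irreducible isotypic components in $\Isocl(X, \eta)$, thereby equipping each such $\calM_i$ with a DM structure $V_{\calM_i}$ and realising $(\calM_i, V_{\calM_i})$ as a subobject of $(\calN_j/\calN_{j-1}, V_{\calN_j}/V_{\calN_{j-1}})$, hence as a subquotient of $(\calM, V_\calM)$; uniqueness is the irreducible case of Lemma \ref{ss-uniq-DM:l}. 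The only genuinely non-formal step is the exactness of $\omega_\Qpi$, which I expect to be the main (albeit minor) obstacle and which is handled exactly as in the proof of Lemma \ref{DM:l}, after reducing via Dieudonné--Manin to direct sums of standard pieces $(K(\Omega), p^s\sigma^i)$.
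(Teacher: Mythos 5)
Your proposal is correct and follows essentially the same route as the paper: both arguments run the Dieudonné--Manin structure along the socle filtration of $\calM$ (which is $\Phi_\calM$-stable by canonicity, cf.\ Lemma \ref{soc-DM:l}), pass to the graded pieces, and invoke Lemma \ref{ss-uniq-DM:l} on the resulting semi-simple objects; the exactness of $\omega_\Qpi$ that you flag is already implicit in the paper's assertion that $\omega_\Qpi$ is a fibre functor on $\Fiisoc(\Omega)$.
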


\begin{proof}
Let $\Phi_\calM$ be an $F^n$-structure of $\calM$ inducing $V_\calM$. By Lemma \ref{soc-DM:l}, after forgetting the \Qpis\ the socle filtration of $(\calM,V_\calM)$ is sent to the socle filtration of $\calM$ and the various steps of the socle filtration of $(\calM,V_\calM)$ have \DMs. Therefore, after taking the semi-simplification of $(\calM,V_\calM)$ with respect to this filtration, the result follows from Lemma \ref{ss-uniq-DM:l}.
\end{proof}

\begin{prop}\label{obse-ffur-forg:p}
The functor $\Lambda_\eta:\Fiisoc(X)\to \Isocl(X,\eta)$ is observable and, if $k=\ka$, it is fully faithful on unit-root objects.
\end{prop}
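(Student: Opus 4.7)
My approach is to verify observability using the criterion of \cite[Prop. A.3]{DE20}: every subquotient of the essential image of $\Lambda_\eta$ must embed in an object of the essential image. The central tool is Frobenius saturation. Given an $F^n$-isocrystal $(\calM,\Phi_\calM)$ and any sub-isocrystal $\calN\subseteq\calM$, the ascending chain
\begin{equation*}
\calN\subseteq\calN+\Phi_\calM((F^n)^*\calN)\subseteq\calN+\Phi_\calM((F^n)^*\calN)+\Phi_\calM^2((F^{2n})^*\calN)\subseteq\cdots
\end{equation*}
stabilises in $\calM$ to the smallest $\Phi_\calM$-stable sub-isocrystal $\calN^\Phi\supseteq\calN$, which inherits an $F^n$-structure from $\calM$. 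Given a subobject $(\calN,V_\calN)\subseteq\Lambda_\eta(\calM,\Phi_\calM^\infty)=(\calM,V_\calM)$ in $\Isocl(X,\eta)$, every $v\in V_\calN\subseteq V_\calM$ is $\Phi_\calM$-periodic in $\omega_\eta(\calM)$ and lies in $\omega_\eta(\calN)\subseteq\omega_\eta(\calN^\Phi)$, so it belongs to $V_{\calN^\Phi}=\omega_\Qpi(\calN^\Phi,\Phi_\calM|_{\calN^\Phi}^\infty)$, yielding the embedding $(\calN,V_\calN)\hookrightarrow\Lambda_\eta(\calN^\Phi)$. For a general subquotient $(\calN,V_\calN)$ of $\Lambda_\eta(\calM,\Phi_\calM^\infty)$, I use that by Proposition \ref{sq-DM:p} every irreducible subquotient in $\Isocl(X,\eta)$ is already in the essential image of $\Lambda_\eta$ (with its unique DM $\Qpi$-structure on the underlying irreducible isocrystal). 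Combining this with Lemma \ref{soc-DM:l} for the socle filtration, I dévisse along the composition series of $(\calN,V_\calN)$ and apply Frobenius saturation at each stage to build an $F^\infty$-isocrystal $\calP$ into whose image $\Lambda_\eta(\calP)$ the subquotient embeds.

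For the full-faithfulness of $\Lambda_\eta$ on unit-root objects when $k=\ka$, faithfulness is automatic because morphisms in both $\Fiisoc(X)$ and $\Isocl(X,\eta)$ inject into $\Hom_{\Isoc(X)}(\calM,\calN)$. For fullness, take $g\in\Hom_{\Isocl(X,\eta)}(\Lambda_\eta(\calM,\Phi_\calM^\infty),\Lambda_\eta(\calN,\Phi_\calN^\infty))$ between unit-root objects. The underlying morphism $g\colon\calM\to\calN$ sends $V_\calM$ into $V_\calN$. In the unit-root case each vector in $V_\calM$ is $\Phi_\calM$-fixed by some power, so one can choose $N$ large enough that $\Phi^N$ fixes a $\Qpi$-basis of $V_\calM$ as well as the $g$-images of this basis in $V_\calN$. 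The $\Qpi$-linearity of $g|_{V_\calM}$ combined with the $\sigma^N$-semilinearity of $\Phi^N$ then forces commutation of $g$ with $\Phi^N$ on $V_\calM$, hence on $\omega_\eta(\calM)=V_\calM\otimes_\Qpi K(\Omega)$ by $K(\Omega)$-linear extension. The difference morphism $h:=g\circ\Phi_\calM^N-\Phi_\calN^N\circ (F^N)^*g\colon (F^N)^*\calM\to\calN$ is a morphism of isocrystals vanishing at the geometric point $\eta$ of the connected variety $X$, hence vanishes globally by rigidity, so $g$ is an $F^N$-morphism.

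The main obstacle will be the subquotient case of observability: while Frobenius saturation directly yields the sub-object embedding, for genuine subquotients one needs to carefully assemble the irreducible pieces supplied by Proposition \ref{sq-DM:p} using the socle-filtration compatibility of Lemma \ref{soc-DM:l}, inductively producing the $F^\infty$-isocrystal $\calP$ that realises the embedding.
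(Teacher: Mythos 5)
Your argument splits into two halves of unequal quality. The full-faithfulness half is essentially correct, though it takes a different route from the paper: where the paper reduces fullness to Lemma \ref{H0:l} applied to the internal Hom, together with the Dieudonné--Manin triviality of constant unit-root objects over $\ka$, you argue directly that a $\Qpi$-linear map between the lattices commutes with a large power of Frobenius at the fibre and then propagate this identity over all of $X$. Both arguments work; yours is more hands-on and relies on the (true, standard) fact that a morphism of isocrystals over a connected base vanishing at one geometric point vanishes identically, which follows from exactness and conservativity of $\omega_\eta$.

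The observability half has a genuine gap. Your Frobenius-saturation construction correctly handles \emph{subobjects} of $\Lambda_\eta(\calM,\Phi_\calM^\infty)$: the saturation $\calN^\Phi$ stabilises by noetherianity, inherits an $F^n$-structure, and $V_\calN\subseteq V_\calM\cap\omega_\eta(\calN^\Phi)=V_{\calN^\Phi}$, so the required embedding exists. But the criterion of \cite[Prop. A.3]{DE20} demands that every \emph{subquotient} embed into the essential image, and quotients are exactly where the difficulty sits: since saturation enlarges a subobject, dualising your argument only shows that a quotient of $\Lambda_\eta(\calM)$ is a \emph{quotient} of some $\Lambda_\eta(\calP)$, not a subobject of one. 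Your proposed remedy --- dévissage along the composition series using Proposition \ref{sq-DM:p} and Lemma \ref{soc-DM:l} --- does not close the argument: knowing that each irreducible constituent of $(\calN,V_\calN)$ lies in the essential image does not produce an embedding of $(\calN,V_\calN)$ itself into an object of the essential image, because an extension of two objects of the essential image need not embed into such an object; that is precisely the phenomenon observability is meant to control, so the induction as stated is circular. The paper sidesteps all of this by invoking a different criterion from the appendix of \cite{DE20}, namely that observability can be tested on rank $1$ subquotients alone; Proposition \ref{sq-DM:p} then finishes the proof in one line. To repair your write-up, either invoke that rank-$1$ criterion or supply a genuine argument for the quotient case.
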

\begin{proof}
To prove that $\Lambda_\eta$ is observable it is enough to notice that by Proposition \ref{sq-DM:p} every rank $1$ subquotient of an isocrystal with \DMs\ is itself an isocrystal with \DMs. If $k=\ka$ unit-root $F^n$-isocrystals over $k$ are trivial by Dieudonné--Manin decomposition, which implies that $\Lambda_\eta$ is fully faithful thanks to Lemma \ref{H0:l}.
\end{proof}

\begin{defi}
We write $\Isoc_{\Qpu}(X,\eta)_F$ for the Tannakian subcategory of $\Isocl(X,\eta)$ spanned by the essential image of $\Lambda_\eta$.
\end{defi}


All we said works unchanged for overconvergent isocrystals and we have an observable functor $\Lambda_\eta:\Fioi(X)\to \oil(X,\eta)$ from overconvergent $F^\infty$-isocrystals to overconvergent isocrystals with $\Qpi$-structure (defined in the analogous way). We are now ready to present the fundamental result which compares the monodromy groups of $F^\infty$-isocrystals and isocrystals with \DMs.

\begin{defi}\label{Fi-mono:d} Let $(X,\eta)$ be a geometrically connected variety over $k$ endowed with a geometric point $\eta$. For an $F^n$-isocrystal $(\calM,\Phi_\calM)$ we write $G(\calM,\Phi_\calM^\infty,\eta)$ for the Tannaka group of $\langle \calM, \Phi_\calM^\infty\rangle$ with respect to $\omega_{\eta,\Qpi}$ and $G(\calM,V_\calM,\eta)$ for the Tannaka group of $\langle \calM, V_\calM\rangle$ with respect to $\omega_{\eta,\Qpi}$, where $V_\calM$ is the \DMs\ induced by $\Phi_\calM$. We also denote by $G(\calM,\Phi^{\infty}_\calM,\eta)^{\cst}$ the Tannaka group of \textit{constant} $F^\infty$-isocrystals in $\langle \calM, \Phi_\calM^\infty\rangle$, namely those $F^\infty$-isocrystals coming from $\Spec(k)$. We give analogous definitions in the overconvergent setting.
\end{defi}
\begin{prop}\label{fund-exac-seq:p} For a $\dag$-extendable $F^n$-isocrystal $(\calM,\Phi_\calM)$, we have the following commutative diagram of algebraic groups over $\Qpi$
	\begin{equation*}
		\begin{tikzcd}
			1\arrow{r} & G(\calM,V_\calM,\eta)\arrow{r}\arrow[hook,d] & G(\calM,\Phi^{\infty}_\calM,\eta)\arrow{r}\arrow[hook,d] & G(\calM,\Phi^{\infty}_\calM,\eta)^{\cst}\arrow{r}\arrow[d,two heads] &1\\
			1\arrow{r} & G(\calMd,V_{\calM},\eta)\arrow{r} & G(\calMd,\Phi^{\ag,\infty}_\calM,\eta)\arrow{r} & G(\calMd,\Phi^{\ag,\infty}_\calM,\eta)^{\cst}\arrow{r} & 1.
		\end{tikzcd}
	\end{equation*}
 The rows are exact, the first two vertical arrows are closed embeddings and the last arrow is a faithfully flat morphism.

\end{prop}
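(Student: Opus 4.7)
The plan is to translate each required property via the Tannakian dictionary summarized in \cite[\S A]{DE20}, reducing the proposition to a handful of categorical conditions verified using Lemma \ref{H0:l} and the hypothesis $\F \subseteq k$.

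First, I would construct the diagram. The two left-most horizontal arrows arise from the restrictions of $\Lambda_\eta$ (resp.\ its overconvergent counterpart) to $\langle \calM, \Phi_\calM^\infty \rangle$ and $\langle \calMd, \Phi_\calM^{\ag,\infty} \rangle$; these functors are observable by Proposition \ref{obse-ffur-forg:p} and hence induce closed embeddings of Tannaka groups. The two right-most horizontal arrows correspond, via Tannakian duality, to the inclusion of the subcategory of constant $F^\infty$-isocrystals (those pulled back from $\Spec(k)$). This subcategory is stable under sub-objects inside its ambient Tannakian category, as follows from the identity $H^0(X, \calO_{X/K}) = K$ (valid because $X$ is geometrically connected and $\F \subseteq k$), which makes $\pi^{*}\colon \Fiisoc(\Spec k) \to \Fiisoc(X)$ fully faithful with subobject-closed image; the resulting maps on Tannaka groups are therefore faithfully flat. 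The vertical arrows are induced by Kedlaya's fully faithful restriction $\alpha\colon \oi(X)_F \hookrightarrow \Isoc(X)_F$; combined with subobject-stability of its essential image, this restriction is an equivalence on the Tannakian subcategories generated by $\calMd$ and $\calM$, yielding closed embeddings (in fact isomorphisms) on the first two columns. Commutativity of the diagram is immediate from naturality.

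The crux is exactness in the middle of each row. By the criterion of \cite[\S A]{DE20}, I must show that any $(\calN, \Phi_\calN^\infty) \in \langle \calM, \Phi_\calM^\infty \rangle$ with $\Lambda_\eta(\calN, \Phi_\calN^\infty) \simeq \mathbbm{1}^{\oplus r}$ in $\Isocl(X, \eta)$ is constant. The triviality in $\Isocl(X,\eta)$ forces the underlying isocrystal $\calN$ to be trivial, and then the identification $H^0(X, \calO_{X/K}) = K$ implies that the $F^\infty$-structure on such a trivial isocrystal is determined by an element of $\GL_r(K)$, hence pulled back from $\Spec(k)$. Lemma \ref{H0:l} enters by matching the maximal trivial subobject in $\Isocl(X, \eta)$ with the maximal trivial subobject in $\Isoc(X)$, which is automatically $\Phi$-stable and therefore an $F^\infty$-subobject of $(\calN, \Phi_\calN^\infty)$. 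The same argument applies verbatim for the overconvergent row.

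The main obstacle is precisely this exactness in the middle, reducing to the observation that a trivial isocrystal on a geometrically connected $X$ equipped with an $F^\infty$-structure is necessarily constant; this is where the hypothesis $\F \subseteq k$ (and geometric connectivity) plays its decisive role.
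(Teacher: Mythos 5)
Your treatment of the exactness of the rows is essentially the paper's argument: invoke the observability of $\Lambda_\eta$ (Proposition \ref{obse-ffur-forg:p}) together with \cite[Prop.~A.13]{DE20}, use Lemma \ref{H0:l} to identify the maximal trivial subobject of $(\calN,V_\calN)$ with the ($\Phi$-stable, hence liftable) maximal trivial subobject of $\calN$, and observe that an $F^\infty$-structure on a trivial isocrystal is a matrix over $H^0(X,\calO_{X/K})=K$, hence constant. That part is fine.

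There is, however, a genuine error in your treatment of the vertical arrows. You assert that the essential image of Kedlaya's functor $\alpha\colon\oi(X)_F\to\Isoc(X)_F$ is stable under subobjects, so that $\langle\calMd,\Phi^{\ag,\infty}_\calM\rangle\to\langle\calM,\Phi^{\infty}_\calM\rangle$ is an equivalence and the first two vertical arrows are isomorphisms. This is false: a convergent subobject of a $\dag$-extendable $F$-isocrystal is in general \emph{not} $\dag$-extendable --- the slope filtration $S_\bullet(\calM)\subseteq\calM$ is the standard counterexample, and the failure of this closure property is precisely why the $\dag$-hull is introduced and why the inclusion $G(\calM,\eta)\subseteq G(\calMd,\eta)$ is typically proper (it is the parabolic of Theorem \ref{para-c:t}; were your claim true, the parabolicity conjecture would assert $G(\calM,\eta)=G(\calMd,\eta)$ and be vacuous). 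The correct, and weaker, statement is the one the proposition asks for: the restriction functor is fully faithful (Kedlaya) and exact, and every object of $\langle\calM,\Phi^{\infty}_\calM\rangle$ is a subquotient of an object in its essential image because the target is generated by $\calM=\alpha(\calMd)$; this already yields a closed immersion of Tannaka groups without any essential surjectivity. The same correction applies to the $V_\calM$-column. Finally, you do not address the faithful flatness of the third vertical arrow $G(\calM,\Phi^{\infty}_\calM,\eta)^{\cst}\twoheadrightarrow G(\calMd,\Phi^{\ag,\infty}_\calM,\eta)^{\cst}$; this follows because constant objects on either side are equivalent to $F^\infty$-isocrystals over $\Spec(k)$ and subobjects of constant objects are constant, so the relevant full subcategory is closed under subobjects --- but note again that this map need not be an isomorphism a priori (cf.\ the remark following the proposition), which is another symptom of the fact that the two rows do not coincide.
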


\begin{proof}
	The proof is similar to the proof of \cite[Cor. 5.12]{DE20}. To prove the exactness of the two sequences we use [\textit{ibid.}, Prop. A.13]. By Proposition \ref{obse-ffur-forg:p}, the functor $\langle \calM, \Phi_{\calM}^\infty \rangle\to \langle \calM,V_\calM\rangle$ is observable. Let us verify the second main property we need to apply the proposition. For $(\calN,\Phi_\calN^\infty)\in \langle \calM,\Phi_\calM^\infty\rangle$, we write $(\calN,V_\calN)$ for the associated isocrystal with $\Qpi$-structure. By Lemma \ref{H0:l}, if $\calT\subseteq \calN$ is the maximal trivial subobject, then $(\calT,V_\calN|_{\calT})$ is the maximal trivial subobject of $(\calN,V_\calN)$. By the maximality of $\calT$, we have that $(\calT,(\Phi_{\calN}|_\calT)^\infty)$ is a subobject of $(\calN,\Phi_\calN^\infty)$. This shows the exactness of the first row. One can argue similarly for the second row. The rest of the statement is a simple check. \end{proof}

\begin{rema}
Thanks to Theorem \ref{main:t} and Proposition \ref{rk1-unr:p} one can also show that the morphism $G(\calM,\Phi_\calM^\infty,\eta)^\cst\to G(\calMd,\Phi_\calM^{\ag,\infty},\eta)^\cst$ is an isomorphism.
\end{rema}

\subsection{Comparisons}

In this section we start by comparing the category $\Isocl(X,\eta)$ and the category $\Isoc(X)$ in order to relate the monodromy groups of isocrystals, as defined in \cite{Cre92a}, and the monodromy groups of isocrystals with \Qpis\ (Proposition \ref{comp:p}). In general, the scalar extension of the monodromy group of an isocrystal with \Qpis\ from $\Qpi$ to $K$ is bigger than the monodromy group of the associated isocrystal. For isocrystals with \DMs, we want to prove instead that we have the expected base change property. Subsequently, we want to compare isoclinic $F^\infty$-isocrystals (or rather direct sums of these) and lisse $\Qpu$-sheaves (Proposition \ref{etal-isoc:p}).

\begin{lemm}\label{ffss-comp:l}
The functor $\Isocl(X,\eta)_F\to \Isoc(X)$ is $K/\Qpi$-fully faithful (cf. \cite[Def. 1.1.2]{Sta08}) and sends semi-simple objects to semi-simple objects.
\end{lemm}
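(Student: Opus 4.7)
The plan is to deduce both assertions from the theory of Dieudonné--Manin $\Qpi$-structures developed in this section. The main input is that, by the observability of $\Lambda_\eta$ (Proposition \ref{obse-ffur-forg:p}) together with Proposition \ref{sq-DM:p}, every object $(\calM,V_\calM)$ of $\Isocl(X,\eta)_F$ embeds into an object in the essential image of $\Lambda_\eta$, i.e.\ into an isocrystal with \DMs\ arising from an actual $F^n$-structure on $\calM$.

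To establish the $K/\Qpi$-fully faithful claim, I would apply the standard internal-$\Hom$ adjunction to reduce it to showing that for every $(\calM,V_\calM)\in\Isocl(X,\eta)_F$ the natural map
$$H^0(\calM,V_\calM)\otimes_{\Qpi}K\to H^0(\calM)$$
is an isomorphism. For an object lying in the essential image of $\Lambda_\eta$, this is precisely the content of Lemma \ref{H0:l} (whose proof proceeds by exhibiting the Dieudonné--Manin $\Qpi$-lattice of the $\Phi$-stable maximal trivial subobject inside $V_\calM$). For a general object of $\Isocl(X,\eta)_F$, I would descend this $\Qpi$-lattice along the embedding furnished by observability.

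For the preservation of semi-simplicity, if $(\calM,V_\calM)\in \Isocl(X,\eta)_F$ is semi-simple, Lemma \ref{soc-DM:l} applies directly and forces $\calM$ to be semi-simple as an isocrystal.

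The step that I expect to require the most care is precisely the descent of the $\Qpi$-lattice from an enveloping $\Lambda_\eta$-image $(\calM',V_{\calM'})$ to $(\calM,V_\calM)$: concretely, one has to verify that the maximal trivial subobject $\calT\subseteq\calM$, which equals $\calT'\cap\calM$ for $\calT'\subseteq\calM'$ the maximal trivial subobject of $\calM'$, inherits from the \DMs\ on $\calT'$ (which exists since $\calT'$ is $\Phi_{\calM'}$-stable by Lemma \ref{H0:l}) a $\Qpi$-lattice contained in $V_\calM$ that spans $H^0(\calM)$ over $K$. Once this compatibility is in place the rest of the argument is formal.
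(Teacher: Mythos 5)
Your treatment of the $K/\Qpi$-full faithfulness is essentially the paper's own argument: reduce via internal Hom to the surjectivity of $H^0(\calM,V_\calM)\otimes_{\Qpi}K\to H^0(\calM)$, embed $(\calM,V_\calM)$ into some $(\calN,V_\calN)$ with \DMs\ using observability, and intersect with the maximal trivial subobject $(\calT',V_{\calT'})$ of $(\calN,V_\calN)$ supplied by Lemma \ref{H0:l}. The "step requiring care" that you isolate is handled in the paper simply by forming the intersection $(\calM,V_\calM)\cap(\calT',V_{\calT'})$ inside the abelian category $\Isocl(X,\eta)$: this is automatically a subobject of $(\calM,V_\calM)$, it is trivial because it is a subobject of the trivial object $(\calT',V_{\calT'})$, and its underlying isocrystal is $\calT'\cap\calM$, which is the maximal trivial subobject of $\calM$. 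So that part is fine and matches the paper.

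The semi-simplicity part has a genuine gap. Lemma \ref{soc-DM:l} does \emph{not} "apply directly": its hypothesis is that $(\calM,V_\calM)$ is an isocrystal with \DM\ $\Qpi$-structure, i.e.\ lies in the essential image of $\Lambda_\eta$, so that $V_\calM$ is induced by an actual $F^n$-structure $\Phi_\calM$ on $\calM$ (the proof of that lemma begins by choosing such a $\Phi_\calM$). A general semi-simple object of $\Isocl(X,\eta)_F$ is only known to be a \emph{subobject} of such an object — a point you yourself rely on in the first half of your proof — and there is no a priori Frobenius structure on $\calM$ inducing $V_\calM$. To close the gap you must again work in the ambient object: reduce to $(\calM,V_\calM)$ irreducible, embed it into $(\calN,V_\calN)$ with \DMs, upgrade the socle $\calS$ of $\calN$ to a subobject $(\calS,V_\calS)\subseteq(\calN,V_\calN)$ by Lemma \ref{soc-DM:l}, and observe that $(\calM,V_\calM)\cap(\calS,V_\calS)$ has nonzero underlying isocrystal (it contains the socle of $\calM$), hence equals $(\calM,V_\calM)$ by irreducibility; thus $\calM\subseteq\calS$ is semi-simple. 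The paper instead invokes Lemma \ref{ss-uniq-DM:l}, which describes the decomposition of a semi-simple isocrystal equipped with a \DMs\ into irreducibles with irreducible underlying isocrystals; either route works, but both go through the object carrying an honest Frobenius structure rather than through $(\calM,V_\calM)$ itself.
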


\begin{proof} We first show that the functor is $K/\Qpi$-fully faithful. This is equivalent to showing that for $(\calM,V_\calM)\in \Isocl(X,\eta)_F$, the maximal trivial subobject of $\calM$ can be upgraded to a trivial subobject of $(\calM,V_\calM)$. By Proposition \ref{obse-ffur-forg:p}, every object $(\calM,V_\calM)\in \Isocl(X,\eta)_F$ is a subobject of some $(\calN,V_\calN)\in \Isocl(X,\eta)_F$ with $V_\calN$ a \DMs. By Lemma \ref{H0:l}, if $(\calT,V_{\calT})$ is the maximal trivial subobject of $(\calN,V_\calN)$, then $\calT$ is the maximal trivial subobject of $\calN$. The intersection of $(\calM,V_\calM)$ and $(\calT,V_{\calT})$ in $(\calN,V_\calN)$ is then an isocrystal with $\Qpi$-structure with underlying isocrystal isomorphic to the maximal trivial subobject of $\calM$, as we wanted. The second part follows from Lemma \ref{ss-uniq-DM:l}.\end{proof}

\begin{prop}\label{comp:p}
	
	The functor $\Isocl(X,\eta)_{F}\otimes_{\Qpi} K\to \Isoc(X)_{F}$ is an equivalence of categories. In particular, for an $F^n$-isocrystal $(\calM,\Phi_\calM)$, if $V_\calM$ is the associated \DMs, we have $G(\calM,\eta)\simeq G(\calM,V_\calM,\eta)\otimes_{\Qpu}K$.
\end{prop}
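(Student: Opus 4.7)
The plan is to apply the scalar extension formalism for Tannakian categories \cite[Thm. 3.1.3]{Sta08} to the forgetful functor $\Psi \colon \Isocl(X,\eta)_F \to \Isoc(X)_F$ sending $(\calM, V_\calM) \mapsto \calM$. First I would verify that $\Psi$ is essentially surjective: given $\calM \in \Isoc(X)_F$, by definition there exists some $F^n$-structure $\Phi_\calM$ on $\calM$, and Lemma \ref{DM:l} produces the Dieudonné--Manin $\Qpi$-structure $V_\calM = \omega_{\eta,\Qpi}(\calM, \Phi_\calM^\infty)$ on $\omega_\eta(\calM)$; the pair $(\calM, V_\calM) \in \Isocl(X,\eta)_F$ then satisfies $\Psi(\calM, V_\calM) = \calM$.

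Next, Lemma \ref{ffss-comp:l} asserts that $\Psi$ is $K/\Qpi$-fully faithful and preserves semi-simplicity, which are precisely the hypotheses required to invoke the scalar extension theorem. Consequently $\Psi$ factors through a fully faithful $K$-linear $\otimes$-functor
\[
\Psi_K \colon \Isocl(X,\eta)_F \otimes_{\Qpi} K \longrightarrow \Isoc(X)_F,
\]
and essential surjectivity of $\Psi$ immediately promotes to essential surjectivity of $\Psi_K$. Hence $\Psi_K$ is an equivalence of $K$-linear neutral Tannakian categories.

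For the second assertion, restrict $\Psi_K$ to the Tannakian subcategory $\langle (\calM, V_\calM) \rangle \otimes_{\Qpi} K$. Its essential image is a Tannakian subcategory of $\Isoc(X)_F$ containing $\calM$ and closed under subquotients, tensor products and duals, hence equal to $\langle \calM \rangle$; under this identification the fibre functor $\omega_{\eta,\Qpi} \otimes_{\Qpi} K$ on the source corresponds to the canonical $K$-linear avatar of $\omega_\eta$ on the target, since by the definition of a $\Qpi$-lattice one has $V_\calM \otimes_{\Qpi} K(\Omega) = \omega_\eta(\calM)$. Tannaka duality then yields the isomorphism $G(\calM, \eta) \simeq G(\calM, V_\calM, \eta) \otimes_{\Qpu} K$. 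The only substantive technical content lies in Lemma \ref{ffss-comp:l}, which has already been proved; the remaining arguments are a routine application of the Tannakian scalar extension machinery combined with the Dieudonné--Manin construction.
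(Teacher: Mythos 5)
Your argument is correct and is essentially the paper's proof: the paper simply invokes Stalder's scalar extension theorem (cited there as \cite[Thm. 2.4.1]{Sta08}) together with Lemma \ref{ffss-comp:l}, exactly the two inputs you use. One small imprecision: since $\Isoc(X)_{F}$ is the Tannakian subcategory \emph{spanned} by isocrystals admitting an $F$-structure, a general object of it need not itself carry an $F^n$-structure; this does not affect the conclusion, because the scalar-extension criterion only requires every object of the target to be a subquotient of something in the essential image (and tensor constructions on $F$-isocrystals again admit $F$-structures), but your essential-surjectivity step should be phrased that way rather than as literal surjectivity on objects.
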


\begin{proof}This follows from \cite[Thm. 2.4.1]{Sta08} thanks to Lemma \ref{ffss-comp:l}.\end{proof}

\begin{defi}
We write $\Fiisoce(X)$ for the Tannakian subcategory of $\Fiisoc(X)$ spanned by the isoclinic $F^\infty$-isocrystals and $\Isocle(X,\eta)$ for the Tannakian subcategory of \break $\Isocl(X,\eta)$ spanned by the image of the functor $\Fiisoce(X)\to \Isocl(X,\eta)$. We say that the category $\Isocle(X,\eta)$ is the category of \textit{étale isocrystals with $\Qpi$-structure}.
\end{defi}

\begin{prop}\label{etal-isoc:p}
If $k=\ka$, there is a natural equivalence $\LS(X,\Qpi)\iso \Isocle(X,\eta).$ 
\end{prop}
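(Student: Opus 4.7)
The plan is to construct the desired equivalence as the composition of Katz's classical theorem with the functor $\Lambda_\eta$ of Definition \ref{Fi-lattices:d}. Applying Katz's equivalence between unit-root $F^n$-isocrystals on $X$ and $\LS(X,\Qpn)$ at each level $n$, then taking $2\text{-}\varinjlim_n$, one obtains a natural equivalence between $\LS(X,\Qpi)$ and the full subcategory of unit-root $F^\infty$-isocrystals in $\Fiisoc(X)$. Post-composing with $\Lambda_\eta$ yields a functor $\Psi \colon \LS(X,\Qpi) \to \Isocl(X,\eta)$ whose essential image is contained in $\Isocle(X,\eta)$, since every unit-root object is isoclinic of slope $0$ and hence belongs to $\Fiisoce(X)$.

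Full faithfulness of $\Psi$ is immediate from Katz's equivalence together with Proposition \ref{obse-ffur-forg:p}, whose assertion that $\Lambda_\eta$ is fully faithful on unit-root objects uses precisely the hypothesis $k=\ka$.

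For essential surjectivity, the key observation I would use is that over $k=\ka$, any isoclinic $F^\infty$-isocrystal can be twisted by a constant rank-$1$ object to become unit-root, without altering its image under $\Lambda_\eta$. Concretely, for a slope $s=a/b$ with $\gcd(a,b)=1$, the Dieudonné--Manin classification over $\Spec(\ka)$ provides a constant rank-$1$ $F^b$-isocrystal $L_s$ of slope $-s$, namely the pullback of $(K, p^{-a})$. Using that $\Qpi = \bigcup_n K(\Omega)^{\sigma^n=\id}$, one checks that the equations defining $\omega_{\eta,\Qpi}(L_s)$ force $j=-ai$ and $\sigma^{bi}(y)=y$, so $\omega_{\eta,\Qpi}(L_s) = \Qpi$ and $\Lambda_\eta(L_s) \simeq \mathbbm{1}$ in $\Isocl(X,\eta)$. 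Consequently, for any isoclinic $(\calN,\Phi_\calN^\infty)$ of slope $s$, the tensor product $\calN \otimes L_s$ is unit-root and $\Lambda_\eta(\calN) \simeq \Lambda_\eta(\calN \otimes L_s)$, showing that the essential images of $\Lambda_\eta|_{\Fiisoce(X)}$ and of $\Psi$ coincide.

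It then remains to verify that this common essential image is itself a Tannakian subcategory of $\Isocl(X,\eta)$, so that it equals $\Isocle(X,\eta)$. Closure under tensor products, duals and direct sums is automatic. Closure under subobjects is the main obstacle of the argument, and I would establish it by comparing monodromy groups: combining the full faithfulness of $\Lambda_\eta$ on unit-root objects with Proposition \ref{comp:p} and Katz's theorem, the Tannaka group of $\langle \Psi(L)\rangle$ in $\Isocl(X,\eta)$ is identified with the algebraic hull of the image of $\pi_1^{\et}(X,\eta)$ in $\GL(V_L)$. Subobjects of $\Psi(L)$ in $\Isocl(X,\eta)$ thus correspond to subrepresentations of $\pi_1^{\et}(X,\eta)$, i.e.\ to sub-lisse-sheaves of $L$, completing the proof.
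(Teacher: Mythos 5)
Your proposal follows the same route as the paper's proof: the Crew--Katz unit-root correspondence $\LS(X,\Q_{p^n})\simeq\{\text{unit-root }F^n\text{-isocrystals}\}$ at each finite level, passage to the $2$-colimit, post-composition with $\Lambda_\eta$ (full faithfulness via Proposition \ref{obse-ffur-forg:p}), and essential surjectivity by twisting an isoclinic object into a unit-root one by a constant rank-$1$ object whose image under $\Lambda_\eta$ is $\mathbbm{1}$ --- your explicit computation of $\omega_{\eta,\Qpi}(L_s)$ just spells out what the paper leaves implicit. The only point needing care is your final paragraph on closure under subobjects (which the paper does not address explicitly): the identification of $G(\calN,V_\calN,\eta)$ with the algebraic hull of the image of $\pi_1^{\et}(X,\eta)$ must be derived from Crew's computation of $G(\calN,\eta)$ for unit-root $\calN$ combined with Proposition \ref{comp:p}, not from full faithfulness of $\Lambda_\eta$ alone, since deducing it from the latter would presuppose exactly the closure under subobjects you are trying to prove.
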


\begin{proof}
By \cite[Thm. 2.1]{Cre87}, for every $n$ there is a fully faithful $\Q_{p^n}$-linear $\otimes$-functor $\LS(X,\Q_{p^n})\hookrightarrow \Fnisoc(X)$ with essential image the category of unit-root $F^n$-isocrystals. This family of $\otimes$-functors induces a fully faithful $\Qpi$-linear $\otimes$-functor $\LS(X,\Qpi)\hookrightarrow \Fiisoc^\et(X)$. If we postcompose this functor with the functor $\Lambda_\eta:\Fiisoc^\et(X)\to \Isocle(X,\eta)$, we get by Proposition \ref{obse-ffur-forg:p} a fully faithful $\Qpi$-linear $\otimes$-functor $\LS(X,\Qpi)\hookrightarrow \Isocle(X,\eta)$. To prove that this functor is essentially surjective it is enough to notice that every isoclinic object in $\Fiisoc(X)$ is a tensor product of a unit-root $F^\infty$-isocrystal by a rank $1$ constant $F^\infty$-isocrystal.
\end{proof}
\begin{rema}
Similarly one can prove that $\Fiisoce(X)$ is equivalent to the category of $\Q$-graded lisse $\Qpi$-sheaves.
\end{rema}
\begin{coro}
Let $(\calM,\Phi_\calM)$ be an $F^n$-isocrystal over $X$ which admits the slope filtration. If $k=\ka$, an \textit{étale path}\footnote{We recall that an \textit{étale path} is an isomorphism between two fibre functors of the Galois category of finite étale covers of $X$ induced by geometric points, \cite[Exp. V, §7]{SGA1}.} $\gamma$ joining two geometric points $\eta$ and $\eta'$ induces isomorphisms $G(\calM,\Phi_\calM^\infty,\eta)\iso G(\calM,\Phi_\calM^\infty,\eta')$ and $G(\calM,V_\calM,\eta)\iso G(\calM,V_\calM,\eta')$.
\end{coro}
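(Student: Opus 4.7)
The plan is to reduce, via Proposition \ref{etal-isoc:p}, to the classical fact that étale paths induce canonical isomorphisms of fiber functors on lisse $\Qpi$-sheaves. By Tannakian duality, both claimed isomorphisms will follow from producing a canonical $\otimes$-isomorphism of fiber functors $\omega_{\eta,\Qpi}|_{\langle\calM,\Phi_\calM^\infty\rangle} \iso \omega_{\eta',\Qpi}|_{\langle\calM,\Phi_\calM^\infty\rangle}$ depending functorially on the étale path $\gamma$.

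First, because $\calM$ admits the slope filtration, every object $(\calN,\Phi_\calN^\infty)\in\langle\calM,\Phi_\calM^\infty\rangle$ has constant slopes (this property is stable under subquotients, tensor products, and duals) and therefore admits a slope filtration $S_\bullet(\calN)$. Moreover, the Dieudonné--Manin lattice $V_\calN=\omega_\Qpi(\calN_\eta,\Phi_{\calN,\eta}^\infty)$ is canonically $\Q$-graded by the slope $j/i$ of vectors $v$ satisfying $\Phi^iv=p^jv$; since $k=\ka$, Dieudonné--Manin classification ensures that this grading realises the fiber at $\eta$ of the (fiberwise split) slope decomposition of $\calN$. This provides a natural $\otimes$-isomorphism $\omega_{\eta,\Qpi}|_{\langle\calM,\Phi_\calM^\infty\rangle}\simeq \omega_{\eta,\Qpi}\circ \Gr_{S_\bullet}$, where $\Gr_{S_\bullet}$ takes values in $\Fiisoce(X)$, and the tensor compatibility follows from the additivity of slopes under tensor products.

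By the remark following Proposition \ref{etal-isoc:p}, the category $\Fiisoce(X)$ is equivalent to the category of $\Q$-graded lisse $\Qpi$-sheaves, and inspection of the proof shows this equivalence is compatible with fiber functors at $\eta$: a unit-root object corresponds to a lisse sheaf whose fiber at $\eta$ coincides with its Dieudonné--Manin $\Qpi$-structure. Since étale paths induce canonical $\otimes$-isomorphisms between fibers of lisse $\Qpi$-sheaves at different geometric points (by the standard theory of the Galois category of finite étale covers), pulling back along the sequence of identifications of the previous paragraph yields the desired $\otimes$-isomorphism $\omega_{\eta,\Qpi}\iso\omega_{\eta',\Qpi}$ on $\langle\calM,\Phi_\calM^\infty\rangle$. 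Tannakian duality produces the first isomorphism $G(\calM,\Phi_\calM^\infty,\eta)\iso G(\calM,\Phi_\calM^\infty,\eta')$. For the second, one applies the exact sequence of Proposition \ref{fund-exac-seq:p}: the quotient $G(\calM,\Phi_\calM^\infty,\eta)^{\cst}$, being the Tannaka group of the subcategory of constant $F^\infty$-isocrystals, is manifestly independent of $\eta$, and our isomorphism commutes with the projection to this quotient (because constant objects are pulled back from $\Spec k$ and insensitive to the choice of geometric point). It therefore restricts to an isomorphism of kernels $G(\calM,V_\calM,\eta)\iso G(\calM,V_\calM,\eta')$. The main technical point to double-check is the tensor compatibility of each identification in the chain, which in every case reduces to an almost tautological verification (additivity of slopes for the Dieudonné--Manin splitting, and explicit construction of the functor $\LS(X,\Qpi)\hookrightarrow \Fiisoce(X)$ for Proposition \ref{etal-isoc:p}).
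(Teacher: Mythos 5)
Your proposal is correct and follows essentially the same route as the paper: identify the Dieudonné--Manin fibre functor on $\langle\calM,\Phi_\calM^\infty\rangle$ with its composition through $\Gr_{S_\bullet}$, then use Proposition \ref{etal-isoc:p} to let the étale path act via the theory of lisse $\Qpi$-sheaves. Your derivation of the second isomorphism from the exact sequence of Proposition \ref{fund-exac-seq:p} is a harmless variant (note only that the exactness of the first row, which is all you use, does not need the $\dag$-extendability hypothesis under which that proposition is stated).
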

\begin{proof}
Write $(\calN,\Phi_\calN)$ for $\Gr_{S_\bullet}(\calM,\Phi_\calM)$ and $V_\calM$ for the \DM\ \Qpis\ of $(\calM,\Phi_\calM)$ at $\eta$. The $\Qpi$-structure $V_\calM$ is naturally isomorphic to the \DM\ \Qpis\ of $(\calN,\Phi_\calN)$ at $\eta$. Thanks to Proposition \ref{etal-isoc:p}, the étale path $\gamma$ induces an isomorphism between the fibre functors $\omega_{\eta,\Qpi}:\langle \calN, V_\calN\rangle\to \VVec_{\Qpi}$ and $\omega_{\eta',\Qpi}:\langle \calN,V_\calN\rangle\to \VVec_{\Qpi}$. This yields the desired result.
\end{proof}

 \section{Main theorems}\label{main-theo:s}
\subsection{\texorpdfstring{$\dag$}{}-hull of \texorpdfstring{$F$}{}-isocrystals}\label{d-hull:ss}
In order to prove the parabolicity conjecture we chiefly study the \textit{$\dagger$-hulls} of sub-$F^n$-isocrystals.
 \begin{defi}
Let $(\calN,\Phi_\calN)\subseteq (\calM,\Phi_\calM)$ be an inclusion of $F^n$-isocrystals. The \textit{\dhl\ }of $(\calN,\Phi_\calN)$ in $(\calM,\Phi_\calM)$ is the smallest\footnote{Note that the category of isocrystals is artinian because if $\calM'\subseteq \calM$ have the same ranks, then $\calM'=\calM$.} \dex\ subobject of $(\calM,\Phi_\calM)$ containing $(\calN,\Phi_\calN)$. We denote it by $(\caloN,\Phi_\caloN)$. 
 \end{defi}

 \begin{nota}
 	Let $(\calM,\Phi_\calM)$ be a \dex\ $F^n$-isocrystal over $X$ with constant slopes. We say that $(\calM,\Phi_\calM)$ satisfies $\MS(\calM,\Phi_\calM)$ (where $\MS$ stands for “minimal slope”) if for every sub-$F^n$-isocrystal  $(\calN,\Phi_\calN)\subseteq (\calM,\Phi_\calM)$, the isocrystals $S_1(\calN)$ and $S_1(\caloN)$ are the same. We also say that $X$ satisfies $\MS(X)$ if for every $n>0$ and every \dex\ $F^n$-isocrystal $(\calM,\Phi_\calM)$ over $X$, we have that $\MS(\calM,\Phi_\calM)$ is true.
 \end{nota}

Throughout §\ref{main-theo:s} we want to prove the following theorem.

 \begin{theo}\label{main:t} A smooth variety $X$ over a perfect field $k$ satisfies $\MS(X)$.
 \end{theo}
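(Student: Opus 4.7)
The strategy I would follow mirrors the one outlined by the author in the introduction, namely first reducing to the case of curves, then to the generic point of $\mathbb{A}^1_k$, and finally handling higher-dimensional varieties by a Lefschetz argument. The overall inclusion $S_1(\calN)\subseteq S_1(\caloN)$ is automatic because $\calN\subseteq \caloN$; the content is the reverse inclusion, which says that the smallest slope piece cannot grow when one replaces $\calN$ by its \dhl.

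For the higher-dimensional reduction, assume the theorem is known for smooth curves, and let $(\calM,\Phi_\calM)$ and $(\calN,\Phi_\calN)\subseteq (\calM,\Phi_\calM)$ be as in the statement over $X$. The plan is to invoke Theorem \ref{i-tame-lef:t} to find a smooth connected curve $C\subseteq X$ such that the restriction functor $\langle \calMd\rangle \to \langle\calMd|_C\rangle$ is an equivalence of categories (after first reducing to the docile case by a suitable alteration/log-extension argument along the boundary divisor). Restriction commutes with the slope filtration because $\calM$ has constant slopes, so $S_i(\calM)|_C = S_i(\calM|_C)$. Since the \dhl\ $\caloN$ is by definition \dex, its restriction $\caloN|_C$ is a \dex\ subobject of $\calM|_C$ containing $\calN|_C$, and the Lefschetz equivalence ensures that this is actually the $\dag$-hull of $\calN|_C$ in $\calM|_C$ (any smaller \dex\ subobject would lift to a smaller \dex\ subobject of $\calM$ still containing $\calN$, contradicting minimality of $\caloN$). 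Applying the curve case on $C$ gives $S_1(\calN|_C) = S_1(\caloN|_C)$, i.e.\ $S_1(\calN)|_C = S_1(\caloN)|_C$, and the equality of sub-$F^n$-isocrystals over $X$ follows because two subobjects of $\calM$ with constant slope that agree on $C$ must coincide (their difference would be a non-zero subobject vanishing on $C$, impossible by faithfulness of restriction to a curve for isocrystals with nowhere-vanishing slopes).

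For the curve case, I would first reduce to $X=\mathbb{A}^1_k$: an arbitrary smooth curve $X$ admits, after removing finitely many points, an étale map to $\mathbb{A}^1_k$, and the property $\MS$ descends along étale morphisms and is preserved under shrinking $X$ (because $\dag$-hulls and slope filtrations commute with such restrictions, and \dex ability is insensitive to removing divisors on a curve once one invokes full faithfulness of $\alpha$). So we may assume $X=\mathbb{A}^1_k$. Next, I would study the analogous statement at the generic point $\eta$ of $\mathbb{A}^1_k$: given the inclusion $(\calN_\eta,\Phi)\subseteq(\calM_\eta,\Phi)$ of germs of $F^n$-isocrystals and the corresponding germ of an overconvergent structure on $\calM$, define the local \dhl\ at $\eta$ and show directly that $S_1$ is preserved. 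This local statement is the technical heart of the argument; I expect to carry it out using the Dieudonné--Manin decomposition of $\calM$ at the generic point together with the Frobenius-stable lattice structure coming from the overconvergent hypothesis, reminiscent of Tate's \cite[Prop.~12]{Tat67}. The output is that among the \dex\ subobjects of $\calM_\eta$ containing $\calN_\eta$, there is one whose smallest-slope piece coincides with $S_1(\calN_\eta)$, which must therefore be $\overline{\calN}_\eta$.

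Finally, I would deduce the global statement over $\mathbb{A}^1_k$ from the generic one. The key point is that the formation of \dhl s is compatible with generic restriction: if $\caloN$ is the global \dhl\ of $\calN$ in $\calM$, then $(\caloN)_\eta$ is a \dex\ subobject of $\calM_\eta$ containing $\calN_\eta$, hence contains the local \dhl\ $\overline{\calN_\eta}$; conversely, any \dex\ global subobject of $\calM$ whose generic fibre realises $\overline{\calN_\eta}$ provides a candidate smaller than or equal to $\caloN$. Coupled with the generic equality $S_1(\calN_\eta)=S_1(\overline{\calN_\eta})$ and the fact that $S_1$ of an $F^n$-isocrystal with constant slopes is determined by its generic fibre (being the kernel of the quotient to the positive-slope parts, which is controlled pointwise), one concludes that $S_1(\calN)=S_1(\caloN)$ globally. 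The hard part of the whole argument is clearly the generic-point analysis on $\mathbb{A}^1_k$: this is where the interaction between the overconvergent condition and the Frobenius-slope structure has to be pinned down quantitatively, and it is also the step that will later feed back into the proof of the Lefschetz theorem, thereby explaining why Theorems \ref{i-main:t} and \ref{i-tame-lef:t} are proved in interwoven fashion.
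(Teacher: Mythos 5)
Your overall architecture coincides with the paper's: reduce to curves, prove a statement at the generic point of $\mathbb{A}^1_k$, globalise over $\mathbb{A}^1_k$, and handle higher dimensions via the Lefschetz theorem, whose proof in turn feeds on the curve case. The reductions you invoke (shrinking opens, étale covers, docility via semistable reduction, checking the equality of $S_1$ on the chosen curve $C$) all appear in the paper, modulo small imprecisions: the paper needs a \emph{finite} étale cover of a dense open of $X$ onto $\mathbb{A}^1_k$ (Kedlaya's theorem, used together with Lemma~\ref{shri-étal:l}), not merely an étale map, and it first reduces to \emph{isoclinic} subobjects $\calN$ (Lemma~\ref{red-isoc:l}), which is what makes the assertion $S_1(\caloN)=\calN$ checkable by slope and rank data alone on $C$ or at the generic point.

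However, the two steps that carry essentially all the technical weight are asserted rather than proved, and one of them as stated is circular. First, the generic-point statement (Proposition~\ref{local:p}): you say you ``expect to carry it out'' using Dieudonné--Manin and Tate's local argument, but the paper's proof is of a different nature --- it characterises the \dhl\ dually as $Q^\vee$ with $Q^\ag=\mathrm{im}\bigl((M^\ag)^\vee\to N^\vee\bigr)$ (Lemma~\ref{dhl:l}), and then uses de Jong's \emph{reverse} slope filtration over the overconvergent ring with algebraically closed residue field (Theorem~\ref{op-slop:t}) plus a full-faithfulness-type injectivity argument (Lemma~\ref{local-K:l}) to pin down the maximal slope and the rank of the top graded piece of $\overline{N}^\vee$. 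None of this is visible in your sketch. Second, your passage from the generic point back to $\mathbb{A}^1_k$ rests on the claim that the \dhl\ commutes with restriction to the generic point; your ``conversely'' clause presupposes that the local \dhl\ $\overline{\calN_\eta}$ is the generic fibre of some global \dex\ subobject of $\calM$, which is precisely what has to be proved. In the paper this is Proposition~\ref{l-to-g:p}, whose proof requires integral models over the rings $A_n$ and $\tilde{B}_n$, a compactness argument bounding the $p^\infty$-torsion of a cokernel, and a Tor-vanishing lemma (Lemma~\ref{bounded-p-torsion:l}) to show that the relevant completed base change preserves injectivity. Without these two inputs the proposal is a correct roadmap but not a proof.
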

 
 
 We start with some reductions.
 \begin{lemm}\label{red-F:l}
	To prove $\MS(X)$ we may assume $n=1$.
\end{lemm}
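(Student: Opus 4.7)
The plan is to reduce from $F^n$-isocrystals to $F$-isocrystals via the cyclic induction $\Ind$. Given a $\dag$-extendable $F^n$-isocrystal $(\calM,\Phi_\calM)$ over $X$, I form the $F$-isocrystal $(\Ind(\calM),\Psi)$ whose underlying isocrystal is $\bigoplus_{i=0}^{n-1}(F^i)^*\calM$ and whose Frobenius $\Psi:F^*\Ind(\calM)\iso \Ind(\calM)$ acts on the summand $(F^{i+1})^*\calM\subseteq F^*\Ind(\calM)$ as the identity into the $(i+1)$-th summand of $\Ind(\calM)$ for $i<n-1$, and sends $(F^n)^*\calM$ to $\calM$ via $\Phi_\calM$ for $i=n-1$. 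Each $(F^i)^*\calM$ is $\dag$-extendable, so $\Ind(\calM)$ is too; and the same recipe applied to any sub-$F^n$-isocrystal $(\calN,\Phi_\calN)\subseteq(\calM,\Phi_\calM)$ produces a sub-$F$-isocrystal $\Ind(\calN)\subseteq \Ind(\calM)$.

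I then need to verify that both the slope filtration and the $\dag$-hull commute with $\Ind$. For the slope filtration, viewing $\Ind(\calM)$ as an $F^n$-isocrystal via $\Psi^n=\bigoplus_i (F^i)^*\Phi_\calM$ gives a direct-sum decomposition of $F^n$-isocrystals; since slopes are intrinsic and $F^*$ preserves the slope filtration, I get $S_j(\Ind(\calN))=\bigoplus_i(F^i)^*S_j(\calN)=\Ind(S_j(\calN))$. For the $\dag$-hull, I need additivity: for any $\bigoplus_i \calN_i\subseteq\bigoplus_i\calM_i$ with each $\calM_i$ $\dag$-extendable, I want $\overline{\bigoplus_i \calN_i}=\bigoplus_i\overline{\calN_i}$. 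The inclusion $\subseteq$ is immediate since the right-hand side is a $\dag$-extendable subobject containing $\bigoplus_i \calN_i$. For $\supseteq$, I intersect $\overline{\bigoplus_i \calN_i}$ with each summand $\calM_i$; this intersection is again $\dag$-extendable, using that intersections of $\dag$-extendable subobjects are $\dag$-extendable (a property built into the very definition of the $\dag$-hull through the full faithfulness of the restriction functor $\alpha:\oi(X)_F\to\Isoc(X)_F$). Combined with the tautology $\overline{(F^i)^*\calN}=(F^i)^*\caloN$, this gives $\overline{\Ind(\calN)}=\Ind(\caloN)$.

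Granting these two compatibilities and applying the $n=1$ case of $\MS$ to $(\Ind(\calM),\Psi)$ and the subobject $\Ind(\calN)$, I obtain $S_1(\Ind(\calN))=S_1(\overline{\Ind(\calN)})$, i.e.\
\begin{equation*}
\bigoplus_{i=0}^{n-1}(F^i)^*S_1(\calN)=\bigoplus_{i=0}^{n-1}(F^i)^*S_1(\caloN).
\end{equation*}
Comparing the $0$-th summands (using uniqueness of the direct-sum decomposition as $F^n$-isocrystals) yields $S_1(\calN)=S_1(\caloN)$, which is exactly $\MS(\calM,\Phi_\calM)$. The most delicate point is the additivity of $\dag$-hulls under direct sums; beyond that, the argument is bookkeeping once one has set up the induction functor $\Ind$.
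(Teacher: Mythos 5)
Your proposal is correct and follows essentially the same route as the paper: the paper also passes to the induced $F$-isocrystal $\calM'=\bigoplus_{i=0}^{n-1}(F^i)^*\calM$, checks that $\overline{\calN'}=\bigoplus_i(F^i)^*\caloN$ (using that $F^*$ is an autoequivalence of the overconvergent category), and compares summands. Your extra care about additivity of $\dag$-hulls under direct sums, via intersecting with each summand, is exactly the point the paper leaves implicit.
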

\begin{proof}
	Let $(\calN,\Phi_\calN)\subseteq (\calM,\Phi_\calM)$ be an inclusion of $F^n$-isocrystals. This induces an inclusion $(\calN',\Phi_\calN')\subseteq (\calM',\Phi_\calM')$ of $F$-isocrystals, where $\calM':=\bigoplus_{i=0}^{n-1} (F^i)^*\calM$ and $\calN':=\bigoplus_{i=0}^{n-1} (F^i)^*\calN$. Since $F^*$ is an autoequivalence of the category of overconvergent isocrystals, we deduce that $\overline{\calN'}=\bigoplus_{i=0}^{n-1} (F^i)^*\caloN$. This shows that if $\MS(\calM',\Phi_\calM')$ is true, then $\bigoplus_{i=0}^{n-1} (F^i)^*\calN=S_1({\calN'})=S_1(\overline{\calN'})=\bigoplus_{i=0}^{n-1} (F^i)^*S_1(\caloN)$ (note that $S_1(-)$ commutes with $F^*$). In turn, this implies that $S_1(\calN)=S_1(\caloN)$, which yields the desired result.
\end{proof}

 \begin{lemm}\label{red-isoc:l}
 To prove $\MS(X)$ it is enough to check isoclinic sub-$F$-isocrystals.
 \end{lemm}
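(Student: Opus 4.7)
The plan is to prove the general statement by induction on the number $m$ of distinct slopes of the sub-$F$-isocrystal $\calN \subseteq \calM$; the base case $m = 1$ is precisely the isoclinic hypothesis. Assume now that $\MS$ has been verified for every sub-$F$-isocrystal with strictly fewer than $m$ distinct slopes inside an arbitrary \dex\ $F$-isocrystal of constant slopes over $X$, and suppose $\calN$ has slopes $s_1 < s_2 < \cdots < s_m$.

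First, apply the isoclinic case to the sub $S_1(\calN) \subseteq \calM$ to obtain $S_1(\overline{S_1(\calN)}) = S_1(\calN)$, so that the \dhl\ $\overline{S_1(\calN)}$ is a \dex\ subobject of $\calM$ with minimum slope exactly $s_1$ and slope-$s_1$ part equal to $S_1(\calN)$. Form the quotient $\calM_1 := \calM/\overline{S_1(\calN)}$, which is again a \dex\ $F$-isocrystal with constant slopes, and write $\calN_1$ for the image of $\calN$ in $\calM_1$. Since $\calN \cap \overline{S_1(\calN)} \supseteq S_1(\calN)$, the subobject $\calN_1$ is a quotient of $\calN/S_1(\calN)$ and so has at most $m - 1$ distinct slopes, all of which are bounded below by $s_2$.

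A short verification identifies the \dhl\ of $\calN_1$ in $\calM_1$ with $\caloN/\overline{S_1(\calN)}$: the latter is \dex\ and contains $\calN_1$; conversely, for an arbitrary \dex\ sub $\calP_1 \subseteq \calM_1$ containing $\calN_1$, its preimage in $\calM$ is again \dex\ by the full faithfulness of $\oi(X)_F \to \Isoc(X)_F$ recalled in \S\ref{nota:s}, contains $\calN$, and therefore contains $\caloN$ by minimality, so that $\calP_1 \supseteq \caloN/\overline{S_1(\calN)}$. The inductive hypothesis then applies to $\calN_1 \subseteq \calM_1$, yielding $S_1(\calN_1) = S_1(\caloN/\overline{S_1(\calN)})$; since the left-hand side has slope at least $s_2 > s_1$, every slope of $\caloN/\overline{S_1(\calN)}$ strictly exceeds $s_1$.

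Finally, from the short exact sequence
\[
0 \longrightarrow \overline{S_1(\calN)} \longrightarrow \caloN \longrightarrow \caloN/\overline{S_1(\calN)} \longrightarrow 0,
\]
the absence of a nonzero pure-slope-$s_1$ subobject in $\caloN/\overline{S_1(\calN)}$ forces the slope-$s_1$ part of $\caloN$ to sit inside $\overline{S_1(\calN)}$ and to coincide with $S_1(\overline{S_1(\calN)}) = S_1(\calN)$; since $s_1$ is consequently the minimum slope of $\caloN$, we conclude $S_1(\caloN) = S_1(\calN)$. The step requiring the most care is the commutation of $\dag$-hulls with quotients by \dex\ subobjects in the third paragraph, which hinges on the correspondence between \dex\ subs of $\calM$ and subs of the overconvergent lift $\calMd$.
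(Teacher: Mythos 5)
Your argument is correct; it is essentially the paper's induction run in the opposite direction. Both proofs peel one slope off $\calN$ at a time, and both rest on the same technical input: the correspondence between \dex\ subobjects of $\calM$ and subobjects of $\calMd$ (so that preimages and quotients along \dex\ subobjects are again \dex), together with the behaviour of Newton polygons in short exact sequences. You strip off the \emph{bottom} piece $S_1(\calN)$, apply the isoclinic case to it inside $\calM$, and run the induction on the image $\calN_1$ of $\calN$ in $\calM/\overline{S_1(\calN)}$; this obliges you to identify the \dhl\ of $\calN_1$ with $\caloN/\overline{S_1(\calN)}$, which you do correctly. The paper strips off the \emph{top}: it applies the inductive hypothesis to $S_{m-1}(\calN)\subseteq\calM$ and the isoclinic case to the image of $\calN$ in $\caloN/\overline{S_{m-1}(\calN)}$, which is isoclinic of the maximal slope $s_m$ and whose \dhl\ is the whole quotient simply by minimality of $\caloN$ --- so the paper never needs the full hull-in-a-quotient identification, only the easier ``preimage of \dex\ is \dex'' half of it. The trade-off is minor and both arguments are sound; the only point worth adding to yours is the degenerate case $\calN_1=0$, where $\caloN=\overline{S_1(\calN)}$ and the conclusion is immediate from the isoclinic case (your final exact-sequence step also covers it vacuously).
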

 
 \begin{proof}Suppose that $\MS(X)$ is true for isoclinic sub-$F$-isocrystals. Let $(\calM,\Phi_\calM)$ be a \dex\ $F$-isocrystal over $X$. We want to prove $\MS(\calM,\Phi_{\calM})$ by induction on the length of the slope filtration of the sub-$F$-isocrystals of $(\calM,\Phi_\calM)$. Let $(\calN,\Phi_\calN)\subseteq (\calM,\Phi_\calM)$ be a sub-$F$-isocrystal of length $m\geq 2$ and assume that we already know the statement for any subobject with slope filtration of length at most $m-1$. It is then enough to show that $S_1(\overline{S_{m-1}(\calN)})=S_1(\caloN)$, since this would imply that $S_1(\calN)=S_1(\caloN)$ because $S_1(\calN)=S_1(\overline{S_{m-1}(\calN)})$. If $\overline{S_{m-1}(\calN)}=\caloN$ there is nothing to prove. If $\overline{S_{m-1}(\calN)}\subsetneq\caloN$ we have to show that the minimal slope of $\caloN/\overline{S_{m-1}(\calN)}\neq 0$ is greater than the minimal slope of $\overline{S_{m-1}(\calN)}$, which by the inductive hypothesis is $s_1$. Write $\pi:\caloN\to \caloN/\overline{S_{m-1}(\calN)}$ for the natural projection. By definition, we have that the sub-$F$-isocrystal $\pi(\calN)\subseteq \caloN/\overline{S_{m-1}(\calN)}$ is isoclinic of slope $s_m$ and its $\dag$-hull is $\caloN/\overline{S_{m-1}(\calN)}$ itself. As we have assumed that $\MS(X)$ is true for isoclinic sub-$F$-isocrystals, we deduce that the minimal slope of $ \caloN/\overline{S_{m-1}(\calN)}$ is then equal to $s_m$, the slope of $\pi(\calN)$, which is strictly greater than $s_1$. This concludes the proof.
 \end{proof}
For next lemma we need the following theorem proven by Kedlaya.
 	\begin{theo}[Kedlaya]\label{shrink-zar:t}
 	Let $U\subseteq X$ be a dense open. The following statements are true.
 	\begin{itemize}
\item[{\normalfont(i)}]The restriction functor $\Fisoc(X)\to \Fisoc(U)$ is fully faithful.
\item[{\normalfont(ii)}]The restriction functor $\Foi(X)\to \Foi(U)$ is fully faithful and closed under the operation of taking subquotients.
 	\end{itemize}
	
\end{theo}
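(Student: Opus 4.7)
The plan is to reduce both parts, via internal hom, to the study of global $\Phi$-invariant sections. Indeed, for objects $(\calM_1,\Phi_1),(\calM_2,\Phi_2)$ in $\Fisoc(X)$ (resp. $\Foi(X)$) the internal Hom $\HHom((\calM_1,\Phi_1),(\calM_2,\Phi_2))$ is again a convergent (resp. overconvergent) $F$-isocrystal $(\calE,\Phi_\calE)$ on $X$, and $\Hom((\calM_1,\Phi_1),(\calM_2,\Phi_2))=H^0(X,(\calE,\Phi_\calE))$. Consequently both parts of the full faithfulness statement follow once one shows that the restriction map $H^0(X,(\calE,\Phi_\calE))\to H^0(U,(\calE|_U,\Phi_\calE|_U))$ is a bijection.

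For part (i), injectivity is immediate because isocrystals are coherent crystals and $U$ is dense in $X$. For surjectivity, one reduces (using Zariski-local coverings and noetherian induction) to the case where $X\setminus U$ is a smooth irreducible divisor, and further to the local question on the formal neighborhood of its generic point. Here a $\Phi$-invariant section of the restriction of a convergent $F$-isocrystal forces, via the slope filtration and the contracting action of $\Phi$ on its unit-root quotient, a bounded-growth condition that identifies the section with the restriction of a global section of $\calE$ on the whole formal disc. This local extension statement for convergent $F$-isocrystals is precisely what Kedlaya establishes in his work on slope filtrations.

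For part (ii), full faithfulness follows by combining (i) with Kedlaya's full faithfulness theorem $\alpha:\oi(X)_F\hookrightarrow \Isoc(X)_F$. Explicitly, one chases the commutative square
\begin{equation*}
\begin{tikzcd}
\Hom_{\Foi(X)}(\calM_1^\ag,\calM_2^\ag) \arrow[r] \arrow[d,hook] & \Hom_{\Foi(U)}(\calM_1^\ag|_U,\calM_2^\ag|_U) \arrow[d,hook] \\
\Hom_{\Fisoc(X)}(\calM_1,\calM_2) \arrow[r,"\sim"] & \Hom_{\Fisoc(U)}(\calM_1|_U,\calM_2|_U),
\end{tikzcd}
\end{equation*}
in which the vertical arrows are injective by Kedlaya and the bottom arrow is bijective by (i); surjectivity of the top arrow then uses that a morphism of convergent $F$-isocrystals on $X$ whose restriction to $U$ is in the essential image of $\alpha$ must itself lie in the essential image of $\alpha$, a consequence of the characterization of overconvergence as a local growth condition along a compactification.

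The remaining claim---that the essential image of $\Foi(X)\to\Foi(U)$ is closed under subquotients---I expect to be the main obstacle, and it is the part that genuinely requires a geometric input. The strategy is: given $(\calN^\ag,\Phi_\calN^\ag)\subseteq (\calM^\ag,\Phi_\calM^\ag)|_U$, one first extends the underlying convergent $F$-isocrystal $\calN\subseteq \calM|_U$ to a sub-$F$-isocrystal $\tilde\calN\subseteq \calM$ on $X$, using the convergent extension theorem recalled above. One then checks that $\tilde\calN$ is $\dag$-extendable: the overconvergence of $\calM^\ag$ provides the required growth control on \emph{all} sections near $X\setminus U$, and one transfers it to the saturated submodule $\tilde\calN$ by a careful local analysis on a strict neighborhood of the tube of each boundary component, exploiting that $\tilde\calN$ is $\Phi$-stable and that the Frobenius stabilizes the overconvergence filtration. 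Once $\tilde\calN$ is known to be $\dag$-extendable, the full faithfulness of $\alpha$ identifies its overconvergent lift with a sub-overconvergent-$F$-isocrystal of $(\calM^\ag,\Phi_\calM^\ag)$ whose restriction to $U$ is $(\calN^\ag,\Phi_\calN^\ag)$, completing the proof.
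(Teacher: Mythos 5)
The paper does not actually prove this statement: it is attributed to Kedlaya, and the ``proof'' consists of pointers to \cite[Thm.~5.2.1]{Ked07} and \cite[Thm.~4.2.1]{Ked08} (as assembled in \cite[Thm.~2.2.3]{DK16}) for (i), and to \cite[Thm.~5.2.1, Prop.~5.3.1]{Ked07} for (ii). Measured against those sources, your treatment of the two full-faithfulness assertions is broadly in the right shape: both do reduce, via the internal Hom, to extending Frobenius-fixed horizontal sections across a divisor, and the overconvergent full faithfulness in (ii) does follow from (i) together with Kedlaya's full faithfulness theorem \cite{KedFull} by your square. One correction there: the diagram chase only closes because the left vertical arrow is a \emph{bijection} (that is what full faithfulness of $\alpha$ over $X$ gives), not merely an injection as you draw it; with that, the convergent extension of a morphism between two $\dag$-extendable objects is automatically overconvergent, and the extra ``local growth condition along a compactification'' you invoke for surjectivity of the top arrow is neither needed nor a valid substitute.

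The genuine gap is in the subquotient claim, and it occurs at the very first step of your strategy. Full faithfulness of $\Fisoc(X)\to\Fisoc(U)$ extends \emph{morphisms}, not \emph{subobjects}; there is no ``convergent extension theorem'' producing a sub-$F$-isocrystal $\tilde{\calN}\subseteq\calM$ over $X$ from $\calN\subseteq\calM|_U$. Indeed, the essential image of $\Fisoc(X)\to\Fisoc(U)$ is \emph{not} closed under subobjects: if the Newton polygon of $\calM$ jumps along $X\setminus U$, the step $S_1(\calM|_U)$ of the slope filtration over $U$ does not extend to a convergent sub-$F$-isocrystal over $X$ (this failure is exactly why the slope filtration exists only after shrinking, and why the $\dag$-hull studied in this paper is a nontrivial notion). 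So the plan of first extending $\calN$ convergently and only afterwards checking $\dag$-extendability cannot work in the stated order: the overconvergence of $\calN^\ag$ must be the input that produces the extension in the first place. That is precisely the content of \cite[Prop.~5.3.1]{Ked07}, whose mechanism (every subobject of an overconvergent isocrystal at the generic point spreads out over all of $X$) is not recoverable from the ``careful local analysis on a strict neighborhood'' you gesture at. As written, the hardest part of the theorem is assumed rather than proved.
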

\begin{proof}
Point (i) follows from \cite[Thm. 5.2.1]{Ked07} and \cite[Thm. 4.2.1]{Ked08} as explained in \cite[Thm. 2.2.3]{DK16}, while point (ii) follows from \cite[Thm. 5.2.1 and Prop. 5.3.1]{Ked07}.
\end{proof}
 
 \begin{lemm}\label{shrink-zar:l}
 	If $U$ is a dense open of $X$, then $\MS(\calM|_{U},\Phi_{\calM|_U})$ implies $\MS(\calM,\Phi_\calM)$. 
 \end{lemm}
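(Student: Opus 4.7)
The plan is to show that the formation of the $\dag$-hull commutes with restriction to a dense open, and then to use this compatibility together with full faithfulness (Theorem \ref{shrink-zar:t}) to transport the statement from $U$ to $X$.

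Fix a sub-$F$-isocrystal $(\calN,\Phi_\calN)\subseteq(\calM,\Phi_\calM)$ over $X$. The key claim is that $\caloN|_U=\overline{\calN|_U}$ inside $\calM|_U$. The inclusion $\overline{\calN|_U}\subseteq \caloN|_U$ is immediate from the minimality of the $\dag$-hull, since $\caloN|_U$ is \dex\ and contains $\calN|_U$. For the reverse inclusion, let $\overline{\calN|_U}^\ag$ denote the overconvergent $F$-isocrystal associated to $\overline{\calN|_U}$; it is a subobject of $\calMd|_U$. By Theorem \ref{shrink-zar:t}(ii), $\Foi(X)\to \Foi(U)$ is fully faithful and stable under taking subobjects, so $\overline{\calN|_U}^\ag$ extends to an overconvergent sub-$F$-isocrystal $\calP^\ag\subseteq \calMd$. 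Write $\calP\subseteq\calM$ for its underlying convergent object. Then $\calP$ is \dex\ and $\calP|_U=\overline{\calN|_U}\supseteq \calN|_U$. By Theorem \ref{shrink-zar:t}(i), the inclusion $\calN|_U\hookrightarrow \calP|_U$ lifts uniquely to a morphism $\calN\to \calP$ over $X$, which is in fact an inclusion because its composition with $\calP\hookrightarrow \calM$ coincides (again by full faithfulness) with the original inclusion $\calN\hookrightarrow \calM$. Hence $\calN\subseteq \calP$ inside $\calM$, and by minimality of the $\dag$-hull $\caloN\subseteq \calP$, which upon restriction to $U$ yields $\caloN|_U\subseteq \overline{\calN|_U}$.

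Granting the claim, the hypothesis $\MS(\calM|_U,\Phi_{\calM|_U})$ applied to $\calN|_U$ gives
$$S_1(\calN)|_U=S_1(\calN|_U)=S_1(\overline{\calN|_U})=S_1(\caloN|_U)=S_1(\caloN)|_U,$$
where the outer equalities use that the slope filtration of a constant-slope $F$-isocrystal is compatible with Zariski restriction (both $\calN$ and $\caloN$ inherit constant slopes from $\calM$, since any sub-$F$-isocrystal of a constant-slope $F$-isocrystal is a direct sum of isoclinic subobjects of the isoclinic summands of $\calM$). Setting $A:=S_1(\calN)\cap S_1(\caloN)\subseteq \calM$, both inclusions $A\hookrightarrow S_1(\calN)$ and $A\hookrightarrow S_1(\caloN)$ become isomorphisms over $U$; by the full faithfulness of $\Fisoc(X)\to \Fisoc(U)$ they are already isomorphisms over $X$, giving $S_1(\calN)=S_1(\caloN)$.

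The main obstacle is the commutation $\caloN|_U=\overline{\calN|_U}$, whose delicate direction rests on Theorem \ref{shrink-zar:t}(ii): an overconvergent subobject over a dense open extends to an overconvergent subobject over all of $X$. Once this is in hand the remaining manipulations are purely formal consequences of full faithfulness of the restriction functors.
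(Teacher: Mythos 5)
Your proof is correct and follows essentially the same route as the paper's: both arguments hinge on showing that the $\dag$-hull commutes with restriction, i.e.\ $\caloN|_U=\overline{\calN|_U}$, by using Theorem \ref{shrink-zar:t} to extend the overconvergent subobject $\overline{\calN|_U}^\ag$ to an intermediate \dex\ subobject $\calN\subseteq\calP\subseteq\calM$, and then transport the equality of the $S_1$'s from $U$ back to $X$ via full faithfulness. The only differences are cosmetic (you treat a general subobject where the paper restricts to isoclinic ones, and you phrase the final descent through the intersection $S_1(\calN)\cap S_1(\caloN)$).
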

 \begin{proof} Let $(\calN,\Phi_\calN)$ be an isoclinic subobject of $(\calM,\Phi_\calM)$. We want to prove that the operation of taking $\dag$-hulls commutes with the restriction functor to $U$. We have by definition $\overline{(\calN|_U)}\subseteq \caloN|_U$. On the other hand, by Theorem \ref{shrink-zar:t}, the \dex\ isocrystal $\overline{(\calN|_U)}$ extends to some \dex\ isocrystal $\calM'$ over $X$ such that $\calN\subseteq\calM'\subseteq \calM$. Since $\caloN\subseteq \calM'$ by definition, we show that $\overline{(\calN|_U)}= \caloN|_U$. Thanks to this, if $S_1(\overline{\calN|_U})=\calN|_U$ then $S_1(\caloN)|_U=S_1(\overline{\calN|_U})=\calN|_U$. This implies that $S_1(\caloN)=\calN$.
 \end{proof}
 
 \begin{lemm}\label{shri-étal:l}
 	If $f:Y\to X$ is a finite étale Galois cover the following are true.
 	
 	\begin{itemize}

\item[{\normalfont (i)}] For every \dex\ $(\calM,\Phi_\calM)$ over $X$ with constant slopes $\MS(f^*\calM,f^*\Phi_{\calM})$ implies $\MS(\calM,\Phi_\calM)$.
\item[{\normalfont (ii)}] For every \dex\ $(\calM,\Phi_\calM)$ over $Y$ with constant slopes $\MS(f_*\calM,f_*\Phi_{\calM})$ implies $\MS(\calM,\Phi_\calM)$.

 	\end{itemize}
 	 
 \end{lemm}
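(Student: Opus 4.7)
The plan for both parts is to reduce to the hypothesis by showing that the formation of the $\dag$-hull commutes with $f^*$ for (i) and with $f_*$ for (ii), and then to invoke the facts that $f^*$ and $f_*$ are exact, faithful, and preserve the slope filtration, since $f$ is finite étale.

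For (i), starting from $(\calN,\Phi_\calN)\subseteq (\calM,\Phi_\calM)$, I would claim that $\overline{f^*\calN}=f^*\caloN$ as subobjects of $f^*\calM$. The inclusion $\overline{f^*\calN}\subseteq f^*\caloN$ is immediate, since $f^*\caloN$ is $\dag$-extendable (by étaleness of $f$) and contains $f^*\calN$. For the reverse, write $G=\Gal(Y/X)$; the pullback $f^*\calN\subseteq f^*\calM$ is tautologically $G$-equivariant, so for every $g\in G$ the subobject $g^*\overline{f^*\calN}$ is again $\dag$-extendable and contains $f^*\calN$, which by minimality forces $\overline{f^*\calN}$ to be $G$-stable. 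Galois descent for overconvergent $F$-isocrystals (using the full faithfulness of $\alpha$ to lift the $G$-equivariance from the convergent to the overconvergent level) then produces a $\dag$-extendable $\calM'\subseteq \calM$ with $f^*\calM'=\overline{f^*\calN}$ and $\calN\subseteq \calM'$. Minimality of $\caloN$ yields $\caloN\subseteq \calM'$, i.e.\ $f^*\caloN\subseteq \overline{f^*\calN}$. Applying $\MS(f^*\calM,f^*\Phi_\calM)$ and the commutation of the slope filtration with $f^*$, we obtain $f^*S_1(\calN)=f^*S_1(\caloN)$, and faithfulness of $f^*$ concludes $S_1(\calN)=S_1(\caloN)$.

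For (ii), I would proceed dually. Since $f$ is finite étale, $f_*$ is well-defined on overconvergent $F$-isocrystals, so $f_*\caloN$ is $\dag$-extendable, yielding $\overline{f_*\calN}\subseteq f_*\caloN$. For the reverse inclusion, I would pull back to $Y$ and use the canonical isomorphism $f^*f_*\calE\simeq \bigoplus_{g\in G} g^*\calE$ valid for finite étale Galois covers. This rewrites the inclusions $f_*\calN\subseteq \overline{f_*\calN}\subseteq f_*\calM$ as $\bigoplus_g g^*\calN\subseteq f^*\overline{f_*\calN}\subseteq \bigoplus_g g^*\calM$. A short check shows that the $\dag$-hull commutes with finite direct sums and with $g^*$ for each $g\in G$, so the $\dag$-hull of $\bigoplus_g g^*\calN$ in $\bigoplus_g g^*\calM$ equals $\bigoplus_g g^*\caloN\simeq f^*f_*\caloN$. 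Minimality then forces $f^*f_*\caloN\subseteq f^*\overline{f_*\calN}$, and since $f^*$ is exact and faithful, hence reflects inclusions of subobjects, we conclude $f_*\caloN\subseteq \overline{f_*\calN}$. The argument concludes as in (i) using the commutation of the slope filtration with $f_*$ and the faithfulness of $f_*$.

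The main technical point in both parts is the commutation of the $\dag$-hull with $f^*$ and $f_*$; the most delicate ingredient is the Galois descent step in (i), which rests on the equivalence between overconvergent $F$-isocrystals on $X$ and $G$-equivariant overconvergent $F$-isocrystals on $Y$ for a finite étale Galois cover. Once these compatibilities are in place, the rest of the proof is a formal manipulation with the definition of $\MS$.
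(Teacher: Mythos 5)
Your proposal is correct and follows essentially the same route as the paper: part (i) is proved by showing $\overline{f^*\calN}$ is $G$-stable (hence descends to a $\dag$-extendable subobject between $\calN$ and $\calM$, forcing $\overline{f^*\calN}=f^*\caloN$), and part (ii) is reduced to (i) via $f^*f_*\calE\simeq\bigoplus_{g\in G}g^*\calE$ together with the compatibility of the $\dag$-hull with direct sums and with $g^*$. The only cosmetic difference is that in (ii) you package these identities into the statement $\overline{f_*\calN}=f_*\caloN$ and then descend, whereas the paper concludes directly on $Y$ by comparing $S_1(\overline{f^*f_*\calN})$ with $f^*f_*\calN$; the underlying ingredients are identical.
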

 \begin{proof}
  As in Lemma \ref{shrink-zar:l}, for (i) it is enough to show that for an isoclinic subobject $(\calN,\Phi_\calN)\subseteq(\calM,\Phi_\calM)$ we have that $f^*\caloN=\overline{f^*\calN}$. Write $G$ for the Galois group of the cover. The inclusion $f^*\caloN\supseteq\overline{f^*\calN}$ follows from the definition of $\dag$-hull. On the other hand, if $\calM':=\overline{f^*\calN}$, then the intersection $\bigcap_{g\in G}g^*\calM'$ contains $f^*\calN$ and it is \dex, thus it is equal to $\calM'$. This implies that $(\calM',\Phi_{\calM'})$ descends to some \dex\ $F$-isocrystal over $X$ which contains $(\calN,\Phi_\calN)$ and is contained in $(\calM,\Phi_\calM)$. Therefore $f^*\caloN=\overline{f^*\calN}$, as we wanted.
 	
 	\spa
 	
 	We prove now (ii). Let $(\calN,\Phi_\calN)$ be an isoclinic subobject of $(\calM,\Phi_\calM)$. We have that $f^*f_*\calN=\bigoplus_{g\in G}g^*\calN\subseteq \bigoplus_{g\in G}g^*\calM=f^*f_*\calM$ so that $\overline{f^*f_*\calN}=\bigoplus_{g\in G}g^*\caloN$. Because of this, to prove that $S_1(\caloN)=\calN$ it is enough to prove that $S_1(\overline{f^*f_*\calN})=f^*f_*\calN.$ By the previous argument we have that $\overline{f^*f_*\calN}=f^*\overline{f_*\calN}$, so that $S_1(\overline{f^*f_*\calN})=f^*(S_1(\overline{f_*\calN}))$ (note that $S_1(-)$ commutes with $f^*$). On the other hand, by the assumption that $S_1(\overline{f_*\calN})=f_*\calN$ we have that $f^*(S_1(\overline{f_*\calN}))=f^*f_*\calN$ and this yields the desired result.
 \end{proof}

 \subsection{The case of curves}\label{curve-case:ss}
In this section we study $\MS(X)$ when $X$ is a curve. In this case, $\MS(X)$ has also been proven by Tsuzuki in \cite[Prop. 6.1]{Tsu19}. The proof we present here is a shorter variant where we avoid [\textit{ibid.}, Thm. 3.27]. The main ingredient in our case is de Jong's theorem on the existence of the \textit{reverse slope filtration} (also used by Tsuzuki) at the generic point of $X$. On the other hand, in Corollary \ref{PBS:c} we extend [\textit{ibid.}, Thm. 3.27] to arbitrary smooth varieties. The strategy is to first prove a version of $\MS(X)$ for the generic point and then pass to the global setting. Thanks to the reductions of §\ref{d-hull:ss}, it is enough to treat the case of the affine line.

\spa

 Consider the ring $\calO_{\calE}:=(W[t]_{(p)})^\wedge$, where $(-)^\wedge$ denotes the $p$-adic completion. This is a complete discrete valuation ring unramified over $W$ with residue field $k(t)$. Let $\calO_{\calEd}\subseteq \calO_{\calE}$ be the subring of functions which converge in some annulus $*\leq |t| < 1$. These two rings are both endowed with a Frobenius lift $\varphi(t)=t^p$ and a derivation $\partial_t$. Write $\calE$ and $\calEd$ for the respective fields of fractions. 
 
 \spa
 
 
 
 \begin{defi}[$(\varphi,\nabla)$-modules]\label{phi-nabla:d}
 	If $E$ is either $\calE$ or $\calEd$, we say that a finite dimensional vector space $M$ over $E$ is a \textit{$(\varphi,\nabla)$-module} if it is endowed with a $\varphi$-linear isomorphism $\varphi_M:M\iso M $ and an additive morphism $\nabla_{\partial_ t}:M\to M$ which satisfies the Leibniz rule and such that $\nabla_{\partial_t} \circ \varphi_M=pt^{p-1}\varphi_M\circ \nabla_{\partial_t}$.
 \end{defi}
 
 The category $\Fisoc(k(t))$ is the category of $(\varphi,\nabla)$-modules over $\calE$ and we denote by $\Foi(k(t))$ the category of $(\varphi,\nabla)$-modules over $\calEd$. Thanks to Theorem 5.1 \cite{KedFull}, the natural functor $$\Foi(k(t))\to \Fisoc(k(t))$$ is fully faithful.
 
 \begin{prop}[Kedlaya, Tsuzuki\footnote{ 
 		We first learned about a proof of Proposition \ref{local:p} from Kedlaya via a private communication. The proposition also corresponds essentially to \cite[Thm. 2.14]{Tsu19}.}]\label{local:p}
 	If $N\subseteq M$ is an inclusion of $(\varphi,\nabla)$-modules over $\calE$ and $M$ is \dex, then $S_1(N)=S_1(\overline{N})$.
 \end{prop}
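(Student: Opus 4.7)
The plan is to combine de Jong's reverse slope filtration theorem with the minimality in the definition of the \dhl, together with the fact (essentially the isoclinic case of the proposition) that every sub of an isoclinic overconvergent $(\varphi,\nabla)$-module is itself overconvergent. First I would reduce to $\overline{N}=M$ by replacing $M$ with the \dex\ module $\overline{N}$; the claim then amounts to showing $S_1(M)\subseteq N$, since this inclusion forces the minimum slope of $N$ to equal $s_1(M)$ and yields $S_1(N)=N\cap S_1(M)=S_1(M)$.

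By de Jong's reverse slope filtration theorem applied to $M$, there exists an overconvergent sub-$(\varphi,\nabla)$-module $T\subseteq M$ such that every slope of $T$ is strictly greater than $s_1(M)$, while $M/T$ is overconvergent and isoclinic of slope $s_1(M)$. Let $\pi\colon M\to M/T$ be the projection. Since $M/T$ is isoclinic overconvergent, the image $\pi(N)\subseteq M/T$ is itself overconvergent: after twisting by a rank-$1$ object of slope $s_1(M)$ this reduces to the equivalence between unit-root overconvergent $F$-isocrystals and $p$-adic local systems, which preserves subobjects. Consequently $\pi^{-1}(\pi(N))=N+T\subseteq M$ is an overconvergent sub of $M=\overline{N}$ containing $N$, and the minimality of the \dhl\ forces $N+T=M$.

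The equality $N+T=M$ identifies $M/N$ with a quotient of $T$, so every slope of $M/N$ is strictly greater than $s_1(M)$. On the other hand, the image of $S_1(M)$ in $M/N$ is the sub-$(\varphi,\nabla)$-module $S_1(M)/(S_1(M)\cap N)$, which is isoclinic of slope $s_1(M)$; since $M/N$ carries no such slope, this image must vanish, giving $S_1(M)\subseteq N$ and concluding the argument.

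The main obstacle is securing $T$ with the required slope properties --- this is the content of de Jong's reverse slope filtration theorem for overconvergent $(\varphi,\nabla)$-modules over $\calE$. The auxiliary isoclinic statement, together with the verification that preimages of overconvergent subs under morphisms of overconvergent modules remain overconvergent, then combine with the minimality of the \dhl\ to yield the result.
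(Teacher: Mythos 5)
Your argument has a genuine gap at its central step: the existence of an overconvergent sub-$(\varphi,\nabla)$-module $T\subseteq M$ with all slopes strictly greater than $s_1(M)$ and with $M/T$ isoclinic of slope $s_1(M)$. This is not what de Jong's theorem provides. The reverse slope filtration (Theorem \ref{op-slop:t}) exists for $\varphi$-modules over $\tcalEd$, i.e.\ after base change to the overconvergent ring with \emph{algebraically closed} residue field $k(t)^{\alg}$; it is in general neither horizontal for $\nabla$ nor descendable to $\calEd$, so it yields no sub-$(\varphi,\nabla)$-module of $M$ over $\calE$. Worse, the object you need simply does not exist in general: such a $T$ would be a proper nonzero overconvergent subobject of $M$ whenever $M$ is not isoclinic, whereas $M=\overline{N}$ is typically \emph{irreducible} as an overconvergent module while carrying several slopes --- this is exactly the situation in which the proposition has content (compare Corollaries \ref{k-c:c} and \ref{irr:c}). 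Dually, your $T$ would furnish $M^\vee$ with an overconvergent isoclinic subobject of maximal slope of full multiplicity, i.e.\ an overconvergent splitting of the top of the slope filtration of $M^\vee$, which fails already for rank-$2$ examples with non-split slope filtration. The remaining steps (subobjects of isoclinic overconvergent modules are overconvergent, preimages of overconvergent subs are overconvergent, minimality of the \dhl) are sound, but everything hinges on $T$.

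The paper's proof is built precisely to avoid needing a reverse filtration downstairs. After reducing to $N$ isoclinic with $\overline{N}=M$, it characterizes the \dhl\ dually: $\overline{N}$ comes from an $\overline{N}^\ag\subseteq M^\ag$ such that $(\overline{N}^\ag)^\vee\to N^\vee$ is \emph{injective} (Lemma \ref{dhl:l}). It then base changes this injection to $\tcalEd$, where de Jong's theorem genuinely applies to the $\varphi$-module $(\overline{N}^\ag)^\vee\otimes_{\calEd}\tcalEd$, and extracts only numerical consequences --- the maximal slope of $\overline{N}^\vee$ and a bound on the rank of its top graded piece (Lemma \ref{local-K:l}). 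These numbers are insensitive to the fact that the reverse filtration lives only after the base change. To repair your argument you would have to replace the appeal to a reverse filtration over $\calEd$ by a dualization-and-base-change step of this kind.
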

 
 \begin{cons}\label{var-dhl:c}
	Let $Q^\ag$ be the image of the composition of the natural morphisms $$(M^\ag)^\vee:=\Hom_\calEd(M^{\ag},\calEd)\to\Hom_\calE(M,\calE)\to \Hom_\calE(N,\calE)=:N^\vee.$$ We have natural maps $$M^\vee=(M^{\ag})^\vee\otimes_{\calEd}\calE\twoheadrightarrow Q^\ag \otimes_{\calEd} \calE\twoheadrightarrow N^\vee.$$
	The first arrow is surjective by construction, the second one is surjective because the morphism $M^\vee\to N^\vee$ is surjective. Note that even though $Q^\ag\subseteq N^\vee$, the second map needs not to be injective. Dualising with respect to $\calE$ we get inclusions $N\subseteq Q^\vee \subseteq M$. 
\end{cons}

To prove Proposition \ref{local:p} we first need the following construction.

 \begin{lemm}\label{dhl:l}
 	The $(\varphi,\nabla)$-module $Q^\vee$ is the $\dag$-hull of $N$ in $M$. In other words, $\overline{N}$ is the unique submodule of $M$ which contains $N$ and comes from some $\overline{N}^\ag\subseteq M^\ag$ such that $(\overline{N}^\ag)^\vee\to N^\vee$ is injective.
 \end{lemm}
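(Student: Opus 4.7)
The plan is to verify that $Q^\vee$ satisfies the universal property of the $\dag$-hull and then extract the alternative characterization. The algebraic tools needed are the compatibility of linear duality $\Hom_{\calEd}(-,\calEd)$ with the base change $-\otimes_{\calEd}\calE$ for finite-dimensional vector spaces, together with ``double dual'' cancellations allowing one to toggle freely between sub-objects of $M^\ag$ and quotients of $(M^\ag)^\vee$.

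First I would show that $Q^\vee$ is itself $\dag$-extendable inside $M$. By definition, $(M^\ag)^\vee\twoheadrightarrow Q^\ag$ is a surjection of $(\varphi,\nabla)$-modules over $\calEd$; dualizing over $\calEd$ produces an injection $(Q^\ag)^\vee\hookrightarrow (M^\ag)^{\vee\vee}=M^\ag$ compatible with $(\varphi,\nabla)$. Because duality commutes with $-\otimes_{\calEd}\calE$ in finite dimension, one gets $(Q^\ag)^\vee\otimes_{\calEd}\calE=Q^\vee$ as a submodule of $M$. This exhibits $Q^\vee$ as a $\dag$-extendable submodule of $M$, with $\dag$-extension $(Q^\ag)^\vee$, and it contains $N$ by Construction \ref{var-dhl:c}. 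Consequently $\overline{N}\subseteq Q^\vee$.

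Next I would establish the reverse inclusion by showing that any $\dag$-extendable $P\subseteq M$ with $\dag$-extension $P^\ag\subseteq M^\ag$ and containing $N$ must contain $Q^\vee$. The inclusion $P^\ag\subseteq M^\ag$ dualizes to a surjection $(M^\ag)^\vee\twoheadrightarrow (P^\ag)^\vee$. Since $N\subseteq P$, any functional on $M^\ag$ vanishing on $P^\ag$ extends to a $\calE$-linear functional on $M$ vanishing on $P\supseteq N$, so the map $(M^\ag)^\vee\to N^\vee$ factors through $(P^\ag)^\vee$. Comparing images one sees that the induced map $(P^\ag)^\vee\to N^\vee$ has image exactly $Q^\ag$, yielding a surjection $(P^\ag)^\vee\twoheadrightarrow Q^\ag$. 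Dualizing over $\calEd$ and tensoring with $\calE$ gives $Q^\vee\hookrightarrow P$ inside $M$, as desired. Combined with the previous paragraph this forces $\overline{N}=Q^\vee$.

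Finally, for the alternative characterization, note that the $\dag$-extension $(Q^\ag)^\vee$ of $\overline{N}$ satisfies $((Q^\ag)^\vee)^\vee=Q^\ag$, which is by construction a submodule of $N^\vee$, so the injectivity condition holds for $\overline{N}$. Conversely, suppose $P\supseteq N$ is $\dag$-extendable via $P^\ag\subseteq M^\ag$ and that $(P^\ag)^\vee\to N^\vee$ is injective. The surjection $(P^\ag)^\vee\twoheadrightarrow Q^\ag$ produced in the minimality step is then also injective, hence an isomorphism, so $P^\ag=(Q^\ag)^\vee$ and $P=Q^\vee=\overline{N}$. The argument is essentially linear algebra over the pair $(\calEd,\calE)$; the one point requiring care, which I do not view as a substantial obstacle, is consistently tracking the interaction of the $(\varphi,\nabla)$-structures with dualization and base change, since $Q^\ag$ is a finite-dimensional $\calEd$-subspace of an infinite-dimensional $\calEd$-vector space $N^\vee$.
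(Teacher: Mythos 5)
Your proof is correct and follows the same route as the paper: show $Q^\vee$ is $\dag$-extendable (hence contains $\overline{N}$) by dualising the surjection $(M^\ag)^\vee\twoheadrightarrow Q^\ag$, and get the reverse inclusion by noting that for any $\dag$-extendable $P\supseteq N$ the map $(P^\ag)^\vee\to N^\vee$ surjects onto $Q^\ag$, which dualises to $Q^\vee\subseteq P$. The paper runs this second step only for $P=\overline{N}$ and is terser, but the argument is the same.
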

 \begin{proof}
 	By construction, $Q^\vee$ is \dex\ and it contains $N$, so that $\overline{N}\subseteq Q^\vee$. On the other hand, we have morphisms $(M^\ag)^\vee\twoheadrightarrow (\overline{N}^\ag)^\vee\to N^\vee$, where the first one is surjective. By definition, the morphism $(\overline{N}^\ag)^\vee\to N^\vee$ factors through $Q^\ag$, which implies that $Q^\vee\subseteq \overline{N}$.
 \end{proof}
 
 We recall now the reverse filtration introduced by de Jong in \cite[Prop. 5.5]{deJ98}. For this, we need to introduce two other discrete valuation fields lifting $k(t)^\alg$.
 \begin{defi}
 	Let $\calO_{\calE_{\alg}}$ be the ring of Witt vectors of $k(t)^\alg$ (which is contained in the ring $\Gamma_2=\Gamma_{2,1}$ in de Jong's notation). Every element of $\calO_{\tcalE}$ can be written uniquely as $\sum_{i=0}^\infty[f_i]p^i$ where $[f_i]$ is the Teichm\"uller lift of some $f_i\in k(t)^\alg$. Consider the subring $\calO_{\tcalE}^\ag\subseteq \calO_{\tcalE}$ of those series such that the $t$-adic valuations of $f_i$ are bounded below by some linear function in $i$. This subring is preserved by the Frobenius of $\calO_{\tcalE}$ (and it is contained in $\Gamma_{2,c}=\Gamma_{2,1,c}$ in de Jong's notation). We write $\tcalEd$ and $\tcalE$ for the fraction fields.
 \end{defi}

 \begin{theo}[de Jong]\label{op-slop:t}
 	For a $\varphi$-module ${M}^{\ag}_\alg$ over $\tcalEd$ the following statements are true. 
 	\begin{itemize}

 		\item[(i)]$M_{\alg}^{\ag}$ admits a reverse slope filtration, i.e. there exists a filtration $$0=S_{0}^{\rev}(M_{\alg}^{\ag})\subsetneq S^{\rev}_1(M_{\alg}^{\ag})\subsetneq \dots \subsetneq S^{\rev}_m(M_{\alg}^{\ag})=M_{\alg}^{\ag}$$ of $\varphi$-modules over $\tcalEd$ such that $(S^{\rev}_i(\tMd)/S^{\rev}_{i-1}(\tMd))\otimes_{\tcalEd}\tcalE$ is isomorphic to\break $S_{m-i}(M_{\alg})/S_{m-i-1}(M_{\alg})$.
 		\item[(ii)]If $M^\ag$ is isoclinic of slope $s/r$, after possibly multiplying $s$ and $r$ by some positive integer, the $\varphi$-module $M_{\alg}^{\ag}[p^{1/r}]$ admits a basis of vectors $\{v_1,\dots,v_d\}$ such that $\varphi(v_i)=p^{s/r}v_i$.
 	\end{itemize}
 	
 \end{theo}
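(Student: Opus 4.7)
The statement is attributed to de Jong, so the honest answer is that my plan is to appeal to \cite[Prop. 5.5]{deJ98}. Still, let me sketch the structural ideas one would use, since the statement is really two claims of rather different flavour and (ii) is the engine for (i).

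For part (ii), the plan is to first work over $\tcalE$, where Dieudonné--Manin classification gives a complete decomposition of isoclinic $\varphi$-modules. In the isoclinic case of slope $s/r$, after possibly replacing $(s,r)$ by $(sa,ra)$ for some $a>0$ and passing to $[p^{1/r}]$, every $\varphi$-module over $\tcalE[p^{1/r}]$ of pure slope $s/r$ is spanned by $\varphi$-eigenvectors with eigenvalue $p^{s/r}$. The task is then to show that such an eigenbasis can actually be realised inside $M_{\alg}^{\ag}[p^{1/r}]$, i.e. with coefficients which converge on some annulus $*\leq |t|<1$. The natural strategy is a convergent fixed-point argument: rescaling $\varphi$ by $p^{-s/r}$ yields an operator $\psi$ of slope $0$, and iterating $\psi$ on an approximate eigenvector produces a genuine eigenvector whose coefficients admit $t$-adic estimates linear in the $p$-adic valuation, which is exactly the overconvergence condition defining $\calO_{\tcalEd}^\ag$.

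For part (i), the plan is to use (ii) as a black box and build the reverse filtration inductively on $m$. The lowest step $S_1^{\rev}(\tMd)$ should correspond, after tensoring with $\tcalE$, to the \emph{highest-slope} piece of the usual slope filtration of $\tM$. To construct it, one takes the short exact sequence $0\to S_{m-1}(\tM)\to \tM \to \tM/S_{m-1}(\tM)\to 0$, applies (ii) to the isoclinic quotient of top slope $s_m$ to produce an eigenbasis, and then lifts the resulting trivialised $\varphi$-submodule through $\tMd$ using a Hensel/convergence argument. The point is that an eigenvector for $p^{s_m/r}\varphi$ in $\tM/S_{m-1}(\tM)$ can be lifted to $\tM$ because the slopes of $S_{m-1}(\tM)$ are all strictly smaller, so the relevant operator is invertible on that piece. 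This yields $S_1^{\rev}(\tMd)\subseteq \tMd$; quotienting by it and iterating gives the full filtration.

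The hard part is part (ii): ensuring that the Dieudonné--Manin eigenbasis, which exists over $\tcalE[p^{1/r}]$ for free, actually has overconvergent coefficients. The subtlety is that the contraction/fixed-point step must be compatible with the overconvergence condition, and one needs genuine estimates on the growth of the $t$-adic valuations of the coefficients of $\psi^n(v)$ as $n\to\infty$. Once (ii) is in hand, (i) is a relatively formal induction. In the actual write-up, I would simply invoke \cite[Prop. 5.5]{deJ98} after matching the notational conventions for $\tcalEd$, $\tcalE$, and the reverse filtration indexing.
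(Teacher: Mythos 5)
Your overall plan --- reduce everything to \cite[Prop.~5.5]{deJ98}, with (ii) proved internally by manufacturing overconvergent eigenvectors via a contraction argument and (i) deduced by inductively splitting off the top-slope piece --- is the same route the paper takes, and your sketch of the internal mechanism of de Jong's proof is accurate. The one genuine gap is that you treat the passage from de Jong's setting to the present one as ``matching notational conventions.'' It is not a notational matter: de Jong proves his proposition for $\varphi$-modules over the strictly larger field $\Gamma_{2,c}[\tfrac 1p]$, whereas the theorem here concerns $\varphi$-modules over $\tcalEd$, the overconvergent subfield of the fraction field of $W(k(t)^{\alg})$, which is a \emph{proper} subring of de Jong's (the paper records the inclusions $\calO_{\tcalE}\subseteq \Gamma_2$ and $\calO_{\tcalE}^\ag\subseteq\Gamma_{2,c}$). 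A reverse slope filtration or an eigenbasis produced over the bigger ring does not automatically descend to $\tcalEd$-submodules, so a bare citation does not suffice; one must check that the proof runs over the smaller ring.

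The paper's proof consists precisely of this check: in \cite[Prop.~5.1]{deJ98} one may replace $\Gamma_{2,b,c}$ by $\calO_{\calE_{\alg}}^\dag[p^{1/b}]$, because the residue field of the latter is still algebraically closed (so the equations one solves to produce $\varphi$-eigenvectors remain solvable there) and because $\calO_{\calE_{\alg}}[p^{1/b}]\cap \Gamma_{2,b,c}=\calO_{\calE_{\alg}}^\dag[p^{1/b}]$, which guarantees that the solutions, overconvergent in de Jong's sense, actually lie in the smaller ring. Your write-up should include this verification (or an explicit descent argument for the filtration) rather than delegating it to ``notation''; with that addition, your proposal coincides with the paper's proof.
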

\begin{proof}
The result is proved in {\cite[Prop. 5.5]{deJ98}} for $\varphi$-modules over the bigger field $\Gamma_{2,c}[\tfrac 1p]$. To extend it in our setting, it is enough to notice that in [\textit{ibid.}, Prop. 5.1] one can replace $\Gamma_{2,b,c}$ by $\calO_{\calE_{\alg}}^\dag[p^{1/b}]$ since the residue field of $\calO_{\calE_{\alg}}^\dag[p^{1/b}]$ is algebraically closed and $\calO_{\calE_{\alg}}[p^{1/b}]\cap \Gamma_{2,b,c}=\calO_{\calE_{\alg}}^\dag[p^{1/b}]$. The rest of the proof works unchanged.
\end{proof}

 \begin{lemm}\label{local-K:l}
 	Let $M^\ag$ be a $\varphi$-module over $\calEd$ and let $N$ be an isoclinic $\varphi$-module over $\calE$ of slope $s/r$. For every morphism $\psi:M\to N$ of $\varphi$-modules, if the restriction of $\psi$ to $M^\ag$ is injective, then the maximal slope of $M$ is $s/r$ and the rank of $S_m(M)/S_{m-1}(M)$ is smaller or equal than the rank of $N$.
 \end{lemm}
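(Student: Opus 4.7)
The plan is to exploit the injectivity of $\psi|_{M^\ag}$ together with de Jong's reverse slope filtration (Theorem~\ref{op-slop:t}) and the eigenvector description of its bottom piece, in order to compare $\varphi$-eigenvalues inside the isoclinic ambient space $N$.

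First I would extend scalars along $\calEd\hookrightarrow \tcalEd$. By flatness, tensoring $\psi|_{M^\ag}$ yields an injective, $\varphi$-compatible, $\tcalEd$-linear morphism $M^\ag_\alg:=M^\ag\otimes_{\calEd}\tcalEd\hookrightarrow N\otimes_{\calEd}\tcalEd$. To descend the target to the isoclinic model $N_\alg:=N\otimes_{\calE}\tcalE$, one needs the canonical map $N\otimes_{\calEd}\tcalEd\to N_\alg$ to be injective. This reduces to the linear disjointness of $\calE$ and $\tcalEd$ over $\calEd$ inside $\tcalE$, which follows from the corresponding statement for the residue field extensions $k(t)$ and $k(t)^{\mathrm{alg}}$ over $k(t)$ together with a lift to the valuation rings. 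Composing, I obtain an injective, $\varphi$-compatible, $\tcalEd$-linear map $\tilde{\psi}\colon M^\ag_\alg\hookrightarrow N_\alg$.

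Next I would apply Theorem~\ref{op-slop:t} to $M^\ag_\alg$. By part~(i), the bottom piece $S_1^{\rev}(M^\ag_\alg)$ of the reverse slope filtration is isoclinic of slope $s_m$ (the maximal slope of $M$) and has $\tcalEd$-rank equal to $\rk(S_m(M)/S_{m-1}(M))$. By part~(ii), after further extending along $\tcalEd\hookrightarrow\tcalEd[p^{1/r'}]$ for a suitable positive integer $r'$, it admits a basis $\{v_1,\dots,v_d\}$ of $\varphi$-eigenvectors with $\varphi(v_i)=p^{s_m}v_i$. The images $\tilde{\psi}(v_i)$ in $N_\alg[p^{1/r'}]$ are then $\tcalE[p^{1/r'}]$-linearly independent $\varphi$-eigenvectors with eigenvalue $p^{s_m}$.

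Finally, since $N_\alg$ is isoclinic of slope $s/r$ over $\tcalE$, after base change along $\tcalE\hookrightarrow\tcalE[p^{1/\mathrm{lcm}(r,r')}]$ it splits as a direct sum of rank-one $\varphi$-eigenspaces with eigenvalue $p^{s/r}$, so the only $\varphi$-eigenvalue on this extension is $p^{s/r}$. Matching this with the eigenvalue $p^{s_m}$ of the nonzero $\tilde{\psi}(v_i)$ forces $s_m=s/r$, which is the first assertion. The linear independence of the $d$ images $\tilde{\psi}(v_i)$ inside the $\rk(N)$-dimensional eigenspace yields $d\leq\rk(N)$, which is the second assertion. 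The main obstacle is justifying the injectivity of the descent $N\otimes_{\calEd}\tcalEd\to N_\alg$, i.e.\ the linear disjointness of $\calE$ and $\tcalEd$ over $\calEd$ inside $\tcalE$, which demands a careful analysis of the four valuation rings involved.
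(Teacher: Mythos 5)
Your argument is correct and is essentially the paper's proof: base change along $\calEd\hookrightarrow\tcalEd$, compose into $N\otimes_{\calE}\tcalE$, apply de Jong's reverse slope filtration to the source, and compare $\varphi$-eigenspaces after adjoining roots of $p$ to get both the slope equality and the rank bound. The one point you flag as the main obstacle --- the injectivity of $\calE\otimes_{\calEd}\tcalEd\to\tcalE$ --- is exactly the input the paper uses, and it disposes of it by citing \cite[Prop.~4.1]{KedFull} rather than reproving it via linear disjointness of residue fields.
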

 \begin{proof}This is a variant of \cite[Lem. 4.2]{KedFull}. Since $\calE_{\alg}^\ag$ is flat over $\calEd$ and $\calE\otimes_{\calEd}\tcalEd\to\tcalE$ is injective by \cite[Prop. 4.1]{KedFull}, then $\psi|_{M^\ag}$ induces an injective morphism
 	
 	$$\psi':\tMd:=M^\ag\otimes_{\calEd}\tcalEd\to N\otimes_{\calEd}\tcalEd\to  N\otimes_{\calE}\tcalE.$$	
 	The restriction of $\psi'$ to $S^{\rev}_1(M_{\alg}^{\ag})$ induces a non-trivial morphism $$S^{\rev}_1(M_{\alg}^{\ag})\otimes_{\tcalEd}\tcalE\to N\otimes_{\calE}\tcalE.$$ This implies that the slope of $S^{\rev}_1(M_{\alg}^{\ag})$, which is the maximal slope of $M$, is $s/r$. Moreover, by Theorem \ref{op-slop:t}.(ii), after possibly enlarging $r$, the dimension of the $\Qp(p^{1/r})$-vector space $(S^{\rev}_1(M_{\alg}^{\ag})[p^{1/r}])^{\varphi=p^{s/r}}$ is equal to the rank of $S_m(M)/S_{m-1}(M)$. Similarly, by the Dieudonné--Manin decomposition, $(N\otimes_{\calE}\tcalE[p^{1/r}])^{\varphi=p^{s/r}}$ is a $\Qp(p^{1/r})$-vector space of dimension equal to the rank of $N$. We then obtain the inequality of ranks thanks to the injectivity of $\psi'$.
 \end{proof}
\subsubsection{Proof of Proposition \ref{local:p}.}
 	By Lemma \ref{red-isoc:l}, it is enough to prove the result when $N$ is isoclinic of slope $s/r$ and $\overline{N}=M$. In that case, we have to check that $\overline{N}$ has minimal slope $s/r$ and that the inclusion $N\subseteq S_1(\overline{N})$ is an equality. By Lemma \ref{dhl:l}, the morphism ${\overline{N}}^\vee\to N^\vee$ satisfies the assumptions of Lemma \ref{local-K:l}, thus $\overline{N}$ has minimal slope $s/r$. Moreover, since $$\rk(N)=\rk(N^\vee)\geq\rk( S_m(\overline{N}^\vee)/S_{m-1}(\overline{N}^\vee))=\rk(S_1(\overline{N})),$$ we get $N=S_1(\overline{N})$. 	
 	\qed

 \subsubsection{}
Next step is to pass from the result on the generic point to the global situation. Thanks to the reductions in §\ref{d-hull:ss}, it is enough to work with $F$-isocrystals over $\mathbb{A}^1_k$. Let $K\langle u \rangle$ and $K\langle u \rangle^\ag$ be the rings of convergent and overconvergent series over $K$. The category $\Fisoc(\mathbb{A}^1_k)$ is the category of $(\varphi,\nabla)$-modules over $K\langle u \rangle$ (defined as in §\ref{phi-nabla:d}) while $\Foi(\mathbb{A}^1_k)$ is the category of $(\varphi,\nabla)$-modules over $K\langle u \rangle^\ag$. We consider the ring homomorphisms $K\langle u \rangle\to \calE$ and $K\langle u \rangle^\ag\to \calE^\ag$ sending $u\mapsto \tfrac 1t$. 

\spa

Let $A_n$ be the image of the morphism $W\langle u_1,\dots, u_n\rangle \to W\langle u \rangle $ which sends $u_i\mapsto \bar{u}_i:=pu^i$ and let $\mathfrak{m}_n\subseteq A_n[t]$ be the maximal ideal $(p,t,\bar{u}_1,\dots,\bar{u}_n)$. We denote by $B_n\subseteq \calO_{\calE}$ the image of $(A_n[t])_{\mathfrak{m}_n}\to \calO_{\calE}$. The rings $A_n$ and $B_n$ provide integral models of the rings of overconvergent series. More precisely, we have that $\varinjlim_n {A}_n[\tfrac 1p]=K\langle u \rangle^\ag$ and $\varinjlim_n ({B}_n)^\wedge[\tfrac 1p]=\calEd$. Note that the $A_n$-algebra $B_n$ can be also written as $$\left( \tfrac{A_n[t]}{(\bar{u}_1t-p,\dots,\bar{u}_nt^n-p)}\right)_{\mathfrak{m}_n}.$$ It will be convenient for us to consider also the $A_n$-algebras $B_n':=\tfrac{A_n[t]}{(\bar{u}_1t-p)}$ and $\tilde{B}_n:=\left(B_n'\right)_{\mathfrak{m}_n}$. Note that $\tilde{B}_n$ admits a surjection $\tilde{B}_n\twoheadrightarrow B_n$ whose kernel is killed by $p^{n-1}$ since $$(\bar{u}_1t-p)|(\bar{u}_1^it^i-p^i)=p^{i-1}(\bar{u}_it^i-p).$$ This implies that $\varinjlim_n (\tilde{B}_n)^\wedge[\tfrac 1p]=\calEd$ as well.

 \begin{lemm}\label{bas:l}For every $n>0$, the natural morphism $(W\langle u \rangle {{\widehat{\otimes}}}_{{A}_n}{\tilde{B}_{n}})^\wedge [\tfrac 1p]\to \calE$ is injective\footnote{One can actually prove that it is surjective as well, but we do not need this here.}. 
 \end{lemm}
 
 \begin{proof}
 	
 	To prove the result it is enough to show that for every $e>0$, we have that $W_e[u] {{{\otimes}}}_{{A}_n}{\tilde{B}_{n}}\to \calO_{\calE}/p^{e}$ is killed by $p$. In addition, since localising at $\mathfrak{m}_n$ is exact and $(\calO_{\calE}/p^{e})_{\mathfrak{m}_n}=\calO_{\calE}/p^{e}$, we may replace $\tilde{B}_{n}$ with $B_n'$. After these reductions, it remains to prove that the kernel of the morphism 
 	$$W_e[u] {{{\otimes}}}_{{A}_n}{{B}'_{n}}=\tfrac{W_e[u,t]}{p(ut-1)}\to W_e[t]_{(p)}= \calO_{\calE}/p^{e}$$
 	is killed by $p$, which follows from the fact that the kernel is generated by $(ut-1)$.
 \end{proof}
\begin{lemm}\label{bounded-p-torsion:l}
If $M$ is an $A_n$-module such that $M[p^\infty]=M[p^s]$ for some $s\geq 0$, then $\mathrm{Tor}_1^{A_n}({B}'_n,M)$ is killed by $p^s$.
\end{lemm}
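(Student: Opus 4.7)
The plan is to compute $\mathrm{Tor}_1^{A_n}(\tilde B_n, M)$ explicitly using a free resolution of $\tilde B_n$, and then see that every element is forced to be $p^\infty$-torsion coefficient by coefficient, so the hypothesis on $M$ applies uniformly.

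Concretely, I would start by writing down the evident free resolution of $A_n[t]/(\bar u_1 t - p)$ over $A_n$. Since $A_n$ is a domain, so is $A_n[t]$, and multiplication by $\bar u_1 t - p$ is therefore injective; as $A_n[t]$ is free over $A_n$, we have
\[
0 \to A_n[t] \xrightarrow{\bar u_1 t - p} A_n[t] \to A_n[t]/(\bar u_1 t - p) \to 0.
\]
Since localization at $\mathfrak{m}_n$ (a multiplicative set in $A_n[t]$) is exact and commutes with tensoring over $A_n$, I get
\[
\mathrm{Tor}_1^{A_n}(\tilde B_n, M) \;=\; \Bigl(\ker\bigl(\bar u_1 t - p : M[t] \to M[t]\bigr)\Bigr)_{\mathfrak{m}_n}.
\]

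Next I would unwind the kernel condition. Writing an element as $\sum_{i=0}^{N} m_i t^i$, the equation $(\bar u_1 t - p)\sum m_i t^i = 0$ is equivalent to $p m_0 = 0$ together with $p m_i = \bar u_1 m_{i-1}$ for all $i \geq 1$. An easy induction then shows $p^{i+1} m_i = 0$: the base case is $p m_0 = 0$, and from $p m_i = \bar u_1 m_{i-1}$ one gets $p^{i+1} m_i = \bar u_1 \cdot p^{i} m_{i-1} = 0$. In particular every coefficient $m_i$ lies in $M[p^\infty]$.

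The payoff is immediate: by the bounded $p^\infty$-torsion hypothesis, fix $r$ with $p^r M[p^\infty] = 0$. Then $p^r m_i = 0$ for every coefficient of every kernel element, so $p^r \cdot \sum m_i t^i = 0$ in $M[t]$. Thus $p^r$ annihilates $\ker(\bar u_1 t - p)$, and therefore also its localization at $\mathfrak{m}_n$, which is exactly $\mathrm{Tor}_1^{A_n}(\tilde B_n, M)$. There is no real obstacle; the one subtle point is recognizing that the inductive bound $p^{i+1} m_i = 0$, which a priori degrades with $i$, is tamed by the global hypothesis $p^r M[p^\infty]=0$ so that a single power of $p$ kills every coefficient of every element of the kernel at once.
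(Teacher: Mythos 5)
Your proof is correct and follows essentially the same route as the paper: the same free resolution $A_n[t]\xrightarrow{\cdot(\bar u_1 t-p)}A_n[t]$, the same identification of $\mathrm{Tor}_1$ with $\ker(\bar u_1 t-p)$ on $M[t]$ (localized), and the same final appeal to bounded $p^\infty$-torsion of $M[t]$. The only difference is in the middle step: where you show by an explicit coefficient induction that every kernel element has $p^\infty$-torsion coefficients, the paper instead observes that $\tilde B_n[\tfrac1p]$ is a localisation of $A_n[\tfrac1p]$, hence flat, so the Tor module must be $p^\infty$-torsion --- your computation is a slightly more hands-on substitute for that one observation and even yields the uniform bound $p^r$ directly.
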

\begin{proof}
	The $A_n$-algebra ${B}_n'$ admits the free resolution $A_n[t]\xrightarrow{\cdot (\bar{u}_1t-p)}A_n[t]$, so that $\mathrm{Tor}_1^{A_n}({B}'_n,M)=\ker(M[t]\xrightarrow{\cdot (\bar{u}_1t-p)}M[t])$. Since ${B}_n'[\tfrac 1p]$ is a localisation of $A_n[\tfrac 1p]$, we have that $$\mathrm{Tor}_1^{A_n[\tfrac 1p]}({B}_n'[\tfrac 1p],M[\tfrac 1p])=0,$$ which implies that $\mathrm{Tor}_1^{A_n}({B}_n',M)\subseteq (M[t])[p^\infty]$. We end the proof by noticing that since $M[p^\infty]$ is killed by $p^s$, the same is true for $(M[t])[p^\infty]$.
\end{proof}

 \begin{prop}\label{l-to-g:p}
 	
 	Let $N\subseteq M$ be an inclusion of $(\varphi,\nabla)$-module over $K\langle u \rangle$, where $M$ is \dex\ and has constant slopes. The $(\varphi,\nabla)$-module $\overline{N}\otimes\calE$ is the \dhl\ of $N\otimes \calE$ in $M\otimes \calE$.
 \end{prop}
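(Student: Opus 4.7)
The strategy is to combine the dual characterisation of $\dag$-hulls from Lemma~\ref{dhl:l} with the integral-model formalism encoded by the rings $A_n$ and $\tilde{B}_n$, and thereby reduce the proposition to a flatness/torsion calculation after inverting $p$.

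One inclusion is essentially formal: since $\overline{N}^{\ag}$ extends $\overline{N}$ inside $M^{\ag}$, the base change $\overline{N}^{\ag}\otimes_{K\langle u \rangle^{\ag}} \calE^{\ag}$ provides an overconvergent extension of $\overline{N}\otimes\calE$ inside $M^{\ag}\otimes\calE^{\ag}$, so that $\overline{N\otimes\calE}\subseteq\overline{N}\otimes\calE$. (The required injectivity of the base change map will be a by-product of the analysis below.) The substantive content is the reverse inclusion, and for this I would invoke Lemma~\ref{dhl:l}: by Construction~\ref{var-dhl:c}, $\overline{N}$ is recovered from the image $Q^{\ag}\subseteq N^{\vee}$ of the composition $(M^{\ag})^{\vee}\to M^{\vee}\to N^{\vee}$, and similarly the $\dag$-hull $\overline{N\otimes\calE}$ of $N\otimes\calE$ in $M\otimes\calE$ is recovered from the analogous image $Q^{\ag}_{\calE}\subseteq (N\otimes\calE)^{\vee}$. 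It therefore suffices to establish the equality
\begin{equation*}
Q^{\ag}_{\calE}\;=\;Q^{\ag}\otimes_{K\langle u \rangle^{\ag}}\calE^{\ag}
\end{equation*}
inside $(N\otimes\calE)^{\vee}=N^{\vee}\otimes_{K\langle u \rangle}\calE$; in other words, the formation of images commutes with the base change $K\langle u \rangle^{\ag}\to\calE^{\ag}$.

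To prove this commutation, I would pick $n$ large enough that $M^{\ag}$, $(M^{\ag})^{\vee}$ and $N$ all descend to finitely generated modules over $A_n$ (respectively $W\langle u\rangle$) with bounded $p^{\infty}$-torsion, which is possible because $\varinjlim_n A_n[\tfrac 1p]=K\langle u \rangle^{\ag}$. Base changing these integral models along $A_n\to\tilde{B}_n$ introduces only $p$-power torsion in $\mathrm{Tor}_1$ by Lemma~\ref{bounded-p-torsion:l}; after $p$-adic completion and inversion of $p$, these Tor terms vanish, so Lemma~\ref{bas:l} implies that the relevant short exact sequences, in particular $0\to Q^{\ag}\to N^{\vee}\to N^{\vee}/Q^{\ag}\to 0$, stay exact after the base change to $\calE^{\ag}$. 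Passing to images then yields $Q^{\ag}_{\calE}=Q^{\ag}\otimes\calE^{\ag}$, and a parallel application to $0\to\overline{N}^{\ag}\to M^{\ag}\to M^{\ag}/\overline{N}^{\ag}\to 0$ supplies the injectivity needed for the easy inclusion.

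The main obstacle is precisely that $K\langle u \rangle^{\ag}\to\calE^{\ag}$ is far from flat, so naïvely tensoring the inclusion $Q^{\ag}\hookrightarrow N^{\vee}$ over $\calE^{\ag}$ need not preserve injectivity and the formation of images need not commute with base change. Overcoming this is exactly the purpose of the auxiliary $A_n$-algebra $\tilde{B}_n$, which, unlike $B_n$, admits an explicit length-two free resolution; this is what makes Lemma~\ref{bounded-p-torsion:l} applicable and confines the failure of flatness to bounded $p$-torsion that disappears upon inverting $p$.
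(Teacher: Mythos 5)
Your overall strategy coincides with the paper's: reduce, via Lemma~\ref{dhl:l}, to the injectivity of $Q^\ag\otimes_{K\langle u \rangle^\ag}\calE^\ag\to N^\vee\otimes_{K\langle u \rangle}\calE$, pass to integral models over $A_n$ and $W\langle u\rangle$, base change along $A_n\to\tilde{B}_n$, and kill the failure of flatness with Lemma~\ref{bounded-p-torsion:l} after completing and inverting $p$. You also correctly identify why $\tilde{B}_n$ rather than $B_n$ is the right auxiliary ring. So the architecture is right.

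However, there is a genuine gap at the decisive step. Lemma~\ref{bounded-p-torsion:l} must be applied to the \emph{cokernel} $R_W:=P_W/f(Q_{W,n})$ of the integral map $f:Q_{W,n}\to P_W$, since it is $\mathrm{Tor}_1^{A_n}(\tilde{B}_n,R_W)$ that controls the kernel of $(Q_{W,n}\otimes_{A_n}\tilde{B}_n)^\wedge\to(P_W\otimes_{A_n}\tilde{B}_n)^\wedge$. You only assert bounded $p^\infty$-torsion for the models of $M^\ag$, $(M^\ag)^\vee$ and $N$ themselves — these can be taken free (by Kedlaya's freeness results for $Q^\ag=(\overline{N}^\ag)^\vee$ and $P=N^\vee$), so the assertion is vacuous for them — and you never address $R_W$. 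The point is that $R_W$ is \emph{not} finitely generated over $A_n$ (it is a quotient of the $W\langle u\rangle$-module $P_W$ by an $A_n$-submodule, and $W\langle u\rangle$ is not finite over $A_n$), so no Noetherian argument gives boundedness for free; if the $p^\infty$-torsion of $R_W$ were unbounded, the $\mathrm{Tor}_1$ term would not be killed by a single power of $p$ and could survive completion and inversion of $p$. The paper closes this gap with a compactness argument: equipping $Q_{W,n}$ and $P_W$ with sup norms, the compactness of $Q_{W,n}$ and the injectivity of $f$ give $\inf\{\|f(m)\|:m\in Q_{W,n}\setminus pQ_{W,n}\}=p^{-s}$, whence $p^{s+i}P_W\cap f(Q_{W,n})\subseteq p^if(Q_{W,n})$ and the $p^\infty$-torsion of $R_W$ is killed by $p^s$. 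Without this (or some substitute for it), the argument does not go through.
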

 \begin{proof}
 	The $K\langle u \rangle$-module $\overline{N}$ has a similar description as in §\ref{var-dhl:c}, where $\calE$ and $\calEd$ are replaced by the rings $K\langle u \rangle$ and $K\langle u \rangle^\ag$. By the analogue of Lemma \ref{dhl:l}, the $(\varphi,\nabla)$-module $\overline{N}$ comes from a $(\varphi,\nabla)$-module $\overline{N}^{\ag}$ over $K\langle u \rangle^\ag$ such that $(\overline{N}^{\ag})^\vee\to N^\vee$ is injective. Write $Q^\ag$ for $(\overline{N}^{\ag})^\vee$ and $P$ for $N^\vee$. Thanks to Lemma \ref{dhl:l}, it is enough to prove that $$Q^\ag\otimes_{K\langle u \rangle^\ag}\calE^\ag\to P\otimes_{K\langle u \rangle}\calE$$
 	is injective.
 	
 	\spa

 	By Prop 6.6 and Thm. 6.7 of \cite{KedFull}, both $P$ and $Q^\ag$ are free. Therefore, there exists a finite free $W\langle u \rangle$-module $P_W$ and a finite free $A_1$-module $Q_{W,1}$ such that $P=P_W[\tfrac{1}{p}]$ and $Q^{\ag}=\varinjlim_nQ_{W,n}[\tfrac{1}{p}]$ where $Q_{W,n}:=Q_{W,1}\otimes_{A_1}A_n$. Even the morphism $Q^\ag\to P$, after clearing the denominators, is induced by a compatible family of injective morphisms $f_n:Q_{W,n}\to P_W$. We note that on the one hand $$Q^\ag\otimes_{K\langle u \rangle^\ag}\calE^\ag=\varinjlim_n(Q_{W,n}\otimes_{A_n} \tilde{B}_n)^\wedge[\tfrac 1p],$$
 	and on the other hand, by Lemma \ref{bas:l}, $$(P_W\otimes_{A_n} \tilde{B}_n)^\wedge[\tfrac 1p]\subseteq P\otimes_{K\langle u \rangle}\calE$$ for every $n>0$.

 	\spa
 	We fix now $n>0$ and we endow the modules $Q_{W,n}$ and $P_W$ with the sup norms associated to some choices of basis. Since $Q_{W,n}$ is compact for the induced metric and $f_n$ is injective, we have that $$\inf\left\{ \|f_n(m)\|\  \big| \ {m\in Q_{W,n}\setminus p  Q_{W,n}}\right\}=p^{-s_n}$$ for some $s_n\geq 0$. This means that $p^{s_n+i}P_W\cap f(Q_{W,n})\subseteq p^if(Q_{W,n})$ for every $i\geq 0$ and this implies that the $p^\infty$-torsion of the quotient $R_{W,n}:=P_W/f(Q_{W,n})$ is killed by $p^{s_n}$. Therefore, by Lemma \ref{bounded-p-torsion:l}, the group $\mathrm{Tor}^{A_n}_1({B}_n',R_{W,n})$ is killed by $p^{s_n}$, which in turns implies that the kernel of $$Q_{W,n}\otimes_{A_n} {B}'_n\to P_W\otimes_{A_n} {B}'_n$$ is killed by $p^{s_n}$ as well. After localising at $\mathfrak{m}_n$, taking the $p$-adic completion, and inverting $p$, we show that 
 	$$(Q_{W,n}\otimes_{A_n} \tilde{B}_n)^\wedge[\tfrac{1}{p}]\to (P_W\otimes_{A_n} \tilde{B}_n)^\wedge[\tfrac{1}{p}]$$ is injective.
 	
 	\spa
 	
 	Knowing the injectivity for every $n>0$, we deduce that $$Q^\ag\otimes_{K\langle u \rangle^\ag}\calE^\ag=\varinjlim_n(Q_{W,n}\otimes_{A_n} \tilde{B}_n)^\wedge[\tfrac 1p]\to (P_W\otimes_{A_n} \tilde{B}_n)^\wedge[\tfrac 1p]\subseteq P\otimes_{K\langle u \rangle}\calE$$ is injective. This yields the desired result.\end{proof}
 
 \begin{theo}[See also {\cite[Prop. 6.1]{Tsu19}}]\label{main-curv:t}
 	A smooth curve $X$ over a perfect field $k$ satisfies $\MS(X)$.
 \end{theo}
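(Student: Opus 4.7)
The plan is to use the reductions of §\ref{d-hull:ss} to pass to the affine line and then combine the two generic-point statements, Propositions \ref{local:p} and \ref{l-to-g:p}. Combining Lemmas \ref{shrink-zar:l} and \ref{shri-étal:l} with the existence of a separating transcendence basis of $k(X)/k$ and a Galois-closure argument, it suffices to prove $\MS(\calM,\Phi_\calM)$ for a \dex\ $F$-isocrystal over $\mathbb{A}^1_k$. Lemmas \ref{red-F:l} and \ref{red-isoc:l} then reduce the problem further to the case $n=1$ with $(\calN,\Phi_\calN)\subseteq(\calM,\Phi_\calM)$ isoclinic of some slope $s$; in this situation $S_1(\calN)=\calN$, so the goal is to establish the equality $\calN = S_1(\caloN)$ inside $\calM$.

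The key input is Proposition \ref{l-to-g:p}, which identifies $\caloN\otimes_{K\langle u\rangle}\calE$ with the $\dag$-hull of $\calN\otimes\calE$ in $\calM\otimes\calE$: the formation of $\dag$-hulls commutes with the base change $K\langle u\rangle\to\calE$. Applying Proposition \ref{local:p} to the resulting inclusion $\calN\otimes\calE \subseteq \calM\otimes\calE$ then yields
\[
\calN\otimes\calE \;=\; S_1(\calN\otimes\calE) \;=\; S_1(\caloN\otimes\calE) \;=\; S_1(\caloN)\otimes\calE,
\]
where the last step uses that the slope filtration commutes with the flat base change $\otimes\calE$.

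To conclude, I would argue that $\calN \subseteq S_1(\caloN) \subseteq \calM$ is an inclusion of isoclinic sub-$F$-isocrystals of the same slope $s$ that become equal after $\otimes\calE$; both therefore have the same rank over $K\langle u\rangle$, and so the quotient $S_1(\caloN)/\calN$, viewed in the abelian category of $F$-isocrystals on $\mathbb{A}^1_k$, is an isoclinic $F$-isocrystal of rank zero and is hence zero. This gives $\calN = S_1(\caloN)$ and completes the proof. I expect the genuine difficulty of the theorem to be concentrated in Propositions \ref{local:p} and \ref{l-to-g:p}, which have already been established; the remainder is essentially formal, with the only delicate point being the reduction to $\mathbb{A}^1_k$, which requires a careful tracking of $\dag$-hulls through the Galois-closure step furnished by a separating transcendence basis of $k(X)/k$.
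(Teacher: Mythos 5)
Your argument from the affine line onward is exactly the paper's proof: Proposition \ref{l-to-g:p} to identify $\caloN\otimes\calE$ with the $\dag$-hull of $\calN\otimes\calE$ in $\calM\otimes\calE$, Proposition \ref{local:p} at the generic point, and the slope-plus-rank comparison to conclude $S_1(\calN)=S_1(\caloN)$ over $K\langle u\rangle$. That part is correct.

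The gap is in the reduction to $\mathbb{A}^1_k$. A separating transcendence basis of $k(X)/k$ only produces a finite \emph{separable} map to $\mathbb{P}^1_k$, hence a finite \emph{étale} cover $U\to V$ only after shrinking the target to a proper dense open $V\subsetneq\mathbb{A}^1_k$ (the branch locus is nonempty in general). Your Galois-closure step then reduces $\MS(U)$ to $\MS(V)$, not to $\MS(\mathbb{A}^1_k)$, and there is no way to pass from $\MS(\mathbb{A}^1_k)$ to $\MS(V)$: Lemma \ref{shrink-zar:l} only goes in the direction ``dense open implies ambient variety'', and an $F$-isocrystal on $V$ need not extend to $\mathbb{A}^1_k$. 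Meanwhile Proposition \ref{l-to-g:p} and the entire apparatus of $K\langle u\rangle$, $W\langle u\rangle$, $A_n$, $\tilde B_n$ is built specifically for the full affine line. The missing ingredient is Kedlaya's theorem \cite[Thm.~1]{Ked05}, a characteristic-$p$ phenomenon with no analogue in characteristic $0$, which asserts that a dense open of $X$ admits a finite étale cover onto \emph{all} of $\mathbb{A}^1_k$; combined with a Galois closure and Lemmas \ref{shrink-zar:l} and \ref{shri-étal:l} this gives the reduction you want. As written, your reduction step fails, and it cannot be repaired by elementary field theory.
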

\begin{proof}By \cite[Thm. 1]{Ked05}, there exists a dense open of $X$ which admits a finite étale cover to $\mathbb{A}^1_k$. Thanks to Lemma \ref{shrink-zar:l} and Lemma \ref{shri-étal:l}, this implies that we may assume $X=\mathbb{A}^1_k$. Let $N\subseteq M$ be an inclusion of $(\varphi,\nabla)$-module over $K\langle u \rangle$, where $M$ is \dex\ and has constant slopes. By Proposition \ref{l-to-g:p}, we have that $\overline{N}\otimes\calE$ is the \dhl\ of $N\otimes \calE$ in $M\otimes \calE$. Therefore, by Proposition \ref{local:p}, $$S_1(N)\otimes \calE=S_1(N\otimes \calE)=S_1(\overline{N}\otimes\calE)=S_1(\overline{N})\otimes{\calE}.$$
	This implies that $S_1(N)$ and $S_1(\overline{N})$ have the same slope and the same rank, so that $S_1(N)=S_1(\overline{N})$. This concludes the proof.
\end{proof}

 \subsection{Chevalley theorem and filtrations}\label{Chevalley:ss}
 
 In this section Hypothesis \ref{hypo} is in force. Let $\eta\in X(\Omega)$ be a perfect point of $X$ and let $(\calMd,\Phi_\calM^\ag)$ be an overconvergent $F^n$-isocrystal with constant slopes. Consider the Dieudonné--Manin fibre functor $\omega_{\eta,\Qpu}:\langle\calM,\Phi_\calM^\infty\rangle \to \VVec_{\Qpu}$ associated to $\eta$.
 Write $G$ for $G(\calMd,\Phi_\calM^{\ag,\infty},\eta)$ and $H$ for $G(\calM,\Phi_\calM^\infty,\eta)$. 
 \begin{defi}\label{coch:d}
For every $e$, let $\Gm^{1/e}$ be the torus with character group $\tfrac{1}{e}\Z$ and $\Gm^{1/\infty}:=\varprojlim_e \Gm^{1/e}$. If $r$ is the lcm of the denominators of the slopes of $\calM$, for every $(\calM',\Phi_{\calM'}^\infty)\in\langle\calM,\Phi_\calM^\infty\rangle$ we denote by $\tilde{S}_{s/r}(\omega_{\eta,\Qpu}(\calM'_\eta,\Phi_{\calM'}^\infty))\subseteq \omega_{\eta,\Qpu}(\calM'_\eta,\Phi_{\calM'}^\infty)$ the $\Qpu$-linear vector subspace of slope at most $s/r$. This defines an exact $\otimes$-filtration $\tilde{S}_{\bullet}$ of $\omega_{\eta,\Qpu}$ indexed by $\tfrac{1}{r}\Z$, that in turn defines a morphism $\lambda:\Gm^{1/\infty}\twoheadrightarrow \Gm^{1/r}\to G$ (cf. \cite[§2.1.1, page 213]{Saa72}). We say that $\lambda$ is the \textit{quasi-cocharacter} attached to the slope filtration of $\calM_\eta$. We denote by $P_G(\lambda)$ the subgroup of $G$ of those $\otimes$-automorphisms of $\omega_{\eta,\Qpu}$ preserving $\tilde{S}_{\bullet}$ (as in [\textit{ibid.}, §2.1.3, page 216]).

 \end{defi}

 \begin{prop}\label{para:p}If $\MS(X)$ is true then $H=P_G(\lambda)$. In particular, if $G$ is a reductive group then $H$ is a parabolic subgroup of $G$.
 \end{prop}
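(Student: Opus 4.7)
The plan is to establish both inclusions $H \subseteq P_G(\lambda)$ and $P_G(\lambda) \subseteq H$ by Tannakian means, invoking $\MS(X)$ only for the second. The main obstacle will be translating the abstract datum produced by Chevalley's theorem into a concrete statement about slopes, so that $\MS(X)$ can be brought to bear.

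The inclusion $H \subseteq P_G(\lambda)$ is essentially formal. Every $(\calM',\Phi_{\calM'}^\infty) \in \langle \calM, \Phi_\calM^\infty\rangle$ inherits constant slopes from $\calM$ (the property being stable under subquotients, tensor products, and duals), and hence admits the slope filtration $S_\bullet(\calM')$ by sub-$F^\infty$-isocrystals. Under $\omega_{\eta,\Qpu}$ this filtration agrees with $\tilde{S}_\bullet(\omega_{\eta,\Qpu}(\calM'))$, since both recover the pointwise Newton decomposition at $\eta$, which already lives over $\Qpu$ by Lemma \ref{DM:l}. Tannakian duality then forces every element of $H$ to preserve each step, giving $H \subseteq P_G(\lambda)$.

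For the reverse inclusion I would apply Chevalley's theorem to the closed immersion $H \hookrightarrow G$ of affine group schemes over the characteristic-$0$ field $\Qpu$: there exist $\calV \in \langle \calMd, \Phi_\calM^{\ag,\infty}\rangle$ and a $\Qpu$-line $L \subseteq \omega_{\eta,\Qpu}(\calV)$ such that $H = \St_G(L)$. By Tannakian duality, $L$ corresponds to a sub-$F^\infty$-isocrystal $\calL \subseteq \calV$ in the convergent category; by Lemma \ref{DM:l} its rank as an isocrystal equals $\dim_{\Qpu} L = 1$. Picking an $F^n$-representative and forming the \dhl\ $\caloL \subseteq \calV$ yields a sub-object of $\calV$ in $\langle \calMd, \Phi_\calM^{\ag,\infty}\rangle$, inheriting constant slopes from $\calV$. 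Since $\calL$ has rank $1$ it is isoclinic, so $S_1(\calL) = \calL$; applying $\MS(X)$ then gives $\calL = S_1(\caloL)$. Hence $L = \omega_{\eta,\Qpu}(\calL)$ is precisely the first nonzero step of $\tilde{S}_\bullet$ on $\omega_{\eta,\Qpu}(\caloL)$, which is preserved by $P_G(\lambda)$ by the very definition of $\lambda$. We conclude $P_G(\lambda) \subseteq \St_G(L) = H$. For the \emph{in particular} clause, after replacing $\lambda$ by a suitable multiple it factors through an honest cocharacter $\Gm \to G$, and the stabiliser of the resulting filtration in a reductive $G$ is a parabolic subgroup by the standard dynamical characterisation.
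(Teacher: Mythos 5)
Your proposal is correct and follows essentially the same route as the paper: the formal inclusion $H\subseteq P_G(\lambda)$, then Chevalley's theorem producing a line $L=\omega_{\eta,\Qpu}(\calL)$ with $H=\St_G(L)$, and the $\dag$-hull $\caloL$ together with $\MS(X)$ (giving $\calL=S_1(\caloL)$) to exhibit $L$ as the intersection of the $P_G(\lambda)$-stable subspaces $\omega_{\eta,\Qpu}(\caloL)$ and $\tilde S_{s}$. The only cosmetic differences are that you spell out the (asserted) inclusion $H\subseteq P_G(\lambda)$ and use the dynamical characterisation of parabolics where the paper cites Saavedra Rivano.
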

 \begin{proof}Since $H\subseteq P_G(\lambda)$, we have to prove that $P_G(\lambda)\subseteq H$. By Chevalley's theorem, there exists an overconvergent $F^\infty$-isocrystal $(\calNd,\Phi_{\calN}^{\ag,\infty})\in \langle \calMd, \Phi_\calM^{\ag,\infty} \rangle$ and a rank $1$ sub-$F^\infty$-isocrystal $(\calL,\Phi_\calL^\infty)\subseteq (\calN,\Phi_\calN^\infty)$, such that $H$ is the stabiliser of the line $$L:=\omega_{\eta,\Qpi}(\calL,\Phi_\calL)\subseteq \omega_{\eta,\Qpi}(\calN,\Phi_\calN):=V.$$ We have to prove that $P_G(\lambda)$ stabilises $L$. Let $(\caloL,\Phi_{\caloL})$ be the $\dag$-hull of $(\calL,\Phi_\calL)\subseteq (\calN,\Phi_\calN)$ and write $\overline{L}$ for $\omega_{\eta,\Qpi}(\caloL,\Phi_{\caloL})$. We denote by $s$ the slope of $(\calL,\Phi_\calL)$ and by $V^{\leq s}\subseteq V$ the subspace of slope smaller or equal than $s$. Since $\MS(X)$ is satisfied, we know that $\calL=S_1(\calL)=S_1(\caloL)$, which implies that $L=\overline{L}\cap V^{\leq s}.$ Since $\overline{\calL}\subseteq \calN$ admits by definition a $\dag$-extension, $P_G(\lambda)$ stabilises $\overline{L}$. On the other hand, $P_G(\lambda)$ stabilises $V^{\leq s}$ because $(\calNd,\Phi_{\calN}^{\ag,\infty})$ is an element in $\langle \calMd, \Phi_\calM^{\ag,\infty} \rangle$. This implies that $P_G(\lambda)$ stabilises $L$, thus $P_G(\lambda)\subseteq H$ as we wanted. If $G$ is reductive, $H=P_G(\lambda)$ is parabolic by \cite[Prop. 2.2.5, page 223]{Saa72}.
 \end{proof}

\subsubsection{}
We end this section by presenting some consequences of Proposition \ref{para:p}. Let $X$ be a smooth geometrically connected variety over $k$ such that every dense open $U\subseteq X$ satisfies $\MS(U)$, let $\eta\in X(\Omega)$ be a perfect point of $X$, and let $(\calM^\ag,\Phi_\calM^\ag)$ be an overconvergent $F^n$-isocrystal over $X$.
 \begin{prop}\label{pi0:p}
There is a canonical isomorphism $\pi_0(G(\calM,\Phi^\infty_\calM,\eta))=\pi_0(G(\calMd,\Phi^{\ag,\infty}_\calM,\eta))$.
 \end{prop}
 \begin{proof}
 		To prove the statement we may assume that $(\calMd,\Phi^{\ag,\infty}_\calM)$ is semi-simple by replacing it with the semi-simplification with respect to a Jordan--H\"older filtration. Next step is to show that the map $\pi_0(G(\calM,\Phi^\infty_\calM,\eta))\to\pi_0(G(\calMd,\Phi^{\ag,\infty}_\calM,\eta))$ is surjective. For this we do not need the previous proposition. Write $\langle\calMd,\Phi_\calM^{\ag,\infty}\rangle_{\mathrm{fin}}$ and $\langle\calM,\Phi_\calM^\infty\rangle_{\mathrm{fin}}$ for the Tannakian subcategories of objects with finite monodromy groups. The functor $\langle\calMd,\Phi_\calM^{\ag,\infty}\rangle\to \langle\calM,\Phi_\calM^\infty\rangle$ is fully faithful, thus the functor $\langle\calMd,\Phi_\calM^{\ag,\infty}\rangle_{\mathrm{fin}}\to \langle\calM,\Phi_\calM^\infty\rangle_{\mathrm{fin}}$ is fully faithful and observable, which implies that $\pi_0(G(\calM,\Phi^\infty_\calM,\eta))\to\pi_0(G(\calMd,\Phi^{\ag,\infty}_\calM,\eta))$ is surjective.
 		
 		\spa	
 		
 		Let $U$ be a dense open of $X$ where $(\calMd,\Phi_\calM^\ag)$ has constant slopes. Up to replacing $\eta$ we may assume that it lies in $U$. We have the following commutative diagram
 		\begin{equation*}
		\begin{tikzcd}
	 \pi_0(G(\calM|_U,\Phi^\infty_\calM|_U,\eta))\arrow[r,two heads]\arrow[d,two heads] & \pi_0(G(\calM,\Phi_\calM^\infty,\eta))\arrow[d,two heads]\\
 \pi_0(G(\calMd|_U,\Phi^{\ag,\infty}_\calM|_U,\eta))\arrow[r,"\sim"] & \pi_0(G(\calMd,\Phi^{\ag,\infty}_\calM,\eta)).
\end{tikzcd}
 		\end{equation*}	
 Thanks to Theorem \ref{shrink-zar:t}, the upper arrow is surjective while the lower one is an isomorphism. To prove the final statement it is enough to prove that the morphism $\pi_0(G(\calM|_U,\Phi^\infty_\calM|_U,\eta))\to\pi_0(G(\calMd|_U,\Phi^{\ag,\infty}_\calM|_U,\eta))$ is injective, or equivalently that $$H':=G(\calMd|_U,\Phi^{\ag,\infty}_\calM|_U,\eta)^\circ \cap G(\calM|_U,\Phi^\infty_\calM|_U,\eta)$$ is connected where $G(\calMd|_U,\Phi^{\ag,\infty}_\calM|_U,\eta)^\circ$ is the neutral component of $G(\calMd|_U,\Phi^{\ag,\infty}_\calM|_U,\eta)$. Indeed, this would imply that all the morphisms of the diagram are isomorphisms. To prove this we note that by Proposition \ref{para:p}, the group $H'$ is a parabolic subgroup of the connected reductive group $G(\calMd|_U,\Phi^{\ag,\infty}_\calM|_U,\eta)^\circ$, thus it is connected by \cite[Thm. 11.16]{Bor91}, as we wanted.
 \end{proof}
 
 \begin{prop}\label{rk1-unr:p}
 If $(\calL,\Phi_\calL^\infty)\in \langle \calM,\Phi_\calM^\infty\rangle$ is a $\dag$-extendable rank $1$ $F^\infty$-isocrystal, then $(\calLd,\Phi_\calL^{\ag,\infty})$ is in $\langle\calMd,\Phi_\calM^{\ag,\infty}\rangle$.
 \end{prop}
 \begin{proof}By Theorem \ref{shrink-zar:t} we may shrink $X$ and assume that $(\calM,\Phi_\calM)$ admits the slope filtration. Moreover, we may assume without loss of generality that $(\calMd,\Phi_\calM^\ag)$ is semi-simple. Let us write $G$ for $G(\calMd \oplus \calLd,\Phi_{\calM}^{\ag,\infty}\oplus\Phi_{\calL}^{\ag,\infty})$ and $G'$ for $G(\calMd,\Phi_\calM^{\ag,\infty})$. The inclusion $\langle\calMd,\Phi_{\calM}^{\ag,\infty} \rangle \subseteq \langle\calMd\oplus \calLd,\Phi_{\calM}^{\ag,\infty} \oplus \Phi_{\calL}^{\ag,\infty} \rangle$ induces a surjective morphism $f:G\twoheadrightarrow G'$. We want to prove that $N:=\ker(f)=1$. Let $P\subseteq G$ be the subgroup associated to $\langle\calM \oplus \calL,\Phi_\calM \oplus \Phi_\calL \rangle$. By Proposition \ref{para:p}, since we are assuming that $\MS(X)$ is true, the subgroup $P$ is a parabolic subgroup of $G$. Therefore, it contains a maximal torus $T$ of $G$. Since $\langle \calM,\Phi_{\calM}^\infty \rangle=\langle \calM \oplus \calL, \Phi_{\calM}^\infty\oplus\Phi_{\calL}^\infty\rangle$, the morphism $f$ is an isomorphism when restricted to $P$, so that $N\cap P=1$. Therefore, $N$ is a finite group, because it is a normal subgroup of the reductive group $G$ which intersects trivially the maximal torus $T$. The morphism $\pi_0(f)$ is an isomorphism by Proposition \ref{pi0:p}, because both $\pi_0(G)$ and $\pi_0(G')$ are equal to $\pi_0(P)$. This implies that $N$ is contained in $G^\circ$, the neutral component of $G$. Moreover, since $N$ is a finite normal subgroup, it is contained in $Z(G^\circ)\subseteq T$. This shows that $N=1$, as we wanted. \end{proof}
 
 \begin{coro}\label{inters:c}The group $G(\calM,\eta)$ is equal to the intersection $G(\calM^\ag,\eta)\cap G(\calM,\Phi^\infty_\calM,\eta)\otimes_{\Qpu} K$.
\end{coro}
\begin{proof} Since constant $F^\infty$-isocrystals are $\dag$-extendable, Proposition \ref{rk1-unr:p} implies that $$G(\calM,\Phi_\calM^\infty,\eta)^{\cst}= G(\calMd,\Phi_\calM^{\ag,\infty},\eta)^{\cst}.$$ We deduce that the left square of the diagram in Proposition \ref{fund-exac-seq:p} is cartesian. To get the final result we extend the scalars of the cartesian square from $\Qpu$ to $K$. Indeed, thanks to Proposition \ref{comp:p}, we know that $G(\calM,V_\calM,\eta)\otimes_{\Qpu} K=G(\calM,\eta)$ and $G(\calM^\ag,V_\calM,\eta)\otimes_{\Qpu} K=G(\calM^\ag,\eta)$. 
\end{proof}



 \subsection{Lefschetz theorem and proof of Theorem \ref{i-para-conj:t}}\label{Lefschet:ss}
 
 In this section we deal with the problem of reducing Theorem \ref{main:t} to the case of curves. For this aim, we prove a new Lefschetz theorem for overconvergent isocrystals (Theorem \ref{tame-lef:t}). Subsequently, we use Theorem \ref{main:t} to prove Theorem \ref{i-para-conj:t}.

 \begin{defi}
 	If $Y$ is smooth and proper variety, $D\subseteq Y$ is a simple normal crossing divisor and $X:=Y\setminus D$, we say that an overconvergent isocrystal over $X$ is \textit{docile} if it has unipotent monodromy along $D$ (cf. \cite[Def. 4.4.2]{Ked07}). We denote by $\Isoc^\ag(X)_\mathrm{doc}$ the category of docile overconvergent isocrystals over $X$.
 \end{defi}
 
 
 In \cite[Cor. 2.4]{AE19} the authors proved the following Lefschetz theorem for docile overconvergent isocrystals over perfect fields.
 
 \begin{theo}[Abe--Esnault]\label{AE-lef:t}
 	Let $Y\subseteq \PP^d_{k}$ be a smooth connected projective variety of dimension $\geq2$ and $D$ a simple normal crossing divisor. For every smooth curve $\overline{C}\subseteq Y$ which is a complete intersection of hypersurfaces intersecting transversally $D$, if we write $X$ for $Y\setminus D$ and $C$ for $\overline{C}\setminus D$, the functor $$\Isoc^\ag(X)_{\mathrm{doc}}\to \Isoc^\ag(C)_{\mathrm{doc}}$$ is fully faithful.
 \end{theo}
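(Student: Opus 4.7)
The plan is to reduce full faithfulness of the restriction functor to a cohomological bijection, express that bijection in terms of logarithmic de Rham cohomology on the good compactification $(Y,D)$, and then conclude via a hyperplane section / Lefschetz argument.

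First, I would observe that docile overconvergent isocrystals form a Tannakian subcategory of $\oi(X)$ closed under $\HHom$ and duals, and that docility is preserved by restriction along $C\hookrightarrow X$ because $\overline{C}$ meets $D$ transversally (so the residues of the log extension restricted to $\overline{C}\cap D$ remain nilpotent). Consequently full faithfulness on all Hom groups is equivalent to the single assertion
\begin{equation*}
H^0_{\mathrm{rig}}(X,\calE)\iso H^0_{\mathrm{rig}}(C,\calE|_{C})\qquad\text{for every docile }\calE\in\oi(X),
\end{equation*}
applied to $\calE=\HHom(\calM_1,\calM_2)$.

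Second, I would use docility to replace rigid cohomology by logarithmic de Rham cohomology. Since $\calE$ is docile, it extends canonically to a locally free $\calO_Y$-module $\tilde{\calE}$ with an integrable log connection of nilpotent residues along $D$; Kedlaya's (or Shiho's) comparison theorem identifies $H^i_{\mathrm{rig}}(X,\calE)$ with $\mathbb{H}^i(Y,\tilde{\calE}\otimes\Omega^\bullet_Y(\log D))$, and similarly for $C$ using $\tilde{\calE}|_{\overline{C}}$ on $(\overline{C},\overline{C}\cap D)$. The goal is now a weak Lefschetz statement in log-de Rham cohomology in degree $\leq 0$.

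Third, I would write $\overline{C}$ as an iterated transverse complete intersection $H_1\cap\cdots\cap H_{d-1}$ of smooth hypersurface sections meeting $D$ transversely, and at each step use the short exact sequence
\begin{equation*}
0\to\tilde{\calE}\otimes\Omega^\bullet_Y(\log D)(-H_i)\to\tilde{\calE}\otimes\Omega^\bullet_Y(\log D)\to\tilde{\calE}|_{H_i}\otimes\Omega^\bullet_{H_i}(\log D\cap H_i)\to 0
\end{equation*}
together with the hypercohomology spectral sequence. The problem reduces to showing the vanishings $\mathbb{H}^j(Y,\tilde{\calE}\otimes\Omega^\bullet_Y(\log D)(-H_i))=0$ for $j\leq 1$, which can be attacked via the Hodge-to-de Rham spectral sequence and the coherent vanishings $H^j(Y,\tilde{\calE}\otimes\Omega^q_Y(\log D)(-H_i))=0$ for $j+q\leq 1$, with $H_i$ taken of sufficiently large degree inside the given projective embedding.

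The main obstacle is precisely this last coherent vanishing: in characteristic zero it is the Kodaira--Akizuki--Nakano theorem for log pairs, but in positive characteristic such a statement can fail for arbitrary coherent sheaves. To bypass this, I expect one has to exploit extra structure on $\tilde{\calE}$: either use Serre vanishing for twists by a sufficiently ample $\calO(H_i)$ applied to the coherent pieces $\tilde{\calE}\otimes\Omega^q_Y(\log D)$ once the coherent data is fixed (which is permissible because docility gives a canonical such extension independent of $H_i$), or, following the Abe--Esnault strategy, translate the full faithfulness into a surjectivity statement for an appropriate docile overconvergent fundamental group and deduce it from an SGA 2-type hyperplane theorem. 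Making either route precise, and in particular verifying that the choice of $H_i$ can be made uniformly enough to yield full faithfulness and not just for a specific $\calE$, is the technical heart of the argument.
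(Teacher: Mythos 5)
First, a point of order: the paper does not prove this statement at all --- it is imported verbatim from Abe--Esnault \cite[Cor.\ 2.4]{AE19}, so there is no internal proof to compare against. Judged on its own merits, your proposal contains a genuine gap, and it is exactly the one you flag at the end but do not close. Your first two reductions are fine: full faithfulness does reduce to the bijectivity of $H^0_{\mathrm{rig}}(X,\calE)\to H^0_{\mathrm{rig}}(C,\calE|_C)$ for docile $\calE=\HHom(\calM_1,\calM_2)$ (injectivity being automatic since a horizontal section on connected $X$ is determined by its fibre at a point of $C$), and docility is indeed stable under restriction to a transverse curve. The problem is the third step. The theorem is asserted for \emph{every} smooth complete-intersection curve meeting $D$ transversally --- in particular for complete intersections of hyperplanes, which is precisely the case needed later in Theorem \ref{Bertini:t} and the proof of Theorem \ref{tame-lef:t}. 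So you are not free to take $H_i$ "of sufficiently large degree", and Serre vanishing for $\tilde{\calE}\otimes\Omega^q_Y(\log D)(-H_i)$ is unavailable. The alternative, a log Kodaira--Akizuki--Nakano vanishing, fails in positive characteristic, and the Hodge-to-de Rham spectral sequence you would feed it into need not degenerate. There is also a hidden comparison issue: identifying $H^i_{\mathrm{rig}}$ with algebraic log-de Rham hypercohomology of a coherent log extension is itself delicate (one needs convergent/log-rigid cohomology, not the naive algebraic complex), although for $H^0$ this is the least serious of the problems.

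The route that actually works (and is the one taken in \cite{AE19}) avoids vanishing theorems entirely. One uses docility to produce a coherent log extension $\tilde{\calE}$ on the proper pair $(Y,D)$, then extends a horizontal section on $C$ to a horizontal section on the formal (or tube) neighbourhood of $\overline{C}$ in $Y$ by parallel transport along the connection, and finally algebraizes this formal section using Grothendieck's Lefschetz theorem for formal functions from SGA 2, which applies to the locally free sheaf $\tilde{\calE}$ because $\overline{C}$ is a positive-dimensional complete intersection in the smooth projective $Y$ of dimension $\geq 2$. Positivity thus enters only through the effectivity of $\overline{C}$ as an ample complete intersection, not through any cohomological vanishing depending on $\calE$, which is why the statement holds uniformly for hyperplane sections. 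Your closing sentence gestures at this ("an SGA 2-type hyperplane theorem") but leaves it as the unproved technical heart; as written, neither of your two proposed routes yields the theorem.
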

 We explain here how to refine their result and prove the following theorem. 
 
 \begin{theo}\label{tame-lef:t}
 	Let $Y\subseteq \PP^d_{\ka}$ be a smooth connected projective variety of dimension $\geq2$ and $D$ a simple normal crossing divisor. If $(\calMd, \Phi_\calM^\ag)$ is an overconvergent $F^n$-isocrystal over $X:=Y\setminus D$ docile with respect to $D$, then there exists a smooth connected curve $C\subseteq X$ intersecting the dense open of $X$ where the slopes of $(\calMd, \Phi_\calM^\ag)$ are constant and such that the restriction functor $\langle \calMd\rangle \to \langle \calMd|_{C} \rangle$
 	is an equivalence of categories.
 \end{theo}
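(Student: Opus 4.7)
The plan is to combine the Abe--Esnault fully faithfulness theorem with a rank-one extension argument whose delicate ingredient is the curve case of the main theorem. I would start by using Bertini to choose a smooth curve $\overline{C}\subseteq Y$ cut out as a complete intersection of general hyperplane sections meeting $D$ transversally, and arrange that $C:=\overline{C}\setminus D$ lies inside the dense Zariski open where $(\calMd,\Phi_\calM^\ag)$ has constant slopes. By Theorem \ref{AE-lef:t} applied to the docile category, the restriction functor $\langle \calMd\rangle\to \langle \calMd|_C\rangle$ is then fully faithful. In Tannakian terms this says that the induced morphism $G(\calMd|_C,\eta)\to G(\calMd,\eta)$ is faithfully flat, so the whole problem reduces to essential surjectivity, i.e.\ to showing that this morphism is an isomorphism.

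Second, I would reduce essential surjectivity to rank-one objects. A faithfully flat morphism of affine group schemes is an isomorphism if and only if its kernel is trivial, and by Chevalley's theorem any non-trivial normal closed subgroup is realized as the stabilizer of a line in some representation. Consequently, the failure of essential surjectivity would produce a rank-one subquotient of some tensor construction on $\calMd|_C$ that is not in the essential image of restriction. It therefore suffices to show that every rank-one object of $\langle \calMd|_C\rangle$ comes by restriction from a rank-one object of $\langle \calMd\rangle$.

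Third, I would translate the rank-one extension problem into one about $p$-adic characters of fundamental groups. Via the overconvergent analogue of Crew's equivalence \cite{Cre87}, each rank-one docile overconvergent $F^n$-isocrystal on $C$ corresponds to a continuous $p$-adic character of $\pi_1^{\et}(C)$. Using a Lefschetz--SGA 2 type theorem for $\pi_1^{\et}$ (adapted to tame quotients), and after shrinking $C$ if necessary, one can extend the relevant character of $\pi_1^{\et}(C)$ to a continuous character of $\pi_1^{\et}(X)$; this gives a rank-one convergent $F^n$-isocrystal on $X$ restricting to the prescribed object on $C$.

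The main obstacle — and the reason this is genuinely intertwined with Theorem \ref{i-main:t} — is the final step: showing that this extension is actually overconvergent along $D$, and not merely convergent. I would handle this through a local ramification analysis at the generic points of the components of $D$ (Lemma \ref{ramif:l}), combined with Proposition \ref{rk1-unr:p}; the role of Proposition \ref{rk1-unr:p} is to ensure that a rank-one $F^n$-isocrystal whose slopes match the prescribed ones along the boundary admits a $\dag$-extension. Its proof requires $\MS$ for smooth curves, namely Theorem \ref{main-curv:t}, applied to small one-parameter families transverse to $D$, in order to force the $\dag$-hull slopes on such a family to coincide with the slopes coming from $C$. Once overconvergence along $D$ is secured, the extended rank-one object lies in $\langle \calMd\rangle$ and restricts to the required object on $C$, yielding essential surjectivity and hence the desired equivalence of Tannakian categories.
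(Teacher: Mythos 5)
Your proposal follows essentially the same route as the paper: Bertini to cut a curve meeting $D$ transversally, Abe--Esnault (Theorem \ref{AE-lef:t}) for full faithfulness, reduction of the remaining problem to rank-one objects, a Pink--Serre type Lefschetz statement for $\pi_1^{\et}$ to extend the corresponding $p$-adic characters from $C$ to $X$, and Lemma \ref{ramif:l} together with Proposition \ref{rk1-unr:p} (hence the curve case of the main theorem) to force the extended rank-one objects to be overconvergent and to lie in $\langle\calMd\rangle$. Two points are stated imprecisely, though neither derails the plan. First, full faithfulness of $\langle\calMd\rangle\to\langle\calMd|_C\rangle$ does \emph{not} make $G(\calMd|_C,\eta)\to G(\calMd,\eta)$ faithfully flat: that morphism is automatically a closed immersion because the target category is generated by $\calMd|_C$, and the entire remaining work is precisely to show it is \emph{also} faithfully flat, i.e.\ that the essential image of the restriction functor is stable under subobjects (the paper phrases this as observability, checked on rank-one objects); if faithful flatness were already known, the proof would be over. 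Second, the division of labour in the last step is slightly different from what you describe: Proposition \ref{rk1-unr:p} is applied on the curve $C$ itself, where $\MS$ is available by Theorem \ref{main-curv:t}, and it identifies the rank-one objects of $\langle\calMd|_C\rangle$ with the \emph{unramified} rank-one objects of the convergent category on $C_U$; the transversality to the boundary enters separately, through Lemma \ref{ramif:l} and the third Bertini condition, whose role is to guarantee that characters ramified along $D_S$ on $X$ remain ramified after restriction to $C$, so that unramified characters on $C_U$ genuinely come from unramified (hence overconvergent) characters on $U$.
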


\begin{rema}
In \cite[Thm. 3.10]{AE19} the authors improve Theorem \ref{AE-lef:t} to a stronger statement when $k$ is finite. Their proof does not extend to algebraically closed fields (not even $\ka=\F$) since they crucially use a finiteness result for rank 1 overconvergent $F$-isocrystals, which ultimately relies on class field theory.
\end{rema}

Before proving Theorem \ref{tame-lef:t} let us present some preliminary results that we will use during the proof. First, we need the following two lemmas on lisse sheaves.
 
 \begin{lemm}[Pink, Serre]\label{isoc-Lef:l}
 	
 	Let $X$ be a smooth connected variety over $k$ and $\calV$ a lisse $\Qpbar$-sheaf over $X$. There exists a connected finite Galois étale cover $\tilde{X}\to X$ (which depends on $\calV$) with the property that for every smooth connected curve $C\subseteq X$ such that $\tilde{X}\times_X C$ is connected, the restriction functor $$\langle\calV \rangle\to \langle \calV|_C \rangle$$
 	is an equivalence of categories. 
 	
 \end{lemm}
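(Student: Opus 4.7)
The approach is through the Tannakian dictionary. Choose a geometric basepoint $\bar{x}$ of $X$ and let $\rho:\pie(X,\bar{x})\to \GL(V)$ be the continuous representation attached to $\calV$, with $G\subseteq \GL(V)$ the Zariski closure of its image. Then $\langle\calV\rangle\simeq \Rep_{\Qpbar}(G)$, and analogously $\langle\calV|_C\rangle\simeq \Rep_{\Qpbar}(G_C)$, where $G_C$ is the Zariski closure of the image of the composition $\pie(C,\bar{x}_C)\to \pie(X,\bar{x})\to \GL(V)$. The restriction functor is an equivalence of Tannakian categories precisely when $G_C=G$ as subgroups of $\GL(V)$.

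The key algebraic input, due to Serre and Pink, is the following statement: for a Zariski-dense compact $p$-adic subgroup $\Gamma\subseteq G(\Qpbar)$, there exists an open normal subgroup $U\trianglelefteq \Gamma$ of finite index such that every closed subgroup $H\subseteq \Gamma$ satisfying $H\cdot U=\Gamma$ remains Zariski-dense in $G$. This input decomposes into two parts. First, since the component group $G/G^\circ$ is finite, requiring $U\subseteq \Gamma\cap G^\circ$ automatically controls the component group of the Zariski closure of $H$. Second, on the identity component one shows that only finitely many proper Zariski closures can arise as Zariski closures of closed subgroups of $\Gamma\cap G^\circ$ with bounded index, using that the Lie algebras of such closures are $\mathrm{Ad}(\Gamma)$-stable Lie subalgebras of $\mathrm{Lie}(G^\circ)$ and that there are finitely many such up to conjugacy. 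One then chooses $U$ small enough to avoid each proper closed subgroup in a finite list of representatives.

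Granting this input, define $\tilde{X}\to X$ as the connected finite étale Galois cover corresponding to the open subgroup $\rho^{-1}(U)\subseteq \pie(X,\bar{x})$. For any smooth connected curve $C\subseteq X$, the connectedness of $\tilde{X}\times_X C$ is equivalent, by the Galois correspondence for finite étale covers, to the composition $\pie(C,\bar{x}_C)\to \pie(X,\bar{x})\to \Gamma/U$ being surjective. By the defining property of $U$, this surjectivity forces $\rho(\pie(C,\bar{x}_C))$ to be Zariski-dense in $G$, i.e. $G_C=G$, which gives the claimed equivalence.

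The main obstacle is establishing the Serre--Pink algebraic input, which relies on nontrivial $p$-adic Lie theory and the finiteness of conjugacy classes of proper $\mathrm{Ad}$-stable Lie subalgebras; granting it, the geometric assertion of the lemma is a formal consequence of the Tannakian formalism and the defining property of the cover $\tilde{X}$.
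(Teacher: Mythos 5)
Your proof follows essentially the same route as the paper: reduce the statement to a Frattini-type assertion about the compact monodromy image $\Gamma=\rho(\pie(X,\bar x))$, take $\tilde X$ to be the cover corresponding to the resulting finite quotient, and observe that connectedness of $\tilde X\times_X C$ is exactly surjectivity of $\pie(C)$ onto that quotient. The paper treats the group-theoretic input purely as a citation (Serre, \emph{Lectures on the Mordell--Weil theorem}, §10.6, or Katz, Key Lemma 8.18.3), in the slightly stronger form that there is a finite quotient $\pi:\Gamma\twoheadrightarrow\bar\Gamma$ such that a closed subgroup $H\subseteq\Gamma$ equals $\Gamma$ if and only if $\pi(H)=\bar\Gamma$; your weaker Zariski-density version suffices for the Tannakian conclusion, since $\langle\calV\rangle$ and $\langle\calV|_C\rangle$ are governed by the Zariski closures $G$ and $G_C$.

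One caveat: your sketch of the group-theoretic input itself does not work as written. The Lie algebra of the Zariski closure of a closed subgroup $H\subseteq\Gamma\cap G^\circ$ is stable under $\mathrm{Ad}(H)$, not under $\mathrm{Ad}(\Gamma)$ (that would require $H$ normal), the phrase ``bounded index'' is unjustified since $H$ need not have finite index in $\Gamma$, and there are in general infinitely many conjugacy classes of Lie subalgebras stable under a given adjoint action (already for a torus). The correct argument is the Frattini-subgroup argument for compact $p$-adic analytic groups: every proper closed subgroup of a profinite group lies in a proper open one, and the intersection of the maximal open subgroups of $\Gamma$ is open by $p$-adic Lie theory. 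Since you explicitly grant this input and the deduction from it is correct, the cleanest fix is to do what the paper does and cite it.
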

 \begin{proof}Let $\Pi$ be the image of the representation $\rho:\pi_1^\et(X,\eta)\to \GL(\calV_\eta)$ associated to $\calV$, where $\eta$ is a geometric point of $X$. By \cite[§10.6]{Ser89} or \cite[Key Lemma 8.18.3]{Kat90}, there exists a finite quotient $\pi:\Pi\twoheadrightarrow \bar{\Pi}$ with the property that for every profinite subgroup $H\subseteq \pi_1^\et(X,\eta)$, we have that $\rho(H)=\Pi$ if and only if $\pi(\rho(H))=\bar{\Pi}$. The Galois cover $\tilde{X}\to X$ induced by $\pi\circ \rho$ satisfies then the required property.
 \end{proof}

\begin{lemm}\label{ramif:l}Let $X$ be a smooth connected variety over $k$, $D\subseteq X$ an irreducible divisor and $\calV$ a lisse $\Qpbar$-sheaf over $X\setminus D$. There exists a dense smooth open $D'\subseteq D$ and a conic closed subscheme $Z\subseteq TX\times_X D'$ of codimension $1$ at every fibre which satisfies the following property.
	\spa
	
	\begin{quote}$\mathrm{R}_\calV(Z)$: Let $C\subseteq X$ be a smooth curve not contained in $D$ and intersecting $D'$ at some closed point $x$ such that $TC_x$ is not contained in $Z_x$. For every rank $1$ lisse sheaf $\calL\in \langle\calV\rangle$ ramified at $D$, $\calL|_C$ is ramified at $x$. 
	\end{quote}
	
\end{lemm}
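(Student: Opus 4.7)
The strategy is to use the Kato--Matsuda theory of refined Swan conductors for rank~$1$ characters, combined with the finite generation of the set of characters appearing in $\langle \calV \rangle$.

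First, reduce to a purely group-theoretic setup. The rank $1$ lisse $\Qpbar$-sheaves in $\langle \calV \rangle$ correspond to characters of the abelianization $G^{\mathrm{ab}}$ of the algebraic monodromy group $G$ of $\calV$, and therefore form a finitely generated abelian group $M$. Since any $\chi \in M$ appears as a rank~$1$ subquotient of a tensor construction built from $\calV$, its break at the generic point $\eta_D$ of $D$ is bounded by a sum of the finitely many breaks of $\calV$; in particular the Swan conductors of all $\chi \in M$ are uniformly bounded.

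Second, invoke refined Swan conductors. For each $\chi \in M$ ramified at $D$ with Swan conductor $s(\chi) > 0$, Kato--Matsuda attach a refined Swan conductor which, after shrinking to a dense open of $D$, is a non-vanishing section of $\Omega^1_X(\log D) \otimes \mathcal{O}(s(\chi) D)|_D$; at every point $x$ where this section is non-zero, it cuts out a hyperplane $H_\chi(x) \subseteq T_xX$ containing $T_xD$, and for any smooth curve $C$ transverse to $D$ at $x$ with $TC_x \not\subseteq H_\chi(x)$, the restriction $\chi|_C$ has Swan conductor $s(\chi)$ at $x$, and is in particular ramified. For tamely ramified $\chi \in M$, transversality of $C$ with $D$ at $x$ already suffices to ensure that $\chi|_C$ is ramified at $x$.

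Third, construct $Z$. The refined Swan conductor is additive on the graded pieces $M_{\leq s}/M_{<s}$ of the Swan filtration on $M$. Because $M$ is finitely generated and only finitely many Swan values occur, these graded pieces are finitely generated abelian groups and map into finite-rank subsheaves of $\Omega^1_X(\log D) \otimes \mathcal{O}(sD)|_D$. Over the generic point $\eta_D$ the finitely many generators produce finitely many directions, and hence finitely many hyperplanes in the fibre. Spreading this out, one finds a dense open $D' \subseteq D$ on which the union of $T_xD$ and the hyperplanes coming from the rsw-images assembles into a closed conic subscheme $Z \subseteq TX \times_X D'$ with $\dim Z_x = \dim X - 1$.

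Finally, verification is routine: for $x \in D'$ and a smooth curve $C \not\subseteq D$ with $TC_x \not\subseteq Z_x$, the curve meets $D$ transversally (because $T_xD \subseteq Z_x$); then tame characters in $M$ remain ramified under restriction to $C$ at $x$, and for any wild $\chi \in M$ ramified at $D$, the hypothesis $TC_x \not\subseteq H_\chi(x) \subseteq Z_x$ guarantees that $\mathrm{rsw}(\chi)(x)$ pairs non-trivially with $TC_x$, so $\chi|_C$ retains its Swan conductor at $x$ and is ramified.

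\textbf{Main obstacle.} The hard step is the third one: ensuring that, despite $M$ being potentially infinite, the union of the hyperplanes $H_\chi(x)$ over all ramified $\chi \in M$ is contained in a proper conic closed subscheme of $TX|_{D'}$ of codimension $1$ at every fibre. This requires exploiting that the refined Swan conductor is additive in $\chi$ within each Swan level, so the infinite family of hyperplanes really comes from finitely many generating directions after appropriate normalization; controlling the failure locus of these generators uniformly over $D$ is the delicate combinatorial-algebraic point, and is where the statement about passing to a dense open $D' \subseteq D$ becomes essential.
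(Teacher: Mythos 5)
Your reduction to a finitely generated group $M$ of rank $1$ characters with bounded Swan conductors is correct and matches the first step of the paper's argument, but the step you yourself flag as the main obstacle is a genuine gap, and it cannot be closed along the lines you suggest. The refined Swan conductor is indeed additive, so if $\ell_1,\dots,\ell_r$ are the linear forms on the fibres of $TX|_{D'}$ attached to generators of a given Swan level, the form attached to $\chi=\prod_i\chi_i^{a_i}$ is $\sum_i a_i\ell_i$. The relevant locus for $\chi$, however, is the \emph{kernel} of this form, and the family of hyperplanes $\ker(\sum_i a_i\ell_i)$, as $(a_i)$ ranges over $\Z^r$, is infinite as soon as the $\ell_i$ span a space of dimension at least $2$; its union is Zariski-dense in the fibre and is not contained in any conic closed subscheme of codimension $1$. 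Hence no admissible $Z$ can contain all the $H_\chi(x)$, and for any candidate $Z$ there is a ramified $\chi\in M$ and a line $TC_x\subseteq H_\chi(x)$ with $TC_x\not\subseteq Z_x$; for such a curve your criterion only shows that the Swan conductor of $\chi|_C$ at $x$ drops, which does not allow you to conclude that $\chi|_C$ is ramified. (A secondary issue: Kato--Matsuda refined Swan conductors are set up for torsion characters, while the elements of $M$ are $p$-adic characters of typically infinite order; this could presumably be handled via mod-$p^n$ reductions, but it is not addressed.)

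The paper avoids this entirely by not attempting to preserve Swan conductors. Writing $\rho$ for the monodromy representation, $\Pi$ for its image and $I$ for the inertia at $D$, it suffices to arrange that $\rho(I_{C,x})=\rho(I)$, since then every $\chi$ nontrivial on $I$ is automatically nontrivial on $I_{C,x}$. By a Nakayama--Frattini argument (Lemma \ref{group:l}), surjectivity onto the topologically finitely generated abelian group $\rho(I)$ can be tested on a \emph{finite} set $\Gamma$ of prime-order cyclic quotients, each extending to a homomorphism $\Pi\to\Pi_{\tors}\times\Pi/p^n\Pi$ with fixed finite target. Each such quotient is realized, after a finite étale base change $X_\gamma\to X$, by a cyclic cover ramified along $D$, and the image of the tangent map of (the normalisation of) this cover supplies one codimension-$1$ component of $Z$; the condition $TC_x\not\subseteq Z_x$ then forces each of these finitely many covers to remain ramified over $C$ at $x$. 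To salvage your approach you would have to make the same move: replace "preserve the refined Swan conductor of every $\chi\in M$" by "preserve the nontriviality on inertia of finitely many well-chosen finite-order characters".
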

\begin{proof}This result is a variant of \cite[Lem. 5.1]{Dri12}. Since the monodromy group of $\calV$ is of finite type, the group $\mathbb{X}$ of rank $1$ objects in $\langle \calV \rangle$ is finitely generated. Write $\{\calL_i\}_{0\leq i \leq n}$ for a set of generators of $\mathbb{X}$. We may then assume $\calV=\bigoplus_{0\leq i \leq n}\calL_i$. 
	
	\spa
	
	Let $\eta$ be a geometric point of $X\setminus D$. Write $I$ for the inertia subgroup of $\pie(X\setminus D,\eta)$ associated to $D$ and $\rho: \pie(X\setminus D,\eta)\to \GL(V)$ for the representation attached to $\calV$, where $V$ is a finite-dimensional $E$-linear vector space for some finite field extension $E/\Qp$. We want to construct $D'$ and $Z$ such that for every smooth curve $C\subseteq X$ not in $D$ and intersecting $D'$ in a point $x$ with $TC_x$ not in $Z_x$, the inertia subgroup of $C\setminus D$ at $x$, denoted by $I_{C,x}$, is mapped surjectively to $\rho(I)$. This would guarantee $R_\calV(Z)$.

	\spa
	
	Write $\Pi$ for the image of $\rho$, which by the assumption is the product of a finitely generated $\Zp$-module and a commutative finite group $\Pi_{\tors}$. Let $n$ be a positive integer such that $(\rho(I)\cap p^n \Pi)\subseteq p \cdot \rho(I)$ and let $\Gamma$ be the finite set of morphisms $\gamma:\Pi\to\Pi_{\tors}\times \Pi/p^n\Pi$ mapping $\rho(I)$ to a cyclic group $G_\gamma$ of prime order. We claim that to have $I_{C,x}\to I$ surjective it is enough to impose that $\gamma \circ \rho : I_{C,x}\to G_\gamma$ is surjective for every $\gamma\in \Gamma$. This follows from the combination of two facts. First, by Nakayama, the morphism $I_{C,x}\to \rho(I)$ is surjective if and only if $I_{C,x}\to \rho(I)/pm \cdot \rho(I)$ is surjective, where $m$ is defined as the product of the primes different from $p$ dividing the order of $\rho(I)_{\tors}$. Secondly, by Lemma \ref{group:l}, any quotient $\rho(I)\twoheadrightarrow G$ with $G$ cyclic lifts to a morphism $\Pi\to \Pi_{\tors}\times \Pi/p^n\Pi$ via some embedding $G\hookrightarrow \Pi_{\tors}\times  \Pi/p^n\Pi$.

	\spa

	For every $\gamma\in \Gamma$, let $f_\gamma:X_\gamma\to X$ be a connected finite étale cover such that the image of the restriction of $\gamma \circ \rho$ to the étale fundamental group of $X_\gamma\setminus D_\gamma$ is $G_\gamma$, where $D_\gamma:=f^{-1}_\gamma(D)$. Let $Y_\gamma\to X_\gamma\setminus D_\gamma$ be the associated connected $G_\gamma$-torsor. Write $\bar{Y}_\gamma$ for the normalisation of $X_\gamma$ in the field of functions of $Y_\gamma$. The morphism $\bar{Y}_\gamma \to X_\gamma$ is a finite cover ramified along $D_\gamma$. After shrinking $X_\gamma$ to an open $X'_\gamma$, we may assume that $\bar{Y}'_\gamma:=\bar{Y}_\gamma\times_{X_\gamma}X'_\gamma$ is regular and the image of $TY'_\gamma\to TX'_\gamma$ is a vector subbundle $\tilde{Z}_\gamma\subseteq TX'_\gamma$ of codimension $1$. We write $Z_\gamma\subseteq TX$ for the image of $\tilde{Z}_\gamma$ via the morphism $f_\gamma$ and $D'_\gamma$ for its projection to $X$. By construction, for every $x\in D'_\gamma$, we have that $Z_{\gamma,x}$ has codimension $1$ in $TX_x$. Write $D'$ for $\bigcap_{\gamma\in \Gamma} D'_\gamma$ and $Z$ for $\bigcup_{\gamma\in \Gamma} Z_{\gamma}|_{D'}$. For every $\gamma\in \Gamma$ the induced morphism $C\times_X\bar{Y}'_\gamma\to C\times_X{X}_\gamma$ is ramified at the points over $x$. Therefore, if $I_{C,x}$ is the inertia subgroup of the étale fundamental group of $C\setminus D$ at $x$, it is sent surjectively to $G_\gamma$ for every $\gamma\in \Gamma$. This yields the desired result. 
\end{proof}

\begin{lemm}\label{group:l}
Let $\Pi$ be a commutative profinite group containing a topologically finitely generated open pro-$p$-subgroup. Write $H\subseteq \Pi$ for a closed subgroup and let $n$ be a positive integer such that $(H\cap p^n \Pi)\subseteq p  H$. For every quotient $H\twoheadrightarrow G$ with $G$ cyclic there exists a morphism $\Pi\to\Pi_{\tors}\times\Pi/p^n \Pi$ and an embedding $G\hookrightarrow \Pi_{\tors}\times\Pi/p^n \Pi$ making the following diagram commute

 		\begin{equation*}
	\begin{tikzcd}
	\Pi\arrow[r] & \Pi_{\tors}\times \Pi/p^n\Pi\\
	H\arrow[r,two heads]\arrow[u,hook] & G\arrow[u,hook].
	\end{tikzcd}
\end{equation*}	

\end{lemm}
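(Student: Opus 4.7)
The plan is to use the Sylow decomposition of $\Pi$ to split the problem into a pro-$p$ part and a prime-to-$p$ part, then construct each piece of the morphism independently. Since $\Pi$ is commutative profinite with a topologically finitely generated open pro-$p$-subgroup, one has $\Pi = \Pi_p \oplus \Pi_{p'}$, where $\Pi_p$ is the pro-$p$-Sylow (of the form $\Z_p^r \oplus T$ with $T$ a finite abelian $p$-group) and $\Pi_{p'}$ is a finite abelian group of order prime to $p$. This induces splittings $H = H_p \oplus H_{p'}$ and $G = G_p \oplus G_{p'}$; the surjection $\chi : H \twoheadrightarrow G$ decomposes accordingly, and the hypothesis restricts to $(H_p \cap p^n\Pi_p) \subseteq pH_p$ (the $p'$-part being trivially satisfied). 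Since the target $K := \Pi_\tors \times \Pi/p^n\Pi$ likewise splits as $(T\oplus \Pi_{p'}) \times ((\Z/p^n)^r \oplus T/p^n T)$, it suffices to construct the morphism and embedding separately on each factor.

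For the prime-to-$p$ part, I will exploit the injectivity of $\Q/\Z$ in the category of abelian groups. Composing $\chi_{p'}$ with any embedding $G_{p'} \hookrightarrow \Q/\Z$ gives a character $H_{p'} \to \Q/\Z$; by injectivity this extends to a character $\Pi_{p'} \to \Q/\Z$, and because $\Pi_{p'}$ is finite the image is a finite cyclic subgroup of $\Q/\Z$ of order dividing the exponent of $\Pi_{p'}$. Such a cyclic group embeds back into $\Pi_{p'}$ as a cyclic subgroup of the appropriate order, producing both the required morphism $\Pi_{p'} \to \Pi_{p'}$ into the $\Pi_\tors$-factor of $K$ and a compatible embedding $G_{p'} \hookrightarrow \Pi_{p'}$.

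The pro-$p$ part is where the hypothesis is essential. The plan is to embed $G_p \hookrightarrow \Pi_p/p^n\Pi_p \subseteq K$ by sending a generator into a $\Z/p^n$ summand; this works cleanly when $G_p$ has order $p$, which is the only case actually needed in the application to Lemma~\ref{ramif:l}. Under this embedding, the composition $H_p \to G_p \hookrightarrow \Pi_p/p^n\Pi_p$ vanishes on $H_p \cap p^n\Pi_p$: any such $h$ lies in $pH_p$ by hypothesis, so $\chi_p(h) \in pG_p$, which is zero when $G_p$ has order $p$. Hence the composition descends through the resulting injection $H_p/(H_p \cap p^n\Pi_p) \hookrightarrow \Pi_p/p^n\Pi_p$, and I then extend it to $\Pi_p/p^n\Pi_p \to \Z/p^n$ using the self-injectivity of $\Z/p^n$ as a module over itself. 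Combining with the prime-to-$p$ construction gives the required $\phi : \Pi \to K$ and embedding $G \hookrightarrow K$ making the diagram commute. The main obstacle is to verify that the hypothesis is sharp enough to kill the image of $H_p \cap p^n\Pi_p$ in $G_p$; this is transparent when $G_p$ has prime order but requires more delicate bookkeeping in the general cyclic case, where one has to mix contributions from the $T$-summand and the $\Pi_p/p^n\Pi_p$-summand of $K$ in order to absorb higher $p$-power torsion in $G_p$.
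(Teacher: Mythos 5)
Your argument is correct on the same effective range of cases as the paper's own proof, but it gets there by a genuinely different route. The paper reduces everything to the case of \emph{finite} $\Pi$: when $|G|$ is prime to $p$ the quotient factors through the finite group $H_{\tors}\subseteq \Pi_{\tors}$, and when $|G|=p$ the hypothesis $(H\cap p^n\Pi)\subseteq pH$ forces it to factor through $H/(H\cap p^n\Pi)\subseteq \Pi/p^n\Pi$; the finite case is then settled by Pontryagin self-duality of finite abelian groups, which supplies an embedding $\Pi/N\hookrightarrow \Pi=\Pi_{\tors}$. You instead split $\Pi=\Pi_p\oplus\Pi_{p'}$ once and for all and build each component of the morphism by an injectivity argument ($\Q/\Z$ as an injective abelian group for the prime-to-$p$ factor, the self-injective ring $\Z/p^n$ for the $p$-factor); the hypothesis enters at exactly the same point as in the paper, namely to kill $H_p\cap p^n\Pi_p$ when $G_p$ has order $p$. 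What your version buys is that it handles $G$ with $|G_p|=p$ and arbitrary $G_{p'}$ simultaneously, whereas the paper treats the two prime-order cases disjointly; what the paper's version buys is brevity, since duality of finite abelian groups replaces both of your extension arguments at once.

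Two small points. First, your $p$-adic step tacitly assumes that $\Pi_p/p^n\Pi_p$ has a $\Z/p^n$ summand; if $\Pi_p$ is finite of exponent $p^m<p^n$ you should instead extend into a maximal cyclic summand $\Z/p^m$ (which is self-injective over itself and still contains a copy of $G_p$, and which embeds into $\Pi_{\tors}$), so nothing breaks, but this case should be mentioned. Second, you are right to confine yourself to $|G_p|\le p$: neither your argument nor the paper's covers cyclic $G$ with $|G_p|\ge p^2$, and the statement as literally written fails there (take $\Pi=H=\Zp$, $n=1$, $G=\Z/p^2\Z$: the target $\Pi_{\tors}\times\Pi/p\Pi\simeq\Z/p\Z$ cannot contain $G$). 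Only prime-order quotients are used in the proof of Lemma \ref{ramif:l}, so this restriction is harmless, but the "delicate bookkeeping" you defer in the general cyclic case is not merely delicate --- it is impossible without strengthening the hypothesis.
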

\begin{proof}
	If $\Pi$ is finite and $N$ is the kernel of $H\twoheadrightarrow G$, we can choose an embedding $\Pi/N\hookrightarrow \Pi$ which in turn induces an inclusion $G\hookrightarrow \Pi=\Pi_{\tors}\hookrightarrow  \Pi_{\tors} \times \Pi/p^n\Pi$ where the second arrow is the standard inclusion. These embeddings satisfy the desired property. If $\Pi$ is infinite we distinguish to cases. If the cardinality of $G$ is prime to $p$, then we deduce the statement from the result on $H_{\tors}\subseteq \Pi_\tors$ since $H\twoheadrightarrow G$ factors through any quotient $H\twoheadrightarrow H_{\tors}$. If the cardinality of $G$ is $p$, we note instead that by the assumption $(H\cap p^n \Pi)\subseteq p  H$, the quotient $H\twoheadrightarrow G$ factors through $H/(H\cap p^n \Pi)$. Therefore, we can deduce the result from the finite case looking at the groups $H/(H\cap p^n \Pi)\subseteq \Pi/ p^n\Pi$.
\end{proof}

We prove now the following consequence of Theorem \ref{main-curv:t}, which helps to describe the subgroup $X^*(G(\calMd))\subseteq X^*(G(\calM))$ \footnote{If $G$ is an algebraic group we denote by $X^*(G)$ its \textit{character  group}, i.e. the set of isomorphism classes of rank 1 representations endowed with tensor product.} in the docile situation. We suppose that $X$ admits a smooth compactification $X\subseteq Y\subseteq \PP^d_k$ and $Y\setminus X$ is a simple normal divisor of $D$. Write $X^*(G(\calM,V_\calM))^{\mathrm{ur}}$ for the group of characters of $G(\calM,V_\calM)$ corresponding to unramified rank 1 isocrystals with $\Qpu$-structure.

\begin{coro}\label{rk1-unr:c}
	If $X$ is a variety as in Proposition \ref{rk1-unr:p} and $(\calMd,\Phi_\calM^\ag)$ is an overconvergent $F^n$-isocrystal over $X$ docile along $D$, we have that $X^*(G(\calMd,V_\calM^\ag))=X^*(G(\calM,V_\calM))^{\mathrm{ur}}$, where $V_\calM^\ag$ and $V_\calM$ are the induced \DMs s.
\end{coro}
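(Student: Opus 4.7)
The plan is to prove the equality by verifying the two inclusions separately, with the reverse inclusion being the substantive content obtained by combining Proposition~\ref{rk1-unr:p} with the translation between $F^\infty$-isocrystals and isocrystals with Dieudonné--Manin $\Qpi$-structure.

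The easy inclusion $X^*(G(\calMd,V_\calM^\ag))\subseteq X^*(G(\calM,V_\calM))^{\mathrm{ur}}$ goes as follows. A character $\chi\in X^*(G(\calMd,V_\calM^\ag))$ corresponds to a rank $1$ overconvergent isocrystal with $\Qpi$-structure $(\calLd,V_\calL^\ag)\in\langle\calMd,V_\calM^\ag\rangle$. Since the docility condition along $D$ is preserved under subquotients, tensor products and duals, and since $(\calMd,\Phi_\calM^\ag)$ is docile along $D$, the overconvergent isocrystal $\calLd$ is docile along $D$; for rank $1$ objects, docility is equivalent to being unramified. Restricting to $(\calL,V_\calL)\in\langle\calM,V_\calM\rangle$ thus produces an unramified rank $1$ object, whose character is exactly the restriction of $\chi$.

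For the reverse inclusion, take $\chi\in X^*(G(\calM,V_\calM))^{\mathrm{ur}}$, corresponding to an unramified rank $1$ object $(\calL,V_\calL)\in\langle\calM,V_\calM\rangle$. Unramifiedness means $\calL$ admits an overconvergent extension $\calLd$ docile along $D$; equipping the fibre at $\eta$ with the $\Qpi$-structure inherited from $V_\calL$, one obtains the desired rank $1$ overconvergent isocrystal with $\Qpi$-structure $(\calLd,V_\calL^\ag)$. In particular, $\calL$ is $\dag$-extendable. Now since $(\calM,V_\calM)=\Lambda_\eta(\calM,\Phi_\calM^\infty)$ and, by Proposition~\ref{sq-DM:p}, subquotients of isocrystals with \DMs\ again carry a unique \DMs, the object $(\calL,V_\calL)$ lifts to a rank $1$ $F^\infty$-isocrystal $(\calL,\Phi_\calL^\infty)\in\langle\calM,\Phi_\calM^\infty\rangle$ with $\Lambda_\eta(\calL,\Phi_\calL^\infty)=(\calL,V_\calL)$.

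Because $\calL$ is $\dag$-extendable, the $F^\infty$-isocrystal $(\calL,\Phi_\calL^\infty)$ is itself $\dag$-extendable. The hypothesis on $X$ allows us to invoke Proposition~\ref{rk1-unr:p}, which yields $(\calLd,\Phi_\calL^{\ag,\infty})\in\langle\calMd,\Phi_\calM^{\ag,\infty}\rangle$. Applying the overconvergent version of $\Lambda_\eta$ we obtain $(\calLd,V_\calL^\ag)\in\langle\calMd,V_\calM^\ag\rangle$, so $\chi\in X^*(G(\calMd,V_\calM^\ag))$. The main obstacle in this argument is the bookkeeping at the passage from $(\calL,V_\calL)$ to an $F^\infty$-isocrystal $(\calL,\Phi_\calL^\infty)$ living inside $\langle\calM,\Phi_\calM^\infty\rangle$ in a way compatible with Proposition~\ref{rk1-unr:p}; this is precisely what the stability of \DMs s under subquotients (Proposition~\ref{sq-DM:p}) and the observability of $\Lambda_\eta$ (Proposition~\ref{obse-ffur-forg:p}) are designed to do.
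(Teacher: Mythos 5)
Your proof is correct and follows essentially the same route as the paper: the easy inclusion via stability of docility under subquotients/tensor operations plus the fact that rank $1$ docile objects are unramified, and the reverse inclusion by lifting the rank $1$ object to an $F^\infty$-isocrystal via Propositions \ref{sq-DM:p} and \ref{obse-ffur-forg:p} and then applying Proposition \ref{rk1-unr:p}. The only difference is expository: you spell out the bookkeeping with $\Lambda_\eta$ that the paper leaves implicit.
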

\begin{proof}To prove that $X^*(G(\calMd,V_\calM^\ag))\subseteq X^*(G(\calM,V_\calM))^{\mathrm{ur}}$ we combine two facts. First, the property of being docile is preserved under the operations of taking direct sum, tensor product, dual, and subquotients in $\oi(X)$ by \cite[Prop. 3.2.20]{Ked07}. Secondly, rank $1$ docile overconvergent isocrystals have 0 residues, thus they are unramified. This shows that rank 1 objects in $\langle \calMd,V_\calM^\ag\rangle$ are unramified.
	
	\spa
	
	For the other containment, by Proposition \ref{sq-DM:p} and Proposition \ref{obse-ffur-forg:p} every rank 1 isocrystal with $\Qpu$-structure in $\langle\calM, V_\calM\rangle$ comes from some $F^\infty$-isocrystal in $\langle\calM, \Phi^\infty_\calM\rangle$. The same result remains true for overconvergent isocrystals. Thanks to Proposition \ref{rk1-unr:p}, if $(\calL,\Phi_\calL^\infty)$ is an unramified (hence \dex) rank 1 $F^\infty$-isocrystal, then $(\calLd,\Phi_\calL^{\ag,\infty})\in \langle\calMd,\Phi^{\ag,\infty}_\calM\rangle.$ This concludes the proof. \end{proof}

 Finally, for Theorem \ref{tame-lef:t}, we need a Bertini's theorem, in the following form.

 \begin{theo}[Bertini's theorem]\label{Bertini:t}
 	Let $Y\subseteq \mathbb{P}^d_{\ka}$ be a smooth connected projective variety over $\ka$ of dimension $\geq2$, $D$ a divisor of $Y$, $D'$ an open subscheme of $D$, $Z$ a conic closed subset of $TY\times_Y D'$ which has codimension $1$ at each fibre, $U\subseteq Y$ a dense open and $\widetilde{U}\to U$ a connected finite étale cover. There exists a curve $C\subseteq Y$ over $\ka$ satisfying the following conditions.
 	\begin{itemize}
 		\item[{\normalfont(1)}] $C$ is a smooth scheme theoretic complete intersection of hyperplanes intersecting $D$ transversally.
 		
 		\item[{\normalfont(2)}]$C\times_U \widetilde{U}$ is connected and non-empty.
 		
 		\item[{\normalfont(3)}]$C$ intersects each irreducible component $D'_i$ of $D'$ and the image of $TC\times_YD_i'\to TY\times_Y D_i'$ is not entirely contained in $Z\times_{D'}D_i'$.
 		
 	\end{itemize}
 	
 \end{theo}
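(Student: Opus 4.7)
Set $m := \dim Y \geq 2$ and let $\mathcal{G} := (\check{\mathbb{P}}^d_{\ka})^{m-1}$ parametrise ordered $(m-1)$-tuples of hyperplanes $\underline H = (H_1,\dots,H_{m-1})$ in $\mathbb{P}^d_{\ka}$; for each such tuple, write $C_{\underline H} := Y \cap H_1 \cap \cdots \cap H_{m-1}$. The strategy is to exhibit, for each of conditions (1), (2), (3), a non-empty Zariski open $\mathcal{G}_j \subseteq \mathcal{G}$ on which the corresponding condition holds, and then to pick $\underline H$ in the non-empty intersection $\mathcal{G}_1 \cap \mathcal{G}_2 \cap \mathcal{G}_3$.

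Conditions (1) and (2) reduce to classical Bertini-type results. For (1), the usual Bertini theorem over the infinite field $\ka$, applied iteratively to $Y$ and to $D$, produces a non-empty open $\mathcal{G}_1$ over which $C_{\underline H}$ is a smooth irreducible scheme-theoretic complete intersection of dimension $1$ meeting $D$ transversally at smooth points of $D$. For (2), one takes the normalisation $\widetilde Y$ of $Y$ in the function field of $\widetilde U$; this is a finite cover of $Y$ restricting to $\widetilde U \to U$. Iterating Zariski's Bertini connectedness theorem (or the Fulton--Hansen connectedness theorem) hyperplane-by-hyperplane on $\widetilde Y$ shows that the pull-back of $C_{\underline H}$ to $\widetilde Y$ remains connected for $\underline H$ in a non-empty open $\mathcal{G}_2 \subseteq \mathcal{G}$; restricting to $\widetilde U$ yields the connectedness of $C_{\underline H} \times_U \widetilde U$.

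The core of the argument is condition (3), which amounts to an incidence-variety dimension count. For each irreducible component $D'_i$ of $D'$ (which has dimension $m-1$), introduce
\[
W_i := \bigl\{(p, \underline H) \in D'_i \times \mathcal{G} \,:\, p \in C_{\underline H}\ \text{and}\ T_p C_{\underline H} \subseteq Z_p\bigr\}.
\]
For fixed $p \in D'_i$, the constraint $p \in C_{\underline H}$ cuts out a subvariety of codimension $m-1$ in $\mathcal{G}$, and the rational map $\underline H \mapsto T_p C_{\underline H} \in \mathbb{P}(T_pY)$ from this subvariety is dominant: varying the covectors defining $H_1,\dots,H_{m-1}$ in $T_p\mathbb{P}^d$ realises every line in $T_pY$ as $T_pH_1 \cap \cdots \cap T_pH_{m-1} \cap T_pY$. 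Because $\mathbb{P}(Z_p) \subseteq \mathbb{P}(T_pY)$ has codimension $1$, imposing $T_p C_{\underline H} \subseteq Z_p$ is one further codimension-$1$ condition, so each fibre of $W_i \to D'_i$ has codimension at least $m$ in $\mathcal{G}$. Consequently $\dim W_i \leq \dim \mathcal{G} - 1$, and the image of $W_i \to \mathcal{G}$ lies in a proper closed subset. Combining the complements over all $i$ with the (evidently non-empty open) locus where $C_{\underline H}$ actually meets each $D'_i$ --- using that $D_i \setminus D'_i$ has codimension $\geq 2$ in $Y$, hence is missed by generic curves, while $D_i$ itself is hit because it is a divisor --- yields the required open $\mathcal{G}_3$. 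The main obstacle is the dominance step, which is verified by a direct local computation in affine coordinates at $p$, picking covectors in $T_pY^*$ whose common kernel is a prescribed line.
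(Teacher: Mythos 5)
Your plan is essentially the paper's proof with the details written out: the paper simply records that each of (1)--(3) is a dense open condition on hyperplanes in the dual of $\mathbb{P}^d_{\ka}$, citing \cite[Thm. 6.3]{Jou83} for smoothness/irreducibility and \cite[\S 1.8]{Del12} for the tangent-direction condition, and then inducts on $\dim Y$ one hyperplane at a time, whereas you parametrise the whole $(m-1)$-tuple at once. Your incidence-variety dimension count for (3) is exactly the intended argument and is correct, provided you work inside the open locus $\mathcal{G}_1$ (or separately discard the codimension-$\geq 2$ non-transverse locus in each fibre of $W_i\to D'_i$) so that $T_pC_{\underline H}$ is genuinely a line at every $p\in C_{\underline H}\cap D$.

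The one step that does not quite work as written is the final sentence of your treatment of (2). Connectedness of $\widetilde C:=\widetilde Y\times_Y C_{\underline H}$ does not imply connectedness of $C_{\underline H}\times_U\widetilde U$: a connected but reducible curve can become disconnected after removing the finitely many points lying over $Y\setminus U$, and $\widetilde Y\to Y$ may be ramified or otherwise badly behaved there, so such components meeting only over $Y\setminus U$ cannot be excluded a priori. What you need, and what \cite[Thm. 6.3]{Jou83} actually provides since $\widetilde Y$ is irreducible (being the normalisation of $Y$ in the function field of the irreducible $\widetilde U$), is that $\widetilde C$ is \emph{irreducible} for $\underline H$ in a dense open; its open subscheme $C_{\underline H}\times_U\widetilde U$, non-empty because $C_{\underline H}$ generically meets $U$, is then irreducible and hence connected. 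With ``connected'' upgraded to ``irreducible'' throughout, the argument closes. A cosmetic point: the reason $C$ meets each $\overline{D'_i}$ is not that it ``is a divisor'' but that $\overline{D'_i}$ is projective of dimension $m-1$ and is being cut by $m-1$ hyperplanes.
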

 \begin{proof}
 	By Bertini's theorem, in the form proven in \cite[Thm. 6.3]{Jou83}, the three conditions correspond to dense opens of the dual of $\mathbb{P}^d_{\ka}$ (see \cite[§1.8]{Del12} for more details). The result then follows from an induction on the dimension of $Y$. 
 \end{proof}
 
\subsubsection{Proof of Theorem \ref{tame-lef:t}.}
 	
 During the proof we will impose three conditions on $\overline{C}\subseteq Y$ numbered as the conditions in Theorem \ref{Bertini:t}. These conditions will ensure that if $C:=\overline{C}\setminus D$, the restriction functor $\langle \calMd \rangle \to \langle \calMd|_{C} \rangle$ is an equivalence. We first assume that $\overline{C}\subseteq Y$ is a smooth complete intersection of hyperplanes intersecting $D$ transversally (Condition (1)). By Theorem \ref{AE-lef:t}, the restriction functor $$\Isoc^\ag(X)_\mathrm{doc}\to \Isoc^\ag(C)_\mathrm{doc}$$ is fully faithful.
 	
 	\spa
 	
 	Let $U\subseteq X$ be a dense open where $\calM$ acquires the slope filtration and such that $D_S:=Y\setminus U$ is a divisor (containing $D$). Write $C_U$ for $C\cap U$ and $\calN$ for $\Gr_{S_\bullet}(\calM|_U)$. Choose a geometric point $\eta$ of $C_U$ and write $V$ for the \DM\ \Qpis\ of $(\calN,\Phi_\calN)$ at $\eta$. Let $\calV$ be the lisse $\Qpi$-sheaf associated to $(\calN,V)$ provided by Proposition \ref{etal-isoc:p} (which does not depend on $\eta$). By Lemma \ref{isoc-Lef:l}, there exists a connected finite étale cover $\widetilde{U}\to U$, with the property that if $\widetilde{U}\times_U C_U$ is connected and non-empty, then the functor $\langle\calN,V \rangle\to \langle \calN|_{C_U}, V\rangle$
 	is an equivalence of categories. Let us assume that $C_U$ satisfies this condition (Condition (2)).
 	
 	
 	\spa

 	 Since $\langle\calN,V \rangle\to \langle \calN|_{C_U},V \rangle$ is an equivalence, it follows that $X^*(G(\calM|_U,V))=X^*(G(\calM|_{C_U},V)).$ By Lemma \ref{ramif:l}, there exists a dense open $D_S'\subseteq D_S$ and a conic closed subscheme $Z\subseteq TX\times_X D_S'$ of codimension at least $1$ at every fibre which satisfies the property $\mathrm{R}_\calV(Z)$. Suppose that $\overline{C}$ satisfies the assumptions in $\mathrm{R}_\calV(Z)$ (Condition (3)), then $X^*(G(\calM|_U,V))^{\mathrm{ur}}=X^*(G(\calM|_{C_U},V))^{\mathrm{ur}}.$
 	
 	\spa
 	
 	We know that $\MS(X)$ is true for smooth curves thanks to Theorem \ref{main-curv:t}. Thus, by Theorem \ref{shrink-zar:t} and Corollary \ref{rk1-unr:c}, $$X^*(G(\calMd|_{C},V))=X^*(G(\calMd|_{C_U},V))= X^*(G(\calM|_{C_U},V))^{\mathrm{ur}}.$$ We showed that every rank $1$ overconvergent isocrystal with $\Qpi$-structure in $\langle \calMd|_{C}, V\rangle$ is the restriction of a rank 1 overconvergent isocrystal with $\Qpi$-structure over $Y$. This implies that the functor $\langle \calMd,V \rangle \to \langle \calMd|_C,V\rangle$ is observable. By Proposition \ref{comp:p}, we deduce that the restriction functor $\langle \calMd \rangle \to \langle \calMd|_C\rangle$ is an equivalence. It remains to show that a curve $\overline{C}$ with these three conditions exists, which is guaranteed by Theorem \ref{Bertini:t}.	\qed
 	
 	\spa
	



\subsubsection{Proof of Theorem \ref{main:t}}\label{pf-main-t:sss} By Theorem \ref{main-curv:t}, when $X$ is a smooth curve, $\MS(X)$ is true. To prove it in higher dimension, arguing as in Lemma \ref{shri-étal:l}, we may assume $k=\ka$. In addition, combining the reductions provided by Lemma \ref{shrink-zar:l} and Lemma \ref{shri-étal:l} and Kedlaya's semi-stable reduction theorem \cite[Thm. 2.4.4]{Ked11}, we may assume that $X$ admits a compactification as in Theorem \ref{tame-lef:t} and $(\calMd,\Phi_\calM^\ag)$ is an overconvergent $F$-isocrystal which is docile along $D$. Applying Theorem \ref{tame-lef:t}, there exists a curve $C$ in $X$ such that the \dex\ subobjects of $(\calM,\Phi_\calM)$ are the same as the ones of $(\calM|_C,\Phi_\calM|_C)$. Since we know that $\MS(\calM|_C)$ is true, we deduce that $\MS(\calM)$ is true as well. This ends the proof.\qed

\spa

As a consequence of Theorem \ref{main:t} we prove the parabolicity conjecture.
\begin{theo}\label{para-c:t}Let $X$ be a smooth geometrically connected variety over a perfect field $k$ endowed with the choice of a perfect point $\eta$. For every overconvergent $F^n$-isocrystal over $X$ with constant slopes, the subgroup $G(\calM,\eta)\subseteq G(\calMd,\eta)$ is the subgroup of $G(\calMd,\eta)$ stabilising the slope filtration of $\calM_\eta$. Moreover, if $\calMd$ is semi-simple, $G(\calM,\eta)$ is a parabolic subgroup of $G(\calMd,\eta)$.
\end{theo}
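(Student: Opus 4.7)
The strategy is to leverage the parabolicity/stabiliser statement already proven for the $\Qpu$-linear $F^\infty$-isocrystal monodromy groups in Proposition \ref{para:p} and to descend it across the diagram of Proposition \ref{fund-exac-seq:p} to Crew's groups $G(\calM,\eta)$ and $G(\calMd,\eta)$. First I would reduce to the setting of Hypothesis \ref{hypo}: since the isomorphism class of a Tannaka group does not depend on the choice of fibre functor and is compatible with base change of $k$, I may pass to $\ka$ and to a geometric point $\bar\eta$ above $\eta$, while still keeping control of the assertion, because both ``stabiliser of a given sub-object'' and ``parabolic'' are preserved and reflected by faithfully flat base change. Thus I may assume $\F\subseteq k=\ka$ and that $\eta$ is a geometric point.

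Secondly I would invoke Theorem \ref{main:t}, which provides $\MS(X)$, so that Proposition \ref{para:p} applies and identifies $G(\calM,\Phi_\calM^\infty,\eta)$ with the subgroup $P_G(\lambda)$ of $G(\calMd,\Phi_\calM^{\ag,\infty},\eta)$ stabilising the slope filtration of $\calM_\eta$. The next step is to transfer this description down to the left column of the diagram of Proposition \ref{fund-exac-seq:p}: by Corollary \ref{cart:c} the left square of that diagram is cartesian, so
\[G(\calM,V_\calM,\eta)=G(\calMd,V_\calM^\ag,\eta)\cap G(\calM,\Phi_\calM^\infty,\eta),\]
which is therefore the stabiliser of the slope filtration inside $G(\calMd,V_\calM^\ag,\eta)$.

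Thirdly I would apply Proposition \ref{comp:p} to extend scalars from $\Qpu$ to $K$, obtaining $G(\calM,\eta)\simeq G(\calM,V_\calM,\eta)\otimes_{\Qpu}K$ and analogously in the overconvergent case. Since stabilisers of $\otimes$-filtrations commute with extension of the base field, this yields the first assertion. For the parabolicity claim, assume $\calMd$ is semi-simple; then $G(\calMd,\eta)$ is reductive, hence so is $G(\calMd,V_\calM^\ag,\eta)$ by faithfully flat descent, and $G(\calMd,\Phi_\calM^{\ag,\infty},\eta)$ is reductive as a characteristic-zero extension of the diagonalisable group $G(\calMd,\Phi_\calM^{\ag,\infty},\eta)^{\cst}$ (diagonalisability coming from Dieudonné--Manin) by a reductive group. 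Proposition \ref{para:p} then places $G(\calM,\Phi_\calM^\infty,\eta)$ as a parabolic subgroup of $G(\calMd,\Phi_\calM^{\ag,\infty},\eta)$; intersecting with the normal reductive subgroup $G(\calMd,V_\calM^\ag,\eta)$ preserves the parabolic property (the quotient embeds as a closed subvariety of the projective flag variety $G(\calMd,\Phi_\calM^{\ag,\infty},\eta)/G(\calM,\Phi_\calM^\infty,\eta)$), so $G(\calM,V_\calM,\eta)$ is parabolic in $G(\calMd,V_\calM^\ag,\eta)$; extending scalars to $K$ concludes.

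The single substantive input throughout is Corollary \ref{cart:c}, whose proof funnels through the $\dag$-hull machinery of Theorem \ref{main:t} and Proposition \ref{rk1-unr:p}; once cartesianity of the left square is in hand, the rest is routine Tannakian bookkeeping together with classical facts on parabolic subgroups and faithfully flat descent of reductivity and parabolicity. I therefore expect no further genuine obstacle beyond having Theorem \ref{main:t} available; the real work has already been done in §\ref{d-hull:ss}--§\ref{Lefschet:ss}.
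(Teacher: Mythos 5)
Your proposal is correct and follows essentially the same route as the paper: reduce to $k=\ka$ and a geometric point, apply Theorem \ref{main:t} with Proposition \ref{para:p} to identify $G(\calM,\Phi_\calM^\infty,\eta)$ with the stabiliser $P_G(\lambda)$, use Proposition \ref{fund-exac-seq:p} and Corollary \ref{cart:c} to descend to the $\Qpu$-groups, extend scalars via Proposition \ref{comp:p}, and obtain parabolicity by intersecting a parabolic with the normal subgroup coming from the exact sequence. The only cosmetic difference is that you establish reductivity of $G(\calMd,\Phi_\calM^{\ag,\infty},\eta)$ via the extension by the diagonalisable constant group, whereas the paper replaces $(\calMd,\Phi_\calM^\ag)$ by its semi-simplification; both work, and your explicit flag-variety justification of the normal-intersection step is if anything cleaner than the paper's phrasing.
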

\begin{proof}By \cite[(2.1.10)]{Cre92a}, we may assume $k=\ka$ and that $\eta$ is a geometric point. By Theorem \ref{main:t} and Proposition \ref{para:p}, we deduce that $G(\calM,\Phi_\calM^\infty,\eta)\subseteq G(\calMd,\Phi_\calM^{\ag,\infty},\eta)$ is the stabiliser of the slope filtration of $\calM_\eta$. In addition, thanks to Corollary \ref{inters:c}, we have that $G(\calM,\eta)=G(\calMd,\eta)\cap G(\calM,\Phi^\infty_\calM,\eta)\otimes_{\Qpu} K$. This proves the first part of the statement. If $\calMd$ is semi-simple, up to replacing $(\calMd,\Phi_\calM^\ag)$ with its semi-simplification with respect to a Jordan--H\"older filtration, we may assume that $(\calMd,\Phi_\calM^\ag)$ is semi-simple. By Proposition \ref{para:p}, the subgroup $G(\calM,\Phi_\calM^\infty,\eta)\subseteq G(\calMd,\Phi_\calM^{\ag,\infty},\eta)$ is then parabolic. Since $G(\calM,\eta)\subseteq G(\calMd,\eta)$ is a normal subgroup, this implies that $G(\calM,\eta)=G(\calMd,\eta)\cap G(\calM,\Phi^\infty_\calM,\eta)\otimes_{\Qpu} K$ is a parabolic subgroup of $G(\calMd,\eta)$.
\end{proof}

 \section{Applications}\label{apps:s}
 \subsection{Monodromy over finite fields}\label{fini-fiel:ss}

 Let $X$ be a smooth connected variety over $\F_{p^n}$ with a rational point $x$. In this case, the fibre functor $\omega_x:\Isoc(X)\to \VVec_{\Qpn}$ induces a $\Qpn$-linear fibre functor for the $\Qpn$-linear Tannakian category $\omega_x:\Fnisoc(X)\to\VVec_{\Qpn}$.
 \begin{defi}
 	 For $(\calM,\Phi_\calM)\in \Fnisoc(X)$ (resp. $(\calMd,\Phi^\ag_\calM)\in \Fnisoc(X)$), we write \break$G(\calM,\Phi_\calM,x)$ (resp. $G(\calMd,\Phi^\ag_\calM,x)$) for the algebraic group of automorphisms of the restriction of $\omega_x$ to $\langle \calM,\Phi_\calM \rangle$ (resp. $\langle \calMd,\Phi^\ag_\calM \rangle$). The group $G(\calMd,\Phi^\ag_\calM,x)$ coincides with the arithmetic monodromy group of $(\calMd,\Phi_\calM^\ag)$ defined in \cite[Def. 3.2.4]{Dad}.
 \end{defi}
  By \cite[Prop. 2.2.4]{AD18}, for every overconvergent $F^n$-isocrystal $(\calM^\dagger,\Phi_\calM^\ag)$ over $X$, we have, as in Proposition \ref{fund-exac-seq:p}, the following commutative diagram of $\Q_{p^n}$-linear algebraic groups
 \begin{equation}\label{fund-diag:e}
 	\begin{tikzcd}
 		1\arrow{r} & G(\calM,x)\arrow{r}\arrow[hook,d] & G(\calM,\Phi_\calM,x)\arrow{r}\arrow[hook,d] & G(\calM,\Phi_\calM,x)^{\cst}\arrow{r}\arrow[d] &1\\
 		1\arrow{r} & G(\calM^\ag,x)\arrow{r} & G(\calM^\ag, \Phi_{\calM}^\ag, x)\arrow{r} & G(\calM^\ag,\Phi_{\calM}^\ag,x)^{\cst}\arrow{r} & 1
 	\end{tikzcd}
 \end{equation}
where the rows are exact. The groups on the right are the Tannaka groups of the category of constant objects in $\langle \calM, \Phi_\calM \rangle$ and $\langle \calMd,\Phi^\ag_\calM \rangle$. We write $G$ for $G(\calM^\ag, \Phi_{\calM}^\ag, x)$ and $H$ for $G(\calM,\Phi_\calM,x)$. As in Definition \ref{coch:d}, the slope filtration of $\calM_x$ defines a quasi-cocharacter $\lambda:\Gm^{1/\infty}\to G(\calMd,\Phi^\ag_\calM,x)$. Write $P_G(\lambda)\subseteq G$ for the stabiliser of the slope filtration.

 \begin{theo}\label{main-fini:t}
If $(\calM,\Phi_\calM)$ has constant slopes, $H=P_G(\lambda)$. Moreover, if $(\calMd,\Phi_\calM^\ag)$ is semi-simple, $H$ is a parabolic subgroup of $G$.
 \end{theo}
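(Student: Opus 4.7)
The proof parallels that of Proposition \ref{para:p}, with the Dieudonn\'e--Manin fibre functor $\omega_{\eta,\Qpi}$ replaced by the $\Qpn$-linear fibre functor $\omega_x$ at the rational point $x$, and $F^\infty$-isocrystals replaced by $F^n$-isocrystals. The inclusion $H \subseteq P_G(\lambda)$ is immediate: the slope filtration of $(\calM,\Phi_\calM)$ consists of sub-$F^n$-isocrystals, so its fibre at $x$ is $H$-stable and realises exactly the filtration of $\omega_x(\calM)$ attached to $\lambda$.

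For the reverse inclusion $P_G(\lambda) \subseteq H$, Chevalley's theorem in Tannakian form produces an overconvergent $F^n$-isocrystal $(\calN^\ag,\Phi_\calN^\ag) \in \langle \calMd,\Phi_\calM^\ag \rangle$ and a rank $1$ sub-$F^n$-isocrystal $(\calL,\Phi_\calL) \subseteq (\calN,\Phi_\calN)$ with $H$ equal to the stabiliser of the line $L := \omega_x(\calL,\Phi_\calL)$ inside $V := \omega_x(\calN,\Phi_\calN)$. Writing $(\caloL,\Phi_\caloL)$ for the $\dag$-hull of $(\calL,\Phi_\calL)$ in $(\calN,\Phi_\calN)$, setting $\overline{L} := \omega_x(\caloL,\Phi_\caloL)$, and letting $s$ denote the slope of $\calL$, Theorem \ref{main:t} (applied after the usual shrinking via Theorem \ref{shrink-zar:t} arranging that $(\calN,\Phi_\calN)$ has constant slopes on a dense open containing $x$) yields $\calL = S_1(\calL) = S_1(\caloL)$, which translates on fibres at $x$ to $L = \overline{L} \cap V^{\leq s}$. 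Since $\caloL$ admits by definition a $\dag$-extension as a subobject of $(\calN^\ag,\Phi_\calN^\ag)$, the subspace $\overline{L}$ belongs to $\langle \calMd,\Phi_\calM^\ag\rangle$ and is stabilised by all of $G$, in particular by $P_G(\lambda)$; and $V^{\leq s}$ is stabilised by $P_G(\lambda)$ by the very definition of $\lambda$. Consequently $P_G(\lambda)$ stabilises $L$, and therefore lies in $H$.

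For the moreover clause, semi-simplicity of $(\calMd,\Phi_\calM^\ag)$ makes $\langle \calMd,\Phi_\calM^\ag\rangle$ a semi-simple Tannakian category, hence $G$ is reductive over $\Qpn$; the identification $H = P_G(\lambda)$ from the first part then exhibits $H$ as a parabolic subgroup of $G$ by the characterisation of parabolic subgroups as stabilisers of filtrations associated to quasi-cocharacters \cite[Prop. 2.2.5, p. 223]{Saa72}. The essential content is packaged in Theorem \ref{main:t}, and the argument is a direct transcription of the proof of Proposition \ref{para:p} into the arithmetic $\Qpn$-linear framework; the one subtlety to watch is ensuring, via Theorem \ref{shrink-zar:t}, that the auxiliary Chevalley object $(\calN,\Phi_\calN)$ can be arranged to have constant slopes while $x$ remains in the corresponding open, so that Theorem \ref{main:t} is genuinely applicable to the inclusion $(\calL,\Phi_\calL) \subseteq (\calN,\Phi_\calN)$.
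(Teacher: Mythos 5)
Your proof is correct and is essentially the paper's: the paper's entire proof of Theorem \ref{main-fini:t} is the sentence ``This follows from Theorem \ref{main:t} by arguing as in Proposition \ref{para:p}'', and your argument is exactly that transcription into the $\Qpn$-linear setting with the fibre functor $\omega_x$ at the rational point $x$. The one ``subtlety'' you flag is in fact a non-issue: since $(\calM,\Phi_\calM)$ has constant slopes, every object of $\langle\calM,\Phi_\calM\rangle$ — in particular the Chevalley object $(\calN,\Phi_\calN)$ — automatically has constant slopes on all of $X$ (it carries a filtration induced by the slope filtration of a tensor construction whose graded pieces are subobjects of isoclinic $F^n$-isocrystals, hence isoclinic), so no shrinking via Theorem \ref{shrink-zar:t} is needed and $x$ is never at risk of being removed.
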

\begin{proof}
This follows from Theorem \ref{main:t} by arguing as in Proposition \ref{para:p}.
\end{proof}

\begin{prop}
	For an overconvergent $F^n$-isocrystal $(\calMd,\Phi_\calM^\ag)$ (possibly with non-constant slopes) $\pi_0(H)=\pi_0(G)$. In addition, the morphism $G(\calM,\Phi_{\calM},x)^{\cst}\to G(\calM^\ag,\Phi_{\calM}^\ag,x)^{\cst}$ is an isomorphism.
\end{prop}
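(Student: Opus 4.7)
\emph{Plan.} The first assertion should follow by copying the argument of Proposition \ref{pi0:p}, with Theorem \ref{main-fini:t} playing the role of Proposition \ref{para:p}; the second is then obtained from the first by a short diagram chase in \eqref{fund-diag:e}.

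To prove $\pi_0(H)=\pi_0(G)$, I would first replace $(\calMd,\Phi_\calM^\ag)$ by its semi-simplification with respect to a Jordan--H\"older filtration in $\Fnoi(X)$, which changes neither of the $\pi_0$'s but ensures that $G$ is reductive. Surjectivity of $\pi_0(H)\twoheadrightarrow \pi_0(G)$ is then immediate from the observability of the fully faithful functor $\langle\calMd,\Phi_\calM^\ag\rangle_{\mathrm{fin}}\to\langle\calM,\Phi_\calM\rangle_{\mathrm{fin}}$ induced by $\alpha$, exactly as in the proof of Proposition \ref{pi0:p}. For injectivity I would shrink $X$ to a dense open $U$ on which $(\calMd,\Phi_\calM^\ag)$ has constant slopes; by Theorem \ref{shrink-zar:t}, restriction to $U$ is a surjection on $\pi_0(H)$ and an isomorphism on $\pi_0(G)$, so it suffices to treat the constant-slope case. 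There Theorem \ref{main-fini:t} exhibits $H|_U\cap G|_U^\circ$ as a parabolic subgroup of the connected reductive group $G|_U^\circ$, hence as a connected group by \cite[Thm.~11.16]{Bor91}, giving the desired injectivity.

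For the second assertion, set $C:=G(\calM,\Phi_\calM,x)^{\cst}$ and $C':=G(\calMd,\Phi_\calM^\ag,x)^{\cst}$. The map $C\to C'$ is automatically faithfully flat, as it is induced by the fully faithful inclusion of Tannakian subcategories $\alpha(\langle\calMd,\Phi_\calM^\ag\rangle^{\cst})\hookrightarrow \langle\calM,\Phi_\calM\rangle^{\cst}$, so only injectivity remains. A short chase in \eqref{fund-diag:e}, using the exactness of the two rows and the fact that $A\hookrightarrow A'$ and $B\hookrightarrow B'$ are closed embeddings, identifies $\ker(C\to C')$ with $(B\cap A')/A$ inside $B'$; the claim therefore reduces to showing that the left square of \eqref{fund-diag:e} is cartesian, i.e.\ $B\cap A'=A$. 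The main obstacle is to lift an arbitrary constant $P\in\langle\calM,\Phi_\calM\rangle^{\cst}$ to an object of $\langle\calMd,\Phi_\calM^\ag\rangle$; I would handle this either by deducing the statement from its $F^\infty$-counterpart Corollary \ref{cart:c} (passing through the natural comparison between $F^n$-monodromy groups at the rational point $x$ and $F^\infty$-monodromy groups at a geometric point $\eta$ above $x$, compatibly with the scalar extension $\Qpn\hookrightarrow \Qpu$), or, more directly, by using that after the semi-simplification of the first part $G$ is reductive, so every $P\in\langle\calM,\Phi_\calM\rangle$ may be realised as a subobject of some tensor construction $T=T(\calM,\Phi_\calM)$; since $P=\pi^{*}P_0$ is automatically dagger-extendable as a pullback from $\Spec \F_{p^n}$, the full faithfulness of $\alpha$ lifts the inclusion $P\hookrightarrow T$ to $\Fnoi(X)$, placing $P^{\ag}$ inside $\langle\calMd,\Phi_\calM^\ag\rangle$ and yielding the cartesian property.
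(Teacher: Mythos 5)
Your treatment of the first assertion is correct and is precisely the paper's route: one replicates the proof of Proposition \ref{pi0:p} verbatim, with Theorem \ref{main-fini:t} supplying the parabolicity input in place of Proposition \ref{para:p}. Your reduction of the second assertion to the cartesianness of the left square of \eqref{fund-diag:e} is also fine.

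The gap is in how you establish that cartesianness. Your ``direct'' route rests on the claim that, $G$ being reductive after semi-simplification, every $P\in\langle\calM,\Phi_\calM\rangle$ is a \emph{subobject} of a tensor construction $T(\calM,\Phi_\calM)$. This does not follow: reductivity of $G$ controls $\langle\calMd,\Phi_\calM^\ag\rangle\simeq\Rep(G)$, whereas $\langle\calM,\Phi_\calM\rangle\simeq\Rep(H)$ with $H$ a \emph{proper parabolic} of $G$ (that is exactly what Theorem \ref{main-fini:t} says), hence non-reductive and non-observable; a general object of $\Rep(H)$ is only a subquotient of a tensor construction, and the quotient step cannot be lifted by the full faithfulness of $\alpha$. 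Had your claim been true, the same soft argument would show that \emph{every} $\dag$-extendable object of $\langle\calM,\Phi_\calM\rangle$ lies in $\langle\calMd,\Phi_\calM^{\ag}\rangle$, rendering Proposition \ref{rk1-unr:p} and the parabolicity machinery behind it superfluous --- a sign that something is off. Your alternative route via Corollary \ref{cart:c} is not a proof as stated either: the constant subcategories in the $F^n$/rational-point setting and in the $F^\infty$/geometric-point setting are genuinely different groups (over $\F_{p^n}$ the constant part records the Zariski closure of Frobenius, while over $\ka$ with $F^\infty$-structures unit-root constants become trivial), so Corollary \ref{cart:c} does not descend along $\Qpn\hookrightarrow\Qpu$ by a formal base change. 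What the paper intends is to rerun the group-theoretic argument of Proposition \ref{rk1-unr:p} in the present setting: for an (irreducible) constant, hence $\dag$-extendable, object $(\calP,\Phi_\calP)\in\langle\calM,\Phi_\calM\rangle$, consider the surjection $G(\calMd\oplus\calPd,\Phi_\calM^\ag\oplus\Phi_\calP^\ag,x)\twoheadrightarrow G(\calMd,\Phi_\calM^\ag,x)$ and show its kernel $N$ is trivial: $N$ meets the parabolic $H$ trivially because $\langle\calM\oplus\calP,\Phi_\calM\oplus\Phi_\calP\rangle=\langle\calM,\Phi_\calM\rangle$, so $N$ is finite and normal, hence central in the neutral component and contained in a maximal torus of $H$, hence trivial. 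This is where Theorem \ref{main-fini:t} actually enters the second assertion; your proposal never invokes it there.
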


\begin{proof}
This follows from Theorem \ref{main-fini:t} by arguing as in Proposition \ref{pi0:p} and Proposition \ref{rk1-unr:p}.
\end{proof}

Suppose that $(\calM,\Phi_\calM)$ has constant slopes. If we write $(\calN,\Phi_\calN)$ for $\Gr_{S_\bullet}(\calM,\Phi_\calM)$, there is a functor $\langle \calM, \Phi_\calM \rangle\to \langle\calN,\Phi_\calN\rangle$ sending $(\calM',\Phi_{\calM'})$ to $\Gr_{S_\bullet}(\calM',\Phi_{\calM'})$. This induces the following commutative diagram with exact rows 
 
  \begin{equation}\label{graded:e}
 	\begin{tikzcd}
 		1\arrow{r} & G(\calN,x)\arrow{r}\arrow[hook,d] & G(\calN,\Phi_\calN,x)\arrow{r}\arrow[hook,d] & G(\calN,\Phi_\calN,x)^{\cst}\arrow{r}\arrow[d, "="] &1\\
 		1\arrow{r} & G(\calM,x)\arrow{r} & G(\calM, \Phi_{\calM}, x)\arrow{r} & G(\calM,\Phi_{\calM},x)^{\cst}\arrow{r} & 1.
 	\end{tikzcd}
 \end{equation}
 
 Even in this case, the natural morphism $G(\calN,\Phi_{\calN},x)^{\cst}\to G(\calM,\Phi_{\calM},x)^{\cst}$ is an isomorphism, since the inclusion $\langle \calN,\Phi_\calN \rangle\hookrightarrow \langle \calM,\Phi_\calM \rangle$ provides an inverse map.
 
 \begin{prop}\label{cent:p}
 The subgroup $G(\calN,\Phi_\calN,x)\subseteq P_G(\lambda)$ is equal to $Z_G(\lambda)$, the centraliser of the image of $\lambda$.
 \end{prop}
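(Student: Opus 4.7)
The plan is to identify $G(\calN,\Phi_\calN,x)$ with the Levi factor $Z_G(\lambda)$ of the parabolic subgroup $P_G(\lambda)=H$ by combining a semidirect product decomposition of $H$ induced by the graded functor $\Gr_{S_\bullet}$ with the Levi decomposition $H=Z_G(\lambda)\ltimes U_H(\lambda)$ attached to $\lambda$. The two main ingredients will be (i) that objects of $\langle\calN,\Phi_\calN\rangle$ are slope-graded, so that $G(\calN,\Phi_\calN,x)$ already lies in $Z_G(\lambda)$; and (ii) that $\Gr_{S_\bullet}$ provides a section of the surjection $H\twoheadrightarrow G(\calN,\Phi_\calN,x)$ whose kernel is contained in the unipotent radical $U_H(\lambda)$.

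For (i), observe that by construction $\lambda$ acts on each $\omega_x(\calM')$ via the slope $\otimes$-grading. For an object $\calN'\in\langle\calN,\Phi_\calN\rangle$, the slope filtration of $\calN'$ already splits canonically into pure-slope constituents $\calN'=\bigoplus_s\calN'_s$, and the resulting decomposition $\omega_x(\calN')=\bigoplus_s\omega_x(\calN'_s)$ is preserved by every automorphism in $G(\calN,\Phi_\calN,x)$, since each $\calN'_s$ is a subobject in $\langle\calN,\Phi_\calN\rangle$. Hence $G(\calN,\Phi_\calN,x)$ commutes with $\lambda$. For (ii), since each graded piece $S_i(\calM)/S_{i-1}(\calM)$ is a subquotient of $\calM$, we have an inclusion of Tannakian subcategories $\iota\colon\langle\calN,\Phi_\calN\rangle\hookrightarrow\langle\calM,\Phi_\calM\rangle$, inducing a surjection $\iota_*\colon H\twoheadrightarrow G(\calN,\Phi_\calN,x)$. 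The functor $\Gr_{S_\bullet}$ of diagram (5.1.3) satisfies $\Gr_{S_\bullet}\circ\iota=\mathrm{id}$ and therefore yields a section $\sigma\colon G(\calN,\Phi_\calN,x)\hookrightarrow H$, which is precisely the middle vertical closed embedding of that diagram. Writing $U:=\ker(\iota_*)$, we thus obtain $H=\sigma(G(\calN,\Phi_\calN,x))\ltimes U$. Moreover, $U$ acts trivially on every object of $\langle\calN,\Phi_\calN\rangle$; in particular, for any $\calM'\in\langle\calM,\Phi_\calM\rangle$ the graded $\Gr_{S_\bullet}(\calM')$ lies in $\langle\calN,\Phi_\calN\rangle$, so $U$ acts trivially on each slope-graded piece of $\omega_x(\calM')$. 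Combined with $U\subseteq H=P_G(\lambda)$, this forces $U\subseteq U_H(\lambda)$.

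I would then conclude using the Levi decomposition $H=Z_G(\lambda)\ltimes U_H(\lambda)$. By (i) the section $\sigma$ takes values in $Z_G(\lambda)$, so $\iota_*$ restricts to a surjective homomorphism $Z_G(\lambda)\twoheadrightarrow G(\calN,\Phi_\calN,x)$ whose kernel is $Z_G(\lambda)\cap U\subseteq Z_G(\lambda)\cap U_H(\lambda)=\{1\}$. Hence this restriction is an isomorphism inverse to $\sigma$, and so $G(\calN,\Phi_\calN,x)=Z_G(\lambda)$ inside $H$. The most delicate point in the argument is to verify that $\Gr_{S_\bullet}$ genuinely induces a section on Tannaka groups --- this requires identifying $\omega_x\circ\Gr_{S_\bullet}$ with $\omega_x$ via the splitting of the slope filtration provided by $\lambda$ --- and to ensure that the Levi decomposition of $H=P_G(\lambda)$ attached to $\lambda$ is available in this (a priori non-reductive) setting.
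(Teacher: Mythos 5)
Your proof is correct, but it takes a genuinely different route from the paper's for the hard inclusion $Z_G(\lambda)\subseteq G(\calN,\Phi_\calN,x)$. The paper argues exactly as in Proposition \ref{para:p}: by Chevalley's theorem it realises $G(\calN,\Phi_\calN,x)$ as the stabiliser in $H$ of a line $L=\omega_x(\calL)$ with $(\calL,\Phi_\calL)\subseteq\Gr_{S_\bullet}(\calM',\Phi_{\calM'})$ of rank $1$, lifts $\calL$ to $\widetilde{\calL}=\pi^{-1}(\calL)\subseteq\calM'$, and writes $L=\widetilde{L}\cap V^s$; since $Z_G(\lambda)\subseteq P_G(\lambda)=H$ stabilises $\widetilde{L}$ (a fibre of a sub-$F^n$-isocrystal) and stabilises the weight space $V^s$, it stabilises $L$. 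You instead exploit the retraction $\iota_*\colon H\twoheadrightarrow G(\calN,\Phi_\calN,x)$ with section $\sigma=\Gr_{S_\bullet,*}$ (the middle vertical arrow of diagram (\ref{graded:e})), decompose $H=\sigma(G(\calN,\Phi_\calN,x))\ltimes U$, and show $\sigma$ lands in $Z_G(\lambda)$ while $Z_G(\lambda)\cap U=\{1\}$. Both arguments rest on the same external inputs: Theorem \ref{main-fini:t} to know $Z_G(\lambda)\subseteq P_G(\lambda)=H$, and the canonical slope splitting of $\omega_x(\calM')$ over the point $x$ to make $\sigma$ (equivalently, the paper's hook arrow in (\ref{graded:e})) well defined and to see that objects of $\langle\calN,\Phi_\calN\rangle$ are slope-graded. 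Your approach avoids Chevalley and yields the extra structural statement $H=G(\calN,\Phi_\calN,x)\ltimes U$ with $U=U_H(\lambda)$; the paper's is shorter and uniform with Proposition \ref{para:p}. One remark on the point you flag as delicate at the end: you do not actually need the Levi decomposition of the possibly non-reductive $P_G(\lambda)$. All your argument uses is $Z_G(\lambda)\cap U=\{1\}$, and this is immediate without any structure theory: an element of $Z_G(\lambda)$ preserves the slope grading of $\omega_x(\calM)$, an element of $U$ acts trivially on the associated graded of the slope filtration, so an element of the intersection acts trivially on $\omega_x(\calM)$ and hence is the identity, since $G$ embeds in $\GL(\omega_x(\calM))$. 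With that simplification your proof is complete.
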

\begin{proof}
The proof is similar to Proposition \ref{para:p}. By construction, the subgroup $G(\calN,\Phi_\calN,x)$ is in $Z_G(\lambda)$. On the other hand, by Chevalley's theorem, there exists an $F^n$-isocrystal $(\calM',\Phi_{\calM'})\in\langle \calM, \Phi_\calM\rangle$ and a subobject $(\calL,\Phi_\calL)\subseteq \Gr_{S_\bullet}(\calM',\Phi_{\calM'})$ of rank $1$ such that $G(\calN,\Phi_\calN,x)$ is the stabiliser of $L:=\omega_x(\calL)\subseteq \omega_x(\calM')=:V$. Since $(\calL,\Phi_\calL)$ has rank $1$, it is contained as $F^n$-isocrystal in the image of the quotient $\pi:S_i(\calM')\to S_i(\calM')/S_{i-1}(\calM')$ for some $i$. Write $(\widetilde{\calL},\Phi_{\widetilde{\calL}})\subseteq (\calM',\Phi_{\calM'})$ for $\pi^{-1}(\calL,\Phi_\calL)$ and $\widetilde{L}\subseteq V$ for its fibre at $x$. If $s$ is the slope of $(\calL,\Phi_\calL)$ and $V^s\subseteq V$ is the subspace of slope $s$, then $L=\widetilde{L}\cap V^s$. We deduce that $Z_G(\lambda)$ stabilises $L$, which gives the desired result.
\end{proof}

\begin{coro}
The algebraic group $G(\calN,\Phi_\calN,x)^\circ$ contains a Cartan subgroup of $G(\calMd,\Phi^\ag_\calM,x)^\circ$.
\end{coro}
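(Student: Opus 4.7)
The plan is to combine Proposition \ref{cent:p}, which identifies $G(\calN,\Phi_\calN,x)$ with the centraliser $Z_G(\lambda)$ of the quasi-cocharacter attached to the slope filtration, with the elementary observation that any Cartan subgroup of $G^\circ$ containing the image of $\lambda$ must automatically lie inside $Z_G(\lambda)^\circ$. Concretely, the first step is to note that by Definition \ref{coch:d} the quasi-cocharacter $\lambda$ factors through the ordinary torus $\Gm^{1/r}$, where $r$ is the lcm of the denominators of the slopes of $\calM$. Since $\Gm^{1/r}$ is a connected algebraic group, its image $T_\lambda := \lambda(\Gm^{1/r})\subseteq G$ is a connected subtorus, and in particular $T_\lambda\subseteq G^\circ$.

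Next, I would choose a maximal torus $T$ of $G^\circ$ containing $T_\lambda$; such a $T$ exists because any torus in a connected algebraic group is contained in some maximal torus. Let $C := Z_{G^\circ}(T)^\circ$ be the associated Cartan subgroup of $G^\circ$. By construction $C$ is connected and centralises $T$, hence it centralises $T_\lambda$, and therefore it centralises $\lambda$ in the sense of Definition \ref{coch:d}. Proposition \ref{cent:p} then yields the inclusion $C\subseteq Z_G(\lambda) = G(\calN,\Phi_\calN,x)$, and since $C$ is connected we conclude $C\subseteq G(\calN,\Phi_\calN,x)^\circ$, which is exactly the desired statement.

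There is no real obstacle: the only point requiring some care is the convention on Cartan subgroups, which here must be taken in Borel's sense as the identity component of the centraliser of a maximal torus (cf.\ \cite[\S 12]{Bor91}). When $G^\circ$ is reductive -- which is the situation one meets in Theorem \ref{main-fini:t} under the semi-simplicity hypothesis -- Cartan subgroups coincide with maximal tori, and the argument actually produces a maximal torus of $G^\circ$ contained in $G(\calN,\Phi_\calN,x)^\circ$.
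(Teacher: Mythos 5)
Your proposal is correct and follows essentially the same route as the paper: choose a maximal torus $T$ of $G^\circ$ containing the image of $\lambda$, observe that the centraliser of $T$ lies in $Z_G(\lambda)$, and invoke Proposition \ref{cent:p} together with connectedness of the Cartan subgroup to land inside $G(\calN,\Phi_\calN,x)^\circ$. The only difference is that you spell out the (correct) preliminary points that $\lambda$ factors through the torus $\Gm^{1/r}$ and that its image is therefore a connected subtorus of $G^\circ$, which the paper leaves implicit.
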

\begin{proof}
If $T$ is a maximal torus of $G(\calMd,\Phi^\ag_\calM,x)^\circ$ containing the image of $\lambda$, the centraliser of $T$ in $G(\calMd,\Phi^\ag_\calM,x)$ is contained in $Z_G(\lambda)=G(\calN,\Phi_\calN,x)$. This yields the desired result.
\end{proof}
 
 \begin{theo}\label{semi-simp:t}
 Let $X$ be a smooth variety over a finite field $\F_{p^n}$ and $f:A\to X$ an abelian scheme with constant slopes. If $(\calM,\Phi_\calM)$ is the $F$-isocrystal $R^1f_{\crys*}\calO_{A,\crys}$, the induced $F$-isocrystal $(\calN,\Phi_\calN):=\Gr_{S_\bullet}(\calM,\Phi_\calM)$ is semi-simple. In particular, $R^1f_{\et*}\underline{\Qp}$ is a semi-simple lisse $\Qp$-sheaf over $X$.
 \end{theo}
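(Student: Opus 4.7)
The plan is to derive the semi-simplicity of $(\calN,\Phi_\calN)$ from the parabolicity package of Theorem~\ref{main-fini:t} together with Proposition~\ref{cent:p}, once the overconvergent $F$-isocrystal $\calMd:=R^1f_{\mathrm{rig}*}\calO_A$ is known to be semi-simple, and then to transport the resulting semi-simplicity of its slope-zero part to the étale side via Crew's equivalence.

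The first input is that $\calMd$ is a semi-simple overconvergent $F$-isocrystal. This is well known over a finite field: its $\ell$-adic companion $R^1f_{\et*}\underline{\Q}_\ell$ is a semi-simple lisse $\Q_\ell$-sheaf (from the polarisation on $A/X$ together with purity, as in Deligne's Weil~II), and semi-simplicity transfers to $\calMd$ either directly using the polarisation or by invoking the existence of $p$-adic companions and Abe's Langlands reciprocity. In particular, the Tannaka group $G:=G(\calMd,\Phi^\ag_\calM,x)$ is reductive. Granting this, Theorem~\ref{main-fini:t} identifies $H:=G(\calM,\Phi_\calM,x)$ with the parabolic subgroup $P_G(\lambda)\subseteq G$, where $\lambda$ is the quasi-cocharacter attached to the slope filtration of $\calM_x$. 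By Proposition~\ref{cent:p}, the subgroup $G(\calN,\Phi_\calN,x)$ equals $Z_G(\lambda)$, which is a Levi factor of $P_G(\lambda)$ in the reductive group $G$ and hence itself reductive. Since $\langle\calN,\Phi_\calN\rangle$ is a neutral Tannakian category in characteristic $0$ with reductive Tannaka group, it is semi-simple, and so $(\calN,\Phi_\calN)$ is semi-simple.

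For the consequence on $R^1f_{\et*}\underline{\Qp}$, note that $S_1(\calM,\Phi_\calM)=S_1(\calN,\Phi_\calN)$ is the unit-root sub-$F$-isocrystal of $\calM$ precisely when the minimal slope of $\calM$ is zero; in that case it corresponds, under the unit-root part of Crew's equivalence \cite{Cre87} already invoked in Proposition~\ref{etal-isoc:p}, exactly to the lisse $\Qp$-sheaf $R^1f_{\et*}\underline{\Qp}$ (if the minimal slope is positive, then $R^1f_{\et*}\underline{\Qp}$ vanishes and the assertion is trivial). As a sub-object of the semi-simple object $\calN$ in a semi-simple category, $S_1(\calN)$ is a direct summand of $\calN$ and thus semi-simple, and Crew's equivalence transports this semi-simplicity to $R^1f_{\et*}\underline{\Qp}$. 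The only step requiring outside input is the semi-simplicity of $\calMd$; everything else is a direct application of the results established earlier in the article, so I do not anticipate a genuine obstacle.
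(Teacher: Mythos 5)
Your overall strategy coincides with the paper's: reductivity of $G:=G(\calMd,\Phi^\ag_\calM,x)$, then Theorem~\ref{main-fini:t} and Proposition~\ref{cent:p} to identify $G(\calN,\Phi_\calN,x)$ with $Z_G(\lambda)$, the centraliser of a (quasi-)torus in a reductive group, hence reductive, hence $\langle\calN,\Phi_\calN\rangle$ is semi-simple; and the étale statement is reduced to the minimal-slope piece via Crew's equivalence and the Dieudonné-theoretic identification of $(\calM,\Phi_\calM)$ with the crystal of $A[p^\infty]$ as in \cite{BBM82}. That part is fine.

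The gap is in the step you yourself flag as the ``only outside input'', namely the semi-simplicity of $(\calMd,\Phi^\ag_\calM)$, and neither of your two justifications works. First, purity (Weil~II) together with a polarisation only yields \emph{geometric} semi-simplicity, i.e.\ semi-simplicity of $\calMd$ as an overconvergent isocrystal (equivalently, of $R^1f_{\et*}\underline{\Ql}$ as a representation of the geometric fundamental group); arithmetic semi-simplicity of the Tate module is Zarhin's theorem, a form of the Tate conjecture, and is not a formal consequence of purity --- already the constant sheaf admits non-split arithmetic self-extensions over a finite base field. What you need for $G(\calMd,\Phi^\ag_\calM,x)$ to be reductive is precisely the arithmetic statement. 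Second, the companion correspondence is defined only at the level of semi-simple objects (an object and its semi-simplification have the same $L$-function), so semi-simplicity of the $\ell$-adic companion cannot be transported to $\calMd$ by invoking companions. The paper circumvents both problems by splitting $G$ along the exact sequence~(\ref{fund-diag:e}): the geometric monodromy group $G(\calMd,x)$ is reductive by purity via \cite[Cor.~3.5.2.(ii)]{Dad}, the constant quotient $G(\calMd,\Phi^\ag_\calM,x)^{\cst}$ is reductive because the Frobenius of $A_x$ is a semi-simple endomorphism of $H^1_{\crys}(A_x)$ (it lies in the centre of the semi-simple algebra $\End(A_x)\otimes\Q$), and an extension of a reductive group by a reductive group is reductive. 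You should either reproduce this argument or replace it by a genuine citation of Zarhin-type semi-simplicity together with de Jong's theorem \cite[Thm.~2.6]{deJ98} to pass to the crystalline side; as written, the key hypothesis of your argument is unproved.
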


\begin{proof}
By étale descent we may assume that $X$ is connected and admits a rational point $x$. We may also replace $\Phi_{\calM}$ with its $n$-th power. By \cite[Thm. 7]{Ete02}, the $F^n$-isocrystal $(\calM,\Phi_\calM)$ is \dex\ and, by \cite[Cor. 3.5.2.(ii)]{Dad}, the monodromy group $G(\calMd,x)$ is a reductive group (note that ($\calMd,\Phi^\ag_\calM$) is pure by the Riemann Hypothesis for abelian varieties). On the other hand, since the action of the $p^n$-th power Frobenius on the crystalline cohomology groups of $A_x$ is semi-simple (this Frobenius is in the centre of $\End(A_x)$), we get that $G(\calMd,\Phi^\ag_\calM,x)^{\cst}$ is a reductive group. We deduce by (\ref{fund-diag:e}) that $G(\calMd,\Phi^\ag_\calM,x)$ is reductive (see also \cite[Thm. 1.2]{Pal15} for a different proof over curves). By Proposition \ref{cent:p}, the group $G(\calN,\Phi_\calN,x)$ is the centraliser of $\lambda$ in $G(\calM,\Phi_\calM,x)$, thus by \cite[Cor. 11.12]{Bor91} it is reductive. This shows that $(\calN,\Phi_\calN)$ is semi-simple.

\spa

 To prove that $R^1f_{\et*}\underline{\Qp}$ is semi-simple it is enough to prove that the associated unit-root $F$-isocrystal $(\calV,\Phi_\calV)$ is semi-simple. This $F$-isocrystal coincides with the crystalline Dieudonné module of the $p$-divisible group $A[p^{\infty}]^{\et}$. By  \cite[Thm. 2.5.6.(ii)]{BBM82}, the $F$-isocrystal $(\calM,\Phi_\calM)$ is instead the crystalline Dieudonné module of $A[p^{\infty}]$, thus $(\calV,\Phi_\calV)$ is the minimal slope sub-$F$-isocrystal of $(\calM,\Phi_\calM)$. This concludes the proof.
\end{proof}

 \subsection{Separable points of abelian varieties}\label{sep-pts:ss}
 Let $E/\Fp$ be a finitely generated field extension and let $A$ be an abelian variety over $E$. We use next consequence of Theorem \ref{main:t} to prove a result on torsion $E^{\sep}$-points of $A$.
 \begin{coro}\label{max-slope-sub:c}
 	If $(\calM,\Phi_\calM)$ is a \dex\ $F^n$-isocrystal over a smooth variety $X$ over $k$, every isoclinic subobject of maximal slope is \dex.
 \end{coro}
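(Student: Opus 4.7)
The plan is to deduce this directly from Theorem \ref{main:t} by comparing the $\dag$-hull $\caloN$ with $\calM$ using the maximality of the slope. The key observation is that Theorem \ref{main:t} is a statement about the \emph{minimum} slope piece of $\caloN$, but for an isoclinic subobject of maximal slope we can squeeze $\caloN$ between the minimum slope constraint from the theorem and the maximum slope constraint from the ambient $\calM$.

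First I would reduce to the case where $\calM$ has constant slopes. By \cite[Cor. 2.3.1]{Kat79} and \cite[Cor. 4.2]{Ked16}, there is a dense open $U \subseteq X$ over which $\calM|_U$ has constant slopes, and one can choose $U$ so that the hypothesis ``$\calN$ is isoclinic of maximal slope in $\calM$'' translates to ``$\calN|_U$ is isoclinic of slope equal to the maximum slope of $\calM|_U$''. It then suffices to show $\calN|_U$ is $\dag$-extendable: using Theorem \ref{shrink-zar:t}.(ii), which asserts that the essential image of $\Foi(X) \to \Foi(U)$ is stable under subquotients inside objects coming from $X$, the overconvergent lift of $\calN|_U$ extends to a subobject $\calN^\ag \subseteq \calMd$ on $X$; its convergent realization agrees with $\calN$ on $U$ and, by the full faithfulness part of Theorem \ref{shrink-zar:t}.(i), must coincide with $\calN$.

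Assuming now $\calM$ has constant slopes with maximum slope $s$ and $\calN \subseteq \calM$ is isoclinic of slope $s$, let $\caloN$ be the $\dag$-hull of $\calN$ in $\calM$. By Theorem \ref{main:t} applied to $\calN \subseteq \calM$, we have
\[
S_1(\caloN) = S_1(\calN) = \calN,
\]
the second equality because $\calN$ is isoclinic. Thus the minimum slope of $\caloN$ equals $s$, so all slopes of $\caloN$ are $\geq s$. On the other hand $\caloN \subseteq \calM$, so all slopes of $\caloN$ are $\leq s$. Therefore $\caloN$ is isoclinic of slope $s$, which forces $\caloN = S_1(\caloN) = \calN$, proving that $\calN$ is $\dag$-extendable.

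The main (and essentially only) conceptual step is the application of Theorem \ref{main:t}; once one notices that ``maximal slope of the subobject'' combined with the ``minimum slope'' conclusion of the theorem pins down all slopes of $\caloN$ to be $s$, the corollary is immediate. The only bookkeeping is the shrinking/extension argument to remove the constant-slopes hypothesis, which is handled cleanly by Theorem \ref{shrink-zar:t}.
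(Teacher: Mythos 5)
Your proposal is correct and follows the same route as the paper: shrink to the constant-slope case, apply Theorem \ref{main:t} to get $S_1(\caloN)=S_1(\calN)=\calN$, and use maximality of the slope to force $\caloN$ to be isoclinic and hence equal to $\calN$. The paper's proof is just a terser version of this (it leaves the shrinking/extension bookkeeping via Theorem \ref{shrink-zar:t} implicit, exactly as in Lemma \ref{shrink-zar:l}), so you have added nothing beyond making its steps explicit.
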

\begin{proof}
After shrinking $X$ we may assume that the slopes of $(\calM,\Phi_\calM)$ are constant. By Theorem \ref{main:t}, a subobject $(\calN,\Phi_\calN)\subseteq (\calM,\Phi_\calM)$ of maximal slope is equal to its \dhl. This yields the desired result.
\end{proof}
 \begin{theo}\label{abelian:t}
 	Let $A$ be an abelian variety over $E$. The group $A(E^{\mathrm{sep}})[p^\infty]$ is finite in the following two cases. 
 	\begin{itemize}
 		\item[(i)] If $\End(A)\otimes_\Z \Qp$ is a division algebra.
 		\item[(ii)] If $\End(A)\otimes_\Z \Q$ has no factor of Albert-type IV.
 	\end{itemize}
 \end{theo}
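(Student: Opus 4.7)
My plan is to translate the infiniteness of $A(E^{\mathrm{sep}})[p^\infty]$ into the existence of a distinguished isoclinic sub-$F$-isocrystal of maximal slope in $H^1_{\crys}(A^\vee/X)$, apply Corollary~\ref{max-slope-sub:c} to extract overconvergence, and obtain a contradiction via a Tate-type rigidity for abelian varieties combined with the hypotheses (i) or (ii). First I would choose a smooth connected $\Fp$-model $X$ with function field $E$ and set $\calM := H^1_{\crys}(A/X)$; by \'Etesse's theorem this $F$-isocrystal is $\dag$-extendable, and after shrinking $X$ the slopes become constant in $[0,1]$. Under (i), $\End(A)\otimes\Qp$ being a division algebra forces $\End(A)\otimes\Q$ to be a division algebra and hence $A$ to be simple; under (ii), Poincar\'e complete reducibility reduces us to $A$ simple with $\End(A)\otimes\Q$ of Albert type I, II or III.

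Assuming for contradiction that $A(E^{\mathrm{sep}})[p^\infty]$ is infinite, after possibly replacing $E$ by a finite separable extension the hypothesis should produce a non-zero constant \'etale sub-$p$-divisible group $(\Qp/\Zp)^{\oplus r} \hookrightarrow A[p^\infty]$ defined over $E$. Cartier duality applied to $A^\vee$ then converts this into a non-zero constant multiplicative sub $\mu_{p^\infty}^{\oplus r} \hookrightarrow A^\vee[p^\infty]$, and crystalline Dieudonn\'e theory yields a non-zero isoclinic slope-$1$ sub-$F$-isocrystal $\calN \subseteq H^1_{\crys}(A^\vee/X)$, isomorphic to $\mathbbm{1}^{\oplus r}$ after a Tate twist. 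As $\calN$ is an isoclinic subobject of maximal slope in the $\dag$-extendable $F$-isocrystal $H^1_{\crys}(A^\vee/X)$, Corollary~\ref{max-slope-sub:c} gives that $\calN$ is itself $\dag$-extendable, and Kedlaya's full faithfulness (Theorem~\ref{shrink-zar:t}) lifts the inclusion to an inclusion of overconvergent $F$-isocrystals $\calN^\ag \hookrightarrow H^1_{\crys}(A^\vee/X)^\ag$.

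The contradiction will then come from the Tate-type rigidity theorem for abelian varieties over finitely generated function fields of characteristic $p$ (Zarhin), which matches overconvergent sub-$F$-isocrystals of $H^1_{\crys}(A^\vee)^\ag$ with sub-abelian-varieties of $A^\vee$ up to isogeny. Under (i), the simplicity of $A$ together with the division-algebra structure of $\End(A)\otimes\Qp$ rule out any proper non-zero such sub with the constant slope-$1$ type imposed on $\calN^\ag$. Under (ii), where $\End(A)\otimes\Qp$ need not be a division algebra, the absence of Albert Type-IV factors is used through an Albert-classification analysis of the possible $\End(A)\otimes\Qp$-module structures on $\calN^\ag$; this last step, leveraging that Type-IV corresponds precisely to CM centre and thus to the presence of certain non-trivial involutions commuting with the $F$-isocrystal structure, is the most delicate part of the proof and the main obstacle.
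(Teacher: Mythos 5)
Your setup (model over a smooth $X$ with function field $E$, \'Etesse's theorem, extraction of a constant/\'etale Barsotti--Tate subgroup from the infinitude of $A(E^{\sep})[p^\infty]$, and Corollary~\ref{max-slope-sub:c} to make the resulting extremal-slope isoclinic piece $\calN$ overconvergent) matches the paper. But the step you rely on for the contradiction does not exist: there is no ``Tate-type rigidity'' matching overconvergent sub-$F$-isocrystals of $H^1_{\crys}(A^\vee)$ with abelian subvarieties of $A^\vee$ up to isogeny. De Jong's theorem (the reference the paper uses, \cite[Thm.~2.6]{deJ98}) is a full faithfulness statement for \emph{homomorphisms}: it identifies $\End(A)\otimes_\Z\Qp$ with the endomorphism algebra of the convergent Dieudonn\'e $F$-isocrystal. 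Subobjects are a different matter: sub-$F$-isocrystals (even overconvergent ones) are far more plentiful than abelian subvarieties --- already for a simple ordinary abelian variety over a finite field, $H^1_{\crys}$ decomposes into many overconvergent rank-one pieces while $A$ has no proper abelian subvariety. So under (i) your claim that simplicity of $A$ ``rules out'' the sub $\calN^\ag$ fails. The paper's actual mechanism is: purity (Riemann Hypothesis) makes the arithmetic monodromy group of $(\calMd,\Phi_\calM^\ag)$ reductive, hence $(\calMd,\Phi_\calM^\ag)$ is semi-simple, hence $\calN^\ag$ is a \emph{direct summand}; this produces a non-trivial idempotent in $\End(\calMd,\Phi_\calM^\ag)\subseteq\End(\calM,\Phi_\calM)$, which de Jong transports to a non-trivial idempotent of $\End(A)\otimes_\Z\Qp$, contradicting (i) directly (no simplicity of $A$ is needed or used there).

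For (ii) you explicitly leave the decisive step open (``the most delicate part \dots and the main obstacle''), so there is no argument to assess; note also that the intended route is not an Albert-classification analysis of module structures on $\calN^\ag$. The paper instead shows that the existence of the slope-extremal overconvergent summand forces the centre of $G(\calMd,\Phi_\calM^\ag)^\circ$ to have dimension at least $2$: the group of twist classes of \cite{Dad} has rank equal to that dimension, the determinant contributes the class $[p^g]$, and the unit-root piece contributes a class $[a]$ with $a$ a $p$-adic unit Weil number of weight $1$, so $[a]$ and $[p^g]$ are $\Q$-linearly independent; then \cite[Lem.~10.5]{CT20} converts a centre of dimension $\geq 2$ into an Albert type IV factor, contradicting (ii). One smaller caution: you replace $E$ by a finite separable extension to make the \'etale subgroup constant, but the hypotheses (i) and (ii) concern $\End(A)$ over $E$, and $\End(A_{E'})$ can be strictly larger; the paper avoids this by Galois-descending the maximal constant Barsotti--Tate subgroup of $A_{E^{\sep}}[p^\infty]$ to an \'etale (not necessarily constant) Barsotti--Tate group over $E$ itself.
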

 \begin{proof}
 	
 	We suppose by contradiction that the group $A(E^{\mathrm{sep}})[p^\infty]$ is infinite and we want to show that both (i) and (ii) cannot occur. Let us make first some preliminary observations. Since $A(E^{\mathrm{sep}})[p^\infty]$ is infinite, the \bt group $A_{E^{\mathrm{sep}}}[p^\infty]$ over $E^{\sep}$ admits $\Qp/\Zp$ as a subgroup. Let $\widetilde{H}$ be the maximal constant \bt subgroup of $A_{E^{\mathrm{sep}}}[p^\infty]$. By Galois descent, $\widetilde{H}$ descends to some étale \bt group $H$ over $E$. Let $X$ be a smooth connected variety over $\Fp$ with function field $E$. After shrinking $X$ we may assume that $A$ over $E$ admits a smooth model $\calA\to X$ with constant Newton polygon. Let $(\calM,\Phi_\calM)$ be the crystalline Dieudonné module of $\calA[p^\infty]$ and $(\calN,\Phi_\calN)$ the crystalline Dieudonné module of the model of $H$ in $\calA[p^\infty]$. Since $H$ is étale, the $F$-isocrystal $(\calN,\Phi_\calN)$ is a unit-root quotient of $(\calM,\Phi_\calM)$. At the same time, we know that $(\calM,\Phi_\calM)$ is \dex, thus by Corollary \ref{max-slope-sub:c} the isocrystal $(\calN,\Phi_\calN)$ is \dex\ as well. As usual, write $(\calN^\ag,\Phi_\calN^\ag)$ for its $\dag$-extension. 
 	
 	\spa
 	Let us now prove that $\End(A)\otimes_\Z \Qp$ contains a non-trivial idempotent, showing thereby that (i) is not possible.  Arguing as in the proof of Theorem \ref{semi-simp:t}, we know that the monodromy group of $(\calM^\ag,\Phi^\ag_\calM)$ is reductive, which implies that $(\calM^\ag,\Phi^\ag_\calM)$ is semi-simple. Thus $(\calN^\ag,\Phi_\calN^\ag)$ is actually a proper direct summand of $(\calM^\ag,\Phi_\calM^\ag)$. This implies that $\End(\calM^\ag,\Phi_\calM^\ag)$ contains some non-trivial idempotent. Thanks to \cite[Thm. 2.6]{deJ98}, we deduce that the same is true for $\End(A)\otimes_\Z \Qp$. 
 	
 	\spa
 	
 	It remains to prove that $A$ admits a factor of Albert-type IV. For this we may assume $A$ geometrically simple. Thanks to \cite[Lem. 10.5]{CT20}, it is enough to show that the centre of the neutral component $G(\calMd,\Phi^\ag_\calM)^\circ$ is of dimension at least $2$. First, by \cite[Prop. 3.3.4]{Dad}, up to replacing $E$ with a finite field extension (and $X$ with some finite étale cover) we may assume that $G(\calMd,\Phi^\ag_\calM)$ is connected. Then, thanks to \cite[Cor. 3.4.8]{Dad}, we know that the rank of the group of \textit{twist classes} $\mathbb{X}(\calMd)\subseteq \Qpbar^\times/\mu_\infty(\Qpbar)$ (cf. Def. 3.4.3 in [\textit{ibid.}]) is equal to the dimension of the centre of $G(\calMd,\Phi^\ag_\calM)$. In addition, since $(\calM^\ag,\Phi_\calM^\ag)$ is the overconvergent $F$-isocrystals associated to an abelian scheme, its determinant has twist class $[p^g]\in \mathbb{X}(\calMd)$, where $g$ is the dimension of $A$. Write $[a]\in \mathbb{X}(\calMd)$ for a twist class of $(\calNd,\Phi_\calN^\ag)$. If we prove that $[a]$ is not a $\Q$-multiple of $[p^g]$, we deduce that $\mathbb{X}(\calMd)$ is of rank at least $2$, so that the centre of $G(\calMd,\Phi^\ag_\calM)$ is of dimension at least $2$, as we want. Suppose by contradiction that $\tfrac{a^{n}}{p^{mg}}$ is a root of unity for some $(m,n) \in \Z\times \Z_{>0}$. Since $a$ is a $p$-adic unit, we have that $m=0$. This implies that $a$ is a root of unity itself. On the other hand, by the Riemann Hypothesis, $a$ is a Weil number of weight $1$. This leads to a contradiction.
 \end{proof}
 
 \begin{rema}
 	In the proof of Theorem \ref{abelian:t} one could deduce that the unit-root $F$-isocrystal $(\calN,\Phi_\calN)$ is \dex\ by replacing Corollary \ref{max-slope-sub:c} with \cite[Thm. 1.2.(a)]{Ros17}.  It is also worth mentioning that Helm \cite{Hel20} recently found an example of an ordinary abelian variety over a certain finitely generated field $E$ with no isotrivial factors and such that $A(E^{\mathrm{sep}})[p^\infty]$ is infinite. 
 \end{rema}
  \subsection{Kedlaya's conjecture}\label{k-c:ss} Thanks to Theorem \ref{main:t}, we are also able to prove a conjecture proposed by Kedlaya.

 \begin{coro}[Kedlaya's conjecture]\label{k-c:c}
 	Let $X$ be a smooth connected variety over a perfect field $k$ and let $(\calM_1^\ag,\Phi_{\calM_1}^\ag)$ and $(\calM_2^\ag,\Phi_{\calM_2}^\ag)$ be two irreducible overconvergent $F^n$-isocrystals over $X$ with constant slopes. If there exists an isomorphism $\epsilon: (S_1(\calM_1),\Phi_{\calM_1}|_{S_1(\calM_1)})\iso (S_1(\calM_2),\Phi_{\calM_2}|_{S_1(\calM_2)})$, then $(\calM_1^\ag,\Phi_{\calM_1}^\ag)\simeq(\calM_2^\ag,\Phi_{\calM_2}^\ag)$.
 \end{coro}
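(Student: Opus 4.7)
The plan is to use a graph-of-morphism construction together with Theorem \ref{main:t} to upgrade the isomorphism $\epsilon$ of minimal-slope pieces to an isomorphism of the ambient overconvergent $F^n$-isocrystals. Set $(\calM,\Phi_\calM):=(\calM_1\oplus \calM_2,\Phi_{\calM_1}\oplus\Phi_{\calM_2})$, which is $\dag$-extendable with constant slopes (the union of the slopes of $\calM_1$ and $\calM_2$; note that the minimal slopes of $\calM_1$ and $\calM_2$ agree since $S_1(\calM_1)\simeq S_1(\calM_2)$). Inside $S_1(\calM_1)\oplus S_1(\calM_2)\subseteq \calM$ consider the graph
\[
	\calN:=\{(v,\epsilon(v)) \mid v\in S_1(\calM_1)\}.
\]
This is a sub-$F^n$-isocrystal of $(\calM,\Phi_\calM)$ isomorphic to $S_1(\calM_1)$, hence isoclinic of slope equal to the common minimal slope; in particular $S_1(\calN)=\calN$.

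Let $(\caloN,\Phi_\caloN)$ be the $\dag$-hull of $(\calN,\Phi_\calN)$ in $(\calM,\Phi_\calM)$. By Theorem \ref{main:t}, we have $S_1(\caloN)=S_1(\calN)=\calN$. Let $\pi_i:\caloN\to \calM_i$ denote the composition of the inclusion $\caloN\hookrightarrow \calM$ with the $i$-th projection; these are morphisms of $F^n$-isocrystals which come from morphisms of overconvergent $F^n$-isocrystals, since $\caloN$ is $\dag$-extendable and the projections $\calM_1^\ag\oplus \calM_2^\ag\to \calM_i^\ag$ exist already in $\Fnoi(X)$. Consequently both the image and the kernel of $\pi_i$ correspond to overconvergent sub-$F^n$-isocrystals of $\calM_i^\ag$ and $\calM_{3-i}^\ag$ respectively.

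By irreducibility of $(\calM_i^\ag,\Phi_{\calM_i}^\ag)$, the image $\pi_i(\caloN)\subseteq \calM_i$ is either zero or all of $\calM_i$; since it contains the non-zero $S_1(\calM_i)$ (coming from $\calN$ via the projection, which is an isomorphism onto $S_1(\calM_i)$), we get $\pi_i(\caloN)=\calM_i$. Similarly, $\ker(\pi_1)\subseteq \{0\}\oplus \calM_2$ is either $0$ or $\calM_2$. In the latter case $\caloN=\calM_1\oplus \calM_2$, so
\[
	S_1(\caloN)=S_1(\calM_1)\oplus S_1(\calM_2),
\]
which has rank $2\cdot \mathrm{rk}\,S_1(\calM_1)$, strictly larger than $\mathrm{rk}\,\calN$; this contradicts $S_1(\caloN)=\calN$. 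Hence $\pi_1$ is an isomorphism, and by the same argument so is $\pi_2$.

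The composition $\pi_2\circ \pi_1^{-1}:(\calM_1,\Phi_{\calM_1})\iso (\calM_2,\Phi_{\calM_2})$ is an isomorphism of $F^n$-isocrystals obtained from morphisms of overconvergent $F^n$-isocrystals, and Kedlaya's full faithfulness (recalled as the fully faithfulness of $\oi(X)_F\to \Isoc(X)_F$) therefore lifts it to an isomorphism $(\calM_1^\ag,\Phi_{\calM_1}^\ag)\iso (\calM_2^\ag,\Phi_{\calM_2}^\ag)$, as required. The main conceptual step is the identification $S_1(\caloN)=\calN$, which is exactly the content of Theorem \ref{main:t}; everything else reduces to a routine graph/irreducibility argument.
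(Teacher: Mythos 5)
Your proof is correct and follows essentially the same route as the paper: form the direct sum, embed $S_1(\calM_1)$ via the graph of $\epsilon$, and apply Theorem \ref{main:t} to its $\dag$-hull to rule out $\caloN=\calM$. The only cosmetic difference is the endgame: the paper notes that $\caloN^\ag$, being a proper non-zero subobject of a sum of two irreducibles, is itself irreducible and maps non-trivially to each $\calM_i^\ag$, whereas you check directly that the two projections are isomorphisms via the kernel/image analysis; both are valid and equivalent.
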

 \begin{proof}
 	Write $(\calN,\Phi_\calN)$ for $(S_1(\calM_1),\Phi_{\calM_1}|_{S_1(\calM_1)})$ and $\iota:(\calN,\Phi_\calN)\hookrightarrow (\calM_1,\Phi_{\calM_1})$ for the tautological inclusion. If $(\calM,\Phi_\calM)$ is the direct sum $(\calM_1,\Phi_{\calM_1})\oplus (\calM_2,\Phi_{\calM_2})$, we have an inclusion $(\iota,\epsilon):(\calN,\Phi_\calN)\hookrightarrow (\calM,\Phi_\calM)$. By Theorem \ref{main:t}, the isocrystal $\caloN$ is a proper subobject of $\calM$ since $S_1(\caloN)=\calN \neq \calN \oplus \calN = S_1(\calM)$. In turn, by the assumption, this implies that $(\caloN^\ag,\Phi_\caloN^\ag)$ is irreducible. On the other hand, by construction, $(\caloNd,\Phi_\caloN^\ag)$ admits non-zero morphisms to $(\calM_1^\ag,\Phi_{\calM_1}^\ag)$ and $(\calM_2^\ag,\Phi_{\calM_2}^\ag)$. Therefore we get $$(\calM_1^\ag,\Phi_{\calM_1}^\ag)\simeq (\caloNd,\Phi_\caloN^\ag) \simeq (\calM_2^\ag,\Phi_{\calM_2}^\ag),$$ as we wanted.
 \end{proof}
 In addition, we can prove the next related result.

 \begin{coro}\label{irr:c} Let $X$ be a smooth connected variety over a perfect field $k$. If $(\calMd,\Phi_\calM^\ag)$ is an irreducible overconvergent $F^n$-isocrystal with constant slopes, then $(S_1(\calM),\Phi_\calM|_{S_1(\calM)})$ is irreducible.
 \end{coro}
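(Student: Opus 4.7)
The plan is to argue by contradiction using Theorem \ref{main:t} together with the irreducibility of $\calMd$. Suppose $(S_1(\calM),\Phi_\calM|_{S_1(\calM)})$ is reducible, and pick a proper nonzero sub-$F^n$-isocrystal $(\calN,\Phi_\calN)\subsetneq (S_1(\calM),\Phi_\calM|_{S_1(\calM)})$. Form the $\dag$-hull $(\caloN,\Phi_\caloN)$ of $(\calN,\Phi_\calN)$ inside $(\calM,\Phi_\calM)$, which by definition comes from a sub-overconvergent $F^n$-isocrystal $(\caloNd,\Phi_{\caloN}^\ag)\subseteq (\calMd,\Phi_\calM^\ag)$.

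Since $\calN$ is nonzero, $\caloN$ is nonzero, and since $\calMd$ is irreducible we must have $\caloNd=\calMd$, hence $\caloN=\calM$. On the other hand, $\calN$ is contained in $S_1(\calM)$, which is isoclinic of minimal slope $s_1$, so $\calN$ itself is isoclinic of slope $s_1$ and therefore $S_1(\calN)=\calN$. Applying Theorem \ref{main:t} to the inclusion $(\calN,\Phi_\calN)\subseteq (\calM,\Phi_\calM)$ gives $S_1(\calN)=S_1(\caloN)$, i.e. $\calN=S_1(\calM)$. This contradicts the strict inclusion $\calN\subsetneq S_1(\calM)$, completing the proof.

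There is no serious obstacle here: the whole argument is a direct application of Theorem \ref{main:t} combined with the formal property that taking the $\dag$-hull produces an overconvergent subobject, so irreducibility of $\calMd$ can be invoked. The only point to double-check is that the construction of the $\dag$-hull respects the $F^n$-structure (which it does, as it is defined in the category of $F^n$-isocrystals) and that $\caloNd$ is genuinely a subobject of $\calMd$, which is precisely the meaning of $(\calM,\Phi_\calM)$ being $\dag$-extendable together with the definition of the hull.
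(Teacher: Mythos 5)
Your argument is correct and is essentially the paper's own proof: both take a nonzero sub-$F^n$-isocrystal $(\calN,\Phi_\calN)$ of $(S_1(\calM),\Phi_\calM|_{S_1(\calM)})$, note that irreducibility of $(\calMd,\Phi_\calM^\ag)$ forces its $\dag$-hull to be all of $(\calM,\Phi_\calM)$, and then apply Theorem \ref{main:t} together with $S_1(\calN)=\calN$ to conclude $\calN=S_1(\calM)$. The only difference is that you phrase it as a contradiction with a proper subobject, while the paper argues directly that every irreducible subobject equals $S_1(\calM)$; the content is identical.
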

 
 \begin{proof}
 	Let $(\calN,\Phi_\calN)\subseteq (S_1(\calM),\Phi_\calM|_{S_1(\calM)})$ be an irreducible sub-$F^n$-isocrystal. Since $(\calMd,\Phi_\calM^\ag)$ is irreducible, the $\dag$-hull of $(\calN,\Phi_\calN)$ is equal to $(\calMd,\Phi_\calM^\ag)$. By Theorem \ref{main:t} we deduce that $\calN=S_1(\caloN)=S_1(\calM)$. This yields the desired result.
 \end{proof}

Combining Kedlaya's conjecture over finite fields, already proved in \cite{Tsu19}, and Abe's Langlands correspondence we deduce the following $p$-adic refinement of the strong multiplicity one theorem for cuspidal automorphic representations.
 
 \begin{theo}\label{mult-one:t}
	Let $X$ be a smooth connected curve over a finite field, let $\mathbb{A}$ be its adele ring and let $r$ be a positive integer. The isomorphism class of a $\Qpbar$-linear cuspidal automorphic representation $\pi$ of $\GL_r(\mathbb{A})$ is determined by the datum of the Hecke eigenvalues of minimal slope at all but finitely many closed points of $X$.
 \end{theo}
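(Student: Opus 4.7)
The plan is to combine Abe's $p$-adic Langlands correspondence \cite{Abe} with Corollary \ref{k-c:c} (Kedlaya's conjecture) and Chebotarev density. Given two cuspidal automorphic representations $\pi_1,\pi_2$ of $\GL_r(\mathbb{A})$ whose Hecke eigenvalues of minimal slope agree at all but finitely many closed points of $X$, I would attach to each $\pi_i$, via Abe's correspondence, an irreducible overconvergent $F$-isocrystal $(\calM_i^\ag, \Phi_{\calM_i}^\ag)$ on a dense open $U\subseteq X$; shrinking $U$ further, both may be assumed to have constant slopes. Under Abe's correspondence, the (normalised) Hecke eigenvalues of $\pi_i$ at a closed point $x\in U$ are exactly the Frobenius eigenvalues of $(\calM_i, \Phi_{\calM_i})$ at $x$, and the minimal-slope ones are precisely the Frobenius eigenvalues of $(S_1(\calM_i), \Phi_{\calM_i}|_{S_1(\calM_i)})$ at $x$.

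The hypothesis thus translates into equality of Frobenius characteristic polynomials of $S_1(\calM_1)$ and $S_1(\calM_2)$ at almost every closed point of $U$. By Corollary \ref{irr:c}, each $(S_1(\calM_i), \Phi_{\calM_i}|_{S_1(\calM_i)})$ is an irreducible (hence semisimple) $F$-isocrystal. Since the matching of traces forces matching $p$-adic valuations, both are isoclinic of a common minimal slope $s$; after a rank-$1$ unramified twist they become unit-root $F$-isocrystals, and by Crew's equivalence these correspond to continuous $p$-adic representations of $\pi_1^{\et}(U)$. Classical Chebotarev density for such representations, combined with semisimplicity, yields an isomorphism of these representations, and untwisting gives $(S_1(\calM_1), \Phi_{\calM_1}|_{S_1(\calM_1)}) \simeq (S_1(\calM_2), \Phi_{\calM_2}|_{S_1(\calM_2)})$ as $F$-isocrystals.

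At this point Corollary \ref{k-c:c} applies directly: the two irreducible overconvergent $F$-isocrystals $\calM_1^\ag$ and $\calM_2^\ag$ on $U$, having isomorphic minimal-slope sub-$F$-isocrystals, must themselves be isomorphic; the injectivity of Abe's Langlands correspondence on isomorphism classes then gives $\pi_1\simeq\pi_2$. The main subtlety I foresee lies in the Chebotarev step, since the sub-$F$-isocrystals $S_1(\calM_i)$ are only convergent and not a priori overconvergent, so the standard Chebotarev for overconvergent $F$-isocrystals does not apply directly; one must verify that the slopes of $S_1(\calM_1)$ and $S_1(\calM_2)$ coincide (via $p$-adic valuations of the Frobenius traces) in order to reduce to the unit-root situation where classical Chebotarev for $p$-adic representations of $\pi_1^{\et}(U)$ is available. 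This is the step where the minimal-slope hypothesis, rather than arbitrary partial Hecke data, is essential.
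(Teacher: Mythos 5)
Your argument is correct and follows essentially the same route as the paper: Abe's correspondence, Corollary \ref{k-c:c} to reduce to the minimal-slope piece, Corollary \ref{irr:c} for irreducibility, Crew's unit-root equivalence after a constant twist, and Chebotarev. The subtlety you flag (that the minimal slopes must agree, read off from the valuations of the Frobenius eigenvalues, so that both twists are by the same constant object) is implicit in the paper's proof and you handle it correctly.
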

 \begin{proof}
 	By \cite{Abe}, for any such $\pi$ there exists an irreducible $\Qpbar$-linear overconvergent $F$-isocrystal $(\calMd,\Phi_\calM^\ag)$ defined over a certain dense open of $X$ which corresponds to $\pi$ in the sense of Langlands. After shrinking $X$ we may assume that $(\calMd,\Phi_\calM^\ag)$ has constant slopes. By Corollary \ref{k-c:c}, the $F$-isocrystal $(S_1(\calM),\Phi_\calM|_{S_1(\calM)})$ determines the isomorphism class of $\calM$. On the other hand, by \cite[Thm. 2.1]{Cre87}, the isoclinic $F$-isocrystal $S_1(\calM)$ after twist is induced by a $\Qpbar$-linear continuous representation $\rho$ of the étale fundamental group of $X$. Moreover, by Corollary \ref{irr:c}, $\rho$ is irreducible. Thanks to Chebotarev's density theorem for the étale fundamental group of $X$ proved in \cite[Thm. 7]{Ser66}, the isomorphism class of $\rho$ is determined by the Frobenius eigenvalues at all but finitely many points. By construction, these eigenvalues are the same as the Frobenius eigenvalues of $(\calMd,\Phi_\calM^\ag)$ of minimal slope. Since $\pi$ and $(\calMd,\Phi_\calM^\ag)$ correspond in the sense of Langlands, we obtain the desired result.
 \end{proof}

 \subsection{\texorpdfstring{\PBS}{} filtration and \texorpdfstring{$\dag$}{}-compactifications}\label{PBS:ss}
 
 In this last section we extend \cite[Thm. 3.27]{Tsu19} to general smooth varieties and we introduce the notion of \textit{$\dag$-compactification} of an $F^n$-isocrystals, which is used to state Corollary \ref{gen-Ked-con:c}, a stronger form of Corollary \ref{k-c:c}.

 \begin{defi}
 	We say that a $\dag$-extendable $F^n$-isocrystal over $X$ is \textit{pure in bounded subobjects} or simply \PBS\ if for every connected open $U\subseteq X$, the isoclinic subobjects of the restriction to $U$ have minimal generic slope\footnote{This is the dual of Tsuzuki's notion of $\mathrm{PBQ}$ overconvergent $F^n$-isocrystals.}.
 \end{defi}

 \begin{coro}\label{PBS:c}
 	Let $\calM$ be a $\dag$-extendable $F^n$-isocrystal over a smooth connected variety over a perfect field $k$. There exists a unique filtration $0=P_{0}(\calM)\subsetneq P_1(\calM)\subsetneq \dots \subsetneq P_r(\calM)=\calM$ of $\dag$-extendable $F^n$-isocrystals such that each quotient $P_i(\calM)/P_{i-1}(\calM)$ is \PBS\ with minimal generic slope $t_i\in \Q$ and $t_1>t_2>\dots >t_r$. 
 \end{coro}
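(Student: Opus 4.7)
The strategy is to construct $P_1(\calM)$ first as the ``maximal-slope PBS piece'' and then recurse on the quotient $\calM/P_1(\calM)$; the key input is Theorem~\ref{main:t}. First I would reduce to the case that $\calM$ has constant slopes: shrink $X$ to a dense open $U$ on which the slopes are constant, construct the filtration $(P_i(\calM|_U))$ there, and then extend each $P_i(\calM|_U)^\ag \subseteq \calM^\ag|_U$ uniquely to a sub-object of $\calM^\ag$ over $X$ via Theorem~\ref{shrink-zar:t}(ii). The PBS property transfers from $U$ to $X$ because, for any connected open $V \subseteq X$, restriction to $V \cap U$ on isocrystals is fully faithful (Theorem~\ref{shrink-zar:t}(i)) and the minimal generic slope is determined on the generic point, which lies in $V \cap U$.

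Assume now $\calM$ has constant slopes. The set of minimal generic slopes of $\dag$-extendable sub-$F^n$-isocrystals of $\calM$ is contained in the finite slope set of $\calM$, so it admits a maximum, say $t_1$. Define $P_1(\calM)$ as the sum inside $\calM$ of all $\dag$-extendable sub-$F^n$-isocrystals of minimal generic slope~$t_1$; this sum is itself $\dag$-extendable (its $\dag$-extension is the sum of the corresponding $\dag$-extensions in $\calM^\ag$) and has minimal generic slope~$t_1$. To see that $P_1(\calM)$ is PBS, suppose $\calN \subseteq P_1(\calM)|_V$ is an isoclinic sub-$F^n$-isocrystal of slope $s > t_1$ over some connected open $V$. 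Theorem~\ref{main:t} applied to $P_1(\calM)|_V$ gives $S_1(\caloN) = S_1(\calN) = \calN$, so the $\dag$-hull $\caloN$ has minimal slope $s$; extending its $\dag$-lift to $X$ via Theorem~\ref{shrink-zar:t}(ii) produces a $\dag$-extendable sub-$F^n$-isocrystal of $\calM$ of minimal generic slope~$s > t_1$, contradicting maximality of $t_1$.

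Since $\calM/P_1(\calM)$ is again $\dag$-extendable, the same construction applied to it yields $t_2$ and a $\dag$-extendable sub-$F^n$-isocrystal $P_2(\calM)/P_1(\calM) \subseteq \calM/P_1(\calM)$. The strict inequality $t_2 < t_1$ holds because any $\dag$-extendable sub of $\calM/P_1(\calM)$ with minimal generic slope $\geq t_1$ would lift to a $\dag$-extendable sub of $\calM$ strictly larger than $P_1(\calM)$ still of minimal generic slope $t_1$, violating the definition of $P_1(\calM)$ as the sum of \emph{all} such subs. Iterating produces the filtration; it terminates by the rank bound.

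For uniqueness, suppose $0 = P'_0 \subsetneq P'_1 \subsetneq \dots \subsetneq P'_{r'} = \calM$ is another filtration with the stated properties, with slopes $t'_1 > \dots > t'_{r'}$. A descending induction along $P'_\bullet$ using the PBS property of each graded piece shows that every $\dag$-extendable sub $\calN \subseteq \calM$ has minimal generic slope equal to some $t'_i$ (and hence $\leq t'_1$): indeed, choosing the smallest $j$ with $\calN \subseteq P'_j$, the nonzero $\dag$-extendable sub $\calN/(\calN \cap P'_{j-1})$ of the PBS quotient $P'_j/P'_{j-1}$ has minimal generic slope~$t'_j$ by the PBS property, and this slope bounds the slope of $\calN$ from above. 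This forces $t'_1 = t_1$, and the same descending argument applied to a candidate $\dag$-extendable sub of $\calM$ with minimal generic slope $t_1$ forces it to lie in $P'_1$, giving $P_1(\calM) \subseteq P'_1$; symmetrically $P'_1 \subseteq P_1(\calM)$, so $P_1(\calM) = P'_1$, and induction on the quotient concludes. The only real obstacle is checking the PBS property of $P_1(\calM)$, which is precisely where Theorem~\ref{main:t} is indispensable: without it one cannot convert an isoclinic sub of strictly higher slope into a genuine $\dag$-extendable sub to contradict maximality of~$t_1$.
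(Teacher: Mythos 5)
Your proof is correct, but it builds the filtration in the opposite direction from the paper. The paper peels off the \emph{top} graded piece: after shrinking to constant slopes it defines $P_{r-1}(\calM)$ as the \dhl\ of the sum of all isoclinic subobjects whose slope is not the minimal generic slope, uses Theorem \ref{main:t} once to deduce $S_1(\calM)\cap P_{r-1}(\calM)=0$ (so the top quotient is nonzero and \PBS\ with the smallest slope $t_r$), and then inducts on the rank of $P_{r-1}(\calM)$. You instead peel off the \emph{bottom} piece, defining $P_1(\calM)$ as the sum of all \dex\ subobjects realising the maximal possible minimal generic slope $t_1$, and you invoke Theorem \ref{main:t} to upgrade any offending isoclinic subobject of slope $s>t_1$ into a \dex\ subobject of minimal generic slope $s$, contradicting maximality. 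Both arguments hinge on the same theorem and on the same shrink-and-extend reductions via Theorem \ref{shrink-zar:t}; yours has the advantage of making the uniqueness assertion explicit, which the paper's proof does not address. Two small points to tidy up. First, Theorem \ref{main:t} requires the ambient object to have constant slopes, and $P_1(\calM)|_V$ need not have constant slopes even after your initial shrinking; take the \dhl\ of $\calN$ inside $\calM|_V$ rather than inside $P_1(\calM)|_V$ (the conclusion $S_1(\caloN)=\calN$ and the resulting contradiction are unaffected). Second, in the uniqueness step your intermediate claim is slightly overstated: the \PBS\ property applies to isoclinic subobjects, so what the argument actually shows is that for a nonzero \dex\ subobject $\calN\subseteq P'_j$ with $\calN\not\subseteq P'_{j-1}$, the isoclinic object $S_1\bigl(\calN/(\calN\cap P'_{j-1})\bigr)$ has slope $t'_j$, whence the minimal generic slope of $\calN$ is at most $t'_j$ --- not that it equals some $t'_i$. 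This weaker bound is all you need to conclude $t_1=t'_1$ and $P_1(\calM)=P'_1$, so the argument goes through.
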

 
 \begin{proof}
 	After shrinking the variety in order to have constant slopes, we construct $P_{r-1}(\calM)$ as the $\dag$-hull of the sum of all the isoclinic subobjects of $\calM$ which do not have minimal generic slope. By Theorem \ref{main:t}, we deduce that $S_1(\calM)\cap P_{r-1}(\calM)=0$. Therefore, if $\calM\neq 0$ then the quotient $\calM/P_{r-1}(\calM)$ is not $0$. On the other hand, $\calM/P_{r-1}(\calM)\neq0$ is \PBS\ by construction. The result then follows by an induction on the rank of $\calM$.
 \end{proof}

 \begin{defi} If $(\calN,\Phi_\calN)$ is an $F^n$-isocrystal, we say that a \dex\ $F^n$-isocrystal $(\calM,\Phi_\calM)$ endowed with an inclusion $\iota_\calM:(\calN,\Phi_\calN)\hookrightarrow (\calM,\Phi_\calM)$ such that $\caloN=\calM$ is a \textit{\dc} of $(\calN,\Phi_\calN)$. A morphism $\psi: (\calM_1,\Phi_{\calM_1})\to(\calM_2,\Phi_{\calM_2})$ of $\dag$-compactifications is a morphism of $F^n$-isocrystals such that $\psi\circ\iota_{\calM_1}=\iota_{\calM_2}$. We say that a weakly final object of the category of $\dag$-compactifications of $\calN$ is a \textit{minimal $\dag$-compactification} of $\calN$.
 \end{defi}

 \begin{coro}\label{gen-Ked-con:c}
 	If $(\calN,\Phi_\calN)$ is isoclinic and it can be embedded into a \dex\ $F^n$-isocrystal, then it admits a minimal $\dag$-compactification.
 \end{coro}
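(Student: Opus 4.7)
The plan is to realize the minimal $\dag$-compactification as a specific quotient of an arbitrary one. By hypothesis $(\calN,\Phi_\calN)$ embeds into some \dex\ $F^n$-isocrystal $(\calP,\Phi_\calP)$, and we may shrink $X$ so that $(\calP,\Phi_\calP)$ has constant slopes. Setting $\calM_0:=\caloN\subseteq \calP$ with $\iota_0$ the inclusion produces a $\dag$-compactification of $(\calN,\Phi_\calN)$. A key preliminary observation is that every morphism $\psi:(\calM_1,\iota_1)\to(\calM_2,\iota_2)$ of $\dag$-compactifications is automatically surjective, since its image is a \dex\ subobject of $\calM_2$ containing $\iota_2(\calN)$, hence equals $\calM_2$ by the $\dag$-hull condition.

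Let $s\in\Q$ be the slope of $\calN$. By Theorem \ref{main:t} applied to $\calN\subseteq \calM_0=\caloN$ we have $S_1(\calM_0)=S_1(\calN)=\calN$, so for a \dex\ subobject $\calK\subseteq \calM_0$ the condition $\calK\cap\iota_0(\calN)=0$ is equivalent (this is where isoclinicity enters) to all slopes of $\calK$ being strictly greater than $s$. The set $\mathcal{S}$ of such $\calK$ is therefore closed under sums and admits a unique maximal element $\calK^*$. Since the category of overconvergent $F^n$-isocrystals is abelian and the restriction functor to $F$-isocrystals is exact and fully faithful on \dex\ objects, the quotient $\calM_0/\calK^*$ is still \dex; moreover, via the correspondence between \dex\ subs of $\calM_0$ containing $\calK^*$ and \dex\ subs of $\calM_0/\calK^*$, the $\dag$-hull of the image of $\iota_0(\calN)$ is all of $\calM_0/\calK^*$, using that $\caloN=\calM_0$. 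Hence $(\calM^*,\iota^*):=(\calM_0/\calK^*,\pi\circ\iota_0)$, with $\pi:\calM_0\twoheadrightarrow\calM^*$ the projection, is a $\dag$-compactification.

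To verify that $(\calM^*,\iota^*)$ is weakly final, given any other $\dag$-compactification $(\calM',\iota')$ I would form the $\dag$-hull $\calM'':=\overline{\Delta(\calN)}\subseteq \calM_0\oplus\calM'$ of the diagonal, which is itself a $\dag$-compactification. The two coordinate projections give surjections $p_0,p'$ of $\dag$-compactifications. Theorem \ref{main:t} applied inside $\calM_0\oplus \calM'$ yields $S_1(\calM'')=\Delta(\calN)$, and since $p'\circ\Delta=\iota'$ is injective, $\ker(p')\cap S_1(\calM'')=0$; hence the \dex\ isocrystal $\ker(p')$ has all slopes $>s$. Its image $p_0(\ker(p'))\subseteq \calM_0$ is therefore a \dex\ subobject with slopes $>s$, and so is contained in $\calK^*$ by maximality. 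Consequently $\ker(p')\subseteq\ker(\pi\circ p_0)$, and $\pi\circ p_0$ factors through $p'$ as a morphism $\psi:\calM'\to\calM^*$ satisfying $\psi\circ\iota'=\iota^*$, which establishes weak finality.

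The main obstacle is the slope-theoretic input in controlling the set $\mathcal{S}$: the identification $\calK\cap\iota_0(\calN)=0\Leftrightarrow\text{all slopes of }\calK\text{ exceed }s$, applied both to subs of $\calM_0$ and to $\ker(p')\subseteq\calM''$, is exactly what makes $\mathcal{S}$ stable under sums and makes the maximality argument work. This identification is precisely the content of Theorem \ref{main:t}; once it is in hand, the remaining arguments are formal bookkeeping between the \dex\ and overconvergent categories.
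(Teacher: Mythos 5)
Your proof is correct, but it takes a genuinely different and more constructive route than the paper's. The paper argues by directedness plus noetherianity: for any two $\dag$-compactifications $(\calM_1,\iota_{\calM_1})$, $(\calM_2,\iota_{\calM_2})$ it forms the $\dag$-hull $\caloN$ of the diagonal copy of $\calN$ inside $\calM_1\oplus\calM_2$ and shows that the quotients $\calM_i/(\calM_i\cap\caloN)$ are isomorphic $\dag$-compactifications receiving surjections from the $\calM_i$ — the injectivity of the induced maps from $\calN$ being exactly where Theorem \ref{main:t} and isoclinicity enter, via $\caloN\cap\iota_{\calM_i}(\calN)\subseteq S_1(\caloN)=\calN$ — and then noetherianity of the rank-bounded subobject lattice produces a weakly final object abstractly. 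You instead exhibit the minimal $\dag$-compactification explicitly as $\caloN/\calK^*$, with $\calK^*$ the largest $\dag$-extendable subobject of $\caloN$ all of whose slopes exceed $s$, and verify finality directly with the same diagonal-hull device. Both arguments use Theorem \ref{main:t} in the same essential way (to identify the slope-$s$ part of any $\dag$-hull of $\calN$ with $\calN$ itself); what yours buys is an explicit description of the kernel of $\caloN\twoheadrightarrow\calM_{\min}$, what the paper's buys is that it never needs the maximal element $\calK^*$ nor constant slopes globally. Two points you should make explicit rather than leave as bookkeeping: (a) you shrink $X$ at the outset to obtain the slope filtration, so the conclusion must be descended back to $X$ — this works because $\calK^{*\ag}$ extends uniquely to a subobject of $\caloN^\ag$ over $X$ by Theorem \ref{shrink-zar:t}(ii), every $\dag$-compactification over $X$ restricts to one over the open $U$ by the argument of Lemma \ref{shrink-zar:l}, and the morphism $\psi$ extends from $U$ to $X$ by Theorem \ref{shrink-zar:t}(i), but none of this is automatic; (b) your repeated claims that kernels, images, sums and quotients of $\dag$-extendable subobjects are again $\dag$-extendable all rest on the exactness and full faithfulness of $\oi(X)_F\to\Isoc(X)_F$, a fact the paper also uses tacitly but which deserves a sentence.
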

 \begin{proof}
 	Since the category of $F^n$-isocrystals is noetherian, it is enough to prove that for every pair $(\calM_1,\Phi_{\calM_1}), (\calM_2,\Phi_{\calM_2})$ of \dcs\ of $(\calN,\Phi_\calN)$, we can find isomorphic \dcs\ $(\calM_1',\Phi_{\calM_1'})$ and $(\calM_2',\Phi_{\calM_2'})$ with (surjective) morphisms $(\calM_i,\Phi_{\calM_i})\twoheadrightarrow (\calM_i',\Phi_{\calM_i'})$. Write $(\calM,\Phi_\calM)$ for $(\calM_1,\Phi_{\calM_1})\oplus (\calM_2,\Phi_{\calM_2})$ and endow $(\calN,\Phi_\calN)$ with the “diagonal” inclusion in $(\calM,\Phi_\calM)$ induced by $\iota_{\calM_1}$ and $\iota_{\calM_2}$. Write $(\caloN,\Phi_{\caloN})$ for the $\dag$-hull of $(\calN,\Phi_\calN)$ in $(\calM,\Phi_\calM)$. We consider the isocrystals $\calM_i':=\calM_i/(\calM_i\cap \caloN)$ endowed with their natural $F^n$-structures $\Phi_{\calM_i'}$ induced by $\Phi_{\calM_i}$.
 	
 	\spa
 	We claim that the induced morphisms $\iota_{\calM_i'}:\calN\to\calM_i'$ are injective. To prove this, we may shrink $X$ in order to acquire the slope filtration. Since $(\calN,\Phi_\calN)$ is isoclinic, by Theorem \ref{main:t} the subobject $\caloN\cap \iota_{\calM_i}(\calN)\subseteq \caloN$ lies in $S_1(\caloN)=\calN$. This implies that $\calM_i\cap \caloN\cap \iota_{\calM_i}(\calN)=  \calM_i\cap\calN=0$, as we wanted. 
 To end the proof we notice that the two projections $\mathrm{pr}_i:\calM\to \calM_i$ send $\caloN$ to $\calM_i$ since $(\calM_i,\Phi_{\calM_i})$ are $\dag$-hulls of $(\calN,\Phi_\calN)$. This implies that the morphisms $\calM_i\to \calM/\caloN$ are surjective, thus $$\calM_1':=\calM_1/(\calM_1\cap \caloN)\simeq \calM/\caloN \simeq \calM_2/(\calM_2\cap \caloN)=:\calM_2'.$$ Therefore, $(\calM_1',\Phi_{\calM_1'})$ and $(\calM_2',\Phi_{\calM_2'})$ endowed with the morphisms $\iota_{\calM_1'}$ and $\iota_{\calM_2'}$, satisfy all the desired properties.

 \end{proof}


	\bibliographystyle{ams-alpha}

\end{document}